\documentclass[reqno,11pt]{amsart}
\usepackage{amscd,amsfonts,amsmath,amssymb,amstext,amsthm}
\usepackage[utf8]{inputenc}
\usepackage[T2A]{fontenc}
\usepackage[russian,english]{babel}
\usepackage{cite}
\usepackage[unicode]{hyperref}
\usepackage{color}
\usepackage{xcolor}
\usepackage{comment}
\usepackage{tikz}
\usepackage{tikz-cd}
\usepackage[normalem]{ulem}

\usetikzlibrary{matrix,arrows,decorations.pathmorphing}

\makeatletter
\def\@settitle{\begin{center}%
    \baselineskip14\p@\relax
    \bfseries
    \MakeUppercase{\@title}
  \end{center}%
}
\makeatother

\newtheorem{theorem}{Theorem}[section]
\newtheorem{assertion}{Assertion}
\newtheorem{lemma}{Lemma}
\numberwithin{lemma}{section}
\newtheorem{proposition}{Proposition}[section]
\newtheorem{corollary}{Corollary}[section]
\theoremstyle{remark}

\newtheorem{remark}{Remark}[section]
\newtheorem{example}{Example}[section]
\newtheorem{definition}{Definition}[section]
\numberwithin{definition}{section}
\numberwithin{equation}{section}

\pagestyle{plain}

\def\A{{\mathbb A}}
\def\C{{\mathbb C}}

\def\Cn*{{\C^n}^*}
\def\CN*{{\C^N}^*}
\def\T{{\mathbb T}}
\def\L{{\mathbb L}}
\def\V{{\mathbb V}}

\def\rank{{\rm rank\:}}
\def\R{{\mathbb R}}
\def\Z{{\mathbb Z}}
\def\codim{{\rm codim\:}}
\def\codima{{\rm codim_a}}
\def\dima{{\rm dim_a}}

\def\vol{{\rm vol}}

\def\rf{{\rm RF}}
\def\E{\:{\rm e}}
\def\re{{\rm Re}\:}
\def\im{{\rm Im\:}}
\def\supp{{\rm supp\:}}

\def\EA{{\rm E}}
\def\nwt{{\rm nwt}}

\begin{document}
\title{
On the exponential algebraic geometry
}
\author{Boris Kazarnovskii}
\address {\noindent Higher School of Modern Mathematics MIPT
\newline
1 Klimentovskiy per., Moscow, Russia
\newline
{\it kazbori@gmail.com}.}
\keywords{exponential sum, exponential variety, intersection index, Newton polytope, mixed volume}
\begin{abstract}
The set of roots of any finite system of exponential sums in the space $\mathbb{C}^n$ is called an exponential variety.
We define
the intersection index of varieties of complementary dimensions,
and the ring of classes of numerical equivalence of exponential varieties
with operations "addition-union" and "multiplication-intersection".
This ring is analogous to the ring of conditions of the torus $(\mathbb{C}\setminus 0)^n$
and is called the ring of conditions of 
$\mathbb{C}^n$.
We provide its description in terms of convex geometry.
Namely we associate an exponential variety with
an element of a certain ring generated by convex polyhedra in $\mathbb{C}^n$.
We call this element the Newtonization of the exponential variety.
For example, the Newtonization of an exponential hypersurface is its Newton polyhedron.
The Newtoniza\-tion map defines an isomorphism of the ring of conditions to
the ring generated by convex polyhedra in $\mathbb{C}^n$.
It follows, in particular,
that the intersection index of $n$ exponential hypersurfaces is equal
to the mixed pseudo-volume of their Newton polyhedra.
\end{abstract}
\maketitle
\tableofcontents
\section{Introduction.}
\label{intro}
\subsection{Exponential Sums}\label{intro1}
Let $\Lambda \subset \Cn*$ be a finite set of linear functionals in the complex vector space $\mathbb{C}^n$.
Recall that a function on $\mathbb{C}^n$ of the form:
$$f(z)=\sum_{\lambda\in\Lambda,\:0\ne c_\lambda\in\C\setminus0}c_\lambda \E^{\lambda(z)},$$
is called an exponential sum.
The finite set $\supp(f) = \Lambda$ and it's convex hull $\Gamma(f) = \text{conv}(\Lambda)$ are respectively called the support and the Newton polytope of the exponential sum $f$.
Exponential sums form a ring.

By external signs, the ring of exponential sums resembles the ring of Laurent polynomials.
For example, these rings are generated respectively by characters of the additive group $\mathbb{C}_+^n$ of the space $\mathbb{C}^n$ and by characters of the complex torus $(\mathbb{C}\setminus 0)^n$.
However, finding algebraic-geometric properties of the ring of exponential sums is a difficult task.
The first result in this direction was obtained in \cite{Ritt}: \emph{if the quotient of two exponential sums in one variable is an entire function, then this function is also an exponential sum.}
Note that there is no division with a remainder
in the ring of exponential sums of one variable.
Accordingly, Ritt's theorem is not obvious.
For many variables, the statement is proven in \cite{AG}.
An important distinction between exponential sums and polynomials is the infinity of the set of zeros of any non-invertible exponential sum of one variable.
For example, the set of zeros of the exponential sum $\mathrm{e}^{2\pi i z} - 1$ is $\mathbb{Z}$.
Therefore, the existence of a common zero of two exponential sums does not imply the existence of a common divisor.
Indeed, exponential sums $\E^z - 1$ and $\E^{2\pi i z} - 1$, which share the common zero at $z = 0$,
are coprime because otherwise,
the set of common zeros would be infinite.
The сonjecture about the finiteness of the set
of common zeros of two coprime exponential sums, possibly coming from J. Ritt, is currently far from being proven.

If one of the exponential sums is $\E^{2\pi iz}-1$,
then the Ritt's assumption
is a statement about the set of integer solutions of an exponential equation.
In this case, the assumption follows from the statement known as the ``Mordell-Lang conjecture'' for the complex torus \cite{L};
see also \cite{Ev}.
\subsection{Exponential Varieties}\label{intro2}
The zero set of a \emph{finitely generated ideal} of the ring of exponential sums is called an \emph{exponential variety} (abbreviated as \EA-variety).
The first small step in algebraic geometry of \EA-varieties was made in \cite{K97},
where the definition of the \emph{algebraic dimension} $\dima X$ of an \EA-variety $X$ was introduced.
Accordingly, $\codima X = n - \dima X$ is called the algebraic codimension of $X$.
The algebraic dimension may turn out to be negative.
For example, the algebraic dimension of an \EA-variety in $\mathbb{C}^1$
defined by the equations $\mathrm{e}^{iz} - 1 = \mathrm{e}^{2\pi i z} - 1 = 0$ is $-1$.
The algebraic dimension $\dima X$ does not exceed the dimension of $X$ as a closed analytic subset of $\mathbb{C}^n$.
An irreducible component $(X,z)$ of the germ of the \EA-variety $X$ at the point $z$
is called \emph{atypical} if its dimension as an analytic set in $\mathbb{C}^n$ is greater than $\dima X$.
\begin{example}\label{exAnomal}
Let a non-empty \EA-variety $X$ be defined by the equations $f = g = 0$.
Then, if the exponential sums $f$ and $g$ have no common divisor in the ring of exponential sums,
then $\codima X = 2$; otherwise $\codima X = 1$.
In particular, the point $z=0$ is an atypical component of the \EA-variety
defined by the equations $\mathrm{e}^{iz} - 1 = \mathrm{e}^{2\pi i z} - 1 = 0$ in $\mathbb{C}^1$.
In \cite{K97}, it is proven that any atypical component of an \EA-variety of algebraic codimension two is an affine subspace.
In \cite{K97} the assumption was also made about the geometry of atypical components of \EA-varieties of arbitrary codimension.
The assumption was proven by B. Zilber in \cite{Z02};
see Theorem \ref{thmAnom} in \textsection\ref{intro5}.
\end{example}
The concept of irreducibility for \EA-varieties is not defined.
A partial replace\-ment is the concept of \emph{equidimensionality}.
Recall that an algebraic variety is sometimes called equidimensional if the dimensions of its irreducible components are the same.
The concept of equidimensionality for \EA-varieties is defined in \textsection\ref{intro5}.
Every \EA-variety is a union of equidimensional \EA-varieties of different algebraic dimensions.
The set of equidimensional components of an \EA-variety is uniquely determined.
\begin{theorem}\label{thRes1}
Let $k\leq n$ and

$\bullet$
$\Lambda_1, \Lambda_2, \ldots, \Lambda_k$ are finite subsets of the exponent space $\Cn*$

$\bullet$
$S(\Lambda_1, \Lambda_2, \ldots, \Lambda_k)$ is the space of systems of $k$ exponential sums $f_1,\ldots,f_k$ such that $\supp(f_i)\subset\Lambda_i$

$\bullet$
$Y(f_1, f_2, \ldots, f_k)$ is an \EA-variety defined by the equations $f_1=\ldots=f_k=0$.

Then there exists the \EA-hypersurface $\mathfrak D$
in $S(\Lambda_1, \Lambda_2, \ldots, \Lambda_k)$ (that is the \EA-variety defined by a single equation)
such that
$$\forall (f_1, f_2, \ldots, f_k)\notin \mathfrak D\colon\:\codima(Y(f_1,\ldots,f_k))=k $$
\end{theorem}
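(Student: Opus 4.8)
The plan is to argue by induction on $k$, using at each step the known description of equidimensional components and, crucially, the earlier structural result on atypical components (Theorem~\ref{thmAnom} in \textsection\ref{intro5}, i.e.\ Zilber's theorem, which I will invoke as a black box: every atypical component of an \EA-variety is, up to translation, cut out by "torsion-like" linear equations coming from the exponents). For $k=1$ the statement is essentially vacuous: for a nonzero exponential sum $f$ with $\supp(f)\subset\Lambda_1$ the variety $Y(f)$ is an \EA-hypersurface, so $\codima Y(f)=1$ automatically, and we may take $\mathfrak D$ to be the locus $f\equiv 0$ inside $S(\Lambda_1)$, which is a linear subspace and hence an \EA-hypersurface of that ambient space.

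For the inductive step, suppose the claim holds for $k-1$. Given generic $(f_1,\dots,f_{k-1})$ outside the bad divisor $\mathfrak D'\subset S(\Lambda_1,\dots,\Lambda_{k-1})$ produced by the inductive hypothesis, we have $\codima Y(f_1,\dots,f_{k-1})=k-1$, i.e.\ $Y(f_1,\dots,f_{k-1})$ has a well-defined equidimensional component $Z$ of algebraic dimension $n-(k-1)$ together with possibly lower-dimensional and possibly atypical pieces. I would now show that for $f_k$ avoiding a hypersurface in $S(\Lambda_k)$, adding the equation $f_k=0$ drops the algebraic dimension by exactly one on $Z$. The key sub-claims are: (i) $f_k$ does not vanish identically on any positive-algebraic-dimensional equidimensional component of $Y(f_1,\dots,f_{k-1})$ — this fails only for $f_k$ in a proper linear subspace of $S(\Lambda_k)$, since distinct such components impose genuinely different linear conditions on the coefficients of $f_k$; and (ii) after imposing $f_k=0$, no new atypical component of algebraic codimension $<k$ is created. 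For (ii) I would use Zilber's theorem: any atypical component would have to be a coset of a subgroup defined by rational dependencies among the exponents in $\Lambda_1\cup\dots\cup\Lambda_k$, and the condition that $Y(f_1,\dots,f_k)$ actually contains such a fixed coset is again a proper linear (hence \EA-hypersurface) condition on the tuple; since there are only finitely many relevant coset directions (they come from the finite combinatorial data of the supports), the union of these conditions over all offending cosets remains a single \EA-hypersurface $\mathfrak D''$ in $S(\Lambda_1,\dots,\Lambda_k)$.

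Finally I would assemble the bad locus as $\mathfrak D=\bigl(\mathfrak D'\times S(\Lambda_k)\bigr)\cup\mathfrak D''\cup\{f_k\equiv 0\text{ on some component of }Y(f_1,\dots,f_{k-1})\}$, check that each piece is an \EA-hypersurface of the product space $S(\Lambda_1,\dots,\Lambda_k)$ (the first by the inductive hypothesis pulled back, the rest being finite unions of proper linear subspaces, which are \EA-hypersurfaces there), and conclude that outside $\mathfrak D$ the variety $Y(f_1,\dots,f_k)$ has algebraic codimension exactly $k$: the inequality $\codima\le k$ holds because $Y(f_1,\dots,f_k)=Y(f_1,\dots,f_{k-1})\cap\{f_k=0\}$ cannot have algebraic dimension exceeding $n-k$ once the generic transversality in (i) holds, and $\codima\ge k$ is automatic since $Y$ is cut out by $k$ equations (the algebraic codimension of a variety defined by $k$ exponential sums never exceeds $k$, cf.\ Example~\ref{exAnomal}).

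I expect the main obstacle to be step (ii): controlling atypical intersections. The algebraic dimension is not upper semicontinuous in the naive sense because of the atypical "torsion" phenomena peculiar to exponential sums (as in Example~\ref{exAnomal} with $\E^{iz}-1$ and $\E^{2\pi i z}-1$), so one cannot simply invoke a generic-flatness or constructibility argument from classical algebraic geometry. The decisive point is that Zilber's theorem makes the set of possible atypical directions \emph{finite and explicitly combinatorial}, which is exactly what is needed to keep $\mathfrak D''$ a genuine \EA-hypersurface rather than something of higher codimension or an infinite union; making this finiteness and the "proper linear condition" bookkeeping precise is the technical heart of the proof.
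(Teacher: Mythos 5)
Your proposal misses the reduction that makes this theorem essentially a citation, and the machinery you bring in to replace it is aimed at the wrong target. By Definition \ref{dfCodim}, $\codima Y(f_1,\ldots,f_k)$ is \emph{defined} as the codimension of the $G$-model in the finite-dimensional torus $\T_G$, where $G$ is the subgroup of $\Cn*$ generated by $\Lambda_1\cup\ldots\cup\Lambda_k$. Under the isomorphism $\omega_G^*$ of Corollary \ref{corGvar1}, the space $S(\Lambda_1,\ldots,\Lambda_k)$ becomes a space of systems of Laurent polynomials with prescribed supports, and the theorem becomes the classical elimination-theoretic fact that the systems whose zero locus fails to have codimension $k$ lie in an algebraic hypersurface $D$ (the resultant locus) of the coefficient space; pulling $D$ back gives $\mathfrak D$. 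That is the paper's entire proof. In particular, atypical components are irrelevant here: atypicality (Theorem \ref{thmAnom}) measures the discrepancy between the \emph{analytic} dimension of $X\subset\C^n$ and $\dima X$, i.e.\ it concerns the interaction of the model with the winding $\omega_G(\C^n)$ --- it never enters the computation of $\codima$, which takes place entirely in $\T_G$. So your step (ii), which you identify as the technical heart, is solving a problem the theorem does not pose, and Zilber's/Ax's theorems are not needed.

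Separately, even on its own terms your construction of $\mathfrak D$ has a gap exactly where the paper's cited input would be needed. The locus $\{(f_1,\ldots,f_k):\ f_k\ \text{vanishes on some component of}\ Y(f_1,\ldots,f_{k-1})\}$ is linear in $f_k$ only for a \emph{fixed} component; since the components vary with $(f_1,\ldots,f_{k-1})$, this set is not a finite union of linear subspaces of the product space, and showing that its closure is contained in a single hypersurface of $S(\Lambda_1,\ldots,\Lambda_k)$ (rather than something of higher codimension, or not Zariski-closed at all) is precisely the elimination-theory statement you would have to import from the torus picture anyway. The correct fix is to discard the induction and the atypicality analysis and argue as the paper does: pass to the $G$-model and quote the existence of the resultant hypersurface for generic systems of Laurent polynomials with given supports.
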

The equation of the \EA-hypersurface $\mathfrak D$ can be seen as the exponential analog of the resultant of a system of $k$ polynomial equations.
The proof of the theorem is provided at the end of \textsection\ref{intro5}.

Below, we 1) define the intersection index of exponential varieties, and 2) construct the ring of classes of numerical equivalence of exponential cycles in the space $\C^n$
with operations of "addition-union" and "multiplication-intersection".
This ring is analogous to the ring of conditions of the complex torus $(\mathbb{C}\setminus0)^n$ or, more generally,
to the ring of conditions of a spherical homogeneous space.
It is called the ring of conditions of the space $\mathbb{C}^n$.
We establish the ring isomorphism $\mathcal{C}(\mathbb{C}^n) \to \mathfrak{S}{\mathfrak{p}}(\mathbb{C}^n)$,
where $\mathfrak{S}{\mathfrak{p}}(\mathbb{C}^n)$ is the ring defined in \textsection\ref{polRing},
generated by convex polytopes located in the space $\mathbb{C}^n$.

The similar ring isomorphism for algebraic varieties in $(\mathbb{C}\setminus0)^n$ is constructed in \cite{EKK}:
the ring of conditions of the torus $(\mathbb{C}\setminus0)^n$ is isomorphic to the subring in the ring of convex bodies in $\R^n$ generated by the Newton polytopes of algebraic hypersurfaces.
The image of a variety in the ring of convex bodies is called the Newtonization of the algebraic variety; see \textsection\ref{AlgNewt}.
Similarly, the image of an \EA-variety under the isomorphism $\mathcal{C}(\mathbb{C}^n)\to\mathfrak{S}_{\mathfrak{p}}(\mathbb{C}^n)$
is called the Newtonization of the \EA-variety; see Definition \ref{dfNewtonizationE}.
The Newtonization of an exponential hypersurface is its Newton polytope; see Definitions \ref{dfNewtonizationETtrop} and \ref{dfNewtonizationE}.

A functional $\lambda\in \mathbb{C}^n{}^*$ is called real if its values on the space $\re \C^n$ are real.
An exponential sum with real exponents  is called quasi-algebraic.
Quasi-algebraic exponential sums also form a ring.
If the ideal of equations of an \EA-variety is generated by quasi-algebraic exponential sums,
then the \EA-variety is also called quasi-algebraic.
Classes of equivalence containing linear combinations of quasi-algebraic \EA-varieties form a subring
of the ring $\mathcal{C}(\mathbb{C}^n)$ called the \emph{quasi-algebraic ring of conditions} of the space $\mathbb{C}^n$.
In the quasi-algebraic case, the geometry of \EA-varieties is simpler than in the general case.
In particular, in this case, the structure of the ring of conditions is close to the structure of the ring of conditions of the complex torus:
the quasi-algebraic ring of conditions is isomorphic to the ring $\mathfrak{S}_\mathrm{vol}(\mathbb{R}^n)$
generated by convex polytopes in the space $\mathbb{R}^n$; see \textsection\ref{polRing}.
(Recall that the ring of conditions of the torus $(\mathbb{C}\setminus0)^n$ is generated by polytopes in $\mathbb{R}^n$ with vertices at lattice points, see \cite{EKK}.)
The quasi-algebraic ring of conditions is constructed in \cite{K22}.
A detailed announcement of the construction was previously published in \cite[Chapter 6]{EKK}.
\begin{remark}
The ring of convex polytopes in $\mathbb{R}^n$ was defined in \cite{MCM}.
Subsequent studies of such rings were conducted by various authors;
 see \cite{MCM1,MCM2,KhP,Br2}.
 The 
 connection between the ring of convex polytopes and the ring of conditions of the torus $(\mathbb{C}\setminus0)^n$ turned out to be fruitful  for both algebraic and Euclidean geometry; see \cite{MCM2,FS,Br1,Est,EKK,K03,KhA}.
 Here, we use the definition of the ring of polytopes from \cite{EKK};
 see Definition \ref{dfGor}.
\end{remark}
\subsection{The Intersection Index of Exponential Varieties}\label{intro3}
In algebraic geometry, the intersection index typically arises as the number of points of a zero-dimensional algebraic variety.
However,
the set of points of a zero-dimensional \EA-variety is infinite.
We define \emph{the weak density  $d_w(X)$ of zero-dimensional \EA-variety $X$} as an analog of the number of points of zero-dimensional algebraic variety;
see Definition \ref{dfWeakDens}.

Using the concept of the weak density,
will define \emph{the real-valued intersection index} $I(X,Y)$
for \EA-varieties $X$ and $Y$ such that $\dima X + \dima Y = n$;
see Definition \ref{dfIndex}.
\begin{theorem}\label{thmInd}
For equidimensional \EA-varieties $X$ and $Y$ such that $\dima X + \dima Y = n$,
there exists an exponential hypersurface $Z\subset\C^n$ such that,
for all $z\not\in Z$, the following is true:
1) $\dima \left((z+X)\cap Y\right) = 0$, and 2) $d_w((z+X)\cap Y)=I(X,Y)$.
 \end{theorem}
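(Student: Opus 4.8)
The plan is to move $X$ by translations and to realize the set of ``bad'' parameters $z$ as the zero set of a single exponential sum, using the exponential resultant of Theorem~\ref{thRes1}. The elementary remark underlying everything is that translation acts on the ring of exponential sums by an exponential rescaling of coefficients: if $f(w)=\sum_\lambda c_\lambda\,\E^{\lambda(w)}$, then $f(w-z)=\sum_\lambda\bigl(c_\lambda\,\E^{-\lambda(z)}\bigr)\E^{\lambda(w)}$, so $\supp\bigl(f(\cdot-z)\bigr)=\supp f$ and the coefficient vector of $f(\cdot-z)$ is an exponential function of $z$. Recall also that the algebraic dimension is translation–invariant.

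I would first settle the dimension statement, part~1). Since $X$ and $Y$ are equidimensional with $\dima X=k$ and $\dima Y=n-k$, pick exponential sums $f_1,\dots,f_k$ cutting out $X$ and $g_1,\dots,g_{n-k}$ cutting out $Y$ (using equidimensionality), set $\Lambda_i=\supp f_i$, $M_j=\supp g_j$, and let $S=S(\Lambda_1,\dots,\Lambda_k,M_1,\dots,M_{n-k})$ be the associated space of systems of $n$ exponential sums, with exponential resultant hypersurface $\mathfrak D\subset S$ and equation $R$ furnished by Theorem~\ref{thRes1}. By the remark above, $z\mapsto\bigl(f_1(\cdot-z),\dots,f_k(\cdot-z),g_1,\dots,g_{n-k}\bigr)$ is an exponential map $E\colon\C^n\to S$, and $Y\bigl(E(z)\bigr)=(z+X)\cap Y$. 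Hence $\Phi:=R\circ E$ is an exponential sum on $\C^n$, and for every $z$ outside the exponential hypersurface $Z':=\{\Phi=0\}$ one has $\dima\bigl((z+X)\cap Y\bigr)=0$ --- \emph{provided} $\Phi\not\equiv0$, which (since $\mathfrak D$ is the genuine exponential resultant locus of the system) amounts to exhibiting a single $z_0$ with $\dima\bigl((z_0+X)\cap Y\bigr)=0$.

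The existence of one such $z_0$ is the heart of the matter, and it is where I expect the main difficulty. I would deduce it from two facts about the algebraic dimension. First, a Minkowski–type estimate: the difference set $Y-X$ is algebraically dense in $\C^n$, i.e. not contained in any exponential hypersurface. (This is indispensable: if $Y-X\subset\{h=0\}$ then $(z+X)\cap Y=\emptyset$ for all $z\notin\{h=0\}$, and the conclusion of the theorem would be false.) Second, a fiber–dimension theorem for the algebraic dimension, applied to the \EA-variety $W\cong X\times Y$ and the difference map $\delta\colon W\to\C^n$, $\delta(x,y)=y-x$, whose fiber over $z$ is a translate of $(z+X)\cap Y$: one has $\dima W=\dima X+\dima Y=n$, and, $\delta$ being algebraically dominant by the first fact, the generic fiber has algebraic dimension $\dima W-n=0$, in particular is nonempty and zero-dimensional for at least one $z_0$. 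Controlling the \emph{algebraic}, as opposed to the merely analytic, dimension of $Y-X$ and of the generic fiber --- in the presence of atypical components --- is the crux; the structure theorem for atypical components (Theorem~\ref{thmAnom}) and the equidimensionality machinery of \textsection\ref{intro5} are the relevant tools, and it may be technically cleanest to transfer the question to an ambient torus $(\C\setminus0)^N$ through the exponentials $z\mapsto(\E^{\lambda(z)})_\lambda$ of the exponents occurring in $X$ and $Y$, where translation becomes multiplication and one may invoke a classical moving lemma together with a Bernstein–type degeneracy count.

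It remains to treat the weak density, part~2). The set $U:=\C^n\setminus Z'$ is connected, and for $z\in U$ the zero-dimensional \EA-variety $(z+X)\cap Y$ has a well-defined weak density $d_w\bigl((z+X)\cap Y\bigr)$ (Definition~\ref{dfWeakDens}). I would show that $z\mapsto d_w\bigl((z+X)\cap Y\bigr)$ is locally constant on $U$: upper semicontinuity along the family of translates follows from the defining properties of $d_w$ together with the properness coming from part~1), while the reverse inequality follows from an averaging identity --- integrating $d_w\bigl((z+X)\cap Y\bigr)$ over a fundamental region of the purely imaginary translations and comparing with volume–type invariants of $X$ and $Y$ shows that the average equals the supremum, which for an upper semicontinuous function forces constancy. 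Since $U$ is connected this yields a single value, which by definition is $I(X,Y)$ (Definition~\ref{dfIndex}); taking $Z:=Z'$, enlarged if necessary by the finitely many exponential hypersurfaces on which $d_w$ could still jump, completes the proof.
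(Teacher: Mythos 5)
Your route is genuinely different from the paper's, and it has gaps at both of its load-bearing points. The paper's proof is two lines: part 1) is Proposition \ref{prSdvig1} (pass to the $G$-models $X_G,Y_G\subset\T_G$, where "generic translate meets properly" is classical, and use that $\omega_G(\C^n)$ is Zariski dense so the bad locus pulls back to an \EA-hypersurface), and part 2) is Theorem \ref{thmNewtShift}: for $z$ outside an \EA-hypersurface one has $((z+X)\cap Y)^{\nwt}=X^{\nwt}\cdot Y^{\nwt}$, whence $d_w((z+X)\cap Y)=n!\,\mathfrak p\bigl(X^{\nwt}\cdot Y^{\nwt}\bigr)=I(X,Y)$ directly from Definitions \ref{dfWeakDens} and \ref{dfIndex}.

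For part 1), your reduction to Theorem \ref{thRes1} begins by choosing $k$ equations for $X$ and $n-k$ for $Y$; but equidimensionality does not mean $X$ is an intersection of $\codima X$ hypersurfaces (the paper makes "intersection of $k$ \EA-hypersurfaces" a separate, non-automatic property in Definition \ref{dfComplete}), so the resultant setup is not available for general equidimensional $X,Y$. Even granting it, $\Phi\not\equiv 0$ requires a $z_0$ with $E(z_0)\notin\mathfrak D$, which is stronger than $\dima((z_0+X)\cap Y)=0$ since Theorem \ref{thRes1} is one-directional; and your existence argument for $z_0$ rests on a fiber-dimension theorem for $\dima$ and algebraic density of $Y-X$, neither of which is developed anywhere — you correctly identify this as "the crux" but do not close it. The clean closure is exactly the passage to the torus you mention in passing. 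For part 2), the final identification is not "by definition": $I(X,Y)=n!\,\mathfrak p(X^{\nwt}\cdot Y^{\nwt})$, so you must prove the multiplicativity of Newtonization under generic translation, which is the actual content of Theorem \ref{thmNewtShift} and is never invoked or reproved in your argument. Moreover, your semicontinuity-plus-averaging argument treats $d_w$ as an asymptotic point count with continuity in $z$, whereas $d_w$ is defined combinatorially through the Newtonization; the equivalence with counting densities is the hard content of Theorems \ref{thmIsolated} and \ref{thmStrong}, proved much later and only under extra hypotheses, so it cannot be assumed here. Constancy of $z\mapsto d_w((z+X)\cap Y)$ on a connected set would in any case only give a constant, not the value $n!\,\mathfrak p(X^{\nwt}\cdot Y^{\nwt})$.
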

\begin{theorem}\label{thmBKK}
Let $X_1,\ldots,X_n$ be exponential hypersurfaces defined by the equations $f_1=\ldots=f_n=0$,
where $f_i$ is an exponential sum with Newton polytope $\Delta_i$.
Then the intersection index $I(X_1,\ldots,X_n)$ of the exponential hypersurfaces $X_i$ is equal
to the mixed pseudovolume of the Newton polytopes
$\Delta_i$ of exponential sums $f_i$, multiplied by $n!$;
see Definition \ref{dfPseudo} in \textsection\ref{intro4}.
\end{theorem}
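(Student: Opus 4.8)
The plan is to reduce Theorem \ref{thmBKK} to the analogous statement for Laurent polynomials on the torus — the classical Bernstein–Kushnirenko–Khovanskii (BKK) theorem as refined through the ring of conditions in \cite{EKK} — via the quasi-algebraic approximation and a density/averaging argument. First I would invoke Theorem \ref{thmInd} to replace the naive count of intersection points (which is infinite) by the stable quantity: for generic $z$, $d_w((z+X_1)\cap X_2\cap\cdots\cap X_n) = I(X_1,\ldots,X_n)$, so it suffices to compute the weak density of a generic translate. The key conceptual point is that the weak density $d_w$ of a zero-dimensional $\EA$-variety is, by Definition \ref{dfWeakDens}, an averaged counting measure: roughly, the number of zeros of the system $f_1=\cdots=f_n=0$ inside a large cube in $\re\C^n$ normalized by volume, which exists as a limit by almost-periodicity of exponential sums. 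So the theorem is really an equidistribution statement about the zeros of a system of exponential sums.

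Second I would **set up the convex-geometric side**. Each $f_i$ has Newton polytope $\Delta_i\subset\C^n{}^*$; the mixed pseudovolume of Definition \ref{dfPseudo} is the polarization of the pseudovolume functional (the functional whose restriction to a single polytope gives the density of zeros of one generic exponential sum, i.e.\ the volume of the ``complexified'' Newton body — this is exactly the content of the one-variable and one-equation cases already established in \cite{K97} and the announcements in \cite{EKK, K22}). The identity $I(X_1,\ldots,X_n) = n!\,\psv(\Delta_1,\ldots,\Delta_n)$ is then the multilinear extension of the already-known hypersurface case, so by multilinearity of both sides (the left side is multiadditive under ``union'' by the construction of the intersection index; the mixed pseudovolume is multilinear by construction) it is enough to prove it on the diagonal, $I(X,\ldots,X) = n!\,\psv(\Delta,\ldots,\Delta) = \vol_{\psv}(\Delta)$ — i.e.\ the density of zeros of a generic system with all supports in a fixed $\Lambda$ equals the pseudovolume of $\Gamma = \mathrm{conv}(\Lambda)$, up to the factor $n!$.

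Third, **the diagonal case I would attack by tropicalization / quasi-algebraic degeneration**. Split $\lambda = \re\lambda + i\,\im\lambda$; the ``real part'' directions behave like torus characters $|z|\mapsto$ monomials, while the imaginary part supplies the oscillation responsible for the extra (Euclidean, non-lattice) volume. Concretely, I would filter: first handle the genuinely quasi-algebraic case (all exponents real), where by \cite{K22} (announced in \cite[Ch.~6]{EKK}) the ring of conditions is isomorphic to $\mathfrak S_{\vol}(\R^n)$ and the intersection index is $n!$ times the ordinary mixed volume of the real Newton polytopes — this is the BKK theorem transported along that isomorphism. Then, for general exponents, I would degenerate a generic system to one whose zero set, after rescaling the imaginary coordinates, limits to a product of a quasi-algebraic configuration and a linear (oscillatory) part; the pseudovolume is precisely the functional that is additive under this Minkowski-type decomposition and reduces to the mixed volume on the quasi-algebraic strata. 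The weak density is continuous/stable under such generic degeneration (this uses Theorem \ref{thRes1} to stay off the discriminant hypersurface $\mathfrak D$ throughout the degeneration), so the value is computed on the limit, giving $\psv(\Delta)$.

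The **main obstacle** is the third step: controlling the weak density under the degeneration in the imaginary directions and identifying the limiting functional with the pseudovolume. Unlike the torus case, there is no toric compactification to make the limit geometric, and the zero set is a noncompact analytic set of infinite volume, so ``stability of the intersection index'' must be argued through the almost-periodic averaging directly — one must show that the averaged zero-count of $f_1=\cdots=f_n=0$ varies continuously as the $f_i$ move in $S(\Lambda_1,\ldots,\Lambda_n)\setminus\mathfrak D$ and that its generic value, as a function of $(\Delta_1,\ldots,\Delta_n)$, satisfies the axioms (multilinearity, monotonicity, correct normalization on parallelepipeds and on real polytopes) that characterize $n!\,\psv$. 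Once that characterization is in hand, matching it with the explicit mixed-pseudovolume formula of Definition \ref{dfPseudo} is a formal verification. I expect the cleanest route is to prove the equality after applying the Newtonization isomorphism $\mathcal C(\C^n)\to\mathfrak S_{\mathfrak p}(\C^n)$: the image of each $X_i$ is $\Delta_i$ by Definitions \ref{dfNewtonizationETtrop} and \ref{dfNewtonizationE}, the intersection index is the top-degree component of the product $\Delta_1\cdots\Delta_n$ in $\mathfrak S_{\mathfrak p}(\C^n)$, and that top component is by design $n!\,\psv(\Delta_1,\ldots,\Delta_n)$ — so Theorem \ref{thmBKK} becomes an immediate corollary of the ring isomorphism, with all the analytic difficulty quarantined inside the construction of that isomorphism.
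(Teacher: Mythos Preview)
Your final paragraph is exactly the paper's proof, and it is complete as stated: in this paper the intersection index is \emph{defined} (Definition~\ref{dfIndex}) as $I(X,Y)=n!\,\mathfrak p(X^\nwt\cdot Y^\nwt)$, and the Newtonization of a hypersurface is its Newton polytope (Corollary~\ref{corNewtE0}, Definitions~\ref{dfNewtonizationET}, \ref{dfNewtonizationE}). Hence $I(X_1,\ldots,X_n)=n!\,\mathfrak p(\Delta_1\cdots\Delta_n)$, and by the construction of $\mathfrak S_\mathfrak p({\C^n}^*)$ this is $n!$ times the mixed pseudovolume. That is the entire proof in the paper.

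The first three-quarters of your proposal rests on a misreading of Definition~\ref{dfWeakDens}. You describe $d_w$ as ``an averaged counting measure \ldots\ the number of zeros \ldots\ normalized by volume, which exists as a limit by almost-periodicity''; but in this paper $d_w(X)$ is \emph{defined} purely convex-geometrically as $n!\,\mathfrak p(X^\nwt)$, with no reference to counting points. The analytic object you have in mind is the \emph{strong density} $d_n$ of Definition~\ref{dfprelim11}, and the equality $d_n=d_w$ (under genericity/non-degeneracy hypotheses) is a separate and much harder result, postponed to \S\ref{index}--\S\ref{pr1} (Theorems~\ref{thmIsolated}, \ref{thmStrong}, \ref{thmq1}). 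Consequently your degeneration/averaging program, the continuity of the averaged zero-count along families, and the axiomatic characterization of $n!\,\mathfrak p$ are all aimed at proving $d_n=d_w$, not Theorem~\ref{thmBKK}; within the paper's logical order Theorem~\ref{thmBKK} is a tautology and requires none of that machinery. If you intended to give Theorem~\ref{thmBKK} independent analytic content, you would be re-proving (a version of) Theorem~\ref{thmStrong} rather than Theorem~\ref{thmBKK} itself.
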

The proof of the theorem is given at the end of \textsection\ref{dw}.
Previous versions of Theorem \ref{thmBKK} can be found in \cite{Dokl,olga,K84,Few}.

In the first part of the article, consisting of \textsection\ref{intro5}-\ref{dw},
all the concepts listed above are defined,
and a construction of the ring of conditions of the space $\mathbb{C}^n$ based on the concept of intersection index of \EA-varieties is provided.
However, the concept of intersection index from Definition \ref{dfIndex} weakly corresponds to the familiar notions of intersection index of algebraic varieties.
Properties of the intersection index, restoring the traditional context,
are presented in the second part of the article, consisting of \textsection\ref{index}-\ref{pr1}.
In this part, the theory of toric varieties is essential for the exposition.
\subsection{Ring of Conditions of $\C^n$}\label{intro4}
Let $\mathcal C(M)$ be the set of classes of numerical equivalence of varieties in a homogeneous algebraic variety $M$.
Sometimes on the set $\mathcal C(M)$, a ring structure can be correctly defined with the operations of "addition-union" and "multiplication-intersection".
Then the ring $\mathcal C(M)$ is called the \emph{ring of conditions for a homogeneous space} $M$.
This ring is an analog of the Chow ring of an algebraic variety; see \cite{FS}.
If the variety $M$ is not compact,
then compared to the Chow ring, the ring of conditions typically contains significantly more information about the intersections of algebraic cycles.
The ring of conditions was first constructed in the works \cite{CP,C} for symmetric spaces.
Subsequently, this construction was extended to arbitrary spherical varieties of linear reductive groups.
If the homogeneous space $M$ of a linear reductive group is not spherical,
then the algorithm for constructing the ring of conditions $\mathcal{C}(M)$ from \cite{CP,C} is not applicable; see \cite{A1}.
Below, we consider \emph{exponential cycles}, i.e., linear combinations of \EA-varieties in the space $\mathbb{C}^n$ with real coefficients,
and construct the corresponding ring of conditions $\mathcal{C}(\mathbb{C}^n)$.
It is worth noting that the same ring $\mathcal{C}(\mathbb{C}^n)$
can be constructed using only integer coefficients in the mentioned linear combinations;
see property \hyperlink{R2}{($\bf R_2$)} below.
\begin{definition}\label{dfNumerEqu}
We will call equidimensional \EA-varieties $X, Y$ of algebraic codimension $k$ \emph{numerically equivalent} if $I(X,Z)=I(Y,Z)$ for any \EA-variety $Z$ of algebraic dimension $k$.
All \EA-varieties of algebraic codimension $>n$ are also considered numerically equivalent.
\end{definition}
The definition of the ring of conditions is based on the following statement.
\begin{theorem}\label{thmBase}
For any equidimensional \EA-varieties $X, Y$, there exist classes of numerical equivalence $\Pi(X, Y)$, $\Sigma(X, Y)$ such
that for some \EA-hypersurface $\mathfrak D(X, Y)$ in $\C^n$ depending on $X, Y$, the following is true.
If $z\in\C^n\setminus\mathfrak D(X, Y)$, then

\textbf{(i)} all \EA-varieties $X\cap(z+Y)$ are equidimensional and belong to the class $\Pi(X, Y)$;

\textbf{(ii)} if $\codima X=\codima Y$, then all \EA-varieties $X\cup(z+Y)$ are equi\-dimensional and belong to the class $\Sigma(X, Y)$;

\textbf{(iii)} the classes $\Pi(X, Y)$, $\Sigma(X, Y)$ depend only on the classes of numerical equivalence containing $X, Y$.
\end{theorem}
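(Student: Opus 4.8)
The plan is to reduce Theorem \ref{thmBase} to a genericity-plus-finiteness package built from Theorem \ref{thRes1} and Theorem \ref{thmInd}, treating the three claims in the natural logical order \textbf{(i)} $\Rightarrow$ \textbf{(ii)} $\Rightarrow$ \textbf{(iii)}. First I would fix equidimensional $X=Y(f_1,\dots,f_p)$ and $Y=Y(g_1,\dots,g_q)$ with $\codima X=p$, $\codima Y=q$, so that $X\cap(z+Y)$ is cut out by the $p+q$ exponential sums $f_1,\dots,f_p,\tau_z^*g_1,\dots,\tau_z^*g_q$, where $\tau_z$ is translation by $z$. Apply Theorem \ref{thRes1} to the supports $\Lambda_i=\supp(f_i)$ and $\Lambda'_j=\supp(g_j)$: the translated system lies in $S(\Lambda_1,\dots,\Lambda_p,\Lambda'_1,\dots,\Lambda'_q)$, and translation by $z$ moves the point $(f_\bullet, \tau_z^*g_\bullet)$ along an algebraic curve in that system space; intersecting the complement of the resultant \EA-hypersurface $\mathfrak D$ of Theorem \ref{thRes1} with this curve produces an \EA-hypersurface $\mathfrak D_1\subset\C^n$ in the $z$-variable outside of which $\codima(X\cap(z+Y))=p+q$. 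This already gives equidimensionality of $X\cap(z+Y)$ for generic $z$, establishing the first half of \textbf{(i)}.

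The substantive part of \textbf{(i)} is that the class of $X\cap(z+Y)$ is \emph{independent of $z$} for generic $z$. Here I would invoke Definition \ref{dfNumerEqu}: two equidimensional varieties of codimension $k=p+q$ are numerically equivalent iff they have the same intersection index against every \EA-variety $Z$ of algebraic dimension $k$. So fix such a $Z$; by Theorem \ref{thmInd} applied to the pair $X\cap(z+Y)$ and $Z$ (whose algebraic dimensions sum to $n$), $I\big((X\cap(z+Y)),Z\big)$ equals the weak density $d_w$ of a generic translate, which by associativity/commutativity of the intersection construction can be computed as $d_w\big(X\cap(z+Y)\cap(w+Z)\big)$ for generic $(z,w)$. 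Using Theorem \ref{thmInd} once more in the other grouping, $I(X,(z+Y)\cap(w+Z)) = d_w(X\cap(z'+Y)\cap(w'+Z))$ for generic $z',w'$, and since the weak density of a generic intersection of this fixed list of hypersurfaces-with-prescribed-supports does not depend on which generic translation vectors are used — both sides are the generic value of the same density function on the complement of an \EA-hypersurface in $\C^n\times\C^n$ — one gets $I\big((X\cap(z+Y)),Z\big)=I\big(X,(z+Y)\cap(w+Z)\big)$, a quantity independent of $z$ (it depends only on the classes of $X$, $Y$, $Z$). Varying $Z$ over all dimension-$k$ varieties and taking the finite intersection of the corresponding \EA-hypersurfaces in $z$ (finiteness comes because only finitely many "test" support-configurations matter up to the equivalence, or by a Noetherian/countability argument phrased via the resultant hypersurfaces) yields a single $\mathfrak D(X,Y)$ and a well-defined class $\Pi(X,Y)$.

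For \textbf{(ii)}, assume $\codima X=\codima Y=k$. Outside $\mathfrak D(X,Y)$ enlarged by another \EA-hypersurface, $X$ and $z+Y$ meet properly (in codimension $k+1$ along the overlap, which is harmless), so $X\cup(z+Y)$ is equidimensional of codimension $k$; its numerical class is governed by the inclusion–exclusion identity $I\big(X\cup(z+Y),Z\big) = I(X,Z)+I(z+Y,Z)-I\big(X\cap(z+Y),Z\big)$ for $Z$ of dimension $k$, valid generically by part \textbf{(i)} and additivity of $d_w$ over the components of a union (the "addition-union" operation). Each term on the right depends only on the classes of $X,Y,Z$ — the first two because translation preserves numerical class, which itself follows from Theorem \ref{thmInd} since $d_w$ of a generic translate is translation-invariant — so $\Sigma(X,Y):=[X]+[Y]-\Pi(X,Y)$ is a well-defined class and $X\cup(z+Y)\in\Sigma(X,Y)$ generically. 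Finally \textbf{(iii)} is then essentially bookkeeping: the formulas $I(\Pi(X,Y),Z)=I(X,(z+Y)\cap(w+Z))$ and $\Sigma=[X]+[Y]-\Pi$ express $\Pi$ and $\Sigma$ purely through intersection indices of $X,Y$ against auxiliary varieties, and intersection indices are by Definition \ref{dfNumerEqu} invariants of the numerical classes, so replacing $X,Y$ by numerically equivalent $X',Y'$ changes nothing.

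The main obstacle I anticipate is the "single \EA-hypersurface $\mathfrak D(X,Y)$ works for all test varieties $Z$ simultaneously" step in \textbf{(i)}: a priori each $Z$ supplies its own exceptional hypersurface in the $z$-plane, and one must argue that only finitely many of these are genuinely needed — e.g. by showing the density $d_w\big(X\cap(z+Y)\cap(w+Z)\big)$, as a function of the data, is locally constant off an \EA-hypersurface whose equation is a fixed resultant depending polynomially on the coefficients, so that for the purpose of pinning down the class $\Pi(X,Y)$ it suffices to vary $Z$ within one bounded family of supports, making the union of exceptional loci itself an \EA-hypersurface. Getting this uniformity cleanly — rather than ending up with only a countable intersection of hypersurfaces, whose complement could in principle be empty — is where the real work lies, and it is presumably handled by the machinery around Theorems \ref{thRes1} and \ref{thmInd} together with the structure of $\mathcal C(\C^n)$ developed in \textsection\ref{intro5}--\ref{dw}.
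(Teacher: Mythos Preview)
Your approach is quite different from the paper's, and the obstacle you flag at the end is a genuine gap that your proposed fixes do not close.

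The paper's proof is a direct reduction to the algebraic case via the model framework of \textsection\ref{intro5}. One fixes a finitely generated group $G\subset{\C^n}^*$ so that $X,Y$ are both $G$-varieties, takes their $G$-models $A=X_G$, $B=Y_G$ in the torus $\T_G$, and observes that for $\tau=\omega_G(z)$ the variety $A\cap(\tau B)$ is the $G$-model of $X\cap(z+Y)$ (and similarly for unions). The entire statement then follows from its analogue for algebraic subvarieties of a torus (Lemma~\ref{lmG}), which is equivalent to the existence of the ring of conditions $\mathcal C(\T_G)$ and is taken as known. One sets $\mathfrak D(X,Y)=\omega_G^{-1}(Z(A,B))$. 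The translation between algebraic and exponential numerical equivalence, including part~(iii), is handled by Propositions~\ref{prMapTo1} and~\ref{prMapTo2} together with Theorem~\ref{thmNewtShift}: E-numerical equivalence is the same as equality of Newtonizations, and $((z+X)\cap Y)^\nwt=X^\nwt\cdot Y^\nwt$ off a single \EA-hypersurface.

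Your route instead tries to pin down $\Pi(X,Y)$ by testing against every $Z$ of complementary dimension and invoking Theorem~\ref{thmInd}. For each fixed $Z$ this produces an exceptional hypersurface $\mathfrak D_Z$ in the $z$-variable, and you need a \emph{single} $\mathfrak D(X,Y)$ that works for all $Z$ simultaneously. Your suggested remedies do not work: the space of test varieties $Z$ is genuinely infinite-dimensional (exponents range over all of ${\C^n}^*$), the ring of exponential sums is not Noetherian, and a countable intersection of complements of \EA-hypersurfaces need not contain the complement of any single \EA-hypersurface. There is no ``finitely many support-configurations suffice'' argument available at this stage --- that is essentially what one is trying to prove. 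The model framework dissolves the problem precisely because it replaces the infinite-dimensional family of exponential test varieties by algebraic test varieties in the fixed finite-dimensional torus $\T_G$, where the classical theory already supplies one hypersurface.

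Two smaller issues: you assume $X$ is cut out by exactly $\codima X$ equations, but an equidimensional \EA-variety need not be a complete intersection (cf.~Definition~\ref{dfComplete}); and your associativity move $I(X\cap(z+Y),Z)=I(X,(z+Y)\cap(w+Z))$ tacitly uses that the generic weak density of the triple intersection is independent of how the translations are grouped, which again is immediate from Theorem~\ref{thmNewtShift} but not from Theorem~\ref{thmInd} alone. If you are willing to invoke Theorem~\ref{thmNewtShift} and Proposition~\ref{prMapTo1}, then the whole argument collapses to the paper's: those two results already say that the Newtonization, hence the numerical class, of $X\cap(z+Y)$ is $X^\nwt\cdot Y^\nwt$ for $z$ off one \EA-hypersurface.
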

The proof of the theorem is provided in \textsection\ref{dw}.
For equidimensional \EA-variety $X$, we denote it's equivalence class as $\kappa(X)$.
We define $\kappa(X) + \kappa(Y) = \Sigma(X,Y)$ and $\kappa(X) \cdot \kappa(Y) = \Pi(X,Y)$.
It follows from Theorem \ref{thmBase} that the classes of numerical equivalence form a commutative associative semiring,
graded by algebraic codimensions of \EA-varieties.
\begin{definition}\label{dfRingCondTrop}
We denote by $\mathcal C_k$ the real vector space consisting of linear combinations of classes of numerical equivalence of equidimensional \EA-varieties with algebraic codimension $k$.
The commutative graded algebra $\mathcal C(\C^n)=\mathcal C_0+\ldots+\mathcal C_n$ is called the ring of conditions for the space $\C^n$.
\end{definition}
The equivalence classes containing linear combinations of quasialgebraic \EA-varieties form a subring of the ring $\mathcal C(\C^n)$ called the \emph{quasialgebraic ring of conditions for the space $\C^n$}.
The quasi-algebraic ring of conditions is isomorphic to the ring $\mathfrak{S}_\text{vol}(\mathbb{R}^n)$,
generated by convex polytopes in the space $\mathbb{R}^n$; see \textsection\ref{polRing}.
The construction of the quasi-algebraic ring of conditions was published in \cite{K22}.
A detailed overview of it can be found in \cite[Part 6]{EKK}.

Here are some of the main properties of the ring of conditions $\mathcal C(\C^n)$.
\begin{enumerate}
\item[\hypertarget{R1}{\textbf{(R1)}}] If the Newton polytopes of exponential sums $f$ and $g$ differ by a parallel shift in the space ${\C^n}^*$,
then the \EA-hypersurfaces $X$ and $Y$ defined by the equations $f=0$ and $g=0$ are numerically equivalent.
The converse is not true; see Example \ref{exPs1} (1).

\item[\hypertarget{R2}{\textbf{(R2)}}] Any element of the ring $\mathcal C(\C^n)$ belongs to a $\Z$-algebra generated by the classes of numerical equivalence of exponential hypersurfaces.

\item[\hypertarget{R3}{\textbf{(R3)}}] The spaces $\mathcal C_0$ and $\mathcal C_n$ are one-dimensional.
If $0$-dimensional \EA-varieties $X$ and $Y$ are numerically equivalent, then $d_w(X)=d_w(Y)$.
The weak density $d_w$ is a coordinate in the one-dimensional space $\mathcal C_n$.
If $k \neq 0$ and $k \neq n$, then the vector space $\mathcal C_k$ is infinite-dimensional.

\item[\hypertarget{R4}{\textbf{(R4)}}]
For $p+q=n$, the multiplication operation defines a non-degenerate pairing of real vector spaces $\mathcal C_p\times\mathcal C_q\to\mathcal C_n\xrightarrow{d_w}\R$.

\item[\hypertarget{R5}{\textbf{(R5)}}] Let the sum of algebraic codimensions of \EA-varieties $X_1, \ldots, X_k$ equals $n$,
and let $\bar X_i\in\mathcal C(\C^n)$ be the class of numerical equivalence of the \EA-variety $X_i$.
We call the intersection index of \EA-vartieties $X_i$
$$
I(X_1,\ldots,X_k)=d_w(\bar X_1\cdot\ldots\cdot\bar X_k),
$$
where $\bar X_1\cdot\ldots\cdot\bar X_k\in\mathcal C_n$ is a product of  equivalence classes $\bar X_i$ of $X_i$
in the ring $\mathcal C_n$.
If $X_i$ is a zero hypersurface of the exponential sum $f_i$ with a Newton polytope $\Delta_i$, then
$$
I(X_1,\ldots,X_n)=n!\:\mathfrak p(\Delta_1,\ldots,\Delta_n),
$$
where $\mathfrak p(\Delta_1,\ldots,\Delta_n)$ is the mixed pseudovolume of convex polytopes defined below (see definitions \ref{dfPseudo} and \ref{dfPseudoMixed}).
\end{enumerate}
\begin{definition}\label{dfPseudo}
The pseudovolume $\mathfrak p(\Delta)$ of a polytope $\Delta\subset{\C^n}^*$ is defined as
$$
\frac{1}{(2\pi)^n}\sum_{\Lambda\subset\Delta,\:\dim(\Lambda)=n}c(\Lambda)\:A(\Lambda)\:\vol_n(\Lambda),
$$
where the summation is over all $n$-dimensional facets $\Lambda$ of $\Delta$.
Here,

(i) $\vol_n(\Lambda)$ is the $n$-dimensional volume of the face $\Lambda$

(ii) $A(\Lambda)$ is the exterior angle of the facet $\Lambda$, which is the angle of the dual cone $K_\Lambda$
of the facet $\Lambda$
(a full $n$-dimensional angle is considered equal to $1$).
Recall that $K_\Lambda$ consists of points $w\in\C^n$,
such that the linear functional $f(z)=\re z(w)$ as a function on the polyhedron $\Delta$ reaches a maximum simultaneously at all points of the face $\Lambda$.

(iii) $c(\Lambda)=\cos(T_\Lambda^\bot,\sqrt{-1}T_\Lambda)$ is the cosine of the angle between the subspaces $T_\Lambda^\bot$ and $\sqrt{-1}T_\Lambda$,
where $T_\Lambda\subset{\C^n}^*$ is the tangent space of the facet $\Lambda$, and $T_\Lambda^\bot\subset{\C^n}^*$ is the orthogonal complement of the space $T_\Lambda$.
Recall that the cosine of the angle between $n$-dimensional subspaces is, by definition, the distortion coefficient of the volume $\vol_n$ when projected from one to the other.
\end{definition}
\begin{example}\label{exPs1}
 (1) The pseudovolume of a polygon in $\C^1$ is equal to half of its perimeter, divided by $2\pi$.
 Here, the length of a line segment is considered to be twice its actual length.

(2) The pseudovolume of a polytope $\Delta\subset\re({\C^n}^*)$ is $\frac{1}{(2\pi)^n}\vol_n(\Delta)$.
\end{example}
\begin{definition}\label{dfPseudoMixed}
The mixed pseudovolume of convex polytopes $\Delta_1,\ldots,\Delta_n$ in the space ${\C^n}^*$ is defined as
$$
\mathfrak p(\Delta_1,\ldots,\Delta_n)=\frac{1}{(2\pi)^n}\sum_{\Lambda\subset\Delta,\,\dim(\Lambda)=n}
  c(\Lambda)\:A(\Lambda)\:\vol_n(\Lambda_1,\cdots,\Lambda_n),
$$
where the summation is over $n$-dimensional faces $\Lambda$ of the polyhedron-sum $\Delta=\Delta_1+\ldots+\Delta_n$
 (i.e. $\Delta$ is the Minkowski sum of the polyhedra $\Delta_i$),
 $\Lambda_i\subset\Delta_i$ -- faces-summands of the $n$-dimensional face $\Lambda$ 
(i.e. $\Lambda=\Lambda_1+\ldots+\Lambda_n$), and
$\vol_n(\Lambda_1,\cdots,\Lambda_n)$ is the $n$-dimensional mixed volume of the polytopes $\Lambda_i$.
\end{definition}
\begin{corollary}\label{corPseudoMixed1}
The function $\mathfrak p(\lambda_1\Delta_1 + \ldots + \lambda_n\Delta_n)$ of non-negative variables $\lambda_1,\ldots,\lambda_n$ is a homogeneous polynomial of degree $n$.
The mixed pseudovolume $\mathfrak p(\Delta_1,\ldots,\Delta_n)$ is equal to the coefficient of the monomial $\lambda_1 \cdot \ldots \cdot \lambda_n$, divided by $n!$.
\end{corollary}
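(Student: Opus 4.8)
\emph{Proof proposal.} The plan is to reduce the statement to the classical polynomiality of the volume of a Minkowski combination, applied facet by facet, keeping track of the geometric constants attached to each facet. First I would fix $\lambda_1,\ldots,\lambda_n>0$ and set $\Delta(\lambda)=\lambda_1\Delta_1+\ldots+\lambda_n\Delta_n$. The normal fan of $\Delta(\lambda)$ is the common refinement of the normal fans of the $\Delta_i$, hence does not depend on the particular positive values $\lambda_i$. Consequently there is a $\lambda$-independent bijection between the $n$-dimensional faces of $\Delta(\lambda)$ and the $n$-dimensional faces $\Lambda=\Lambda_1+\ldots+\Lambda_n$ of $\Delta=\Delta(1,\ldots,1)$, under which the face corresponding to $\Lambda$ is exactly $\lambda_1\Lambda_1+\ldots+\lambda_n\Lambda_n$, with $\Lambda_i\subset\Delta_i$ the summand face of $\Lambda$. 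Moreover the dual cone of this face is the intersection of the dual cones $K_{\Lambda_i}$, and its tangent space is the real span of $\sum_i(\Lambda_i-\Lambda_i)$; both are determined by the common normal cone alone and do not depend on $\lambda$. Hence the exterior angle $A$ and the cosine $c$ attached to the face $\lambda_1\Lambda_1+\ldots+\lambda_n\Lambda_n$ coincide with $A(\Lambda)$ and $c(\Lambda)$ for every choice of $\lambda_i>0$.

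Next I would invoke Minkowski's theorem: for the polytopes $\Lambda_1,\ldots,\Lambda_n$ lying in the common $n$-dimensional real subspace $T_\Lambda\subset{\C^n}^*$, the function $\vol_n(\lambda_1\Lambda_1+\ldots+\lambda_n\Lambda_n)$ is a homogeneous polynomial of degree $n$ in $\lambda_1,\ldots,\lambda_n\geq 0$, and the coefficient of the monomial $\lambda_1\cdots\lambda_n$ equals $n!\,\vol_n(\Lambda_1,\ldots,\Lambda_n)$ with the normalization of the mixed volume used in Definition~\ref{dfPseudoMixed}. Substituting into Definition~\ref{dfPseudo} and using the previous paragraph gives
$$
\mathfrak p\big(\lambda_1\Delta_1+\ldots+\lambda_n\Delta_n\big)
=\frac{1}{(2\pi)^n}\sum_{\Lambda}c(\Lambda)\,A(\Lambda)\,\vol_n(\lambda_1\Lambda_1+\ldots+\lambda_n\Lambda_n),
$$
a finite sum of homogeneous polynomials of degree $n$, hence itself a homogeneous polynomial of degree $n$ in $\lambda_1,\ldots,\lambda_n$; this identity, first obtained for $\lambda_i>0$, extends to all $\lambda_i\geq 0$ by continuity. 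Extracting the coefficient of $\lambda_1\cdots\lambda_n$ on both sides yields $\frac{n!}{(2\pi)^n}\sum_\Lambda c(\Lambda)\,A(\Lambda)\,\vol_n(\Lambda_1,\ldots,\Lambda_n)$, which by Definition~\ref{dfPseudoMixed} equals $n!\,\mathfrak p(\Delta_1,\ldots,\Delta_n)$. Dividing by $n!$ gives the asserted identity.

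The only point that needs care is the first step: one must verify that the decomposition of each $n$-dimensional face of $\Delta(\lambda)$ as $\lambda_1\Lambda_1+\ldots+\lambda_n\Lambda_n$ is genuinely compatible with scaling, so that the data $c$ and $A$ are functions of the common normal cone only. This is the standard fact that the interior of the deformation (type) cone of a fixed complete fan consists precisely of the Minkowski combinations with strictly positive coefficients, all of which share combinatorial type; faces for which some summand $\Lambda_i$ is lower-dimensional contribute $0$ to both sides via a vanishing mixed volume and cause no difficulty. Once this bookkeeping is in place, the remainder is the classical Minkowski polynomiality together with the identification of the $\lambda_1\cdots\lambda_n$-coefficient as $n!$ times the mixed volume.
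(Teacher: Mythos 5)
Your argument is correct, and it supplies exactly the standard reasoning that the paper leaves implicit: Corollary~\ref{corPseudoMixed1} is stated there without proof, as an immediate consequence of Definitions~\ref{dfPseudo} and~\ref{dfPseudoMixed}. Your reduction — constancy of the normal fan (hence of $c(\Lambda)$ and $A(\Lambda)$) over the open orthant, followed by Minkowski polynomiality of $\vol_n$ inside each tangent space $T_\Lambda$ — is the intended route, and your identification of the $\lambda_1\cdots\lambda_n$-coefficient matches Definition~\ref{dfPseudoMixed} term by term. The one step deserving a word of justification is the passage from $\lambda_i>0$ to $\lambda_i\geq 0$ ``by continuity'': the facet-sum formula for $\mathfrak p$ is not manifestly continuous as facets degenerate, so either invoke the continuity of the pseudovolume as a valuation (Remark~\ref{rmps2}, via the representation $\int dd^ch_1\wedge\ldots\wedge dd^ch_n$, which is multilinear in the support functions and gives the whole corollary in one line), or simply rerun your argument for the sub-collection of polytopes with $\lambda_i>0$ and check that the resulting polynomial is the restriction of $Q$ to the corresponding coordinate face of the orthant.
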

\begin{corollary}\label{corPseudoMixed2}
The mixed pseudovolume is a symmetric multilinear $n$-form on the space of virtual convex polytopes in the space ${\C^n}^*$.
\end{corollary}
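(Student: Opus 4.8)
The plan is to deduce everything from Corollary \ref{corPseudoMixed1}. First, symmetry of $\mathfrak p(\Delta_1,\dots,\Delta_n)$ is immediate: in Definition \ref{dfPseudoMixed} the Minkowski sum $\Delta=\Delta_1+\dots+\Delta_n$, the decomposition $\Lambda=\Lambda_1+\dots+\Lambda_n$ of each $n$-dimensional face of $\Delta$, and the mixed volume $\vol_n(\Lambda_1,\dots,\Lambda_n)$ are all symmetric under permutations of the indices; equivalently, by Corollary \ref{corPseudoMixed1}, $n!\,\mathfrak p(\Delta_1,\dots,\Delta_n)$ is the coefficient of the symmetric monomial $\lambda_1\cdots\lambda_n$ in the polynomial $\mathfrak p(\lambda_1\Delta_1+\dots+\lambda_n\Delta_n)$. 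By symmetry it then suffices to show that, on the semigroup of honest convex polytopes in ${\C^n}^*$, the map $\Delta_1\mapsto\mathfrak p(\Delta_1,\Delta_2,\dots,\Delta_n)$ is additive and positively homogeneous. Positive homogeneity, $\mathfrak p(\mu\Delta_1,\Delta_2,\dots,\Delta_n)=\mu\,\mathfrak p(\Delta_1,\dots,\Delta_n)$ for $\mu\ge 0$, I would obtain at once from Corollary \ref{corPseudoMixed1} by substituting $\lambda_1\mapsto\mu\lambda_1$ and reading off the coefficient of $\lambda_1\cdots\lambda_n$.

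For additivity, $\mathfrak p(\Delta_1+\Delta_1',\Delta_2,\dots,\Delta_n)=\mathfrak p(\Delta_1,\Delta_2,\dots,\Delta_n)+\mathfrak p(\Delta_1',\Delta_2,\dots,\Delta_n)$, the first step is to strengthen Corollary \ref{corPseudoMixed1} to: for every finite family $P_1,\dots,P_m$ of convex polytopes in ${\C^n}^*$, the function $(\lambda_1,\dots,\lambda_m)\mapsto\mathfrak p(\lambda_1P_1+\dots+\lambda_mP_m)$ on $\lambda_i\ge 0$ is a homogeneous polynomial of degree $n$. This is the step I expect to be the main obstacle. It is equivalent to the assertion that $\mathfrak p$ is a translation-invariant valuation on convex polytopes, homogeneous of degree $n$, after which the classical polynomiality theorem (Minkowski for volume, McMullen for general homogeneous translation-invariant valuations) applies without change. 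Homogeneity and translation-invariance are visible in Definition \ref{dfPseudo}, since $\vol_n(\mu\Lambda)=\mu^n\vol_n(\Lambda)$ while $A(\Lambda)$ and $c(\Lambda)$ depend only on the direction of the face $\Lambda$. For the valuation identity $\mathfrak p(P\cup Q)+\mathfrak p(P\cap Q)=\mathfrak p(P)+\mathfrak p(Q)$ I would invoke Groemer's cut criterion and reduce, direction by direction (legitimate because $c(\Lambda)$ depends on the affine direction of $\Lambda$ alone), to the classical additivity of the surface-area/curvature measures of convex polytopes under a hyperplane section. Alternatively, one can re-run the proof of Corollary \ref{corPseudoMixed1} with $m$ summands in place of $n$.

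Granting this polynomiality, I would finish as follows. Apply it with $m=n+1$ to $P_1=\Delta_1$, $P_2=\Delta_1'$, $P_{i+1}=\Delta_i$ for $2\le i\le n$, so that $G(s,t,\lambda_2,\dots,\lambda_n)=\mathfrak p(s\Delta_1+t\Delta_1'+\lambda_2\Delta_2+\dots+\lambda_n\Delta_n)$ is a homogeneous polynomial of degree $n$. Putting $t=0$ and comparing with Corollary \ref{corPseudoMixed1} shows the coefficient of $s\lambda_2\cdots\lambda_n$ in $G$ is $n!\,\mathfrak p(\Delta_1,\Delta_2,\dots,\Delta_n)$, and likewise the coefficient of $t\lambda_2\cdots\lambda_n$ is $n!\,\mathfrak p(\Delta_1',\Delta_2,\dots,\Delta_n)$; while $G(s,s,\lambda_2,\dots,\lambda_n)=\mathfrak p\bigl(s(\Delta_1+\Delta_1')+\lambda_2\Delta_2+\dots+\lambda_n\Delta_n\bigr)$ has coefficient of $s\lambda_2\cdots\lambda_n$ equal to $n!\,\mathfrak p(\Delta_1+\Delta_1',\Delta_2,\dots,\Delta_n)$. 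Since $G$ is homogeneous of degree $n$, the only monomials $s^at^b\lambda_2\cdots\lambda_n$ occurring in $G$ have $a+b=1$, so the substitution $t=s$ adds precisely the two coefficients above; this is the required additivity. It then remains to note that a function on honest convex polytopes that is symmetric and, in each of its $n$ arguments, additive and positively homogeneous, extends uniquely to a symmetric $\R$-multilinear $n$-form on virtual convex polytopes in ${\C^n}^*$ (the Grothendieck group of the Minkowski semigroup, with real coefficients, i.e. differences of support functions): for virtual polytopes $P_i-Q_i$ one sets $\mathfrak p(P_1-Q_1,\dots,P_n-Q_n)=\sum_{\varepsilon\in\{0,1\}^n}(-1)^{|\varepsilon|}\mathfrak p(R_1^{\varepsilon_1},\dots,R_n^{\varepsilon_n})$ with $R_i^0=P_i$, $R_i^1=Q_i$, and additivity in each slot makes this independent of the representatives and multilinear — the standard passage to virtual polytopes in McMullen's polytope algebra and in the theory of the ring of polytopes cited above.
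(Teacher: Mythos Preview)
Your argument is correct, and since the paper gives no proof at all for this corollary (it is stated as an immediate consequence, sandwiched between Corollary~\ref{corPseudoMixed1} and the reminder on virtual polytopes), your polarization route is almost certainly the intended one.

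One remark: the step you single out as ``the main obstacle'' --- polynomiality of $\mathfrak p(\lambda_1P_1+\dots+\lambda_mP_m)$ for $m>n$ --- is in fact no harder than Corollary~\ref{corPseudoMixed1} itself and does not require a separate appeal to the valuation property. The argument behind Corollary~\ref{corPseudoMixed1} is simply that, for strictly positive $\lambda_i$, the dual fan of $\sum_i\lambda_iP_i$ is the common refinement of the dual fans of the $P_i$ and hence does not depend on the $\lambda_i$; for each fixed $n$-codimensional cone $K$ of this fan the corresponding $n$-face is $\sum_i\lambda_i(P_i)_K$, the coefficients $c(\Lambda)$ and $A(\Lambda)$ depend only on $K$, and $\vol_n\bigl(\sum_i\lambda_i(P_i)_K\bigr)$ is a degree-$n$ homogeneous polynomial by the classical Minkowski theorem. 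Nothing here uses $m=n$. So your ``alternatively, re-run the proof of Corollary~\ref{corPseudoMixed1}'' is actually the shortest path, and the detour through Groemer's criterion is unnecessary. If you still want to cite the valuation property, note that Remark~\ref{rmps2} in the paper already records (via \cite{Al03}) that $\mathfrak p$ is a translation-invariant valuation, so you need not reprove it.

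A second, equally short alternative is to bypass polynomiality altogether and read multilinearity directly off Definition~\ref{dfPseudoMixed}: pass to a common refinement of the dual fans of $\Delta_1,\Delta_1',\Delta_2,\dots,\Delta_n$, observe that for each $n$-codimensional cone $K$ the face map $\Delta\mapsto\Delta_K$ is Minkowski-additive, and use the multilinearity of the ordinary mixed volume $\vol_n$; faces of dimension $<n$ contribute zero mixed volume, so the change in the set of $n$-faces when replacing $\Delta_1$ by $\Delta_1+\Delta_1'$ is harmless.
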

Let us remind that a virtual polytope is defined as the formal difference $\Delta - \Gamma$ of two defined up to a translation convex polytopes $\Delta,\Gamma$.
Virtual polytopes form a real vector space with operations of\ ``Minkowski addition'' and ``multiplication by number'',
where $\lambda \cdot (\Delta - \Gamma) = \text{sign}(\lambda) (|\lambda|\Delta - |\lambda|\Gamma)$.

For completeness of the presentation, we will provide the criteria for the vanishing of the mixed volume and mixed pseudovolume of polytopes.
\begin{definition}\label{dfPolRang}
For a polytope $\Delta\subset\R^n$, let $\dim(\Delta)$ denote the dimension of the minimal affine subspace,
containing $\Delta$.
The \emph{rank of a set of convex polytopes} $\Delta_1,\ldots,\Delta_m$ in the space $\R^n$ is defined as the minimum value $\dim(\Delta_{i_1}+\ldots+\Delta_{i_k}) -k$
over all $k\leqslant m$ and all subsets $1\leqslant i_1<\ldots<i_k\leqslant m$.
\end{definition}
\begin{proposition}\label{prPolRang}  {\rm\cite[Assertion 2.1.6]{EKK}}
The mixed volume of
$\Delta_1,\ldots,\Delta_n$ is zero if and only if the rank of the set of polytopes $\Delta_i$ is negative.
\end{proposition}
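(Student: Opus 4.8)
The plan is to reduce the statement to a purely linear-algebraic transversal condition and then to recognise that condition as non-negativity of the rank. For a convex polytope $\Delta\subset\R^n$ I will write $L(\Delta)=\mathrm{lin}(\Delta-\Delta)$ for its direction subspace, so that $\dim\Delta=\dim L(\Delta)$ and, for any indices $i_1<\dots<i_k$, one has $\dim(\Delta_{i_1}+\dots+\Delta_{i_k})=\dim\bigl(L(\Delta_{i_1})+\dots+L(\Delta_{i_k})\bigr)$. The key external fact I will use is the classical criterion of the Brunn--Minkowski theory: $\vol_n(\Delta_1,\dots,\Delta_n)\ne0$ if and only if for every $i$ one can choose a non-degenerate segment $S_i\subset\Delta_i$ with the directions of $S_1,\dots,S_n$ linearly independent. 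One half of this criterion is elementary and I would include a short proof: if $\dim(\Delta_{i_1}+\dots+\Delta_{i_k})=d\le k-1$, then substituting $\lambda_{i_j}=t\mu_j$ into the polynomial $\vol_n(\lambda_1\Delta_1+\dots+\lambda_n\Delta_n)$ keeps the relevant summand $t\sum_j\mu_j\Delta_{i_j}$ inside a fixed $d$-dimensional affine subspace, so $\vol_n$ is a polynomial in $t$ of degree $\le d$ (the crude bound $\vol_n(tA+B)=O(t^{\dim A})$ together with non-negativity of mixed volumes); hence its total degree in $\{\lambda_{i_1},\dots,\lambda_{i_k}\}$ is $<k$ and the coefficient of $\lambda_1\cdots\lambda_n$, namely $n!\,\vol_n(\Delta_1,\dots,\Delta_n)$, vanishes. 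For the other half I would pick the segments, use that $\sum\lambda_iS_i$ is a parallelepiped so that $\vol_n(S_1,\dots,S_n)=\tfrac1{n!}|\det|>0$, and apply monotonicity of mixed volumes under inclusion.

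With the criterion in hand, the proposition follows quickly. Since $S_i$ may be taken parallel to an arbitrary nonzero vector $v_i\in L(\Delta_i)$, positivity of the mixed volume is equivalent to the existence of linearly independent vectors $v_i\in L(\Delta_i)$ for $i=1,\dots,n$. I would then invoke Rado's theorem (the matroid generalisation of Hall's marriage theorem), applied to the matroid of linear independence in $\R^n$ and the families $L(\Delta_1),\dots,L(\Delta_n)$: such a linearly independent transversal exists if and only if $\mathrm{rank}\bigl(\bigcup_{i\in J}L(\Delta_i)\bigr)\ge|J|$ for every $J\subseteq\{1,\dots,n\}$. Finally I would note that $\mathrm{rank}\bigl(\bigcup_{i\in J}L(\Delta_i)\bigr)=\dim\bigl(\sum_{i\in J}L(\Delta_i)\bigr)=\dim\bigl(\sum_{i\in J}\Delta_i\bigr)$, so this inequality for all $J$ is precisely $\dim(\Delta_{i_1}+\dots+\Delta_{i_k})-k\ge0$ for all subsets, i.e. that the rank of the family $\{\Delta_i\}$ is non-negative. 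Hence $\vol_n(\Delta_1,\dots,\Delta_n)=0$ exactly when the rank is negative.

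The classical inputs — the segment criterion, monotonicity of mixed volumes, the value $\tfrac1{n!}|\det(v_1,\dots,v_n)|$ for a system of segments, and the estimate $\vol_n(tA+B)=O(t^{\dim A})$ (a crude Steiner-type bound for the neighbourhood of an affine flat) — are all standard and I would simply cite them; the only genuinely non-trivial ingredient is Rado's theorem. The step I expect to require the most care is the identification of $\dim(\Delta_{i_1}+\dots+\Delta_{i_k})$ with the matroid rank of $\bigcup_j L(\Delta_{i_j})$ and the verification that Rado's hypothesis translates literally into the non-negativity of the polytope rank from Definition \ref{dfPolRang}; everything else is bookkeeping with the polynomial $\vol_n(\sum\lambda_i\Delta_i)$.
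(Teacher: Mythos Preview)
The paper does not give its own proof of this proposition: it is stated with a citation to \cite[Assertion 2.1.6]{EKK} and no argument is supplied. So there is nothing to compare against, and I will simply assess your argument.

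Your proof is correct. The reduction to the segment criterion is standard, and your justification of the ``easy'' direction via the degree of $\vol_n\bigl(\sum_i\lambda_i\Delta_i\bigr)$ in the block of variables $\lambda_{i_1},\dots,\lambda_{i_k}$ is clean. For the converse, the appeal to Rado's theorem for the linear matroid on $\R^n$ with the family of subspaces $L(\Delta_i)$ is exactly right, and the identification $\mathrm{rank}\bigl(\bigcup_{i\in J}L(\Delta_i)\bigr)=\dim\bigl(\sum_{i\in J}L(\Delta_i)\bigr)=\dim\bigl(\sum_{i\in J}\Delta_i\bigr)$ is immediate. One small point worth making explicit when you write it up: for $v_i\in L(\Delta_i)$ you need an actual segment inside $\Delta_i$ parallel to $v_i$; this follows because $\Delta_i-\Delta_i$ is a convex body with linear span $L(\Delta_i)$, so $\epsilon v_i=a-b$ for some $a,b\in\Delta_i$ and small $\epsilon>0$, and convexity gives $[b,a]\subset\Delta_i$. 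With that detail supplied, the monotonicity step and the evaluation $\vol_n(S_1,\dots,S_n)=\tfrac{1}{n!}|\det(v_1,\dots,v_n)|$ finish the argument.
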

\begin{definition}\label{dfPolRangC}
For a polytope $\Delta\subset{\C^n}^*$, let $\dim_\C(\Delta)$ denote the dimension of the minimal complex affine subspace, containing $\Delta$.
The \emph{complex rank of a set of convex polytopes} $\Delta_1,\ldots,\Delta_m$ is defined as the minimum value $\dim_\C(\Delta_{i_1}+\ldots+\Delta_{i_k}) -k$
over all $k\leqslant m$ and all subsets $1\leqslant i_1<\ldots<i_k\leqslant m$.
\end{definition}
\begin{proposition}\label{prPolRangС}{\rm\cite[Corollary 3.3]{K14}}
The mixed pseudovolume of polytopes $\Delta_1,\ldots,\Delta_n$ in ${\C^n}^*$ is zero if and only if the complex rank of the set of polytopes $\Delta_i$ is negative.
\end{proposition}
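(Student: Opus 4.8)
The plan is to exploit that every summand in Definition \ref{dfPseudoMixed} is non-negative, so that $\mathfrak p(\Delta_1,\ldots,\Delta_n)=0$ precisely when each summand vanishes, and then to read off the vanishing of a single summand as a dimension condition. First I would record the elementary facts. For an $n$-dimensional face $\Lambda=\Lambda_1+\ldots+\Lambda_n$ of $\Delta=\Delta_1+\ldots+\Delta_n$ the dual cone $K_\Lambda$ has dimension $n$ and is full-dimensional in its span (a dimension count in the affine hull, valid even when $\Delta$ is not full-dimensional in ${\C^n}^*$), so $A(\Lambda)>0$; the mixed volume $\vol_n(\Lambda_1,\ldots,\Lambda_n)$ of convex bodies is $\geq 0$; and $c(\Lambda)$, being the Jacobian of the orthogonal projection $\sqrt{-1}T_\Lambda\to T_\Lambda^\perp$, lies in $[0,1]$ and vanishes exactly when $\sqrt{-1}T_\Lambda\cap T_\Lambda\neq0$, i.e. when the real $n$-plane $T_\Lambda$ is not totally real. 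Hence $\mathfrak p(\Delta_1,\ldots,\Delta_n)=0$ if and only if every $n$-dimensional face $\Lambda=\Lambda_1+\ldots+\Lambda_n$ of $\Delta$ has either $T_\Lambda$ not totally real, or $\vol_n(\Lambda_1,\ldots,\Lambda_n)=0$.

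For the direction ``negative complex rank $\Rightarrow\mathfrak p=0$'' I would fix a subset $I$ with $\dim_\C\bigl(\sum_{i\in I}\Delta_i\bigr)\leq|I|-1$ and take any $n$-dimensional face $\Lambda=\sum_j\Lambda_j$ of $\Delta$; if $c(\Lambda)=0$ its summand vanishes, so assume $c(\Lambda)>0$. Then $\sum_{i\in I}T_{\Lambda_i}\subseteq T_\Lambda$ is totally real, while at the same time (each $\Lambda_i$ being a face of $\Delta_i$) it lies in a complex subspace of complex dimension $\leq|I|-1$. A real subspace that is totally real and sits in a complex space of complex dimension $d$ has real dimension $\leq d$; hence the Minkowski sum $\sum_{i\in I}\Lambda_i$ has affine dimension $<|I|$, and Proposition \ref{prPolRang} forces $\vol_n(\Lambda_1,\ldots,\Lambda_n)=0$. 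Thus every summand vanishes.

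For the converse I would argue the contrapositive. Assume $\dim_\C\bigl(\sum_{i\in I}\Delta_i\bigr)\geq|I|$ for all $I$. The edge vectors of $\Delta_i$ span the complex subspace $\mathrm{span}_\C(\Delta_i-\Delta_i)$, and for every $I$ the union of these edge vectors over $i\in I$ has $\C$-rank $\dim_\C(\sum_{i\in I}\Delta_i)\geq|I|$; so Rado's theorem, applied to the $\C$-linear matroid on ${\C^n}^*$, provides edges $\gamma_i\subset\Delta_i$ whose direction vectors $e_1,\ldots,e_n$ are $\C$-linearly independent, and $U:=\mathrm{span}_\R(e_1,\ldots,e_n)$ is a totally real $n$-plane. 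Taking the segments $\sigma_i:=\gamma_i\subset\Delta_i$, the Minkowski sum $P:=\sigma_1+\ldots+\sigma_n$ is an $n$-dimensional parallelepiped with $T_P=U$ whose only $n$-dimensional face is $P$ itself, so $\mathfrak p(\sigma_1,\ldots,\sigma_n)=\frac{1}{(2\pi)^n}c(P)A(P)\vol_n(\sigma_1,\ldots,\sigma_n)$. Here $c(P)>0$ since $U$ is totally real, $A(P)>0$, and $\vol_n(\sigma_1,\ldots,\sigma_n)=\vol_n(P)/n!>0$ since the $e_i$ are independent; hence $\mathfrak p(\sigma_1,\ldots,\sigma_n)>0$. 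It then remains to invoke monotonicity of the mixed pseudovolume under inclusion of the arguments to conclude $\mathfrak p(\Delta_1,\ldots,\Delta_n)\geq\mathfrak p(\sigma_1,\ldots,\sigma_n)>0$.

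The hard part will be exactly this monotonicity: in contrast to mixed volumes it is not visible from Definition \ref{dfPseudoMixed}, because the face structure of $\Delta$ jumps under perturbation, and the naive Cauchy-type averaging formulas for the pseudovolume fail (projections do not see the factor $c(\Lambda)$). I would derive it either from a correctly normalized integral-geometric representation of the pseudovolume, or — closer to the present framework — from a representation $\mathfrak p(\Delta_1,\ldots,\Delta_n)=\int h_{\Delta_1}\,d\mu(\Delta_2,\ldots,\Delta_n)$ with a non-negative ``mixed pseudo-surface-area measure'' $\mu$ monotone in $\Delta_2,\ldots,\Delta_n$, mimicking the classical proof of monotonicity of mixed volumes; this is the technical input imported from \cite{K14}. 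The remaining steps are routine: the identification ``$c(\Lambda)=0\Leftrightarrow T_\Lambda$ not totally real'', the totally-real dimension bound, and the two appeals to Proposition \ref{prPolRang}.
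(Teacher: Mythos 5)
First, note that the paper itself offers no proof of this proposition: it is imported verbatim as Corollary 3.3 of \cite{K14}, so there is no internal argument to compare yours against. Your reduction of the problem to the vanishing of each individual summand in Definition \ref{dfPseudoMixed} (using $A(\Lambda)>0$, $\vol_n\geq 0$, $c(\Lambda)\in[0,1]$), the identification of $c(\Lambda)=0$ with $T_\Lambda$ failing to be totally real, and the entire implication ``negative complex rank $\Rightarrow \mathfrak p=0$'' (via the bound $\dim_\R W\leq d$ for a totally real $W$ inside a complex $d$-dimensional space, followed by Proposition \ref{prPolRang}) are correct and complete. The application of Rado's theorem to the $\C$-linear matroid of edge directions, and the computation of $\mathfrak p(\sigma_1,\ldots,\sigma_n)>0$ for the resulting totally real parallelepiped, are also sound.

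The gap is exactly where you locate it, but it is more serious than your phrasing suggests: the entire converse direction hangs on inclusion-monotonicity of the mixed pseudovolume, and this is neither proved nor legitimately ``imported.'' The cited Corollary 3.3 of \cite{K14} \emph{is} the proposition you are proving, so deferring the key lemma there is circular in spirit; and nothing stated in the present paper supplies it. Remark \ref{rmps2} gives only the positivity of the measure $dd^ch_1\wedge\ldots\wedge dd^ch_n$, which yields $\mathfrak p\geq 0$ but not monotonicity (for the Monge--Amp\`ere operator, pointwise comparison of potentials does not transfer to masses in the naive direction). Multilinearity (Corollary \ref{corPseudoMixed2}) only gives monotonicity under Minkowski \emph{summands}, and an edge $\sigma_i$ of $\Delta_i$ is not a Minkowski summand of $\Delta_i$; nor can you substitute $\Delta_i+t\sigma_i$ for $\Delta_i$, since that changes the bodies. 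To close the argument within your own framework you must either actually prove a representation $\mathfrak p(\Delta_1,\ldots,\Delta_n)=\int h_{\Delta_1}\,d\mu(\Delta_2,\ldots,\Delta_n)$ with $\mu\geq 0$ (a nontrivial analytic statement), or --- better, and closer to what a proof from Definition \ref{dfPseudoMixed} should look like --- bypass monotonicity entirely by showing that non-negative complex rank forces the existence of an actual $n$-dimensional face $\Lambda=\Lambda_1+\ldots+\Lambda_n$ of $\Delta$ with $T_\Lambda$ totally real and $\vol_n(\Lambda_1,\ldots,\Lambda_n)>0$. That existence statement is where the real content of the converse lies, and it is not addressed by producing independent edges whose sum need not be a face of $\Delta$.
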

\begin{corollary}\label{corPolRangC}
The product of the classes of numerical equivalence of $k$ \EA-hypersurfaces in the ring $\mathcal C(\C^n)$ is zero if and only if the complex rank of the set of their Newton polytopes is negative.
\end{corollary}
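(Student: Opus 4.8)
The plan is to combine the ring isomorphism $\mathcal C(\C^n)\to\mathfrak S_{\mathfrak p}(\C^n)$ (which sends an \EA-hypersurface to its Newton polytope) with the characterization of numerical equivalence via the intersection index, and then invoke Proposition \ref{prPolRangС} on the vanishing of the mixed pseudovolume. First I would recall that by property \hyperlink{R5}{($\bf R_5$)} and the non-degeneracy of the multiplication pairing in \hyperlink{R4}{($\bf R_4$)}, a class $c\in\mathcal C_k$ vanishes if and only if $d_w(c\cdot \bar Z)=0$ for every $\bar Z\in\mathcal C_{n-k}$; and since by \hyperlink{R2}{($\bf R_2$)} every element of $\mathcal C_{n-k}$ is a $\Z$-linear combination of products of classes of \EA-hypersurfaces, it suffices to test against classes $\bar X_{k+1}\cdot\ldots\cdot\bar X_n$ where each $X_j$ is an \EA-hypersurface with Newton polytope $\Delta_j$. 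Applying this to $c=\bar X_1\cdot\ldots\cdot\bar X_k$, the product vanishes in $\mathcal C(\C^n)$ if and only if
$$
I(X_1,\ldots,X_n)=n!\,\mathfrak p(\Delta_1,\ldots,\Delta_n)=0
$$
for every choice of Newton polytopes $\Delta_{k+1},\ldots,\Delta_n$ of auxiliary \EA-hypersurfaces.

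The next step is the convex-geometric reduction: the mixed pseudovolume $\mathfrak p(\Delta_1,\ldots,\Delta_k,\Delta_{k+1},\ldots,\Delta_n)$ vanishes for all choices of $\Delta_{k+1},\ldots,\Delta_n$ if and only if the complex rank of $\{\Delta_1,\ldots,\Delta_k\}$ is negative. The implication "negative complex rank of the subfamily $\Rightarrow$ negative complex rank of any enlargement $\Rightarrow$ $\mathfrak p=0$" follows directly from Definition \ref{dfPolRangC} (the minimum defining the complex rank of the larger family is taken over a superset of subfamilies, hence is $\le$ the complex rank of $\{\Delta_1,\ldots,\Delta_k\}$) together with Proposition \ref{prPolRangС}. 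For the converse, suppose the complex rank of $\{\Delta_1,\ldots,\Delta_k\}$ is nonnegative; I would choose the remaining polytopes so that the full family has nonnegative complex rank — e.g. take each $\Delta_j$ for $j>k$ to be a sufficiently generic complex line segment (so that $\dim_\C$ of any Minkowski sum involving them grows by one per new summand, as it does for $k$ generic segments in $\C^n$ realizing the standard cube) — and then Proposition \ref{prPolRangС} gives $\mathfrak p(\Delta_1,\ldots,\Delta_n)\ne 0$, hence the product of the original $k$ classes is nonzero by the pairing argument above. One should check here that $\Delta_{k+1},\ldots,\Delta_n$ can indeed be realized as Newton polytopes of honest \EA-hypersurfaces, which is immediate since any finite subset of $\Cn*$ arises as the support of an exponential sum.

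The main obstacle I anticipate is the converse direction of the rank computation: verifying that when $\{\Delta_1,\ldots,\Delta_k\}$ has nonnegative complex rank one can always complete it to a family of $n$ polytopes of nonnegative complex rank by adjoining complex segments. Concretely, one needs that if every subfamily $\{\Delta_{i_1},\ldots,\Delta_{i_m}\}$ satisfies $\dim_\C(\Delta_{i_1}+\cdots+\Delta_{i_m})\ge m$, then after adding generic segments $S_{k+1},\ldots,S_n$ every subfamily of the enlarged collection still satisfies the analogous bound. This is a Hall-type / matroid-theoretic statement about the function $\mathcal A\mapsto\dim_\C(\sum_{i\in\mathcal A}\Delta_i)$, which is submodular and monotone; the point is that the "defect" version $\dim_\C(\sum)-|\mathcal A|$ stays $\ge 0$ under adding a generic one-dimensional summand to a nonnegative-rank family, and the genericity can be arranged because there are only finitely many affine subspaces to avoid. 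Once this combinatorial lemma is in place, the corollary follows formally from \hyperlink{R2}{($\bf R_2$)}, \hyperlink{R4}{($\bf R_4$)}, \hyperlink{R5}{($\bf R_5$)} and Proposition \ref{prPolRangС}.
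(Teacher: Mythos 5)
Your proof is correct and follows essentially the same route as the paper: it reduces, via the non-degenerate pairing and the fact that the ring of conditions is generated by hypersurface classes, to the equivalence between negative complex rank of $\{\Delta_1,\dots,\Delta_k\}$ and the vanishing of $\mathfrak p(\Delta_1,\dots,\Delta_k,\Lambda_1,\dots,\Lambda_{n-k})$ for all choices of $\Lambda_1,\dots,\Lambda_{n-k}$, and then invokes Definition \ref{dfPolRangC} and Proposition \ref{prPolRangС}. The only difference is that you spell out the generic-segment completion argument for the converse direction, which the paper leaves implicit in its one-line appeal to the definition of complex rank.
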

\begin{proof}
Let $\Delta_i$ be the Newton polytopes of the $k$ \EA-hypersurfaces $X_i$. According to \hyperlink{R3}{($\bf R_3$)} and \hyperlink{R5}{($\bf R_5$)},
we need to prove that the condition of negative complex rank for the set $\Delta_1, \ldots, \Delta_k$ is equivalent to the condition:
$$
\forall \Lambda_1, \ldots, \Lambda_{n-k} \subset {\C^n}^* : \mathfrak p(\Delta_1, \ldots, \Delta_k, \Lambda_1, \ldots, \Lambda_{n-k}) = 0.
$$
This follows from Definition \ref{dfPolRangC} and Proposition \ref{prPolRangС}.
\end{proof}
\begin{remark}\label{rmps2}
Let $A$ be a convex compact set in ${\C^n}^*$.
Let us remind that $h_A(z) = \max_{w \in A} \mathrm{Re}\, w(z)$ is called the support function of a compact convex set $A \subset {\C^n}^*$.
If $h_i$ is the support function of the set $A_i$,
then the current $dd^c h_1 \wedge \ldots \wedge dd^c h_n$ is well-defined and represents a non-negative measure in $\C^n$; see \cite{Dokl, K84, K14f}.
The integral of the measure $dd^c h_1 \wedge \ldots \wedge dd^c h_n$ over the unit ball in $\C^n$ with its center at the origin is called the mixed pseudovolume of the convex bodies $A_i$.
If $A_i$ are convex polytopes, then this integral, up to a constant factor depending on $n$, coincides with $\mathfrak p(A_1, \ldots, A_n)$.
In \cite{Al03}, it is proven that the pseudovolume is a translational and unitarily invariant valuation on convex bodies in ${\C^n}^*$.
The mixed pseudo-volume also arises in estimates of the asymptotic density of the set of common zeros of entire functions of exponential type \cite{KLast}.
\end{remark}
\subsection{Exponential Sums as Laurent Polynomials}\label{plan}
We consider exponen\-tial sums as Laurent polynomials on the infinite-dimensional character torus $\T_n^\infty$ of the additive group $\C^n_+$ in the space $\C^n$.
This approach can be traced back to the famous publication by Hermann Weyl \cite{Weyl}.

The torus $\T_n^\infty$ is the projective limit of the character tori $\T_G$ of finitely generated subgroups $G$ in the additive group of the space ${\C^n}^*$.
The projective limit of the standard windings of the tori $\T_G$
(see definition \ref{winding})
is the standard winding $\omega_\infty\colon\C^n\to\T_n^\infty$
of the infinite-dimensional torus $\T_n^\infty$.
The ring of exponential sums consists of the preimages of Laurent polynomials on $\T_n^\infty$ under the map $\omega_\infty$.

The rings of conditions $\mathcal C(\T_G)$ of the tori $\T_G$ form an inductive system of rings.
We denote the direct limit of the rings $\mathcal C(\T_G)$ by $\mathcal C(\T_n^\infty)$.
Using the representation of the ring $\mathcal C(\T_G)$ as the ring
generated by convex polyhedra in the character space of the torus,
we get a description of the ring $\mathcal C(\T_n^\infty)$ as a ring,
generated by finite-dimensional convex polyhedra in the infinite-dimensional character space of the torus $\T_n^\infty$.

Let $\mathfrak{R}$ denote the discrete additive group of the space ${\C^n}^*$.
Then the character space of the torus $\T_n^\infty$ is $\mathfrak{R} \otimes_\Z \R$.
Consider the extension of the identity map $\mathfrak{R} \to {\C^n}^*$
to a linear map of real vector spaces $\Pi \colon \mathfrak{R} \otimes_\Z \R \to {\C^n}^*$.
The action of the map $\Pi$ on convex polytopes in the character space extends to a homomorphism
of the ring of convex polytopes in $\mathfrak{R} \otimes_\Z \R$ to
the ring of convex polytopes $\mathfrak{S}_\vol({\C^n}^*)$
in the space ${\C^n}^*$ defined in \textsection\ref{polRing1}.
We construct
the ideal $J_\mathfrak{p}$ in the ring $\mathfrak{S}_\vol({\C^n}^*)$
(see \textsection\ref{polRing1}),
such that the ring of conditions of the space $\C^n$ is isomorphic to the quotient ring
$\mathfrak{S}_\vol({\C^n}^*) / J_\mathfrak{p}$;
see \textsection\ref{polRing1} and \textsection\ref{dw}, respectively.

The presentation of results is done at the finite-dimensional level.
We use the computation of the ring of conditions of the torus,
given in \cite{EKK}; see also \cite{KhA}.
One of the main concepts in the algebraic geometry of subvarieties of the torus is the Newton polytope of an algebraic hypersurface.
The computation of the ring of condition of the torus from \cite{EKK}
can be regarded as the construction of an analog of the Newton polytope for an arbitrary algebraic variety:
a variety is associated with an element of a certain ring,
generated by convex polytopes with vertices in the character lattice of the torus.
We call this element the \emph{Newtonization of an algebraic variety}.

In \textsection\ref{intro5}, any \EA-variety $X$ is associated with an algebraic variety in a certain multidimensional torus,
called the \emph{model of the \EA-variety} $X$.
In \textsection\ref{Newt}, we define the \emph{Newtonization of the \EA-variety} $X$ as the image
of the Newtonization of the model of $X$ under a certain ring homomorphism,
generated by convex polytopes;
see Theorem \ref{thmIdeals} and Corollary \ref{corM1}.
The Newtonization of an \EA-variety is an element of a certain ring generated by convex polytopes in ${\C^n}^*$,
defined in \textsection\ref{polRing}.

By analogy with \cite{EKK},
we interpret the Newtonization map as associating an \EA-variety with its numerical equivalence class,
i.e., the corresponding element in the ring of conditions of the space $\C^n$;
see \textsection\ref{EANewt}.
Using the Newtonizations of \EA-varieties,
we define in \textsection\ref{dw} the concepts of weak density and intersection index.

The definition of the intersection index of \EA-varieties given in \textsection\ref{dw} is not formally related in the usual way to the notions of the set of intersection points of \EA-varieties of complementary dimension.
The properties of the intersection index,
restoring the traditional context, are provided in \textsection\ref{index}.
Their proofs in \textsection\ref{pr1} heavily rely on the theory of toric varieties.
\section{Algebraic Codimension}\label{intro5}
In this section, we define some concepts that will be used later, including the concept of the algebraic dimension of \EA-varieties from \cite{K97}.
Further, we use the following notations throughout:

$\bullet$ $G$ - a finitely generated subgroup in the space $\Cn*$;
we assume that $G$ contains some basis of the space $\Cn*$

$\bullet$ $E_G$ - the ring of exponential sums with supports from the group $G$

$\bullet$ $G$-variety - \EA-variety defined by equations from $E_G$

$\bullet$ $\T_G$ - the character torus of the group $G$, $\C[\T_G]$ - the ring of Laurent polynomials on the torus $\T_G$

$\bullet$ $\mathfrak T_G$ - the Lie algebra of the torus $\T_G$, $\im \mathfrak T_G$ - the Lie algebra of the maximal compact subtorus in $\T_G$, $\re\mathfrak T_G=i\im \mathfrak T_G$

$\bullet$ $\mathfrak T_G^*$ - the space of linear functionals on $\mathfrak T_G$

$\bullet$ $\re \mathfrak T_G^*$ - the character space of the torus $\T_G$

$\bullet$ $\Z_G\subset\re\mathfrak T_G^*$ - the lattice of characters of the torus $\T_G$.
\begin{definition}\label{winding}
For $z\in\C^n$, we define the character $\omega_G(z)$ of the group $G$ as $\omega_G(z)\colon g\mapsto\E^{g(z)}$.
The map $\omega_G\colon\C^n\to\T_G$ is a group homomorphism and is called the \emph{standard winding map} to the torus $\T_G$.
\end{definition}
\begin{corollary}\label{corGvar1}
The pullback map $\omega_G^*\colon\C[\T_G]\to E_G$ is an isomorphism of rings.
\end{corollary}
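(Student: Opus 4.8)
The plan is to verify separately that $\omega_G^*$ is a ring homomorphism, that it is surjective, and that it is injective; the first two are formal, and all the content is in injectivity, which is the classical linear independence of exponential characters.

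Since $G$ is a finitely generated torsion-free abelian group (a subgroup of the $\C$-vector space $\Cn*$), the torus $\T_G=\mathrm{Hom}(G,\C^\times)$ is an algebraic torus and $\C[\T_G]$ is its group algebra $\C[G]$, with $\C$-basis the characters $\{\chi^g\}_{g\in G}$ and multiplication $\chi^g\chi^h=\chi^{g+h}$. Pullback along any map of sets is a ring homomorphism of the corresponding function rings, so $\omega_G^*$ is a ring homomorphism; and by Definition \ref{winding} one computes $(\omega_G^*\chi^g)(z)=\chi^g(\omega_G(z))=\omega_G(z)(g)=\E^{g(z)}$. Hence the image of $\omega_G^*$ is the $\C$-linear span of the exponential monomials $\E^{g(z)}$, $g\in G$, which is by definition $E_G$. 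Thus $\omega_G^*$ is a surjective ring homomorphism, and it remains only to show $\ker\omega_G^*=0$.

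An element of $\ker\omega_G^*$ is a finite sum $\sum_{g\in\Lambda}c_g\chi^g$ with $\Lambda\subset G$ finite and $\sum_{g\in\Lambda}c_g\E^{g(z)}\equiv 0$ on $\C^n$; we must deduce that all $c_g=0$. First, the maps $\chi_g\colon(\C^n,+)\to\C^\times$, $z\mapsto\E^{g(z)}$, are group homomorphisms, and they are pairwise distinct for distinct $g$: if $\chi_g=\chi_h$, then $(g-h)(z)\in 2\pi i\Z$ for all $z$, but $g-h$ is $\C$-linear, so its image is a $\C$-linear subspace of $\C$, and the only such subspace contained in the discrete set $2\pi i\Z$ is $\{0\}$; hence $g=h$. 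Now the Dedekind--Artin lemma on \emph{linear independence of characters} — pairwise distinct characters of a group with values in a field are linearly independent over that field — forces all $c_g=0$. (Alternatively, one may reduce to one variable: choose $v\in\C^n$ with the values $\{g(v):g\in\Lambda\}$ pairwise distinct, which holds for generic $v$ since $\Lambda$ is finite and $g\mapsto g(v)$ is linear, restrict to the line $\C v$, and invoke the classical Wronskian argument, or the asymptotics as $\re t\to+\infty$, for $\sum_{g}c_g\E^{tg(v)}\equiv 0$.)

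There is essentially no obstacle here: once the group-algebra description of $\C[\T_G]$ and the formula $\omega_G^*\chi^g=\E^{g(\cdot)}$ are recorded, the only nontrivial ingredient is the linear independence of characters, which is classical. The single point deserving a word of care is the distinctness of the characters $\chi_g$ for distinct exponents, for which one uses that a nonzero $\C$-linear functional on $\C^n$ is surjective onto $\C$ and therefore cannot take values in the lattice $2\pi i\Z$.
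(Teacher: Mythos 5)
Your proof is correct, but it takes a different (and more self-contained) route than the paper. The paper's entire proof is the one-line observation that the image of the standard winding $\omega_G\colon\C^n\to\T_G$ is Zariski dense, from which injectivity of the pullback is immediate (a Laurent polynomial vanishing on a dense set vanishes identically), with surjectivity left implicit. You instead prove injectivity directly: you identify $\C[\T_G]$ with the group algebra of $G$, compute $\omega_G^*\chi^g=\E^{g(\cdot)}$, check that distinct exponents $g$ give distinct characters of $(\C^n,+)$ (using that a nonzero $\C$-linear functional is surjective onto $\C$ and so cannot land in $2\pi i\Z$), and invoke Dedekind--Artin linear independence of characters. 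The two arguments are essentially equivalent in content --- injectivity of $\omega_G^*$ on all of $\C[\T_G]$ is the same statement as Zariski density of $\omega_G(\C^n)$ --- but the paper treats density as a known fact, whereas your version actually supplies the proof of that fact from first principles, and also makes the surjectivity step explicit. Nothing is missing.
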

\noindent
\emph{Proof.}
The image of the standard winding map is dense in the Zariski topology. This implies the desired statement.
\begin{definition} \label{dfModel}
Let $X$ be a $G$-variety defined by equations $\{P_i=0\}$, where $P_i\in E_G$.
The equations $\{(\omega_G^*)^{-1}P_i=0\}$ define
the algebraic variety $X_G\subset\T_G$ called the $G$-\emph{model} of the \EA-variety $X$.
\end{definition}
\begin{corollary}\label{corModel}
The map $X\mapsto X_G$ establishes a one-to-one correspondence between the set of $G$-varieties and the set of algebraic varieties in $\T_G$.
\end{corollary}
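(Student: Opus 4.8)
The plan is to deduce the corollary in a couple of lines from the ring isomorphism $\phi:=\omega_G^*\colon\C[\T_G]\to E_G$ of Corollary~\ref{corGvar1}, by transporting it through the dictionary ``ideal $\leftrightarrow$ zero set''.

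First I would record the algebraic side. A ring isomorphism carries ideals bijectively to ideals and preserves finite generation, so $\phi$ restricts to a bijection between the finitely generated ideals of $E_G$ and those of $\C[\T_G]$. Moreover $\C[\T_G]$ is the ring of Laurent polynomials in $\rank G$ indeterminates (here $G$, being a finitely generated subgroup of a vector space, is free), hence Noetherian, and therefore so is $E_G$; thus the qualifier ``finitely generated'' may be dropped on both sides. This last remark is a convenience rather than a necessity, since $\phi$ pairs finitely generated ideals with finitely generated ideals in any case.

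Next I would relate ideals to their zero loci by means of the defining property $(\omega_G^*Q)(z)=Q(\omega_G(z))$ of $\phi$, valid for every $Q\in\C[\T_G]$ and $z\in\C^n$. Writing $V(\,\cdot\,)$ for the common zero locus of a set of functions, this identity gives, for an ideal $J\subset\C[\T_G]$ and $I=\phi(J)\subset E_G$, the set-theoretic equality $V(I)=\omega_G^{-1}(V(J))$ inside $\C^n$: a point $z$ kills every element of $I=\phi(J)$ exactly when $\omega_G(z)$ kills every element of $J$. Two consequences follow. (i) If a $G$-variety $X$ is cut out by equations $P_i=0$ with $P_i\in E_G$ generating the ideal $I$, then by Definition~\ref{dfModel} its model $X_G$ is the subvariety $V(\phi^{-1}(I))\subset\T_G$, and the equality above (applied to $J=\phi^{-1}(I)$) yields $\omega_G^{-1}(X_G)=V(I)=X$. (ii) Conversely, given any algebraic subvariety $Y=V(J)$ of $\T_G$ (with $J$ finitely generated, by the first step), the preimage $X:=\omega_G^{-1}(Y)=V(\phi(J))$ is a $G$-variety and its model is $V(\phi^{-1}(\phi(J)))=V(J)=Y$.

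Finally I would assemble the pieces: (i) and (ii) say precisely that $X\mapsto X_G$ and $Y\mapsto\omega_G^{-1}(Y)$ are mutually inverse, hence each is the asserted one-to-one correspondence between $G$-varieties and algebraic subvarieties of $\T_G$ — with the understanding, implicit already in Definition~\ref{dfModel}, that a $G$-variety is taken together with a choice of defining ideal $I\subset E_G$, so that the correspondence is literally the bijection $I\leftrightarrow\phi^{-1}(I)$ of defining ideals before one passes to zero sets. The one point that warrants any verification is the compatibility $V(\phi(J))=\omega_G^{-1}(V(J))$ used in the third paragraph, and even that reduces to the trivial identity $(\omega_G^*Q)(z)=Q(\omega_G(z))$ together with the Zariski density of $\omega_G(\C^n)$ in $\T_G$ that already underlies Corollary~\ref{corGvar1}; unlike the later parts of the paper, no toric geometry is involved.
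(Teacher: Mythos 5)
Your argument is correct and is precisely the one the paper intends: the corollary is stated there without proof, as an immediate consequence of the ring isomorphism $\omega_G^*\colon\C[\T_G]\to E_G$ of Corollary \ref{corGvar1}, and your transport of finitely generated ideals through that isomorphism combined with the identity $V(\omega_G^*J)=\omega_G^{-1}(V(J))$ is exactly how that consequence is drawn. You also rightly flag the one delicate point, which the paper leaves unsaid: the correspondence must be read at the level of defining ideals (or of the models themselves) rather than of literal point sets in $\C^n$, since $\omega_G^{-1}$ is not injective on subvarieties of $\T_G$ — a nonempty subvariety can miss the dense but proper image of $\omega_G$ entirely.
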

\begin{definition} \label{dfCodim}
The codimension of the $G$-model $X_G$ of the $G$-variety $X$ is denoted by $\codima X$ and is called the algebraic codimension of $X$.
Accordingly, $\dima X=n-\codima X$ is called the \emph{algebraic dimension} of $X$
(for example, the algebraic dimension of an \EA-variety in $\mathbb{C}^1$, defined by the equations $\E^{iz}-1 = \E^{2\pi i z}-1 = 0$, is $-1$).
\end{definition}
Note that for all groups $G$ such that the \EA-variety $X$ is a $G$-variety, the codimensions of the corresponding $G$-models $X_G$ are the same.
Therefore, the algebraic codimension of $X$ is well-defined.
The equidimensionality of one of the models of $X$ implies the equidimensionality of any of its $G$-models.
In this case, the \EA-variety $X$ is called equidimensional.
Furthermore, we default to the following assumptions:
1)
we consider the \EA-varieties to be equidimensional, and 2)
if a statement involves a finite set of \EA-varieties and a group $G\subset\Cn*$, then all of these \EA-varieties are $G$-varieties, i.e., they are defined by equations from the ring $E_G$.
For any finite set of \EA-varieties, such a group $G$ exists.
\begin{definition}\label{dfComplete}
An \EA-variety \( X \) with algebraic codimension $k$ is called an intersection of $k$ \EA-hyper\-surfaces if it's model \( X_G \) is an intersection of $k$ algebraic hypersurfaces.
\end{definition}
Note that the property of an \EA-variety being an
intersection does not depend on the choice of the group $G$.
\begin{proposition}\label{prSdvig1}
For equidimensional \EA-varieties $X$ and $Y$, there exists an \EA-hypersurface $Z$ such that for all \( z \notin Z \), the \EA-variety \( (z+X)\cap Y \) is equidimensional and \( \codim (z+X)\cap Y = \codim X + \codim Y \).
\end{proposition}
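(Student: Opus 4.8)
The plan is to pass to $G$-models and reduce the statement to the generic transversality of translates of subvarieties of the torus $\T_G$. First I would fix, as allowed by the conventions above, a finitely generated group $G\subset{\C^n}^*$ containing a basis of ${\C^n}^*$ for which $X$ and $Y$ (and hence also $(z+X)\cap Y$ for every $z$, since its equations lie in $E_G$) are $G$-varieties, and pass to the $G$-models $X_G,Y_G\subset\T_G$; these are equidimensional because $X,Y$ are, and by Definition \ref{dfCodim} it is enough to produce an \EA-hypersurface $Z\subset\C^n$ such that, for $z\notin Z$, the $G$-model of $(z+X)\cap Y$ is equidimensional of codimension $\codima X+\codima Y$ in $\T_G$. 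Here I would record the elementary shift formula: if $X$ is cut out by $P_i\in E_G$ and $Q_i=(\omega_G^*)^{-1}P_i\in\C[\T_G]$, then $z+X$ is cut out by $P_i(w-z)=Q_i\bigl(\omega_G(w)\,\omega_G(z)^{-1}\bigr)=0$, because $\omega_G$ is a group homomorphism; hence, for $\T_G$ acting on itself by multiplication, the $G$-model of $z+X$ is $\omega_G(z)\cdot X_G$ and that of $(z+X)\cap Y$ is $\bigl(\omega_G(z)\cdot X_G\bigr)\cap Y_G$.

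The core step is to show that $\bigl(\omega_G(z)\cdot X_G\bigr)\cap Y_G$ is equidimensional of the expected codimension for $z$ outside a hypersurface. For each pair of irreducible components $X'\subset X_G$, $Y'\subset Y_G$ I would consider $I_{X',Y'}=\{(g,x)\in\T_G\times X':gx\in Y'\}$; the map $(g,x)\mapsto(x,gx)$ is an isomorphism of $I_{X',Y'}$ onto $X'\times Y'$ with inverse $(x,y)\mapsto(yx^{-1},x)$, and it carries the projection $I_{X',Y'}\to\T_G$, $(g,x)\mapsto g$, to the morphism $\psi\colon X'\times Y'\to\T_G$, $(x,y)\mapsto yx^{-1}$. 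Writing $W$ for the closure of the image of $\psi$ and applying the theorem on the dimension of the fibres of the dominant morphism $X'\times Y'\to W$, I obtain a dense Zariski-open $U_{X',Y'}\subset\T_G$ on which either $W\ne\T_G$ and the fibre is empty, or $W=\T_G$ and the fibre is pure of dimension $\dim X'+\dim Y'-\dim\T_G$; since the fibre over $g$ is isomorphic, via translation by $g$, to $gX'\cap Y'$, in both cases every component of $gX'\cap Y'$ has codimension $\codim X'+\codim Y'$ in $\T_G$ (the empty case vacuously). Intersecting the finitely many $U_{X',Y'}$, and using that all components $X'$ of $X_G$ (resp. $Y'$ of $Y_G$) have the same codimension, I get a dense Zariski-open $U\subset\T_G$ such that for $g\in U$ the set $\bigl(g\cdot X_G\bigr)\cap Y_G=\bigcup_{X',Y'}\bigl(gX'\cap Y'\bigr)$ is equidimensional of codimension $\codima X+\codima Y$. (Alternatively one may cite Kleiman's transversality theorem, applicable since $\T_G$ acts transitively on itself.)

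To conclude, $\T_G\setminus U$ is a proper Zariski-closed subset, hence contained in the zero locus of some nonzero $R\in\C[\T_G]$. I would set $Z=\{z\in\C^n:(\omega_G^*R)(z)=0\}$; this is an \EA-hypersurface because, by Corollary \ref{corGvar1}, $\omega_G^*R$ is a nonzero exponential sum. For $z\notin Z$ we have $R(\omega_G(z))\ne0$, so $\omega_G(z)\in U$, and therefore the $G$-model $\bigl(\omega_G(z)\cdot X_G\bigr)\cap Y_G$ of $(z+X)\cap Y$ is equidimensional of codimension $\codima X+\codima Y$ in $\T_G$; by Definition \ref{dfCodim} this is exactly the assertion of the proposition.

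The step I expect to be the real obstacle is the middle one — forcing a generic translate of $X_G$ to meet $Y_G$ properly, with no atypical (too-large-dimensional) components. This is where the homogeneity of $\T_G$ under its own multiplication enters essentially: it is precisely what trivializes the incidence variety $I_{X',Y'}$ as $X'\times Y'$, so that the generic fibre of its projection to $\T_G$ is pure of the expected dimension. The remaining ingredients — the shift formula, the descent from a Zariski-open subset of $\T_G$ to an \EA-hypersurface in $\C^n$ via $\omega_G^*$, and the vacuous treatment of an empty intersection — are routine bookkeeping.
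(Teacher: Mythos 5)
Your proposal is correct and follows essentially the same route as the paper: pass to the $G$-models, establish that the model of $(z+X)\cap Y$ is $(\omega_G(z)\cdot X_G)\cap Y_G$, invoke generic transversality of translates in the torus, and pull the exceptional locus back through $\omega_G^*$ to get the \EA-hypersurface $Z$. The only difference is that the paper simply cites the torus transversality statement as "known from algebraic geometry," whereas you supply its proof via the incidence variety $I_{X',Y'}\cong X'\times Y'$ and the fibre-dimension theorem, which is a correct and standard way to fill that gap.
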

\begin{proof}
Let $X$ and $Y$ be $G$-varieries.
From algebraic geometry, it is known that for some algebraic hypersurface $\mathcal{Z} \subset \T_G$, for all $\tau \not\in \mathcal{Z}$,
the variety $(\tau X_G) \cap Y_G$ is equidimensional and $\codim (\tau X_G) \cap Y_G = \codim X_G + \codim Y_G$.
Then, for the \EA-hypersurface $Z$ such that $Z_G = \mathcal{Z}$, the required statements follow from the definition of
algebraic codimension.
\end{proof}
\noindent
\emph{Proof of Theorem} \ref{thRes1}.
Let $G$ be the group generated by the points of all supports $\Lambda_i$.
Each of the supports $\Lambda_i$ can be regarded as a finite set of torus characters of $\T_G$.
Accordingly, the space $S(\Lambda_1,\ldots,\Lambda_k)$ can be seen as the space $S$ consisting of Laurent polynomial systems on $\T_G$.
It is known from algebraic geometry that in the space $S$, there exists an algebraic hypersurface $D$ such that the following holds:
\emph{the codimension of the zero set of a system from the space $S$ not belonging to $D$ is an algebraic variety in $\T_G$ of codimen\-sion} $k$.
Let $F=0$ be the equation of the hypersurface $D$.
The polynomial $F$ corresponds to the exponential sum $\mathcal F$ in the space $S(\Lambda_1,\ldots,\Lambda_k)$.
Then, the \EA-hypersurface defined by the equation $\mathcal F=0$ has the required properties.
the theorem is proven.
\par\smallskip
Let us recall that a local irreducible component $(Y,z)$ of the \EA-variety $Y$ at the point $z$ is called atypical if
its codimension as an analytic set in $\C^n$ is less than $\codim Y$.
\begin{theorem}\label{thmAnom}
Let $(X,z)$ be an atypical component of the \EA-variety $X$ at the point $x\in X$,
and let the algebraic variety $Y$ in the torus $\mathbb{T}_G$ be a $G$-model of $X$.
Then there exist a proper subtorus $\mathbb{T}$ in $\mathbb{T}_G$ and $g\in\mathbb{T}_G$,
such that the image $\omega_G(X,z)$ of the component $(X,z)$ is contained in $g\mathbb{T}$.
\end{theorem}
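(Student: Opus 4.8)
The plan is to reduce the statement, via the standard winding map $\omega_G$, to a question purely about the algebraic variety $Y = X_G \subset \mathbb{T}_G$ and then invoke Zilber's "CIT" (Conjecture on Intersections with Tori, now a theorem). First I would unwind the definition of "atypical component." Recall that $\dima X = \dim Y = n - \codim Y$, while a local component $(X,z)$ is atypical when its dimension as an analytic subset of $\mathbb{C}^n$ strictly exceeds $\dima X$. The image of a neighbourhood of $z$ in $(X,z)$ under $\omega_G$ lies in $Y$, and since $\omega_G\colon \mathbb{C}^n \to \mathbb{T}_G$ has dense image and its differential is injective (it is a covering onto a dense subgroup), the local analytic dimension is preserved. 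So the hypothesis says: the analytic germ $\omega_G(X,z)$ has dimension $> \dim Y$ — but it sits inside $Y$, which is impossible unless we also track the ambient linear structure. The right way to see the content is: $(X,z)$ is a component of the intersection of the hyperplanes/analytic sets cut out by finitely many exponential sums, and its dimension exceeds the "expected" dimension $\dima X$; transporting through $\omega_G$, the germ $\omega_G(X,z)$ is an analytic germ in $\mathbb{T}_G$ whose dimension exceeds what one expects from the number of defining equations — precisely the "atypical intersection" phenomenon in the torus.

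Next I would set up the translate. By Corollary \ref{corGvar1} the defining exponential sums of $X$ pull back to Laurent polynomials $p_i$ on $\mathbb{T}_G$ with $Y = \{p_1 = \cdots = p_m = 0\}$. The germ $W := \omega_G(X,z)$ is an irreducible analytic germ at the point $g := \omega_G(z) \in \mathbb{T}_G$, contained in $Y$, with $\dim W = \dim(X,z) > \dim Y$. We now want to realize $W$ as (a germ of) an atypical component of the intersection of $Y$ with a coset of a subtorus. The key geometric input is Zilber's theorem: any subvariety of an algebraic torus whose local components at some points have dimension larger than forced by the equations must have those components contained in cosets $g\mathbb{T}$ of proper subtori $\mathbb{T} \subsetneq \mathbb{T}_G$ — more precisely, the "atypical part" of $Y$ is covered by finitely many such cosets, each chosen so that $Y \cap g\mathbb{T}$ is itself atypical inside $g\mathbb{T}$. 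Applying this to the germ $W$ at $g$ yields a proper subtorus $\mathbb{T}$ and an element $g \in \mathbb{T}_G$ with $W = \omega_G(X,z) \subset g\mathbb{T}$, which is exactly the conclusion.

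The main obstacle — really the only nontrivial point — is citing Zilber's theorem in the correct form and checking the hypotheses match. Zilber's result in \cite{Z02} is stated for algebraic subvarieties of $\mathbb{T}_G$ and "atypical" components of intersections $Y \cap \text{(coset of subtorus)}$, whereas here the atypicality is phrased intrinsically as a dimension jump of a local analytic component $(X,z)$ of the $\mathbb{E}$-variety. So I would need to verify the bridge lemma: a local component $(X,z)$ of $X$ is atypical (in the sense of Definition following Proposition \ref{prSdvig1}, i.e. $\dim(X,z) > \dima X = \dim Y$) if and only if $\omega_G(X,z)$ is contained in an atypical component of $Y \cap g\mathbb{T}$ for some proper subtorus and some $g$ — one direction being Zilber's theorem and the other being immediate. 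The subtleties are that $\omega_G$ is not algebraic (only real-analytic with discrete kernel-like fibers being the periods), so one must argue that the local analytic germ $W$ at $g$, a priori just an analytic subgerm of $Y$, is cut out by the same equations that cut out $Y$ and hence inherits the atypicality in the algebraic sense; and that a proper algebraic subtorus of $\mathbb{T}_G$ is exactly what corresponds, under $\omega_G$, to a proper complex affine subspace translate in $\mathbb{C}^n$ defined by rational (with respect to $G$) linear equations — consistent with the codimension-two case in Example \ref{exAnomal} where atypical components are affine subspaces. Once this dictionary is in place, the theorem is a direct translation of \cite[Theorem on atypical intersections]{Z02}.
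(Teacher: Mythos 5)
The paper offers no proof of this statement: it is quoted as Zilber's theorem from \cite{Z02}, with only the remark that Zilber's argument rests on Ax's functional version of Schanuel's conjecture \cite{Ax}. Your proposal ultimately also points at \cite{Z02}, but the route you describe contains a genuine error. You propose to transport the problem through $\omega_G$ into a \emph{purely algebraic} statement about subvarieties of $\mathbb{T}_G$ and then invoke ``Zilber's CIT (Conjecture on Intersections with Tori, now a theorem).'' CIT is not a theorem; it remains a conjecture. Moreover, the algebraic statement you formulate as the ``key geometric input'' --- that a subvariety of a torus whose local components exceed the dimension forced by its defining equations must have those components inside cosets of proper subtori --- is not a correct statement of any available result: for an algebraic $Y$ cut out by $m$ Laurent polynomials, excess dimension of a component is ordinary failure of complete intersection and has nothing to do with subtori. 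The atypicality in Theorem \ref{thmAnom} is atypicality of the intersection of $Y$ with the \emph{analytic, Zariski-dense} subgroup $\omega_G(\C^n)$, and it is precisely this transcendental feature that makes the statement provable: what Zilber proves in \cite{Z02} \emph{is} the exponential statement itself, and the engine is Ax's theorem, which your proposal never mentions. The proposed ``reduction'' therefore buys nothing, and the actual mathematical content is absent.

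There is also a dimension slip worth flagging. If $\mathbb{T}_G$ has dimension $N$, then $\dima X = n-\codim Y=\dim Y-(N-n)$, so atypicality of $(X,z)$ means $\dim(X,z)>\dim Y-(N-n)$, not $\dim(X,z)>\dim Y$; the latter would indeed be incompatible with $\omega_G(X,z)\subset Y$, as you notice, but the resolution is not ``tracking the ambient linear structure'' --- it is simply that $\dima X<\dim Y$ whenever $N>n$, which is the only case in which atypical components can occur.
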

\begin{corollary}\label{corZilber}
Any atypical component of an \EA-variety is contained in some proper affine subspace of $\C^n$.
\end{corollary}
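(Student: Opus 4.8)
The plan is to deduce the corollary directly from Theorem~\ref{thmAnom} by pushing its conclusion down through the standard winding map $\omega_G$. Suppose $(X,z)$ is an atypical component of an \EA-variety $X$ at a point $x$, and pick a finitely generated group $G \subset \Cn*$ so that $X$ is a $G$-variety; let $Y = X_G \subset \T_G$ be its $G$-model. By Theorem~\ref{thmAnom} there are a proper subtorus $\T \subset \T_G$ and an element $g \in \T_G$ such that $\omega_G(X,z) \subset g\T$. The task is to translate "contained in a coset of a proper subtorus of $\T_G$" into "contained in a proper complex affine subspace of $\C^n$" under the winding.

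First I would recall that $\omega_G \colon \C^n \to \T_G$ is a group homomorphism whose image is Zariski-dense (Corollary~\ref{corGvar1}). A subtorus $\T \subset \T_G$ is the common zero locus of a collection of characters $\chi/\chi'$ for characters $\chi, \chi'$ of $\T_G$, equivalently the set where a sublattice $M \subset \Z_G$ of the character lattice acts trivially; properness of $\T$ means $M \neq 0$. Pulling back along $\omega_G$: each character $\chi$ of $\T_G$ corresponds to an element $g_\chi \in G \subset \Cn*$, and $\chi \circ \omega_G$ is the function $z \mapsto \E^{g_\chi(z)}$. Hence the preimage $\omega_G^{-1}(g\T)$ is cut out by equations of the form $\E^{g_\chi(z)} = $ const for $g_\chi$ ranging over (generators of) $M$; taking $g_{\chi_0} \in M$ nonzero, the set $\{\,\E^{g_{\chi_0}(z)} = c\,\}$ is a countable disjoint union of the parallel complex hyperplanes $\{\, g_{\chi_0}(z) = \log c + 2\pi i k \,\}$, $k \in \Z$. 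Intersecting over the finitely many generators of $M$ shows $\omega_G^{-1}(g\T)$ is a countable union of parallel complex affine subspaces, each of complex codimension $\operatorname{rank} M \geq 1$. Since the germ $(X,z)$ is connected, its image lands in $g\T$, so $(X,z)$ itself is contained in one of these parallel affine subspaces — a proper complex affine subspace of $\C^n$, as required.

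The one genuine point to be careful about is the passage from "the \emph{image} $\omega_G(X,z)$ lies in $g\T$" to "$(X,z)$ lies in a \emph{single} affine sheet of $\omega_G^{-1}(g\T)$": a priori a connected analytic germ could meet several sheets if they were not disjoint, but the sheets here are genuinely parallel and disjoint (distinct values of $k$ give disjoint hyperplanes), and a connected set meeting a disjoint union lies in one piece, so this is immediate once the sheet structure is identified. I expect the main (modest) obstacle to be purely bookkeeping: writing a proper subtorus as the kernel of a nonzero sublattice of characters and checking that the resulting exponential equations $\E^{g_\chi(z)} = \text{const}$ indeed carve out parallel complex hyperplanes rather than something more complicated. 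No analytic subtlety is needed beyond connectedness of an irreducible germ; the corollary is essentially a restatement of Theorem~\ref{thmAnom} under the dictionary of \textsection\ref{intro5}.
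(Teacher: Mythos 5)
Your proposal is correct and follows the paper's own argument: apply Theorem~\ref{thmAnom}, observe that the preimage of the coset $g\T$ under $\omega_G$ is a countable union of parallel proper affine subspaces, and use connectedness of the germ to place it in a single sheet. The extra bookkeeping you supply (characters, sublattices, the explicit hyperplane equations) is just an expansion of the step the paper states without detail.
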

\begin{proof}
The set $\omega_G^{-1}(\omega_G(z)\mathbb{T})$ is a countable union of parallel affine subspaces in $\mathbb{C}^n$.
From Theorem \ref{thmAnom}, it follows that one of them contains the component $(X,z)$.
\end{proof}
Theorem \ref{thmAnom} is proved in \cite{Z02}.
B. Zilber's proof relies on a well-known theorem proved by J. Ax \cite{Ax}.
This theorem is a functional analog of the famous Schanuel's conjecture on transcendence.
It is worth noting that the proof of some algebraic analogs of this statement is also based on the use of Ax's theorem; see \cite{BMZ07}.
%
%
\section{Rings Generated by Convex Polyhedra}\label{polRing}
\subsection{Definition of the Ring $\mathfrak S_\vol$}\label{polRing1}
Here is the definition of the ring generated by convex polytopes from \cite[Section 6]{EKK}.
Let's denote by ${\rm sym}(V)={\rm sym}_0(V)+{\rm sym}_1(V)+{\rm sym}_2(V)+\ldots$, where ${\rm sym}_0(V)=\R$, the graded symmetric algebra of the vector space $V$.
\begin{lemma}\label{lmPoly}
For any $k$, the vector space ${\rm sym}_k(V)$ is generated by elements of the form $\{v^k\colon v\in V\}$.
\end{lemma}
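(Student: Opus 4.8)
The plan is to prove the statement by \emph{polarization}. By the construction of the symmetric algebra, the space ${\rm sym}_k(V)$ is spanned by products $v_1v_2\cdots v_k$ with $v_1,\ldots,v_k\in V$; for $k=0$ the claim is trivial since ${\rm sym}_0(V)=\R$ is generated by $1=v^0$. Hence it suffices to write every such product as a linear combination of $k$-th powers of vectors of $V$.

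The key tool is the polarization identity
$$
v_1v_2\cdots v_k \;=\; \frac{1}{k!}\sum_{S\subseteq\{1,\ldots,k\}}(-1)^{k-|S|}\Bigl(\sum_{i\in S}v_i\Bigr)^{k},
$$
valid in ${\rm sym}_k(V)$ for every $k\geq 1$ and all $v_1,\ldots,v_k\in V$ (the summand with $S=\emptyset$ is $0$). To verify it, I would expand each $\bigl(\sum_{i\in S}v_i\bigr)^k$ by the multinomial theorem in the commutative ring ${\rm sym}(V)$ and compare, for each monomial $v_1^{b_1}\cdots v_k^{b_k}$ with $b_1+\ldots+b_k=k$, the coefficients on the two sides. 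If $b_i=1$ for all $i$, the monomial $v_1\cdots v_k$ occurs only in the term $S=\{1,\ldots,k\}$, with coefficient $k!$, reproducing the left-hand side. If some $b_i\neq 1$, set $T=\{i:b_i>0\}$, so $|T|\leq k-1$; the monomial occurs in the $S$-term exactly when $T\subseteq S$, each time with the same multinomial coefficient $k!/(b_1!\cdots b_k!)$, so its total coefficient on the right is a nonzero multiple of $\sum_{S\supseteq T}(-1)^{k-|S|}=(-1)^{k-|T|}\sum_{S'\subseteq\{1,\ldots,k\}\setminus T}(-1)^{|S'|}=0$, because $\{1,\ldots,k\}\setminus T$ is nonempty. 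This establishes the identity.

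Since the right-hand side of the polarization formula is a linear combination of elements $w^k$ with $w=\sum_{i\in S}v_i\in V$, every product $v_1\cdots v_k$, and therefore every element of ${\rm sym}_k(V)$, lies in the span of $\{v^k:v\in V\}$, which proves the lemma. The only non-routine point is the inclusion–exclusion cancellation above; everything else is the defining property of the symmetric algebra together with the multinomial theorem, and since only the finitely many vectors appearing in a given product are ever used, no finite-dimensionality hypothesis on $V$ is needed.
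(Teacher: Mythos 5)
Your proof is correct, and it is just the fully written-out version of the paper's one-line argument ("follows from the commutativity of ${\rm sym}(V)$"), namely the standard polarization identity, whose verification indeed only uses the multinomial theorem in the commutative ring ${\rm sym}(V)$ and the inclusion--exclusion cancellation you carry out. No gap; the two approaches coincide.
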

\begin{proof}
Follows from the commutativity of the ring ${\rm sym}(V)$.
\end{proof}
\begin{definition}\label{dfGor}
Suppose a symmetric multilinear $m$-form $\nu$ is defined on the space $V$.

(1) Let $I_\nu$ be a linear functional on the space ${\rm sym}_m(V)$ such that $$I_\nu(\Delta_1\cdot\ldots\cdot\Delta_m)=\nu(\Delta_1,\ldots,\Delta_m)$$
Extending the notation, we continue $I_\nu$ to a linear functional on the space ${\rm sym}(V)$,
setting it to zero on any homogeneous component ${\rm sym}_k(V)$ for $k\neq m$.

(2) Denote by $L_\nu(x,y)$ the symmetric bilinear form on the vector space ${\rm sym}(V)$ defined by $L_\nu(x,y)=I_\nu(x\cdot y)$.

(3) The kernel $J_\nu$ of the bilinear form $L_\nu$ is a homogeneous ideal of the graded ring ${\rm sym}(V)$.
Define the graded ring $\rm {sym}(\nu)(V)$ as $\rm {sym}(\nu)(V)={\rm sym}(V)/J_\nu$.
\end{definition}
The following proposition is a direct consequence of the definition.
\begin{proposition}\label{prpolRing}{\rm \cite[Section 6.1]{EKK}}
The following statements hold:

 {\rm(i)} $\rm {sym}_0(\nu)(V)=\R$

 {\rm(ii)} $\dim \rm {sym}_m(\nu)(V)=1$

 {\rm(iii)} The graded ring $\rm {sym}(\nu)(V)$ is generated by elements of degree $1$

 {\rm(iv)} The mappings $\rm {sym}_p(\nu)(V)\times\rm {sym}_{m-p}(\nu)(V)\to\R,$ defined as $(\eta,\xi)\mapsto I_\nu(\eta\cdot\xi)$, are non-degenerate pairings.
\end{proposition}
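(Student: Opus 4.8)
The plan is to verify the four statements directly from Definition \ref{dfGor}, working in the quotient ring $\mathrm{sym}(\nu)(V) = \mathrm{sym}(V)/J_\nu$, where $J_\nu$ is the kernel of the bilinear form $L_\nu(x,y) = I_\nu(x\cdot y)$. First I would record the basic structural facts: since $I_\nu$ vanishes on every homogeneous component $\mathrm{sym}_k(V)$ with $k\neq m$, the form $L_\nu$ only pairs $\mathrm{sym}_p(V)$ nontrivially against $\mathrm{sym}_{m-p}(V)$, so its kernel $J_\nu$ is a homogeneous ideal: $J_\nu = \bigoplus_k (J_\nu\cap\mathrm{sym}_k(V))$. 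Consequently $\mathrm{sym}(\nu)(V)$ inherits the grading, with $\mathrm{sym}_k(\nu)(V) = \mathrm{sym}_k(V)/(J_\nu\cap\mathrm{sym}_k(V))$.

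For (i): $\mathrm{sym}_0(V) = \R$, and an element $c\in\R$ lies in $J_\nu$ iff $I_\nu(c\cdot y) = 0$ for all $y$; taking $y\in\mathrm{sym}_m(V)$ with $\nu$ not identically zero (if $\nu\equiv 0$ the whole ring collapses, a degenerate case one should note) shows $J_\nu\cap\mathrm{sym}_0(V) = 0$, hence $\mathrm{sym}_0(\nu)(V) = \R$. For (ii): $\mathrm{sym}_m(V)$ is one-dimensional? No — rather, $I_\nu$ restricted to $\mathrm{sym}_m(V)$ is a nonzero linear functional, and an element $\xi\in\mathrm{sym}_m(V)$ lies in $J_\nu$ iff $I_\nu(\xi\cdot c) = c\,I_\nu(\xi) = 0$ for all $c\in\R = \mathrm{sym}_0(V)$ (the only homogeneous degrees $y$ that pair nontrivially with degree-$m$ elements are degree $0$), i.e. iff $I_\nu(\xi) = 0$. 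Thus $J_\nu\cap\mathrm{sym}_m(V) = \ker(I_\nu|_{\mathrm{sym}_m(V)})$, a hyperplane, and $\mathrm{sym}_m(\nu)(V) = \mathrm{sym}_m(V)/\ker I_\nu$ is one-dimensional, with $I_\nu$ descending to an isomorphism onto $\R$. For (iii): by Lemma \ref{lmPoly}, $\mathrm{sym}(V)$ is generated as a ring by $\mathrm{sym}_1(V) = V$; this property is preserved under passing to the quotient $\mathrm{sym}(V)/J_\nu$, so $\mathrm{sym}(\nu)(V)$ is generated by its degree-$1$ part.

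For (iv): the pairing $\mathrm{sym}_p(\nu)(V)\times\mathrm{sym}_{m-p}(\nu)(V)\to\R$, $(\eta,\xi)\mapsto I_\nu(\eta\cdot\xi)$, is well-defined on the quotient precisely because $J_\nu$ is by definition the radical of $L_\nu$: if $\eta\in J_\nu\cap\mathrm{sym}_p(V)$ then $I_\nu(\eta\cdot\xi) = L_\nu(\eta,\xi) = 0$ for all $\xi$, and symmetrically in the second argument. Non-degeneracy is then immediate: if $\bar\eta\in\mathrm{sym}_p(\nu)(V)$ pairs to zero with every $\bar\xi\in\mathrm{sym}_{m-p}(\nu)(V)$, then a representative $\eta$ satisfies $I_\nu(\eta\cdot\xi) = 0$ for all $\xi\in\mathrm{sym}_{m-p}(V)$; combined with the fact that $I_\nu(\eta\cdot\xi) = 0$ automatically whenever $\xi$ is homogeneous of degree $\neq m-p$, we get $L_\nu(\eta, \cdot) \equiv 0$, so $\eta\in J_\nu$ and $\bar\eta = 0$.

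The argument is essentially a routine unwinding of definitions; there is no single hard step. The one point requiring a little care — and the place I would be most careful — is the bookkeeping in (iv) showing that $L_\nu(\eta,\,\cdot\,)$ vanishing \emph{on all of} $\mathrm{sym}_{m-p}(V)$ already forces it to vanish on all of $\mathrm{sym}(V)$, which rests on the observation that $I_\nu$ is supported in degree $m$ so that a homogeneous element of degree $p$ can only have nonzero $L_\nu$-pairing with homogeneous elements of complementary degree $m-p$. Once that degree-matching is made explicit, (iii) and (iv) together also re-derive (i) and (ii) as the extreme cases $p = 0$ and $p = m$, so one could alternatively present (iv) first and deduce the rest.
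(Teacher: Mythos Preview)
Your proof is correct and is precisely the routine unwinding of Definition~\ref{dfGor} that the paper has in mind when it says the proposition is ``a direct consequence of the definition''; the paper gives no further argument. Your observation about the degenerate case $\nu\equiv 0$ (where the quotient collapses and (i) fails) is a useful caveat that the paper leaves implicit, since in the applications $\nu$ is the mixed volume or mixed pseudovolume and is never identically zero.
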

Let $V$ be the space of virtual convex polytopes in the vector space $E$.
Denote the graded symmetric algebra ${\rm sym}(V)$ by $S(E)=S_0+S_1+\ldots$.
Notice, that, by construction,
the space $S_1$ coincides with the space of virtual convex polytopes in $E$.
That is, the images of a polytope $\Delta$ and the translated polytope $e+\Delta$ in the space $S_1$ coincide,
and the multiplication of a polytope by a non-negative number is defined as the homothetic transformation.

For $\dim E = m$ on the space $V$ we consider a symmetric multilinear $m$-form $\vol$,
where $\vol(\Delta_1,\ldots,\Delta_m)$ is the mixed volume of the polyhedra $\Delta_1,\ldots,\Delta_m$.
In this case,
instead of ${\rm sym}(\vol)(V)$ from Definition \ref{dfGor} (3), we use the notation $\mathfrak S_\vol(E)$.

Furthermore, we also consider the case where $E={\C^n}^*$ and a symmetric multilinear $n$-form $\mathfrak p$,
where $\mathfrak p(\Delta_1,\ldots,\Delta_n)$ equals the mixed pseudo-volume of polytopes $\Delta_1,\ldots,\Delta_n$; see Definition \ref{dfPseudoMixed}.
In this case, the notation $\mathfrak S_\mathfrak p({\C^n}^*)$ is used instead of ${\rm sym}(\mathfrak p)(V)$.
When there is ambiguity in the notations, we write $S_k(E)$ and $J_\nu(E)$ instead of $S_k$ and $J_\nu$.

The following simple statement about the relationship between the ideals $J_\mu(E)$ and $J_\nu(E)$
for different multilinear forms $\mu$ and $\nu$ on the space of virtual polytopes in $E$ will be needed later.
\begin{lemma}\label{lmNu1}
Let $\mathfrak{n}$ and $\mathfrak{m}$ be $n$-linear and $m$-linear symmetric forms respect\-ively in the space of virtual polytopes in $E$.
Suppose $n\leq m$.
Then from $J_\mathfrak{m}(E)\cap S_n(E)\subset J_\mathfrak{n}(E)\cap S_n(E)$ it follows
that $J_\mathfrak{m}(E)\subset J_\mathfrak{n}(E)$.
\end{lemma}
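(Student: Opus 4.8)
The plan is to check the inclusion $J_\mathfrak{m}(E)\subset J_\mathfrak{n}(E)$ on an arbitrary homogeneous element, exploiting that $J_\mathfrak{m}(E)$ and $J_\mathfrak{n}(E)$ are homogeneous ideals of $S(E)={\rm sym}(V)$ (Definition \ref{dfGor}(3)) and that, by construction, $I_\mathfrak{n}$ vanishes on every graded component $S_l(E)$ with $l\neq n$. First I would record the structural consequence of this last fact that the whole argument leans on: for $x\in S_n(E)$ and $y=\sum_l y_l$ with $y_l\in S_l(E)$, only the term $l=0$ contributes to $L_\mathfrak{n}(x,y)=I_\mathfrak{n}(xy)$, whence $J_\mathfrak{n}(E)\cap S_n(E)=\ker\bigl(I_\mathfrak{n}|_{S_n(E)}\bigr)$.

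Next, take $x\in J_\mathfrak{m}(E)\cap S_p(E)$; I want $L_\mathfrak{n}(x,y)=I_\mathfrak{n}(xy)=0$ for every homogeneous $y\in S_q(E)$, which is exactly the condition $x\in J_\mathfrak{n}(E)$. If $p>n$ this is automatic, since then $xy\in S_{p+q}(E)$ with $p+q>n$ and $I_\mathfrak{n}$ kills that component. If $p\le n$, then $I_\mathfrak{n}(xy)$ can fail to vanish only when $q=n-p\ge 0$; but in that case $xy\in S_n(E)$ and, because $J_\mathfrak{m}(E)$ is an ideal, $xy\in J_\mathfrak{m}(E)\cap S_n(E)$, so the hypothesis gives $xy\in J_\mathfrak{n}(E)\cap S_n(E)=\ker\bigl(I_\mathfrak{n}|_{S_n(E)}\bigr)$ and hence $I_\mathfrak{n}(xy)=0$. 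Thus $L_\mathfrak{n}(x,\cdot)\equiv 0$, i.e.\ $x\in J_\mathfrak{n}(E)$, and since $x$ was an arbitrary homogeneous element of $J_\mathfrak{m}(E)$ the lemma follows.

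I do not expect a genuine obstacle here; the content is entirely in the two structural observations above --- the identification of $J_\mathfrak{n}(E)\cap S_n(E)$ with $\ker\bigl(I_\mathfrak{n}|_{S_n(E)}\bigr)$, and the remark that multiplying a degree-$p$ member of the ideal $J_\mathfrak{m}(E)$ by an element of degree $n-p$ returns to $J_\mathfrak{m}(E)$ in degree $n$, precisely the degree where the hypothesis applies. No use is made of Lemma \ref{lmPoly} or of any feature of the particular forms $\vol$ and $\mathfrak p$; the assumption $n\le m$ fixes the range of interest for the application but is not invoked in the deduction itself.
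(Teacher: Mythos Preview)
Your proof is correct and matches the paper's argument essentially verbatim: split on the degree $p$ of a homogeneous element of $J_\mathfrak{m}(E)$, observe that $p>n$ is trivial because $J_\mathfrak{n}\cap S_p=S_p$ there, and for $p\le n$ multiply into degree $n$ and invoke the hypothesis together with the identification $J_\mathfrak{n}(E)\cap S_n(E)=\ker\bigl(I_\mathfrak{n}|_{S_n(E)}\bigr)$. Your closing remark that the hypothesis $n\le m$ is not actually used in the deduction is also accurate.
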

\begin{proof}
If $k>n$, then by Definition \ref{dfGor}, $J_\mathfrak{n}(E)\cap S_k=S_k$,
i.e., $J_\mathfrak{m}(E)\cap S_k\subset J_\mathfrak{n}(E)\cap S_k$.
If $k<n$ and $\xi\in J_\mathfrak{m}(E)\cap S_k$,
then for all $\eta\in S_{n-k}$, $\xi\cdot\eta\in J_\mathfrak{m}(E)\cap S_n\subset J_\mathfrak{n}(E)\cap S_n$.
Hence, by Definition \ref{dfGor} (3), $\xi\in J_\mathfrak{n}(E)\cap S_k$. Thus, the lemma is proved.
\end{proof}
\begin{proposition}\label{prpolRingVol05}
The ring $\mathfrak{S}_\vol(E)$ does not depend on the choice of volume form in the space $E$.
\end{proposition}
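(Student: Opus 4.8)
The plan is to reduce everything to the elementary fact that any two translation-invariant volume forms on the $m$-dimensional real vector space $E$ are proportional, together with the observation that the ideal $J_\vol$ from Definition \ref{dfGor}(3) is insensitive to rescaling the form $\vol$ by a positive constant.

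First I would fix two volume forms $\vol$ and $\vol'$ on $E$ (equivalently, two Lebesgue measures on $E$, or two nonzero elements of $\Lambda^m E^*$ up to sign). Since $\dim E = m$, classical uniqueness of Haar measure gives a constant $c>0$ with $\vol' = c\,\vol$ as measures. Consequently, for all convex polytopes $\Delta_1,\ldots,\Delta_m\subset E$ one has $\vol'(\Delta_1,\ldots,\Delta_m) = c\,\vol(\Delta_1,\ldots,\Delta_m)$, because the mixed volume is the polarization of $\Delta\mapsto\vol_m(\Delta)$ (respectively of $\Delta\mapsto\vol'_m(\Delta)$), and these two $m$-dimensional volume functionals differ exactly by the factor $c$. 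The same scaling then passes to virtual polytopes, so the two $m$-linear symmetric forms on the space $V$ of virtual convex polytopes in $E$ satisfy $\vol' = c\,\vol$.

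Next I would trace this through Definition \ref{dfGor}. The graded symmetric algebra $S(E)={\rm sym}(V)$ does not involve any volume form, so it is literally the same for $\vol$ and $\vol'$. By part (1) of Definition \ref{dfGor}, $I_{\vol'} = c\,I_{\vol}$ on ${\rm sym}_m(V)=S_m(E)$ and both functionals vanish on every other homogeneous component, hence $I_{\vol'} = c\,I_{\vol}$ as functionals on all of $S(E)$. By part (2), $L_{\vol'}(x,y) = I_{\vol'}(x\cdot y) = c\,L_{\vol}(x,y)$, so $L_{\vol'} = c\,L_{\vol}$ as symmetric bilinear forms on $S(E)$. Since $c\neq 0$, these two forms have the same kernel; that is, $J_{\vol'} = J_{\vol}$ as homogeneous ideals of the graded ring $S(E)$. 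Therefore $\mathfrak S_{\vol'}(E) = S(E)/J_{\vol'} = S(E)/J_{\vol} = \mathfrak S_\vol(E)$, with identical grading and identical ring structure.

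There is no genuine obstacle here; the only point needing a word of care is the assertion that "choice of volume form" really amounts to nothing more than a positive rescaling, i.e. that on a finite-dimensional real vector space the mixed volume is canonically determined up to a single positive scalar. This is precisely the uniqueness of Lebesgue measure on $E$ up to normalization. (If one wishes to admit orientation-reversing forms as well, one works throughout with the induced measures $|\vol'| = |c|\,|\vol|$, and the computation above is unchanged.)
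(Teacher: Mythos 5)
Your proof is correct and is essentially the paper's own argument, spelled out in full: the paper's one-line proof is exactly that changing the volume form multiplies the form $L_\vol$ from Definition \ref{dfGor} by a constant factor, hence leaves the kernel $J_\vol$ and the quotient ring unchanged. Your additional care about uniqueness of Lebesgue measure up to a positive scalar is a reasonable elaboration of the same point.
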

\begin{proof}
Changing the volume form in the space $E$ multiplies the quadratic form $L_\vol$ from Definition \ref{dfGor} by a constant factor.
\end{proof}
Note that the structure of the ring $\mathfrak{S}_\mathfrak{p}({\C^n}^*)$ depends on the choice of Hermitian metric in the space ${\mathbb{C}^n}^*$.
\begin{proposition}\label{prpolRingVol}
Denote by ${\rm sym}_A\colon S(G)\to S(E)$
the homomorphism of symmetric algebras
corresponding to the linear operator $A\colon G\to E$.
For the rings $\mathfrak{S}_\vol$, the following holds.

{\rm(i)} For any linear operator $A\colon G\to E$, the homomorphism ${\rm sym}_A$
extends to a ring homomorphism $A_*\colon \mathfrak{S}_\vol(G)\to \mathfrak{S}_\vol(E)$,
such that $A_*(\Delta)=A(\Delta)$,
where $\Delta$ is any convex polytope in $E$.

{\rm(ii)}
If the operator $A$ is surjective or injective, then the ring homomorphism
$A_*$ is also respectively surjective
or injective.

{\rm(iii)}
For a linear operator $B\colon E\to U$, it holds that $B_*A_*=(BA)_*$.
\end{proposition}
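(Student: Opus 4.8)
The plan is to deduce everything from the functoriality of the symmetric algebra together with one classical identity for mixed volumes, after reducing to two special cases.

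Part (iii) and the surjectivity claim of (ii) are essentially formal. The assignment $A\mapsto{\rm sym}_A$ is functorial — on degree $1$ it is the map on virtual polytopes induced by $A$, sending $\Delta$ to $A(\Delta)$, and ${\rm sym}(\cdot)$ is the symmetric algebra of that — so ${\rm sym}_{BA}={\rm sym}_B\circ{\rm sym}_A$; once (i) shows the ideals descend, this gives $B_*A_*=(BA)_*$, and $A_*(\Delta)=A(\Delta)$ in degree $1$ is the definition. If $A$ is surjective with linear section $s$, then every polytope $\Delta\subset E$ equals $A(s(\Delta))=A_*([s(\Delta)])$, and since $\mathfrak S_\vol(E)$ is generated in degree $1$ by classes of polytopes (Proposition \ref{prpolRing}(iii)), $A_*$ is onto. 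Finally, a linear isomorphism multiplies mixed volume by a nonzero constant, hence (Definition \ref{dfGor}) sends $J_\vol$ to $J_\vol$ and induces an isomorphism of the rings $\mathfrak S_\vol$; so, factoring $A=\iota\circ\pi$ with $\pi\colon G\twoheadrightarrow A(G)$ surjective and $\iota\colon H\hookrightarrow E$ the inclusion of a linear subspace $H=A(G)$, it remains to prove ${\rm sym}_\pi(J_\vol(G))\subseteq J_\vol(H)$, ${\rm sym}_\iota(J_\vol(H))\subseteq J_\vol(E)$, and — for injectivity of $\iota_*$ — the reverse inclusion ${\rm sym}_\iota^{-1}(J_\vol(E))\subseteq J_\vol(H)$.

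The tool is the \emph{prism formula}: if $P_1,\dots,P_d$ lie in a $d$-dimensional linear subspace $H_0$ of an $m$-dimensional space $V$ and $Q_1,\dots,Q_{m-d}\subset V$ are arbitrary polytopes, then $\vol_V(P_1,\dots,P_d,Q_1,\dots,Q_{m-d})$ equals a positive constant depending only on the chosen volume forms on $H_0$ and $V/H_0$ times $\vol_{H_0}(P_1,\dots,P_d)\cdot\vol_{V/H_0}(\bar Q_1,\dots,\bar Q_{m-d})$, with $\bar Q_j$ the image of $Q_j$ in $V/H_0$; when $P_1+\dots+P_d$ is not $d$-dimensional both sides vanish, by Proposition \ref{prPolRang}. (This is classical — the mixed surface area measure of bodies filling $H_0$ is carried by the normal directions of $H_0$ — and, by Proposition \ref{prpolRingVol05}, the volume forms on all subspaces and quotients may be chosen compatibly.) With it the surjective case is direct: for $x=\sum_\alpha\lambda_\alpha P^{(1)}_\alpha\cdots P^{(k)}_\alpha\in J_\vol(G)\cap S_k(G)$ with $k\le\dim H$ (the case $k>\dim H$ is automatic), and a test product $Q_1\cdots Q_{\dim H-k}$ in $H$ lifted along $s$ to $\tilde Q_1\cdots\tilde Q_{\dim H-k}$ in $G$, the prism formula in $G$ with subspace $\ker\pi$ and a pad $F^{\dim\ker\pi}$ ($F\subset\ker\pi$ a fixed full-dimensional polytope) gives
$$I_{\vol_H}\!\big({\rm sym}_\pi(x)\cdot Q_1\cdots Q_{\dim H-k}\big)=c\cdot I_{\vol_G}\!\big(x\cdot\tilde Q_1\cdots\tilde Q_{\dim H-k}\cdot F^{\dim\ker\pi}\big)=0,$$
the last equality because $J_\vol(G)$ is an ideal and the argument has degree $\dim G$. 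The reverse inclusion for the subspace case is equally direct: with $E=\iota(H)\oplus W$ and a fixed full-dimensional $C\subset W$, the prism formula yields $I_{\vol_E}\big({\rm sym}_\iota(x\xi)\cdot C^{\dim W}\big)=c'\cdot I_{\vol_H}(x\xi)$ for all $x\in S_k(H)$, $\xi\in S_{\dim H-k}(H)$, so ${\rm sym}_\iota(x)\in J_\vol(E)$ forces $I_{\vol_H}(x\xi)=0$ for all $\xi$, i.e. $x\in J_\vol(H)$.

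The delicate point is the forward inclusion ${\rm sym}_\iota(J_\vol(H))\subseteq J_\vol(E)$ for a \emph{proper} subspace $H\subsetneq E$, because a test product in $E$ need not descend to $H$, so the pairing cannot simply be pushed down. For $k>\dim H$ the relevant mixed volumes vanish for rank reasons. For $k\le\dim H$, write $x=\sum_\alpha\lambda_\alpha P^{(1)}_\alpha\cdots P^{(k)}_\alpha\in J_\vol(H)\cap S_k(H)$, discard the monomials with $\dim(P^{(1)}_\alpha+\dots+P^{(k)}_\alpha)<k$ (they annihilate every mixed volume), and group the remainder by the $k$-dimensional subspace $H_0\subseteq H$ spanned. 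Applying the prism formula term by term, both $I_{\vol_H}(x\cdot Q_1\cdots Q_{\dim H-k})$ and $I_{\vol_E}({\rm sym}_\iota(x)\cdot Q_1\cdots Q_{m-k})$ become finite linear combinations — indexed by the occurring subspaces $H_0$ and with the \emph{same} coefficients $c_{H_0}=\sum_{\alpha\text{ spanning }H_0}\lambda_\alpha\,\vol_{H_0}(P^{(1)}_\alpha,\dots,P^{(k)}_\alpha)$ — of the ``projection mixed volume'' functionals on $H/H_0$, respectively on $E/H_0$. The key auxiliary fact is that, among subspaces of a fixed dimension, these projection-volume functionals are linearly independent (each remembers its subspace: Minkowski-adding a segment $[0,v]$ leaves it unchanged exactly when $v$ lies in the subspace); hence $x\in J_\vol(H)$ forces every $c_{H_0}$ to vanish, whence so does the $E$-side combination, i.e. ${\rm sym}_\iota(x)\in J_\vol(E)$. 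I expect this last independence statement, together with pinning down the exact constant in the prism formula, to be where the real work lies; the independence can be had by testing against segments and lower-dimensional polytopes, or deduced from injectivity of the cosine transform.
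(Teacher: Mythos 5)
Your reduction of (ii) and (iii) to (i), the factorization of $A$ into a surjection followed by an injection, and the use of Proposition \ref{prFromEKK} all match the paper's (very terse) proof. Your surjective case and your reverse inclusion ${\rm sym}_\iota^{-1}(J_\vol(E))\subseteq J_\vol(H)$ are correct: in both you pad the product with enough copies of a full\-dimensional polytope inside the distinguished subspace ($\ker\pi$, resp.\ $H$) so that the number of bodies lying in that subspace equals its dimension, which is exactly the hypothesis of Proposition \ref{prFromEKK}.

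The gap is in the forward inclusion ${\rm sym}_\iota(J_\vol(H))\subseteq J_\vol(E)$ for a proper subspace $H\subsetneq E$ — the part of (i) that actually makes $\iota_*$ well defined. Proposition \ref{prFromEKK} applies to a monomial $P^{(1)}_\alpha\cdots P^{(k)}_\alpha$ only when $\dim\bigl(P^{(1)}_\alpha+\dots+P^{(k)}_\alpha\bigr)$ equals $k$; you discard the monomials where this dimension is $<k$ and group the rest by a $k$-dimensional span $H_0$, but nothing forces the remaining monomials to span only $k$ dimensions. For $k<\dim H$ such "fat" monomials are unavoidable: e.g.\ take $\dim H=3$, $k=2$, full-dimensional polytopes $\Delta,\Gamma\subset H$, and let $\Xi$ be their Blaschke sum (the polytope with surface area measure $S_2(\Delta,\cdot)+S_2(\Gamma,\cdot)$, which exists by Minkowski's theorem); then $\Delta^2+\Gamma^2-\Xi^2$ is a nonzero element of $J_\vol(H)\cap S_2(H)$ every monomial of which spans three dimensions, so no $2$-dimensional $H_0$ exists for any term and the prism formula says nothing about $\vol_E(\Delta,\Delta,Q_1,\dots,Q_{\dim E-2})$ term by term. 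Your "same coefficients $c_{H_0}$ on both sides'' identity therefore accounts for only part of $I_{\vol_E}({\rm sym}_\iota(x)\cdot Q_1\cdots Q_{\dim E-k})$, and the linear-independence lemma you flag, even granted, does not repair this. To close the gap one needs more than Proposition \ref{prFromEKK} — for instance the face-volume characterization of $J_\vol\cap S_k$ from \textsection\ref{polRing3} (Theorem \ref{thmIdealIntersection} together with Corollaries \ref{corNonSymm0}--\ref{corNonSymm2}): a complete face operation $\pi_{\mathfrak z}$, $\mathfrak z\in U^{\dim E-k}(E)$, applied to polytopes lying in $H$ factors through a face operation on $H$ (or kills the $k$-volume for rank reasons), so the vanishing of all $k$-dimensional face volumes of $x$ in $H$ transfers to ${\rm sym}_\iota(x)$ in $E$.
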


\begin{proof}
If (i) is true,
then (ii) and (iii) are direct consequences of the definitions.
Since ${\rm sym}_B{\rm sym}_A={\rm sym}_{BA}$,
it is sufficient to prove (i) separately for the cases of injective and surjective operators $A$.
In each of these cases, statement (i) follows from the well-known property of mixed volumes given below in Proposition \ref{prFromEKK}.
\end{proof}
\begin{proposition}\label{prFromEKK}{\rm\cite[Proposition 2.1.7]{EKK}}
Let $A_1,\dots,A_k$ be convex bodies in the space $\R^N$.
Assume that these bodies
can be placed in a $k$-dimensional subspace $A\subset\R^N$ by parallel shifts.
Then for any convex bodies $A_{k+1},\ldots,A_N$, it holds that
$$
\vol_N(A_1,\ldots,A_N)=\vol_k(A_1,\ldots,A_k)\cdot\vol_{N-k}(\pi (A_{k+1}),\ldots,\pi (A_N)),
$$
where $\pi$ denotes the projection onto the orthogonal complement $A^\bot$ of the subspace $A$,
and $\vol_p$ denotes the mixed volume in a space of dimension $p$.
\end{proposition}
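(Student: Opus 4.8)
The plan is to strip off, by Cavalieri's principle, the $k$ flat directions spanned by $A_1,\dots,A_k$, after first using multilinearity to collapse the remaining bodies to a single one. For fixed $A_1,\dots,A_k$, both sides of the asserted identity are symmetric multilinear forms in the virtual polytopes $A_{k+1},\dots,A_N$ --- on the left by the standard properties of mixed volume, on the right because $\pi$ is linear and $\vol_{N-k}$ is multilinear --- so by the polarization counterpart of Lemma~\ref{lmPoly} (a symmetric multilinear form is determined by its diagonal) it is enough to prove the identity when $A_{k+1}=\dots=A_N=Q$ for an arbitrary convex body $Q$; polarizing back in these slots then gives the general case. (Reducing the last $N-k$ bodies to one is what keeps the slicing below clean, since the fiber of a Minkowski sum over a point need not be the Minkowski sum of the fibers.) Approximating in the Hausdorff metric and using continuity of mixed volume, I may further assume $P_1:=A_1,\dots,P_k:=A_k$ and $Q$ are polytopes, and, after a translation, that $P_1,\dots,P_k\subset A$. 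Identify $\R^N=A\oplus A^\bot$ and write points as pairs $(a,y)$; for $t_i\ge0$ put $B:=t_1P_1+\dots+t_kP_k\subset A$. The elementary point is that the fiber of $B+sQ$ over $y\in A^\bot$ equals $B+(sQ)_y$, where $(sQ)_y:=\{a\in A:(a,y)\in sQ\}$, and that this fiber is nonempty exactly when $y\in s\,\pi(Q)$, because $\pi(B)=\{0\}$.

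Cavalieri's principle gives $\vol_N(B+sQ)=\int_{s\,\pi(Q)}\vol_k\bigl(B+(sQ)_y\bigr)\,dy$, and the substitution $y=sy'$ turns this into $s^{N-k}\int_{\pi(Q)}\vol_k\bigl(B+sQ_{y'}\bigr)\,dy'$, where $Q_{y'}:=\{a\in A:(a,y')\in Q\}$. For each fixed $y'$ the integrand is a polynomial in $t_1,\dots,t_k$ of degree at most $k$, whose degree-$k$ homogeneous part is $\vol_k(t_1P_1+\dots+t_kP_k)$, since the body $sQ_{y'}$ can enter only monomials of lower degree in the $t_i$. Hence the degree-$k$ homogeneous part, in $t_1,\dots,t_k$, of $\vol_N(B+sQ)$ equals $s^{N-k}\,\vol_{N-k}(\pi(Q))\cdot\vol_k(t_1P_1+\dots+t_kP_k)$. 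Reading off the coefficient of the monomial $t_1\cdots t_k\,s^{N-k}$ --- equivalently, writing $sQ$ as a Minkowski sum of $N-k$ copies of $Q$ and extracting the pure mixed term, a routine bookkeeping with multinomial coefficients in which the normalization of mixed volume used in \cite{EKK} enters --- one gets $\vol_N(P_1,\dots,P_k,Q,\dots,Q)=\vol_k(P_1,\dots,P_k)\,\vol_{N-k}(\pi(Q),\dots,\pi(Q))$ with $Q$ and $\pi(Q)$ each taken $N-k$ times; polarizing back in the last $N-k$ arguments yields the proposition. (The bound obtained along the way, that $\vol_N(B+sQ)$ has degree at most $k$ in $t_1,\dots,t_k$, is precisely the standard vanishing of the mixed volume of more than $\dim$-many copies of a flat body.)

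The geometry involved --- the fiber identity $(B+sQ)_y=B+(sQ)_y$ together with Cavalieri's principle --- is immediate, so I anticipate no real difficulty in the main step. The parts that need care, rather than new ideas, are: setting up the polarization/reduction argument properly for symmetric multilinear forms on the space of virtual polytopes lying in the fixed subspace $A$; the passage from polytopes back to arbitrary convex bodies via Hausdorff approximation and continuity of the mixed volume; and the bookkeeping of the multinomial normalization factors so that the asserted identity emerges with coefficient exactly $1$.
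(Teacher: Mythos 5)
The paper offers no proof of this proposition at all --- it is quoted from \cite[Proposition 2.1.7]{EKK} --- so your argument can only be judged on its own. Its skeleton is the standard proof of the reduction formula for mixed volumes (essentially Theorem 5.3.1 in Schneider's \emph{Convex Bodies}): polarize to collapse the last $N-k$ bodies into a single $Q$, pass to polytopes by Hausdorff approximation, slice $t_1P_1+\dots+t_kP_k+sQ$ over $A^\bot$ by Fubini using the fiber identity $(B+sQ)_y=B+(sQ)_y$, and compare degree-$k$-in-$t$ parts. All of these steps are sound, and the polynomial identity you actually establish,
$$
\bigl[\text{degree-}k\text{ part in }t\bigr]\ \vol_N\bigl(t_1P_1+\dots+t_kP_k+sQ\bigr)=s^{N-k}\,\vol_{N-k}(\pi(Q))\cdot\vol_k\bigl(t_1P_1+\dots+t_kP_k\bigr),
$$
is correct.

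The one step you defer --- the ``routine bookkeeping with multinomial coefficients'' --- is precisely where the statement becomes normalization-sensitive, and it does \emph{not} come out with coefficient $1$ under the normalization this paper uses elsewhere. With the convention $\vol_m(C,\dots,C)=\vol_m(C)$ (which is what Corollary \ref{corPseudoMixed1} and Corollary \ref{corBKK}\,(2) presuppose), the coefficient of $t_1\cdots t_k s^{N-k}$ on the left is $\frac{N!}{(N-k)!}\vol_N(P_1,\dots,P_k,Q,\dots,Q)$, while on the right it is $k!\,\vol_k(P_1,\dots,P_k)\,\vol_{N-k}(\pi(Q))$, so your computation yields
$$
\vol_N(P_1,\dots,P_k,Q,\dots,Q)=\binom{N}{k}^{-1}\vol_k(P_1,\dots,P_k)\,\vol_{N-k}(\pi(Q),\dots,\pi(Q)),
$$
i.e.\ the clean product formula holds with coefficient $1$ only for the factorial-normalized mixed volume (the coefficient of $t_1\cdots t_N$ itself). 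A sanity check: for $N=2$, $k=1$, $A_1$ a unit segment on the $x$-axis and $A_2$ the unit square, one has $\vol_2(A_1,A_2)=\tfrac12$ while $\vol_1(A_1)\,\vol_1(\pi(A_2))=1$. So you should either carry the bookkeeping through and record the binomial factor, or state explicitly which normalization of $\vol_p$ makes the displayed identity exact; as written, your final line does not follow from the (correct) polynomial identity preceding it until that convention is pinned down.
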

The main result in \textsection\ref{polRing} is the following statement.
\begin{theorem}\label{thmIdeals}
It is true that $J_\vol({\C^n}^*)\subset J_\mathfrak p({\C^n}^*)$.
\end{theorem}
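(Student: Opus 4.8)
The plan is to reduce Theorem~\ref{thmIdeals} to a statement entirely about mixed volumes and mixed pseudovolumes of convex polytopes, bypassing any algebraic geometry. By Lemma~\ref{lmNu1} applied with $\mathfrak n=\mathfrak p$ (an $n$-form) and $\mathfrak m=\vol$ (also an $n$-form on ${\C^n}^*$, since $\dim_{\R}{\C^n}^*=2n$ --- wait, here the relevant volume form is the one making $\vol$ an $n$-linear form, namely the real $n$-dimensional mixed volume used in Definition~\ref{dfPseudo}), both forms have the same degree $n$, so the hypothesis of Lemma~\ref{lmNu1} becomes the requirement $J_\vol({\C^n}^*)\cap S_n\subset J_\mathfrak p({\C^n}^*)\cap S_n$. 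By Definition~\ref{dfGor}(3), an element $\xi\in S_n({\C^n}^*)$ lies in $J_\vol$ iff $I_\vol(\xi\cdot\eta)=0$ for all $\eta$, i.e.\ (since $\xi$ has top degree) iff $I_\vol(\xi)=0$, and similarly for $J_\mathfrak p$. So everything comes down to proving: \emph{for $\xi\in S_n({\C^n}^*)$, if $\vol(\xi)=0$ then $\mathfrak p(\xi)=0$}, where I abbreviate $I_\vol(\xi)$ and $I_\mathfrak p(\xi)$ by $\vol(\xi)$, $\mathfrak p(\xi)$. Equivalently, the linear functional $\mathfrak p$ on $S_n({\C^n}^*)$ factors through $\vol$; that is, there is a constant $c$ with $\mathfrak p=c\cdot\vol$ on $S_n$. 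But $S_n({\C^n}^*)$ is spanned by products $\Delta_1\cdots\Delta_n$ of convex polytopes (Lemma~\ref{lmPoly}), so this is precisely the assertion that the mixed pseudovolume $\mathfrak p(\Delta_1,\dots,\Delta_n)$ is a constant multiple of the mixed volume $\vol_{2n}$... no: $\vol$ here must be interpreted as the ambient real volume, so let me restate.

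**The key identity.** The real obstacle, and the heart of the matter, is the following: on the space of virtual polytopes $V$ in $E={\C^n}^*$ (a real $2n$-dimensional space), the mixed pseudovolume $\mathfrak p$ is an $n$-linear form, while the full mixed volume $\vol_{2n}$ is a $2n$-linear form --- these have \emph{different degrees}, so Lemma~\ref{lmNu1} does not apply directly with $\mathfrak m=\vol_{2n}$. Rather, the $\vol$ appearing in $J_\vol({\C^n}^*)$ must itself be an $n$-form. The natural candidate is: fix a real $n$-dimensional subspace (or rather, use the $n$-linear form that on polytopes $\Delta_1,\dots,\Delta_n$ contained in a common real $n$-plane gives their $n$-dimensional mixed volume, and vanishes otherwise) --- but that is not multilinear on all of $V$. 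So I think the correct reading is that $J_\vol({\C^n}^*)$ uses the form $\vol$ from the convention ``$E={\C^n}^*$, $\dim_{\R} E=2n$'' replaced by... Actually, re-examining: the paper writes $\mathfrak S_\vol(E)$ for $\dim E=m$ using the $m$-form $\vol$. For $E={\C^n}^*$ we have $m=2n$. So $J_\vol({\C^n}^*)$ is a homogeneous ideal with $J_\vol\cap S_k=S_k$ for $k>2n$, whereas $J_\mathfrak p\cap S_k=S_k$ for $k>n$. Thus $J_\mathfrak p$ is ``larger in high degrees'' and Lemma~\ref{lmNu1} with $n\leftarrow n$, $m\leftarrow 2n$ reduces the claim to $J_\vol({\C^n}^*)\cap S_n\subset J_\mathfrak p({\C^n}^*)\cap S_n$, i.e.\ to showing that any $\xi\in S_n$ with $L_\vol(\xi,\eta)=0$ for all $\eta\in S_{2n-n}=S_n$ also satisfies $L_\mathfrak p(\xi,\eta')=0$ for all $\eta'\in S_0=\R$, i.e.\ $I_\mathfrak p(\xi)=0$.

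**Finishing via the convex-geometric fact.** So the reduced statement is: \emph{if $\xi\in S_n({\C^n}^*)$ satisfies $\vol_{2n}(\xi\cdot\eta)=0$ for all $\eta\in S_n({\C^n}^*)$, then $\mathfrak p(\xi)=0$.} The hypothesis says the ``mixed volume pairing'' of $\xi$ against everything of complementary degree vanishes; taking $\eta$ to be products of $n$ polytopes, this means $\vol_{2n}(\Delta_1,\dots,\Delta_n,\Gamma_1,\dots,\Gamma_n)$-type expressions built from $\xi$ all vanish. I would now invoke the explicit formula for $\mathfrak p$ in Definition~\ref{dfPseudoMixed}: $\mathfrak p(\Delta_1,\dots,\Delta_n)$ is a sum over top-dimensional faces $\Lambda$ of $\Delta_1+\dots+\Delta_n$ of $c(\Lambda)A(\Lambda)\vol_n(\Lambda_1,\dots,\Lambda_n)$. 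The point is that each term is expressible through full $2n$-dimensional mixed volumes: by Remark~\ref{rmps2}, $\mathfrak p$ is (up to a dimensional constant) the integral over the unit ball of the Monge--Amp\`ere current $dd^c h_1\wedge\dots\wedge dd^c h_n$, and this current is, locally, a mixed-volume density. The cleanest route: show that $\mathfrak p(\Delta_1,\dots,\Delta_n)$ equals a universal linear combination of expressions $\vol_{2n}(\Delta_1,\dots,\Delta_n, B_1,\dots,B_n)$ where $B_1,\dots,B_n$ are \emph{fixed} auxiliary bodies (e.g.\ coming from the unit ball / the complex structure), so that $\mathfrak p(\xi)$ is a value of the functional $\eta\mapsto\vol_{2n}(\xi\cdot\eta)$ at the fixed element $\eta=B_1\cdots B_n\in S_n$, hence vanishes whenever $\xi\in J_\vol\cap S_n$. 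Establishing such a representation of the mixed pseudovolume through ambient mixed volumes against fixed bodies is the main obstacle; I expect it follows from the Monge--Amp\`ere description in Remark~\ref{rmps2} together with the fact (Proposition~\ref{prFromEKK}) that mixed volumes of bodies supported on a subspace split off orthogonal projections --- applied to the complex line bundle structure that defines the constants $c(\Lambda)$ and $A(\Lambda)$. Once $\mathfrak p=\vol_{2n}(-\,;B_1,\dots,B_n)$ is in hand, the inclusion $J_\vol({\C^n}^*)\subset J_\mathfrak p({\C^n}^*)$ is immediate from Lemma~\ref{lmNu1}, completing the proof.
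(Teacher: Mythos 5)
Your reduction is the right one and matches the paper's: by Lemma~\ref{lmNu1} applied with $\mathfrak m=\vol$ (the $2n$-form on the real $2n$-dimensional space ${\C^n}^*$) and $\mathfrak n=\mathfrak p$ (an $n$-form), everything comes down to Proposition~\ref{prIdeals}, i.e.\ to showing that $\mathfrak U\in J_\vol\cap S_n({\C^n}^*)$ forces $\mathfrak p(\mathfrak U)=0$. Your initial hesitation about which form $\vol$ is resolves correctly, and up to that point the argument is sound.

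The gap is in the step you yourself flag as ``the main obstacle'': the claim that $\mathfrak p(\Delta_1,\dots,\Delta_n)$ is a \emph{finite} linear combination of ambient mixed volumes $\vol_{2n}(\Delta_1,\dots,\Delta_n,B_1,\dots,B_n)$ against \emph{fixed} auxiliary bodies. This is not proved, and it is far from clear that it is true: it is an instance of the representation problem for translation-invariant valuations, and what Alesker's theory (and Remark~\ref{rmps2}) gives for the unitarily invariant valuation $\mathfrak p$ is membership in the \emph{closure} of the span of mixed volumes, not exact finite representability. A single ball certainly does not work: for a polygon $\Delta$ lying in a complex line in $\C^2{}^*$ one has $\mathfrak p(\Delta,\Delta)=0$ (the factor $c(\Lambda)$ vanishes), while $\vol_4(\Delta,\Delta,D,D)=\pi\,\vol_2(\Delta)>0$ by Proposition~\ref{prFromEKK}. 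Moreover such a representation would prove strictly more than needed, namely that $\mathfrak p$ equals $L_\vol(\cdot,\eta)$ for some $\eta\in S_n$, whereas the theorem only requires that $\mathfrak p$ kill the radical of $L_\vol$ in degree $n$. The paper gets exactly the latter, by a different and more elementary mechanism: Theorem~\ref{thmIdealIntersection} characterizes $J_\vol\cap S_n$ as the set of $\mathfrak U$ whose weighted fan $\mathcal K_{\mathfrak U}$ has all weights zero, where the weight of an $n$-dimensional dual cone $K_\Lambda$ is precisely the mixed volume $\vol_n(\Lambda_1,\dots,\Lambda_n)$ of the face-summands; and by Definitions~\ref{dfPseudo} and~\ref{dfPseudoMixed}, $\mathfrak p(\mathfrak U)$ is the linear combination (\ref{eq_pFor}) of exactly these weights with coefficients $c(\Lambda)A(\Lambda)$. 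So $\mathfrak U\in J_\vol\cap S_n$ makes every term vanish. To repair your proof you would either need to establish the finite mixed-volume representation of $\mathfrak p$ (a substantial open-ended claim) or replace it by the weighted-fan characterization of $J_\vol\cap S_n$, which is what Theorem~\ref{thmIdealIntersection} supplies.
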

\begin{corollary}\label{corM1}
The identity map of the ring $S({\C^n}^*)$
extends to a surjective homomorphism of rings
$\mathfrak S_\vol({\C^n}^*)\to\mathfrak S_\mathfrak p({\C^n}^*)$.
\end{corollary}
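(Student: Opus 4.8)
The plan is to deduce the corollary directly from Theorem \ref{thmIdeals} by the elementary fact that a quotient ring factors through any smaller ideal, so no new geometry or convex-geometric input is needed beyond the ideal inclusion already established.

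First I would recall the definitions set up after Definition \ref{dfGor}: the ring $\mathfrak S_\vol({\C^n}^*)$ is by definition the quotient $S({\C^n}^*)/J_\vol({\C^n}^*)$, and the ring $\mathfrak S_\mathfrak p({\C^n}^*)$ is the quotient $S({\C^n}^*)/J_\mathfrak p({\C^n}^*)$, where both $J_\vol({\C^n}^*)$ and $J_\mathfrak p({\C^n}^*)$ are homogeneous ideals of the graded commutative ring $S({\C^n}^*)$. Theorem \ref{thmIdeals} gives the inclusion $J_\vol({\C^n}^*)\subset J_\mathfrak p({\C^n}^*)$.

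Next, let $\pi\colon S({\C^n}^*)\to\mathfrak S_\mathfrak p({\C^n}^*)$ be the canonical projection, whose kernel is $J_\mathfrak p({\C^n}^*)$. By Theorem \ref{thmIdeals} this kernel contains $J_\vol({\C^n}^*)$, so $\pi$ vanishes on $J_\vol({\C^n}^*)$ and therefore factors uniquely through the canonical projection $S({\C^n}^*)\to S({\C^n}^*)/J_\vol({\C^n}^*)=\mathfrak S_\vol({\C^n}^*)$. This produces a ring homomorphism $\varphi\colon\mathfrak S_\vol({\C^n}^*)\to\mathfrak S_\mathfrak p({\C^n}^*)$ determined by $\varphi\bigl(x+J_\vol({\C^n}^*)\bigr)=x+J_\mathfrak p({\C^n}^*)$ for every $x\in S({\C^n}^*)$. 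By construction $\varphi$ respects the gradings, acts as the identity on representatives from $S({\C^n}^*)$, and in particular sends the class of a convex polytope in $S_1({\C^n}^*)$ to the class of that same polytope; this is the precise sense in which $\varphi$ ``extends the identity map of $S({\C^n}^*)$''. Surjectivity of $\varphi$ is immediate because $\pi$ is surjective (alternatively, by Proposition \ref{prpolRing}(iii) the target is generated in degree one, and every degree-one generator is the image of the same polytope in $\mathfrak S_\vol({\C^n}^*)$).

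I expect no real obstacle: all the content sits in Theorem \ref{thmIdeals}, and the passage to the corollary is just the standard surjection $R/I\to R/J$ induced by the identity of $R$ when $I\subset J$ are ideals. The only point worth making explicit is that the commutativity of the triangle formed by the two canonical projections out of $S({\C^n}^*)$ together with $\varphi$ is exactly the uniqueness clause in the universal property of the quotient $S({\C^n}^*)/J_\vol({\C^n}^*)$ used above.
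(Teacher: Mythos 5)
Your proposal is correct and is exactly the argument the paper intends: the corollary is the standard fact that the inclusion $J_\vol({\C^n}^*)\subset J_\mathfrak p({\C^n}^*)$ from Theorem \ref{thmIdeals} induces a surjection between the quotients $S({\C^n}^*)/J_\vol({\C^n}^*)$ and $S({\C^n}^*)/J_\mathfrak p({\C^n}^*)$, which the paper leaves implicit by stating the result as an immediate corollary.
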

\noindent
According to Lemma \ref{lmNu1},
Theorem \ref{thmIdeals} follows from the following proposition.
\begin{proposition}\label{prIdeals}
It holds that $J_\vol({\C^n}^*)\cap S_n({\C^n}^*)\subset J_\mathfrak p({\C^n}^*)$.
\end{proposition}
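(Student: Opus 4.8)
The plan is to reduce the containment $J_\vol({\C^n}^*)\cap S_n({\C^n}^*)\subset J_\mathfrak p({\C^n}^*)$ to a pointwise statement about mixed volumes and mixed pseudovolumes, using the fact that $J_\vol$ is, by Definition \ref{dfGor}(3), the kernel of the bilinear pairing $L_\vol$ on $S({\C^n}^*)$, and similarly for $J_\mathfrak p$. Since $\dim_\R {\C^n}^* = 2n$, an element $\xi\in S_n({\C^n}^*)$ lies in $J_\vol({\C^n}^*)$ iff $\vol_{2n}(\xi, \Lambda_1,\ldots,\Lambda_n) = 0$ for every choice of $n$ polytopes $\Lambda_i$ (here I write $\vol_{2n}$ for the mixed-volume $2n$-form extended to $S$), while $\xi\in J_\mathfrak p({\C^n}^*)$ iff $\mathfrak p(\xi,\Lambda_1,\ldots,\Lambda_n)=0$ for all $\Lambda_i$, because $\mathfrak p$ is an $n$-form and $\xi$ has degree $n$. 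So the proposition is equivalent to the claim: \emph{for a virtual combination $\xi$ of degree-$n$ products of polytopes, vanishing of all mixed volumes $\vol_{2n}(\xi,\Lambda_1,\ldots,\Lambda_n)$ forces vanishing of all mixed pseudovolumes $\mathfrak p(\xi,\Lambda_1,\ldots,\Lambda_n)$.}

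The key step is to exhibit $\mathfrak p(\Delta_1,\ldots,\Delta_n)$ as a \emph{limit (or average) of ordinary $2n$-dimensional mixed volumes}. The natural candidate comes from Remark \ref{rmps2}: $\mathfrak p$ arises, up to a dimensional constant, from the current $dd^c h_1\wedge\cdots\wedge dd^c h_n$ restricted to the unit ball, where $h_i$ is the support function of $\Delta_i$. I would instead use the more elementary route through Definition \ref{dfPseudoMixed}: the pseudovolume is the sum over $n$-dimensional faces $\Lambda$ of $\Delta_1+\cdots+\Delta_n$ of $c(\Lambda)A(\Lambda)\vol_n(\Lambda_1,\ldots,\Lambda_n)/(2\pi)^n$. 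The two genuinely metric-dependent factors $c(\Lambda)$ (the cosine between $T_\Lambda^\perp$ and $\sqrt{-1}T_\Lambda$) and $A(\Lambda)$ (the exterior solid angle of $\Lambda$) can each be written as an integral over a compact group or sphere of an indicator of the form ``$\re z(\cdot)$ is maximised on $\Lambda$''. Concretely, I expect $A(\Lambda)$ to be $\int_{S^{2n-1}} \mathbf 1[\text{face}(\Delta,u)=\Lambda]\,d\sigma(u)$, and the whole expression $c(\Lambda)A(\Lambda)\vol_n(\Lambda_1,\ldots,\Lambda_n)$ to be recoverable by integrating the $n$-dimensional mixed volume of the faces $\operatorname{face}(\Delta_i,u)$ against a suitable kernel on directions $u$. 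Packaging this, I would prove an identity of the shape
$$
\mathfrak p(\Delta_1,\ldots,\Delta_n)=c_n\int_{\mathcal G}\vol_{2n}^{(g)}(\Delta_1,\ldots,\Delta_n,\Pi^{(g)})\,dg
$$
where $\mathcal G$ is a compact family of auxiliary data and $\Pi^{(g)}$ is an $n$-tuple of fixed auxiliary polytopes (segments in complementary real directions), so that each integrand is an honest degree-$2n$ mixed volume and hence is a value of $L_\vol$ on the degree-$n$ product $\Delta_1\cdots\Delta_n$. By multilinearity both sides extend from monomials to all of $S_n$, and then $\xi\in J_\vol$ kills every integrand, so $\mathfrak p(\xi,\Lambda_1,\ldots,\Lambda_n)=0$, giving $\xi\in J_\mathfrak p$.

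The main obstacle is establishing the integral-geometric representation of $c(\Lambda)A(\Lambda)\vol_n$ cleanly: I must check that the ``distortion cosine'' $c(\Lambda)$, which is a Cauchy–Crofton-type projection factor, combines with the exterior angle $A(\Lambda)$ and the $n$-dimensional face mixed volume to assemble into a single $2n$-dimensional mixed volume after averaging over directions — essentially a Cauchy-type formula expressing the pseudovolume of a polytope through projections. An alternative, perhaps cleaner, route to the same end is to bypass explicit integral geometry: use that $J_\vol({\C^n}^*)\cap S_n$ is spanned by differences $\Delta_1\cdots\Delta_n - \Delta_1'\cdots\Delta_n'$ whose mixed volume against every test $n$-tuple agrees, invoke the characterisation of such virtual polytopes via support-function identities, and then appeal to the valuation property of $\mathfrak p$ on convex bodies proved in \cite{Al03} (translation- and unitary-invariant valuation) together with its continuity, to conclude that any such difference is also killed by $\mathfrak p$. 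Either way, once the pseudovolume is realised as a limit/average/valuation built from ordinary mixed volumes, the inclusion of ideals is immediate from Definition \ref{dfGor}(3) and bilinearity, and Theorem \ref{thmIdeals} follows via Lemma \ref{lmNu1}.
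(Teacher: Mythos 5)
Your reduction of the proposition to the statement that the linear functional $I_{\mathfrak p}$ vanishes on $J_\vol({\C^n}^*)\cap S_n({\C^n}^*)$ is correct, modulo a small slip: for $\xi\in S_n$ the condition $\xi\in J_{\mathfrak p}$ is simply $\mathfrak p(\xi)=0$ (pairing with $S_0$), since $I_{\mathfrak p}$ already annihilates $S_{2n}$; the condition you write, $\mathfrak p(\xi,\Lambda_1,\ldots,\Lambda_n)=0$, is vacuous. The genuine gap is the central identity you rely on: the representation of $\mathfrak p(\Delta_1,\ldots,\Delta_n)$ as an average $c_n\int_{\mathcal G}\vol_{2n}(\Delta_1,\ldots,\Delta_n,\Pi^{(g)})\,dg$ of honest $2n$-dimensional mixed volumes. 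You flag this yourself as ``the main obstacle'' and do not prove it, and it is not routine. Averaging $\vol_n$ of projections over the Grassmannian with the invariant measure yields, by Cauchy--Kubota, the contribution $A(\Lambda)\,\vol_n(\Lambda)$ of each $n$-face but not the factor $c(\Lambda)=\cos(T_\Lambda^\perp,\sqrt{-1}\,T_\Lambda)$, which for $n\geq 2$ is genuinely nonconstant (it vanishes when $T_\Lambda$ contains a complex line). Producing $c(\Lambda)$ requires a specific signed, non-uniform Crofton measure, and the existence of such a measure for the pseudovolume is precisely the nontrivial content. Your fallback route does not close this either: knowing from \cite{Al03} that $\mathfrak p$ is a continuous, translation- and unitarily-invariant valuation does not by itself place it in the closed span of the functionals $\xi\mapsto I_\vol(\xi\cdot\eta)$; that would require Alesker's irreducibility theorem (McMullen's density conjecture) together with an argument that the limit can be evaluated on the fixed virtual element $\xi$, none of which you supply. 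The claim that $J_\vol\cap S_n$ is ``spanned by differences $\Delta_1\cdots\Delta_n-\Delta'_1\cdots\Delta'_n$'' is also unsubstantiated.

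For comparison, the paper avoids integral geometry entirely. It first proves (Theorem \ref{thmIdealIntersection}) that $\mathfrak U\in J_\vol\cap S_k$ if and only if all weights of the dual weighted fan $\mathcal K_{\mathfrak U}$ vanish --- an induction on $k$ built on the decomposition of Lemma \ref{lmNSymm} --- and then observes that, by Definitions \ref{dfPseudo} and \ref{dfPseudoMixed}, the number $\mathfrak p(\mathfrak U)$ is a linear combination $\sum_K c_K\,\vol_n(w(K))$ of exactly those weights. In other words, $\mathfrak p$ factors through the tropicalization $\mathfrak U\mapsto\mathcal K_{\mathfrak U}$, whose kernel on $S_n$ is $J_\vol\cap S_n$ (Corollary \ref{corWeightedOfU}). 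The cheapest repair of your argument is to replace the unproved Crofton representation by this factorization, which uses only material already available in \textsection\ref{polRing3}.
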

The proof of Proposition \ref{prIdeals} is based on the description given below in Theorem \ref{thmIdealIntersection}
of the homogeneous components of the ideal $J_\vol(E)$, 
as well as on the construction 
of the weighted fan $\mathcal K_\mathfrak U$, corresponding to an element $\mathfrak U$ of the ring $\mathfrak S_\vol(E)$;
see Proposition \ref{prWeightedFan}.
This proof,
except for the application of tropical fans in \textsection\ref{tropTor}, \textsection\ref{pr1}, is not used further.
\subsection{Description of the ideal $J_\vol$}\label{polRing3}
Let $E^*$ be the space of linear functionals on an $m$-dimensional vector space $E$,
$K$ a convex polyhedral cone in $E^*$,
$V_K$ the subspace in $E^*$ generated by the points of the cone $K$,
and $V^\bot_K$ the orthogonal complement of the subspace $V_K\subset E^*$ in $E$.
Note that the highest nonzero component of the graded ring $\mathfrak S_\vol(V^\bot_K)$ has degree $m - \dim K$.
\begin{definition}\label{dfWeightedFan}
A fan $\mathcal K$ of cones of dimension $\leq k$ in the space $E^*$
is called a \emph{weighted fan of cones} if to each $k$-dimensional cone $K\in\mathcal K$,
an element $w(K)$ of the graded ring $\mathfrak S_\vol(V^\bot_K)$ of degree $m-k$ is assigned,
called the \emph{weight of the cone} $K$.
The union of all cones $K\in\mathcal K$ is denoted by $\supp\mathcal K$
and called the support of the fan $\mathcal K$.
Two $k$-dimensional weighted fans of cones $\mathcal{K,L}$ with the same support are considered equivalent
if $w(K)=w(L)$ for any two $k$-dimensional cones $K\in\mathcal K, L\in\mathcal L$ such that $\dim(K\cap L)=k$.
\end{definition}
%
%
Let $\Gamma$ be a convex polytope in the space $E$. We denote by $K_\Delta$ the set of linear functionals on the space $E$ that attain the maximum on $\Gamma$ simultaneously at all points of the face $\Delta\subset\Gamma$.
The convex polyhedral cone $K_\Delta$ is called the dual cone of the face $\Delta$, $\codim K_\Delta=\dim\Delta$.
The dual cones of faces of dimension $\geq m-k$ form a $k$-dimensional fan of cones $\mathcal K_{\Gamma,k}$ in the space $E^*$.
The fan of cones $\mathcal K_{\Gamma,m}$, consisting of dual cones of all faces, is called the dual fan of the polytope $\Gamma$.
For a $(m-k)$-dimensional face $\Delta$, we set $w(K_\Delta)$ equal to the image of the element $\Delta^{m-k}\in S_{m-k}(E^\bot_{V_K})$ in the ring $\mathfrak S_\vol(E/V^\bot_K)$.
We then consider $\mathcal K_{\Gamma,k}$ as a $k$-dimensional weighted fan with weights $w(K_\Delta)$.
\begin{remark}\label{rmOmegaNumber}
By definition, the weight $w(K)$ belongs to the component of degree $m-k$ of the graded ring $\mathfrak S_\vol(V^\bot_K)$. Since $\dim (V^\bot_K)=m-k$, the dimension of this component is one. Therefore, any chosen method of measuring volume in the space $V^\bot_K$ allows assigning a numerical value $\vol_{m-k}(w(K))$ to the weight $w(K)$.
\end{remark}
Now we define the addition of weighted $k$-dimensional fans $\mathcal{K,L}$.
The set $\supp(\mathcal K)\cup\supp(\mathcal L)$ is the union of subsets of three types:
$$
\{K\cap L\colon\,K\in{\mathcal K},L\in{\mathcal L}\},\,\,
\{K\setminus\supp(\mathcal L)\colon\,K\in\mathcal K\},\,\,
\{L\setminus\supp(\mathcal K)\colon\,L\in{\mathcal L}\}.
$$
The subsets of the first type are cones, and those of the second and third types are open subsets of cones.
We assign weights $w(K)+w(L)$ to $k$-dimensional subsets of the specified types,
where $w(K)$ and $w(L)$ are the weights of the cones $K$ and $L$.
Let ${\mathcal P}$ be a fan of cones with the support $\supp(\mathcal K)\cup\supp(\mathcal L)$,
such that each of the specified subsets is composed entirely of cones in the fan ${\mathcal P}$ (it is obvious that such fans exist).
We assign the weights to $k$-dimensional cones ${\mathcal P}$ containing them.
The sums of equivalent weighted fans are equivalent.
Thus, sums of equivalence classes of weighted fans are defined.
\begin{proposition}\label{prWeightedFan}
Let $\mathfrak U\in S_{m-k}(E)$ be represented as $\mathfrak U=\sum_{i\leq p}\pm \Gamma_i^{m-k}$, and let $\mathcal K_\mathfrak U=\pm\mathcal K_{\Gamma_1,k} +\ldots+\pm\mathcal K_{\Gamma_p,k}$. Then the $k$-dimensional weighted fan $\mathcal K_\mathfrak U$ does not depend on the choice of polytopes $\Gamma_i$. We call $\mathcal K_\mathfrak U$ the weighted fan of the element $\mathfrak U$.
\end{proposition}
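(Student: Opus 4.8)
My plan is to compute, for one chosen common refinement $\mathcal R$ of the dual fans $\mathcal K_{\Gamma_1,m},\dots,\mathcal K_{\Gamma_p,m}$, the weight carried by $\mathcal K_\mathfrak U=\pm\mathcal K_{\Gamma_1,k}+\ldots+\pm\mathcal K_{\Gamma_p,k}$ on each of its $k$-dimensional cones, and to exhibit that weight as the image of $\mathfrak U\in S_{m-k}(E)$ under a linear map that depends only on the cone and not on the representation $\mathfrak U=\sum_i\pm\Gamma_i^{m-k}$. Directly from Definition \ref{dfWeightedFan}, the equivalence class of a $k$-dimensional weighted fan is determined by the weights it carries on the $k$-dimensional cones of any one refinement (cones of zero weight being discarded), so this will give the proposition; independence of the auxiliary choice of $\mathcal R$ is already built into the definition of the sum of weighted fans.

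First I would fix a $k$-dimensional cone $K\in\mathcal R$ and a point $\xi_0$ in its relative interior; write $V_K\subset E^*$ for the span of $K$ and $V_K^\perp\subset E$ for its annihilator, so $\dim V_K^\perp=m-k$. Since $\mathcal R$ refines every $\mathcal K_{\Gamma_i,m}$, the functional $\xi_0$ attains its maximum on $\Gamma_i$ along a single face $\Delta_i$, whose dual cone $K_{\Delta_i}$ contains $K$; as $\dim K=k$ this forces $\dim\Delta_i\le m-k$ and $T_{\Delta_i}\subseteq V_K^\perp$, so each $\Delta_i$, translated to the origin, lies in $V_K^\perp$ and $\Delta_i^{\,m-k}$ is a well-defined element of $S_{m-k}(V_K^\perp)$. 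Unwinding the construction of $\mathcal K_{\Gamma_i,k}$ and of the sum of weighted fans, the weight $w_\mathfrak U(K)$ equals $\sum_i\pm[\Delta_i^{\,m-k}]$, the sum of the images of the $\Delta_i^{\,m-k}$ in $\mathfrak S_\vol(V_K^\perp)$: for $\dim\Delta_i=m-k$ this is exactly the weight $w(K_{\Delta_i})$ prescribed for $\mathcal K_{\Gamma_i,k}$, while for $\dim\Delta_i<m-k$ the cone $K$ lies off $\supp\mathcal K_{\Gamma_i,k}$ and contributes $0$ — which matches $[\Delta_i^{\,m-k}]=0$, since then $\vol_{m-k}(\Delta_i)=0$ and the degree-$(m-k)$ component of $\mathfrak S_\vol(V_K^\perp)$ is one-dimensional.

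The key observation is then that the operator sending a convex polytope $\Gamma$ in $E$ to its face $\Gamma^{\xi_0}$ in the direction $\xi_0$ is additive with respect to Minkowski addition and compatible with translations and positive scaling, hence extends to an $\R$-linear endomorphism $\rho_{\xi_0}$ of the space $S_1(E)$ of virtual polytopes (setting $\rho_{\xi_0}(\Delta-\Gamma)=\Delta^{\xi_0}-\Gamma^{\xi_0}$, which is well defined because $\Delta_1+\Gamma_2=\Delta_2+\Gamma_1$ implies the same after passing to $\xi_0$-faces). By functoriality of the symmetric algebra, $\rho_{\xi_0}$ induces an algebra endomorphism ${\rm sym}(\rho_{\xi_0})$ of $S(E)$ whose degree-$(m-k)$ part carries $\Gamma^{m-k}$ to $(\Gamma^{\xi_0})^{m-k}$; applied to $\mathfrak U$ it gives ${\rm sym}_{m-k}(\rho_{\xi_0})(\mathfrak U)=\sum_i\pm\Delta_i^{\,m-k}$, an element of $S_{m-k}(E)$ depending only on $\mathfrak U$. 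Because each $\Delta_i$ sits, after translation, in $V_K^\perp$, and the inclusion $S_1(V_K^\perp)\hookrightarrow S_1(E)$ induces an injection $S_{m-k}(V_K^\perp)\hookrightarrow S_{m-k}(E)$, this element in fact lies in the subspace $S_{m-k}(V_K^\perp)$, and its image under the quotient $S_{m-k}(V_K^\perp)\to\mathfrak S_\vol(V_K^\perp)$ is precisely $w_\mathfrak U(K)=\sum_i\pm[\Delta_i^{\,m-k}]$. Thus $w_\mathfrak U(K)$ is the value at $\mathfrak U$ of a linear map that does not involve the representation, and, letting $K$ run over the $k$-dimensional cones of $\mathcal R$, the equivalence class of $\mathcal K_\mathfrak U$ is determined by $\mathfrak U$.

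The formal ingredients — functoriality of ${\rm sym}$, linearity of the face operator $\rho_{\xi_0}$, injectivity of ${\rm sym}_{m-k}$ on the inclusion $S_1(V_K^\perp)\hookrightarrow S_1(E)$ — are routine. The step I expect to require care is the identification in the second paragraph: that the combinatorial recipe for the weight of $\pm\mathcal K_{\Gamma_1,k}+\ldots+\pm\mathcal K_{\Gamma_p,k}$ at $K$ agrees with the algebraic sum $\sum_i\pm[\Delta_i^{\,m-k}]$ \emph{inside the one ring} $\mathfrak S_\vol(V_K^\perp)$. This rests on two facts: that all the faces $\Delta_i$ lie in the common ambient space $V_K^\perp$ (so that their contributions can be added at all), and that a weight vanishing off $\supp\mathcal K_{\Gamma_i,k}$ is reproduced by $[\Delta_i^{\,m-k}]$ vanishing when $\dim\Delta_i<m-k$; both follow from the inequality $\dim\Gamma_i^{\xi_0}\le m-k$ forced by $\dim K=k$, together with one-dimensionality of the top graded piece of $\mathfrak S_\vol(V_K^\perp)$ (Remark \ref{rmOmegaNumber} and Proposition \ref{prpolRing}). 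A final bookkeeping point is the convention — implicit in the notion of equivalence used here — that weighted fans differing only by cones of zero weight are identified; without it the literal supports of $\mathcal K_\mathfrak U$ obtained from different representations of $\mathfrak U$ need not coincide.
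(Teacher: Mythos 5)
Your argument is correct and is essentially the paper's (the paper's proof just cites Lemmas \ref{lmvol^k_2} and \ref{lmvol^k_3} and omits the details): your face operator $\rho_{\xi_0}$ and its extension ${\rm sym}(\rho_{\xi_0})$ to $S(E)$ are exactly the homomorphism $\pi_{\mathfrak u}$ of Lemma \ref{lmvol^k_2} for $\mathfrak u=(\xi_0)\in U^1(E)$, and your use of a single point in the relative interior of $K$ in place of a generating set of functionals is precisely Lemma \ref{lmvol^k_3}. The bookkeeping you supply --- lower-dimensional faces contributing zero via one-dimensionality of the top graded piece, and the convention identifying weighted fans that differ only by zero-weight cones --- correctly fills in what the paper leaves implicit.
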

To prove the proposition, we will utilize some additional statements related to the concept of a weighted fan.
Let $U^q(E)$ denote the set of ordered sets of $q$ linearly independent vectors $v_i$ from $E^*$.
Suppose that $\mathfrak{v}=(v_1,\ldots,v_{q})\in U^{q}(E)$,
$E_{\mathfrak{v}}$ is the subspace of common zeros of the functionals $v_i$ in $E$,
and $\iota_{\mathfrak{v}}\colon E_{\mathfrak{v}}\to E$ is the corresponding embedding operator.
We will continue to use the notation $\iota_{\mathfrak{v}}$ for the corresponding embedding of symmetric algebras $\iota_{\mathfrak{v}}\colon S(E_{\mathfrak{v}})\to S(E)$.
\begin{definition}\label{dfvolv}
Let $\mathfrak{v}=(v_1,\ldots,v_{m-k})\in U^{m-k}(E)$.
Denote by $\Delta_{\mathfrak{v}}$ the face of the convex polytope $\Delta$ consisting of points where the functionals $v_i$ attain their joint maximum on $\Delta$.
We retain the notation $\Delta_{\mathfrak{v}}$ for this face, shifted into the $k$-dimensional subspace $E_{\mathfrak{v}}$.
Thus, we can consider $\Delta_{\mathfrak{v}}$ as a virtual polytope in the space $E_{\mathfrak{v}}$.
Conversely, any virtual polytope in the space $E_{\mathfrak{v}}$
can be considered as a virtual polytope in $E$.
In particular, the polytope $\Delta_{\mathfrak{v}}$ is also a virtual polytope in $E$.
\end{definition}

\begin{lemma}\label{lmvol^k_2}
The mapping $\Delta \mapsto \Delta_{\mathfrak{v}}$ extends to homomorphisms $\mathfrak{U} \mapsto \mathfrak{U}_\mathfrak{v}$
of the rings $\pi_\mathfrak{v}\colon S(E) \to S(E_{\mathfrak{v}})$,
such that $\forall k\colon\pi_{\mathfrak{v}}(S_k(E)) \subset S_k(E_{\mathfrak{v}})$.
\end{lemma}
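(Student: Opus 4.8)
The plan is to exploit the universal property of the symmetric algebra. Since $S(E)={\rm sym}(V)$ is the symmetric algebra on the vector space $V=S_1(E)$ of virtual polytopes in $E$, a ring homomorphism $S(E)\to S(E_{\mathfrak v})$ is the same data as an $\R$-linear map $S_1(E)\to S(E_{\mathfrak v})$, and if that linear map takes values in the degree-one part $S_1(E_{\mathfrak v})$, the resulting homomorphism automatically respects the gradings. So everything reduces to showing that the assignment $\Delta\mapsto\Delta_{\mathfrak v}$ on convex polytopes defines an $\R$-linear map $S_1(E)\to S_1(E_{\mathfrak v})$. The one point requiring care is that this map goes \emph{against} the obvious inclusion $\iota_{\mathfrak v}\colon S(E_{\mathfrak v})\to S(E)$, so we cannot simply transport polytopes functorially: we must check that passing to the face $(\,\cdot\,)_{\mathfrak v}$ is compatible with all the structure of the space of virtual polytopes.

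First I would record that for a convex polytope $\Delta\subset E$ the face $\Delta_{\mathfrak v}$ is parallel to $E_{\mathfrak v}=\bigcap_i\ker v_i$: any two of its points assign to each $v_i$ the same value $\max_\Delta v_i$, so their difference lies in $E_{\mathfrak v}$. Hence, after the translation described in Definition \ref{dfvolv}, $\Delta_{\mathfrak v}$ is a well-defined element of $S_1(E_{\mathfrak v})$, independent of that translation, and unchanged as an element of $S_1(E_{\mathfrak v})$ when $\Delta$ is replaced by a translate. The crux is then additivity and positive homogeneity: $(\Delta+\Gamma)_{\mathfrak v}=\Delta_{\mathfrak v}+\Gamma_{\mathfrak v}$ and $(t\Delta)_{\mathfrak v}=t\,\Delta_{\mathfrak v}$ for $t\ge 0$. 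Homogeneity is clear from $\max_{t\Delta}v_i=t\max_\Delta v_i$. For additivity, write $z=x+y\in(\Delta+\Gamma)_{\mathfrak v}$ with $x\in\Delta$, $y\in\Gamma$; since $v_i(x)+v_i(y)=v_i(z)=\max_\Delta v_i+\max_\Gamma v_i$ while $v_i(x)\le\max_\Delta v_i$ and $v_i(y)\le\max_\Gamma v_i$, equality is forced in each summand for every $i$, so $x\in\Delta_{\mathfrak v}$ and $y\in\Gamma_{\mathfrak v}$; the reverse inclusion is immediate. (Equivalently one maximizes the $v_i$ one at a time and iterates the elementary identity $(\Delta+\Gamma)_v=\Delta_v+\Gamma_v$ for a single functional.) By the cancellation law for Minkowski sums, this additive, positively homogeneous, translation-invariant assignment extends uniquely from polytopes to the Grothendieck group, yielding the desired $\R$-linear map $\pi_{\mathfrak v}\colon S_1(E)\to S_1(E_{\mathfrak v})$.

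Finally, by the universal property of ${\rm sym}(S_1(E))=S(E)$ this linear map extends uniquely to a ring homomorphism $\pi_{\mathfrak v}\colon S(E)\to S(E_{\mathfrak v})$, and since it sends the degree-one generators into the degree-one part, Lemma \ref{lmPoly} (that $S_k(E)$ is spanned by the $k$-th powers $v^k$ of elements $v\in S_1(E)$) gives $\pi_{\mathfrak v}(v^k)=(\pi_{\mathfrak v}v)^k\in S_k(E_{\mathfrak v})$, hence $\pi_{\mathfrak v}(S_k(E))\subset S_k(E_{\mathfrak v})$ for all $k$. I expect the only real work to be the verification in the previous paragraph — the Minkowski-additivity of the face operation together with the bookkeeping needed to pass from honest polytopes (whose faces live in translates of subspaces of $E_{\mathfrak v}$) to virtual polytopes; the existence and uniqueness of the homomorphism is then a formal consequence of the universal properties of the Grothendieck group and of the symmetric algebra.
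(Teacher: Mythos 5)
Your proposal is correct and follows essentially the same route as the paper: the paper's entire proof is the observation that $(\lambda\Delta+\beta\Lambda)_{\mathfrak v}=\lambda\Delta_{\mathfrak v}+\beta\Lambda_{\mathfrak v}$ together with an appeal to the definition (universal property) of the symmetric algebra, and your argument is exactly this with the Minkowski-additivity of the face operation, the passage to virtual polytopes, and the grading bookkeeping written out in full.
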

\begin{proof}
Since $(\lambda\Delta+\beta\Lambda)_\mathfrak v=\lambda\Delta_\mathfrak v+\beta\Lambda_ \mathfrak v$,
the statement follows from the definition of the symmetric algebra.
\end{proof}
Next, we retain the notation $\pi_{\mathfrak{v}}$ for the composition of homomorphisms of symmetric algebras:
$$
S(E) \xrightarrow{\pi_{\mathfrak{v}}} S(E_{\mathfrak{v}}) \xrightarrow{\iota_{\mathfrak{v}}} S(E).
$$
\begin{lemma}\label{lmvol^k_3}
Let $\mathfrak{v}=(v_1,\ldots,v_{m-k})\in U^{m-k}(E)$. For positive $\alpha_1,\ldots,\alpha_{m-k}$, consider $\mathfrak{u}=\alpha_1v_1+\ldots+\alpha_{m-k}v_{m-k}$ as an element in $U^1$. Then, for any $\mathfrak{U}\in S(E)$, we have $\pi_{\mathfrak{v}}(\mathfrak{U})=\pi_{\mathfrak{u}}(\mathfrak{U})$.
\end{lemma}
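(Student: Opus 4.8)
The plan is to exploit that $\pi_{\mathfrak v}$ and $\pi_{\mathfrak u}$ are $\R$-algebra endomorphisms of $S(E)$, so that they need only be compared on a set of generators. Indeed, by Lemma~\ref{lmvol^k_2} (applied to $\mathfrak v\in U^{m-k}(E)$, and, by the very same construction, to the single functional $\mathfrak u\in U^1(E)$), each of $\pi_{\mathfrak v}$ and $\pi_{\mathfrak u}$ is the composite of an algebra homomorphism of symmetric algebras with the embedding $\iota$, and hence is itself a graded algebra endomorphism $S(E)\to S(E)$. By Lemma~\ref{lmPoly} the ring $S(E)={\rm sym}(V)$ is generated by its degree-one part $S_1(E)=V$, and $V$ is spanned by the classes of genuine convex polytopes. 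It therefore suffices to prove $\pi_{\mathfrak v}(\Delta)=\pi_{\mathfrak u}(\Delta)$ for every convex polytope $\Delta\subset E$; that is, $\Delta_{\mathfrak v}=\Delta_{\mathfrak u}$ as virtual polytopes in $E$.

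Unwinding Definition~\ref{dfvolv}, $\Delta_{\mathfrak v}$ is the face
$$
F=\{\,x\in\Delta\colon v_i(x)=\max_{\Delta}v_i\ \text{ for all }\ i=1,\dots,m-k\,\}
$$
on which $v_1,\dots,v_{m-k}$ simultaneously attain their maxima over $\Delta$, shifted into $E_{\mathfrak v}$; likewise $\Delta_{\mathfrak u}$ is the maximal face of $\mathfrak u$ over $\Delta$, shifted into $E_{\mathfrak u}$. Hence the content of the lemma is the face identity: the maximal face of $\mathfrak u=\alpha_1v_1+\dots+\alpha_{m-k}v_{m-k}$ over $\Delta$ is exactly $F$. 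For any $y\in\Delta$ we have $\mathfrak u(y)=\sum_i\alpha_i v_i(y)\le\sum_i\alpha_i\max_{\Delta}v_i$, and — because every $\alpha_i>0$ — equality holds precisely when $v_i(y)=\max_{\Delta}v_i$ for each $i$, i.e. precisely on $F$. Since $F$ is a nonempty face (this is what it means for $\Delta_{\mathfrak v}$ to be defined), the upper bound $\sum_i\alpha_i\max_{\Delta}v_i$ is attained, so it equals $\max_{\Delta}\mathfrak u$, and the maximal face of $\mathfrak u$ is therefore $F$. Positivity of the $\alpha_i$ is used only here.

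Consequently $\Delta_{\mathfrak v}$ and $\Delta_{\mathfrak u}$ are obtained from one and the same face $F$ of $\Delta$, shifted respectively into $E_{\mathfrak v}$ and into $E_{\mathfrak u}$. A shift of $F$ inside $E$ does not change its class in $S_1(E)=V$, so $\iota_{\mathfrak v}(\Delta_{\mathfrak v})$ and $\iota_{\mathfrak u}(\Delta_{\mathfrak u})$ both equal the class of $F$ in $V$. Thus $\pi_{\mathfrak v}(\Delta)=\pi_{\mathfrak u}(\Delta)$ for all convex polytopes $\Delta$, and since $\pi_{\mathfrak v}$ and $\pi_{\mathfrak u}$ are algebra homomorphisms agreeing on the degree-one generators of $S(E)$, we conclude $\pi_{\mathfrak v}=\pi_{\mathfrak u}$.

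The substantive step is the face identity of the second paragraph, where the sign constraint $\alpha_i>0$ — and the nonemptiness of $F$, which is part of the meaning of $\Delta_{\mathfrak v}$ — are essential; the reduction to degree one and the translation bookkeeping are routine. (Should one wish to admit tuples $\mathfrak v$ for which the common maximal face of some $\Delta$ is empty, one first subdivides the cone spanned by $v_1,\dots,v_{m-k}$ by the normal fan of $\Delta$ and applies the additivity of $\Delta\mapsto\Delta_{\mathfrak v}$ from Lemma~\ref{lmvol^k_2} on each cone of the subdivision; in the later applications of this lemma, where $\mathfrak v$ spans a cone of a normal fan, this is not needed.)
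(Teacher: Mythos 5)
Your proof is correct and is precisely the unwinding that the paper intends: the paper's own ``proof'' is the single sentence ``This follows directly from Definition \ref{dfvolv},'' and your face identity (the maximal face of $\sum_i\alpha_i v_i$ with $\alpha_i>0$ is the common maximal face of the $v_i$, whenever the latter is nonempty) together with the reduction to degree-one generators is exactly the content being invoked. Your parenthetical caveat about the empty common face correctly identifies the one point the paper's formulation glosses over, and your resolution is consistent with how the lemma is actually used (for $\mathfrak v$ spanning a cone of a dual fan).
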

\noindent
\emph{Proof.}
This follows directly from Definitions \ref{dfvolv}.
\par\smallskip
\noindent
\emph{Proof of Proposition \ref{prWeightedFan}.} It follows from Lemmas \ref{lmvol^k_2} and \ref{lmvol^k_3}. We omit the details.
\begin{theorem}\label{thmIdealIntersection}
Let $1\leq k\leq m$ and $\mathfrak U\in S_k$.
Then the following conditions are equivalent:

{\rm(i)}\ $\mathfrak U\in J_\vol\cap S_k$

{\rm(ii)}\ All the weights of the cones in the weighted fan $\mathcal K_\mathfrak U$ are zero.
\end{theorem}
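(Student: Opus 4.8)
The plan is to establish the equivalence by characterizing the ideal component $J_\vol \cap S_k$ via the non-degenerate pairing that defines $J_\vol$, and then translating that pairing condition into a statement about the weights of $\mathcal K_\mathfrak U$. Recall $J_\vol = \ker L_\vol$ where $L_\vol(x,y) = I_\vol(x\cdot y)$ and $I_\vol$ picks out the top-degree ($m$-th) component. So for $\mathfrak U \in S_k$ one has $\mathfrak U \in J_\vol \cap S_k$ if and only if $\vol_m(\mathfrak U \cdot \mathfrak W) = 0$ for every $\mathfrak W \in S_{m-k}$, and by Lemma \ref{lmPoly} it suffices to test this on $\mathfrak W = \Gamma^{m-k}$ for arbitrary convex polytopes $\Gamma \subset E$. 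Thus the task reduces to showing: the mixed volume $\vol_m(\mathfrak U, \Gamma^{m-k})$ vanishes for all $\Gamma$ precisely when all weights of $\mathcal K_\mathfrak U$ vanish.

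The key step is a \emph{localization formula} expressing $\vol_m(\mathfrak U \cdot \Gamma^{m-k})$ as a sum over the $k$-dimensional cones of the common refinement of $\mathcal K_\mathfrak U$ and the dual fan $\mathcal K_{\Gamma, k}$, with each term a product of the weight $w(K)$ of $\mathcal K_\mathfrak U$ along that cone and a volume contribution coming from the corresponding face of $\Gamma$. Concretely, writing $\mathfrak U = \sum_i \pm \Gamma_i^{m-k}$, the mixed volume $\vol_m(\Gamma_1, \ldots, \Gamma_{m-k}, \Gamma, \ldots, \Gamma)$ decomposes, by a standard "cone-by-cone" expansion of mixed volumes (using Proposition \ref{prFromEKK} repeatedly, splitting $E$ along $V_K \oplus V_K^\bot$ for each maximal cone $K$ where all the $\Gamma_i$ share a common face-direction of codimension $k$), into $\sum_K \vol_{m-k}\big((\Gamma_i)_{\mathfrak v(K)}\big)_i \cdot \vol_k\big(\pi_K(\Gamma_{\mathfrak v(K)})\big)$, where $\mathfrak v(K)$ is a choice of spanning functionals of $K$ and $\pi_K$ is projection to $V_K^\bot$. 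The first factor is exactly (the numerical value $\vol_{m-k}(w(K))$ of) the weight assigned in the construction before Proposition \ref{prWeightedFan}; the second factor depends on $\Gamma$ alone. This is the content underlying Proposition \ref{prWeightedFan} and Lemmas \ref{lmvol^k_2}, \ref{lmvol^k_3}, which I would invoke to make the decomposition well-defined independent of the presentation of $\mathfrak U$.

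Granting the localization formula, the implication (ii)$\Rightarrow$(i) is immediate: if every $w(K) = 0$ then every term vanishes, so $\vol_m(\mathfrak U \cdot \Gamma^{m-k}) = 0$ for all $\Gamma$, hence $\mathfrak U \in J_\vol$. For (i)$\Rightarrow$(ii), I argue contrapositively: if some $k$-dimensional cone $K$ of $\mathcal K_\mathfrak U$ has $w(K) \neq 0$, then $w(K) \in \mathfrak S_\vol(V_K^\bot)$ is a nonzero element of the top degree $m-k$; since mixed volume is a non-degenerate form on $\mathfrak S_\vol(V_K^\bot)$ (Proposition \ref{prpolRing}(iv) applied to that ring), and since $w(K)$ is literally $\sum_i \pm (\Gamma_i)_{\mathfrak v(K)}^{m-k}$, one can choose a polytope $\Gamma$ whose face $\pi_K(\Gamma_{\mathfrak v(K)})$ has prescribed "direction" inside $V_K^\bot$ and whose dual fan is fine enough near $K$ that \emph{only} the term indexed by $K$ survives — e.g. take $\Gamma$ supported on a small neighbourhood of the relevant face-direction together with a suitable polytope realizing a test vector pairing nontrivially with $w(K)$. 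Then $\vol_m(\mathfrak U \cdot \Gamma^{m-k})$ equals that single nonzero term, contradicting $\mathfrak U \in J_\vol$.

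The main obstacle is making the cone-by-cone localization rigorous: one must verify that the decomposition of $\vol_m(\mathfrak U \cdot \Gamma^{m-k})$ into contributions indexed by $k$-cones of the refined fan is genuinely well-posed — i.e. that lower-dimensional strata (cones of dimension $<k$ in the refinement, and boundary overlaps) contribute nothing, and that the choice of spanning functionals $\mathfrak v(K)$ is irrelevant (this is where Lemma \ref{lmvol^k_3} enters). A secondary subtlety is the "separation" construction in (i)$\Rightarrow$(ii): one needs a polytope $\Gamma$ that simultaneously isolates a single cone $K$ and detects the nonvanishing of $w(K)$ under the mixed-volume pairing; the cleanest route is to first pick $\Gamma$ isolating $K$ (so that $\vol_m(\mathfrak U \cdot \Gamma^{m-k}) = \vol_{m-k}(w(K)) \cdot \vol_k(\text{face of }\Gamma)$ up to sign), then vary $\Gamma$'s face-direction within $V_K^\bot$ using that $w(K)$, being a sum of $(m-k)$-th powers, pairs nontrivially with some such power by Proposition \ref{prpolRing}(iv). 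I expect this reduction to occupy the bulk of the proof, with everything else being bookkeeping already prepared by the lemmas above.
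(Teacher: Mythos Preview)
Your plan is sound in spirit but takes a different route from the paper, and the step you yourself flag as the ``secondary subtlety'' is in fact a genuine gap.

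The paper does \emph{not} attempt a single global localization formula. Instead it argues by downward induction on $k$. The case $k=m$ is the definition of $J_\vol$. The case $k=m-1$ is done directly from Lemma~\ref{lmNSymm} (the support-function expansion of mixed volume): Corollary~\ref{corNonSymm1} gives (ii)$\Rightarrow$(i), and Corollary~\ref{corNonSymm2} gives (i)$\Rightarrow$(ii) by observing that testing against all polytopes $\Upsilon$ is the same as testing against all \emph{piecewise-linear} functions $h=h_\Gamma-h_\Upsilon$, which are flexible enough to pick out a single ray $\mathfrak v$ and force $\vol_{m-1}(\pi_\mathfrak v(\mathfrak U))=0$. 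The inductive step from $k=p+1$ to $k=p$ multiplies $\mathfrak U\in S_p$ by an arbitrary polytope $\Gamma$, applies the hypothesis to $\Gamma\cdot\mathfrak U\in S_{p+1}$, and then passes via $\pi_\mathfrak u$ to a $(p+1)$-dimensional subspace $V=E_\mathfrak u$, where the already-proved codimension-one case (Corollaries~\ref{corNonSymm1}, \ref{corNonSymm2}) applies inside $V$.

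Your direct approach runs into trouble precisely at the separation step for (i)$\Rightarrow$(ii). A single convex polytope $\Gamma$ cannot isolate one $(m-k)$-dimensional cone $K$ of $\mathcal K_\mathfrak U$: its dual fan is complete, every maximal face contributes with strictly positive volume, and there is no way to make ``only the term indexed by $K$ survive''. You would be forced to pass to virtual polytopes (equivalently, piecewise-linear test data), and then to argue that these separate individual cones of arbitrary codimension---which, once you unwind it, is essentially the inductive argument the paper gives. So the obstacle you anticipate is not bookkeeping; it is the heart of the matter, and the paper's induction is the clean way around it.

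A minor index issue: for $\mathfrak U\in S_k$ one writes $\mathfrak U=\sum_i\pm\Gamma_i^{k}$ (not $\Gamma_i^{m-k}$), the fan $\mathcal K_\mathfrak U$ is $(m-k)$-dimensional, and the weights lie in the top (degree-$k$) component of $\mathfrak S_\vol(V_K^\bot)$ with $\dim V_K^\bot=k$; your localization sum should accordingly run over $(m-k)$-dimensional cones with $\vol_k$ attached to the weight factor.
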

In the proof of the theorem, we assume that a scalar product $(*,*)$ is fixed in the space $E$.
In this context, the dual scalar product $(*,*)_*$ is considered in the space $E^*$.
Accordingly, for a $k$-dimensional polytope $\Delta$,
$\vol_k(\Delta)$ denotes its $k$-dimensional area.
Recall that, according to Definition \ref{dfGor} (3),
for $\mathfrak v\in U^{m-k}$ and $\mathfrak U\in S_k(E)$,
the image of the element $\pi_\mathfrak v(\mathfrak U)$
in the ring $\mathfrak S_\vol(E_\mathfrak v)$ is uniquely determined by the value of $\vol_k(\pi_\mathfrak v(\mathfrak U))$.
In particular, if $\vol_k(\pi_\mathfrak v(\mathfrak U))=0$,
then the image of $\pi_\mathfrak v(\mathfrak U)$ in the ring $\mathfrak S_\vol(E_\mathfrak v)$ is zero.

Below, we use a well-known property of the mixed volume of polytopes;
see, for example, \cite[Proposition 2.1.10]{EKK}.
\begin{lemma}\label{lmNSymm}

Let $\Upsilon,\Gamma_1,\ldots,\Gamma_{n-1}$ be polytopes in $E$, where $\dim E=m$,
and let $S_*$ be a unit sphere in $E^*$ centered in zero.
Then
$$
m!\:\vol(\Upsilon,\Gamma_1,\ldots,\Gamma_{n-1})=
\sum_{\mathfrak v\in  U^1(E)\cap S_*}h_\Upsilon(\mathfrak v)\: \vol_{m-1}(\pi_\mathfrak v (\Gamma_1),\ldots,\pi_\mathfrak v(\Gamma_{m-1})),
$$
where $h_\Upsilon$ is the support function of $\Upsilon$,
and $\vol_{m-1}(\pi_\mathfrak v (\Gamma_1),\ldots,\pi_\mathfrak v(\Gamma_{m-1}))$ denotes the mixed volume of polytopes in the space $E_\mathfrak v$.
\end{lemma}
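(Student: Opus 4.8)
\emph{Proof strategy.} The identity is the polytopal shadow of the classical fact that the mixed volume is the integral of a support function against a mixed surface--area measure, so the plan is to establish the ``pure volume'' case first and then polarize. For the case $\Upsilon=\Gamma_1=\dots=\Gamma_{m-1}=\Delta$, fix the scalar product on $E$ (as in the proof of Theorem~\ref{thmIdealIntersection}) and an interior point $a\in\Delta$; cutting $\Delta$ into the pyramids with apex $a$ over its facets and summing volumes gives $\vol(\Delta)=\tfrac1m\sum_{F}\big(h_\Delta(\mathfrak v_F)-\langle a,\mathfrak v_F\rangle\big)\vol_{m-1}(F)$, where $F$ runs over the facets and $\mathfrak v_F\in S_*$ is the outer unit normal of $F$. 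The terms with $\langle a,\mathfrak v_F\rangle$ cancel by the closure relation $\sum_F\vol_{m-1}(F)\,\mathfrak v_F=0$, and for $\mathfrak v\in U^1(E)\cap S_*$ which is not a facet normal the face $\Delta_\mathfrak v$ has dimension $<m-1$, so $\vol_{m-1}(\pi_\mathfrak v(\Delta),\dots,\pi_\mathfrak v(\Delta))=0$ and may be added to the sum for free. After unwinding the normalizations of Definition~\ref{dfGor} for $\vol$ and for the ambient $(m-1)$-dimensional mixed volume, this is precisely the lemma with all arguments equal.

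Next I would polarize: substitute $\Delta=\lambda_0\Upsilon+\lambda_1\Gamma_1+\dots+\lambda_{m-1}\Gamma_{m-1}$ with $\lambda_i>0$ and compare coefficients of the monomial $\lambda_0\lambda_1\cdots\lambda_{m-1}$. On the left, multilinearity of the mixed volume makes $\vol(\lambda_0\Upsilon+\sum\lambda_i\Gamma_i)$ a polynomial in the $\lambda_i$ whose $\lambda_0\cdots\lambda_{m-1}$--coefficient is a fixed multiple of $\vol(\Upsilon,\Gamma_1,\dots,\Gamma_{m-1})$. On the right I would use that (a)~$h_{\lambda_0\Upsilon+\sum\lambda_i\Gamma_i}=\lambda_0h_\Upsilon+\sum\lambda_ih_{\Gamma_i}$; (b)~the face of a Minkowski sum in a direction $\mathfrak v$ is the Minkowski sum of the faces, so $\pi_\mathfrak v(\lambda_0\Upsilon+\sum\lambda_i\Gamma_i)=\lambda_0\pi_\mathfrak v(\Upsilon)+\sum\lambda_i\pi_\mathfrak v(\Gamma_i)$ up to a translation irrelevant to $\vol_{m-1}$; (c)~multilinearity of $\vol_{m-1}$ in $E_\mathfrak v$; and (d)~for generic positive $\lambda_i$ the facet normals of $\lambda_0\Upsilon+\sum\lambda_i\Gamma_i$ form the fixed finite set of outer normals to the facets of $\Gamma_1+\dots+\Gamma_{m-1}$, i.e.\ exactly the $\mathfrak v$ with $\vol_{m-1}(\pi_\mathfrak v(\Gamma_1),\dots,\pi_\mathfrak v(\Gamma_{m-1}))\ne0$, so the index set of the sum does not move with $\lambda$. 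Extracting the $\lambda_0\cdots\lambda_{m-1}$--coefficient then matches the right-hand side of the lemma.

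The real content, and where I expect the main obstacle, is twofold. First, the combinatorial type of $\lambda_0\Upsilon+\sum\lambda_i\Gamma_i$ varies with $\lambda$; I would handle this by restricting to generic $\lambda$ (so that the facet--normal set is the fixed one above) and noting that both sides are polynomial in $\lambda$, hence agree everywhere. Second, and more essential: naively polarizing the fully symmetric base case only yields the statement symmetrized over which argument plays the role of $\Upsilon$, so isolating the single term $\sum_\mathfrak v h_\Upsilon(\mathfrak v)\,\vol_{m-1}(\pi_\mathfrak v(\Gamma_1),\dots,\pi_\mathfrak v(\Gamma_{m-1}))$ requires the classical identity $\vol(\Upsilon,\Gamma_1,\dots,\Gamma_{m-1})=\tfrac1m\int_{S_*}h_\Upsilon\,dS(\Gamma_1,\dots,\Gamma_{m-1};\cdot)$ for the mixed surface--area measure, together with the fact that for polytopes $S(\Gamma_1,\dots,\Gamma_{m-1};\cdot)$ is the atomic measure $\sum_\mathfrak v\vol_{m-1}(\pi_\mathfrak v(\Gamma_1),\dots,\pi_\mathfrak v(\Gamma_{m-1}))\,\delta_\mathfrak v$. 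Since this, and hence the lemma, is recorded as \cite[Proposition~2.1.10]{EKK}, one may simply cite it; but the derivation above explains why it holds and is also reused later, so I would keep the short argument.
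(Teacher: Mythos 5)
The paper offers no proof of this lemma at all: it is introduced with the words ``we use a well-known property of the mixed volume of polytopes'' and a citation to \cite[Proposition 2.1.10]{EKK}, which is exactly where your last sentence also lands. So in substance you end up doing what the paper does --- citing the classical fact --- but you additionally attempt a self-contained derivation, and it is worth being honest about where that derivation actually closes and where it does not.

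Your base case (the pyramid decomposition of $\Delta$ over its facets, cancellation of the apex terms via $\sum_F \vol_{m-1}(F)\,\mathfrak v_F=0$, and the harmless enlargement of the index set to all of $U^1(E)\cap S_*$) is correct. The genuine gap is the one you yourself flag: polarizing the identity $m\vol(\Delta)=\sum_{\mathfrak v}h_\Delta(\mathfrak v)\vol_{m-1}(\pi_\mathfrak v\Delta,\ldots,\pi_\mathfrak v\Delta)$ by substituting $\Delta=\lambda_0\Upsilon+\sum\lambda_i\Gamma_i$ and extracting the coefficient of $\lambda_0\cdots\lambda_{m-1}$ yields only the \emph{symmetrized} identity, i.e.\ the sum over which argument sits inside the support function. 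Your proposed repair --- invoke $\vol(\Upsilon,\Gamma_1,\ldots,\Gamma_{m-1})=\tfrac1m\int_{S_*}h_\Upsilon\,dS(\Gamma_1,\ldots,\Gamma_{m-1};\cdot)$ together with the atomicity of the mixed surface-area measure for polytopes --- is circular, because for polytopes that statement \emph{is} the lemma. A non-circular route (the standard one, e.g.\ Schneider's treatment) first proves the asymmetric identity directly for strongly isomorphic simple polytopes, where the normal fans coincide and the face maps $\Delta\mapsto\Delta_{\mathfrak v}$ are Minkowski-linear, so the polarization can be carried out facet by facet without symmetrizing; one then passes to general polytopes by approximation or by refining normal fans. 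If you do not want to carry that out, the citation to \cite[Proposition 2.1.10]{EKK} is doing the real work, which is precisely the paper's own treatment; the sketch should then be presented as motivation rather than as a proof.
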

\begin{corollary}\label{corNonSymm0}
  Let $\mathfrak U\in S_{m-1}$.
Then
$$
m!\:\vol(\Upsilon\cdot\mathfrak U)=
\sum_{\mathfrak v\in  U^1(E)\cap S_*}h_\Upsilon(\mathfrak v)\: \vol_{m-1}(\pi_\mathfrak v (\mathfrak U)),
$$
\end{corollary}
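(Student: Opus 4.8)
The plan is to deduce the corollary from Lemma~\ref{lmNSymm} by the usual linearity (polarization) argument: fix the polytope $\Upsilon$ and view both sides of the claimed identity as linear functionals of the variable $\mathfrak U$ on the vector space $S_{m-1}(E)$. Concretely, set
$$
L_1(\mathfrak U)=m!\:\vol(\Upsilon\cdot\mathfrak U),\qquad
L_2(\mathfrak U)=\sum_{\mathfrak v\in U^1(E)\cap S_*}h_\Upsilon(\mathfrak v)\:\vol_{m-1}(\pi_\mathfrak v(\mathfrak U)).
$$
Here $L_1$ is visibly a well-defined linear functional: multiplication by $\Upsilon$ is the linear map $S_{m-1}(E)\to S_m(E)$, and $\vol(\cdot)$ is the functional $I_\vol$ of Definition~\ref{dfGor}(1) on $S_m(E)$. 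For each fixed $\mathfrak v$ the corresponding summand of $L_2$ is well defined and linear in $\mathfrak U$, since $\pi_\mathfrak v\colon S_{m-1}(E)\to S_{m-1}(E_\mathfrak v)$ is linear by Lemma~\ref{lmvol^k_2}, $\dim E_\mathfrak v=m-1$, and $\vol_{m-1}(\cdot)$ is the canonical top-degree functional on $S_{m-1}(E_\mathfrak v)$.

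The only point requiring care is that the sum defining $L_2$ is finite for each fixed $\mathfrak U$. By Lemma~\ref{lmPoly}, $S_{m-1}(E)$ is spanned by the powers $v^{m-1}$ with $v$ a virtual polytope; writing $v=\Delta-\Gamma$ with $\Delta,\Gamma$ honest polytopes and expanding $v^{m-1}$ binomially (using commutativity of $S(E)$) shows that $S_{m-1}(E)$ is in fact spanned by products $\Gamma_1\cdots\Gamma_{m-1}$ of honest convex polytopes. For such a product $\pi_\mathfrak v(\Gamma_1\cdots\Gamma_{m-1})=(\Gamma_1)_\mathfrak v\cdots(\Gamma_{m-1})_\mathfrak v$, and its value under $\vol_{m-1}$ is the mixed volume $\vol_{m-1}((\Gamma_1)_\mathfrak v,\ldots,(\Gamma_{m-1})_\mathfrak v)$, which vanishes unless the faces $(\Gamma_i)_\mathfrak v$ jointly span $E_\mathfrak v$; this forces $\mathfrak v$ to be a unit normal to a facet of the Minkowski sum $\Gamma_1+\cdots+\Gamma_{m-1}$, and there are only finitely many such $\mathfrak v$. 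Hence the sum is finite on each spanning element, and therefore on every element of $S_{m-1}(E)$, so $L_2$ is a genuine linear functional.

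Finally, it suffices to check $L_1=L_2$ on the spanning set of products $\mathfrak U=\Gamma_1\cdots\Gamma_{m-1}$ of honest polytopes. For such $\mathfrak U$, by definition of $I_\vol$ one has $\vol(\Upsilon\cdot\mathfrak U)=\vol(\Upsilon,\Gamma_1,\ldots,\Gamma_{m-1})$, and, as above, $\vol_{m-1}(\pi_\mathfrak v(\mathfrak U))=\vol_{m-1}(\pi_\mathfrak v(\Gamma_1),\ldots,\pi_\mathfrak v(\Gamma_{m-1}))$ in the notation of Lemma~\ref{lmNSymm}; so the equality $L_1(\mathfrak U)=L_2(\mathfrak U)$ is precisely the statement of that lemma. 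Since $L_1$ and $L_2$ are linear and agree on a spanning set, they agree on all of $S_{m-1}(E)$, which is the assertion of the corollary. I do not expect a real obstacle here — the argument is a routine extension by linearity from polytopes to the symmetric algebra; the only mildly delicate point, the well-definedness of the infinite sum, is settled by the facet-normal observation above.
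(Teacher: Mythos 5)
Your proposal is correct and follows essentially the same route as the paper: represent $\mathfrak U$ by a spanning set of polytope products (the paper uses $\sum\pm\Gamma_i^{m-1}$), apply Lemma~\ref{lmNSymm} termwise, and extend by linearity. Your extra observation that the sum over $\mathfrak v$ is supported on the finitely many facet normals of the Minkowski sum is a detail the paper leaves implicit, but it does not change the argument.
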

\begin{proof}
Since there exists a representation $\mathfrak U=\sum\pm\Gamma_i^{m-1}$,
the desired statement follows from Lemma \ref{lmNSymm}.
\end{proof}
\begin{corollary}\label{corNonSymm1}
  Let $\mathfrak U\in S_{m-1}(E)$.
  Suppose $\vol_{m-1}(\pi_\mathfrak v(\mathfrak U))=0$ for any nonzero $\mathfrak v\in U^1(E)$.
  Then $\mathfrak U\in J_\vol(E)$.
\end{corollary}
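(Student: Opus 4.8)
The plan is to unwind the definition of $J_\vol(E)$ as the kernel of $L_\vol$ and then reduce the problem to a single application of Corollary \ref{corNonSymm0}.

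First I would recall from Definition \ref{dfGor}(3) that $J_\vol(E)$ is precisely the kernel of the symmetric bilinear form $L_\vol(x,y)=I_\vol(x\cdot y)$ on $S(E)$; hence $\mathfrak U\in J_\vol(E)$ is equivalent to $I_\vol(\mathfrak U\cdot y)=0$ for every $y\in S(E)$. Decomposing $y$ into homogeneous components and using that $I_\vol$ vanishes on every $S_k(E)$ with $k\neq m$ while $\mathfrak U$ lies in $S_{m-1}(E)$, only the degree-one component of $y$ can contribute, since for $y$ homogeneous of degree $\neq 1$ the product $\mathfrak U\cdot y$ lands outside $S_m(E)$. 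As $S_1(E)$ is, by construction, the space of virtual polytopes in $E$ and is therefore spanned by the classes of genuine convex polytopes, and as $\Upsilon\mapsto I_\vol(\mathfrak U\cdot\Upsilon)$ is linear, it suffices to show $I_\vol(\mathfrak U\cdot\Upsilon)=0$ for every convex polytope $\Upsilon\subset E$.

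Next I would apply Corollary \ref{corNonSymm0} to this $\Upsilon$ and to $\mathfrak U$, which yields
$$
m!\,\vol(\Upsilon\cdot\mathfrak U)=\sum_{\mathfrak v\in U^1(E)\cap S_*}h_\Upsilon(\mathfrak v)\,\vol_{m-1}(\pi_\mathfrak v(\mathfrak U)),
$$
where $\vol(\Upsilon\cdot\mathfrak U)=I_\vol(\Upsilon\cdot\mathfrak U)$ since $\Upsilon\cdot\mathfrak U\in S_m(E)$. Each index $\mathfrak v$ occurring in the sum is a nonzero element of $U^1(E)$, so by hypothesis $\vol_{m-1}(\pi_\mathfrak v(\mathfrak U))=0$ for every term; the right-hand side therefore vanishes, whence $I_\vol(\mathfrak U\cdot\Upsilon)=0$. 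Combined with the reduction of the previous paragraph, this gives $I_\vol(\mathfrak U\cdot y)=0$ for all $y\in S(E)$, i.e. $\mathfrak U\in J_\vol(E)$.

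I do not expect a genuine obstacle: the assertion is an immediate corollary of \ref{corNonSymm0}. The only points that merit a word of care are the reduction to pairing $\mathfrak U$ against degree-one elements (which rests on the homogeneity of $\mathfrak U$ and on $I_\vol$ being zero outside $S_m(E)$) and the passage from genuine convex polytopes to all of $S_1(E)$, handled by linearity of $\Upsilon\mapsto I_\vol(\mathfrak U\cdot\Upsilon)$.
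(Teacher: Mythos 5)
Your proof is correct and is exactly the argument the paper intends: its own proof of this corollary is the single line ``Follows from Corollary \ref{corNonSymm0}'', and you have supplied precisely the routine details (reduction to pairing against convex polytopes via homogeneity of $I_\vol$ and linearity on $S_1(E)$, then termwise vanishing of the sum in Corollary \ref{corNonSymm0}). No discrepancy to report.
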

\begin{proof}
Follows from Corollary \ref{corNonSymm0}.
\end{proof}
\begin{corollary}\label{corNonSymm2}
  Let $\mathfrak U\in J_\vol\cap S_{m-1}(E)$.
 Then for any $\mathfrak v\in U^1(E)$, we have $\vol_{m-1}(\pi_\mathfrak v(\mathfrak U))=0$.
\end{corollary}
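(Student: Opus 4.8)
The plan is to feed the hypothesis $\mathfrak U\in J_\vol(E)$ into Corollary~\ref{corNonSymm0} and then to disentangle the finitely many directions that actually occur in the resulting identity by choosing the test body cleverly. Throughout I would work with unit $\mathfrak v$, since $\pi_\mathfrak v$ depends only on the ray of $\mathfrak v$ (Definition~\ref{dfvolv}), so that the case of a general $\mathfrak v\in U^1(E)$ follows by rescaling.

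First, since $\mathfrak U\in J_\vol(E)$, Definition~\ref{dfGor}~(3) gives $L_\vol(\mathfrak U,y)=I_\vol(\mathfrak U\cdot y)=0$ for every $y\in S(E)$. Taking $y=\Upsilon$ to be an arbitrary convex polytope in $E$ we get $\vol(\Upsilon\cdot\mathfrak U)=I_\vol(\mathfrak U\cdot\Upsilon)=0$, so Corollary~\ref{corNonSymm0} yields
$$
\sum_{\mathfrak v\in U^1(E)\cap S_*}h_\Upsilon(\mathfrak v)\,\vol_{m-1}\bigl(\pi_\mathfrak v(\mathfrak U)\bigr)=0
\qquad\text{for every convex polytope }\Upsilon\subset E .
$$

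Next I would fix a representation $\mathfrak U=\sum_{i\le p}\pm\Gamma_i^{m-1}$ by convex polytopes (such a representation exists by Lemma~\ref{lmPoly}) and observe that this sum is essentially finite. Indeed, $\pi_\mathfrak v(\mathfrak U)=\sum_i\pm\bigl((\Gamma_i)_\mathfrak v\bigr)^{m-1}$ in $S_{m-1}(E_\mathfrak v)$, hence $\vol_{m-1}(\pi_\mathfrak v(\mathfrak U))=\sum_i\pm\vol_{m-1}\bigl((\Gamma_i)_\mathfrak v\bigr)$; if $v_1$ is not an outer normal of a facet of any $\Gamma_i$, then every face $(\Gamma_i)_\mathfrak v$ has dimension strictly less than $m-1$ and this number is zero. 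Thus the asserted vanishing is automatic outside the finite set $D=\{\mathfrak v_1,\dots,\mathfrak v_N\}\subset S_*$ of outer facet normals of the polytopes $\Gamma_i$, and, setting $c_j=\vol_{m-1}(\pi_{\mathfrak v_j}(\mathfrak U))$, the displayed identity becomes $\sum_{j=1}^N h_\Upsilon(\mathfrak v_j)\,c_j=0$ for every convex polytope $\Upsilon$.

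It remains to deduce $c_1=\dots=c_N=0$, i.e. that the linear functionals $\Upsilon\mapsto h_\Upsilon(\mathfrak v_j)$ on the space of (virtual) polytopes are linearly independent; this is where the actual work sits. Since $\mathfrak v_1,\dots,\mathfrak v_N$ are pairwise distinct points of $S_*$, it is enough to know that the differences of support functions of convex polytopes, restricted to $S_*$, form a family of functions separating any finite set of points — for instance because this family contains all continuous piecewise-linear functions on $S_*$ (a convex piecewise-linear function is the support function of a polytope, and every piecewise-linear function is a difference of two convex ones on a common refinement of the relevant fans), and piecewise-linear ``hat'' functions separate $\{\mathfrak v_1,\dots,\mathfrak v_N\}$ freely. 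The one point one must be careful about is that centrally symmetric test bodies — segments, balls, ellipsoids — satisfy $h_\Upsilon(\mathfrak v)=h_\Upsilon(-\mathfrak v)$ and therefore cannot separate a facet normal from its opposite when both occur in $D$; genuinely non-symmetric polytopes are needed, and with them the conclusion $c_j=0$ is immediate, which completes the proof.
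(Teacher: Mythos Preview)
Your proof is correct and follows essentially the same route as the paper: both feed $\mathfrak U\in J_\vol$ into Corollary~\ref{corNonSymm0}, observe that the resulting identity holds for all differences $h_\Gamma-h_\Upsilon$ of support functions, and then use that such differences give all piecewise-linear functions on $E^*$, which suffices to separate the finitely many facet normals and force each $\vol_{m-1}(\pi_{\mathfrak v}(\mathfrak U))$ to vanish. Your write-up is simply more explicit about the finiteness of the relevant directions and the separation step (including the nice aside about centrally symmetric bodies), whereas the paper compresses all of this into the single phrase ``From this, the desired statement follows.''
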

\begin{proof}
Recall that a fan $\mathcal K$ in $E^*$ is called complete if $\bigcup_{K\in\mathcal K}K=E^*$.
A positively homogeneous degree $1$ continuous function $h\colon E^*\to\R$,
linear on each of the cones of some complete fan in $E^*$, is called piecewise linear.
It is known that any piecewise linear function is the difference of the support functions of two convex polytopes in $E$.
If $\mathfrak U\in J_\vol\cap S_{n-1}(E)$,
then, by the definition of the ideal $J_\vol$,
for any polytope $\Gamma$, $\vol(\Gamma\cdot\mathfrak U)=0$.
Let $h=h_\Gamma-h_\Upsilon$.
Applying Corollary \ref{corNonSymm0} to the polytopes $\Gamma$ and $\Upsilon$,
we get the following:
if the function $h$ is piecewise linear, then
$\sum_{v\in S_*}h(v)\: \vol_v(\mathfrak U)=0$.
From this, the desired statement follows.
\end{proof}

\noindent\emph{Proof of Theorem \ref{thmIdealIntersection}.}
When $k=m$, the statement is a consequence of the definition of the ideal $J_\vol$.
When $k=m-1$, the equivalence of (i) and (ii) follows from Corollaries \ref{corNonSymm1} and \ref{corNonSymm2}.
Now let's prove the theorem for $k=p$, assuming it holds true for $k=p+1$.

Consider the following set of data:

$\bullet$\  $\mathfrak U\in S_{p}(E)$

$\bullet$\  $\mathfrak u\in U^{m-p-1}(E)$

$\bullet$\  $V=E_\mathfrak u$ -- $(p+1)$-dimensional subspace in $E$

$\bullet$\ $\mathfrak V=\pi_\mathfrak u(\mathfrak U)\in S_{p}(V)\subset S_{p}(E)$

$\bullet$\ $\Upsilon=\pi_\mathfrak u(\Gamma)$ -- convex polytope in $V$, where $\Gamma$ is a convex polytope in $E$.
\par\smallskip
First, suppose that $\mathfrak U\in J_\vol\cap S_p$, i.e., condition (i) holds.
Then $\Gamma\cdot\mathfrak U\in J_\vol\cap S_{p+1}$.
Therefore, applying (i)$\Rightarrow$(ii) for $k=p+1$,
we have that
$$
\forall \Gamma\colon\:\vol_{p+1}(\Upsilon\cdot\mathfrak V)=\vol_{p+1}(\pi_\mathfrak u(\Gamma\cdot\mathfrak U))=0.
$$
Hence, $\mathfrak V\in J_\vol(V)\cap S_{p}(V)$.
According to Corollary \ref{corNonSymm2},
$\vol_p(\pi_\mathfrak v(\mathfrak V))=0$ for any nonzero $\mathfrak v\in U^1(V)$.
Since, for any $\mathfrak z\in U^{m-p}$, there exist $\mathfrak u$ and $\mathfrak v$ such that $\pi_\mathfrak z =\pi_\mathfrak v\pi_\mathfrak u$, then
$\vol_p(\pi_\mathfrak z(\mathfrak V))=0$.
Thus, condition (ii) holds.

Now assume that condition (ii) holds.
Then for any $\mathfrak v\in U^1(V)$, we have $\vol_p(\pi_\mathfrak v(\mathfrak V))=0$.
According to Corollary \ref{corNonSymm0},
$$
\forall \Gamma\colon\: \vol_{p+1}(\pi_u(\Gamma\cdot\mathfrak U))=\Upsilon\cdot\mathfrak V\in J_\vol(V).
$$
Therefore,
applying (ii)$\Rightarrow$(i) for $k=p+1$,
we get $\Gamma\cdot\mathfrak U\in J_\vol$.
From this, the desired statement follows.
Theorem \ref{thmIdealIntersection} is proven.
\par\smallskip
By construction, the map $\mathfrak U \mapsto \mathcal K_\mathfrak U$ is a linear operator from the space $S_k(E)$ to the space of $(m-k)$-dimensional weighted fans.
From Theorem \ref{thmIdealIntersection}, the following statement follows.
\begin{corollary}\label{corWeightedOfU}
The kernel of the operator $\mathfrak U \mapsto \mathcal K_\mathfrak U$ is equal to $S_k(E)\cap J_\vol(E)$.
\end{corollary}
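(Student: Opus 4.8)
The plan is to reduce the statement directly to the equivalence (i)$\Leftrightarrow$(ii) of Theorem \ref{thmIdealIntersection}, after making precise what it means for the weighted fan $\mathcal K_\mathfrak U$ to be the zero element of the space of $(m-k)$-dimensional weighted fans. Since the operator $\mathfrak U\mapsto\mathcal K_\mathfrak U$ is already known to be a well-defined linear operator (Proposition \ref{prWeightedFan} together with the construction preceding the corollary), computing its kernel amounts to deciding when $\mathcal K_\mathfrak U=0$.

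First I would unwind the vector space structure on weighted fans described after Definition \ref{dfWeightedFan}. The point is that adjoining to a weighted fan finitely many $(m-k)$-dimensional cones carrying zero weight does not change its equivalence class, and that for any weighted fan $\mathcal L$ the sum $\mathcal K+\mathcal L$, where $\mathcal K$ has all weights zero, is equivalent to $\mathcal L$: on the common part of the supports the weights are $w(L)+0=w(L)$, and the remaining $(m-k)$-dimensional cones coming from $\mathcal K$ carry weight zero. Hence a weighted fan represents the additive identity of the space of weighted fans if and only if every one of its $(m-k)$-dimensional cones carries zero weight. In particular, $\mathcal K_\mathfrak U=0$ precisely when all the weights of the cones of $\mathcal K_\mathfrak U$ vanish.

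Then I would invoke Theorem \ref{thmIdealIntersection} for the value of $k$ in question: for $\mathfrak U\in S_k(E)$ the condition that all weights of $\mathcal K_\mathfrak U$ are zero is equivalent to $\mathfrak U\in J_\vol(E)\cap S_k(E)$. Combining the two observations, the kernel of $\mathfrak U\mapsto\mathcal K_\mathfrak U$ is exactly $\{\mathfrak U\in S_k(E)\colon\text{all weights of }\mathcal K_\mathfrak U\text{ vanish}\}=S_k(E)\cap J_\vol(E)$, which is the assertion of the corollary.

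The proof is essentially immediate once Theorem \ref{thmIdealIntersection} is in hand, so there is no serious obstacle. The only point needing a line of care is the bookkeeping identification ``$\mathcal K_\mathfrak U$ is the zero weighted fan'' $\Longleftrightarrow$ ``all weights of $\mathcal K_\mathfrak U$ vanish'', that is, verifying that the equivalence relation on weighted fans together with the addition law make a fan with all zero weights into the additive identity; everything else is a direct substitution into the already established theorem.
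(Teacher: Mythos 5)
Your proof is correct and follows the same route as the paper, which derives the corollary directly from Theorem \ref{thmIdealIntersection}. The extra care you take in identifying the zero weighted fan with the class of fans whose weights all vanish is a reasonable bookkeeping step that the paper leaves implicit.
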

\noindent
\emph{Proof of Theorem \ref{thmIdeals}.}
Here we assume that a Hermitian metric is chosen in the space $\C^n$.
The notion of pseudo-volume depends on this choice.
Recall that the mixed pseudo-volume $\mathfrak p$ is considered as a symmetric multilinear $n$-form on the space of virtual polytopes in ${\C^n}^*$;
see 
Corollaries \ref{corPseudoMixed1}, \ref{corPseudoMixed2}.
Let $\mathfrak U\in J_\vol\cap S_n({\C^n}^*)$.
According to Proposition \ref{prIdeals},
Theorem \ref{thmIdeals} is reduced to proving the equality
$\mathfrak p(\mathfrak U)=0$; see definition \ref{dfGor}.
From definitions \ref{dfPseudo}, \ref{dfPseudoMixed}, it follows that $\mathfrak p(\mathfrak U)$ is equal to a linear combination of the form
\begin{equation}\label{eq_pFor}
 \sum_{K\in\mathcal K_\mathfrak U,\:\dim K=n}c_K\:\vol_n(w(K)),
\end{equation}
where $\mathcal K_\mathfrak U$ is the weighted fan of the element $\mathfrak U$ constructed above
(see Proposition \ref{prWeightedFan}),
and $w(K)$ is the weight of the cone $K$.
According to Theorem \ref{thmIdealIntersection},
all these weights are zero.
Theorem \ref{thmIdeals} is proven.
\subsection{Tropicalizations of elements of the ring $\mathfrak{S}_\vol$}\label{tropS}
Here, we describe the multiplication operation in the ring
$\mathfrak{S}_\vol(E)$ in terms of dual weighted fans in the space $E^*$.
In the case of $E = \re{\C^n}^*$,
this description practically coincides with the so-called tropical formula of Kushnirenko-Bernstein;
see \cite{EKK}.
It is not used further until \textsection\ref{tropTor}.

Below, we use the following notations:
$K$ -- a convex polyhedral cone in $E^*$,
$V_K$ -- the subspace in $E^*$
generated by the points of the cone $K$,
$V^\bot_K$ -- the annihilator of the subspace $V_K$ in $E$.
Let us recall some definitions
related to the concept of a weighted fan.

A fan of cones $\mathcal{K}$ of dimension $\leq k$ in the space $E^*$
is called a \emph{weighted fan of cones}
if each $k$-dimensional cone $K \in \mathcal{K}$
is associated with an element $w(K)$ of the graded ring $\mathfrak{S}_\vol(V^\bot_K)$ of degree $m-k$,
called the \emph{weight of the cone} $K$;
see definition \ref{dfWeightedFan}.
Note that
the highest nonzero component of the graded ring $\mathfrak{S}_\vol(V^\bot_K)$ has degree $m-k$
and is one-dimensional.
To any element $\mathfrak{U} \in S_k(E)$, we associate
the dual $(m-k)$-dimensional weighted fan $\mathcal{K}_\mathfrak{U}$ in the space $E^*$,
where $m=\dim(E)$;
see proposition \ref{prWeightedFan}.
Weighted fans,
dual to the elements of $S(E)$, will be referred to as \emph{tropical fans}.
\begin{definition}\label{dfTransvCone}
We call a pair of cones $(K, L)$ in the space $E^*$ transverse
if $V_K \cup V_L$ does not belong to some proper subspace in $E^*$.
If the pair $(K, L)$ is not transverse,
we set
\[
D(K, L) = \{e^* \in E^* \colon (e^* + V_K) \cap V_L \neq \emptyset\},
\]
otherwise we set $D(K, L) = \emptyset$.
The set $D(K, L)$ is a proper subspace in $E^*$.
Any point $e \not\in D(K, L)$ will be called \emph{admissible for the pair of cones} $(K, L)$.
If a point $e$ is admissible for every pair of cones $(K \in \mathcal{K}, L \in \mathcal{L})$,
then we call it \emph{admissible for the pair of fans} $(\mathcal{K}, \mathcal{L})$.
\end{definition}
Let $e$ be an admissible point
for a pair of fans $\mathcal{K}$ and $\mathcal{L}$ of dimensions respectively $p$ and $q$
in the $m$-dimensional space $E^*$.
We define the fan $\mathcal{K} \cap^e \mathcal{L}$ of dimension $(m - p - q)$,
called the $e$-intersection of the fans $\mathcal{K}$ and $\mathcal{L}$.
If the fans $\mathcal{K}$ and $\mathcal{L}$ are weighted,
then we define the weights of the $(m - p - q)$-dimensional cones in $\mathcal{K} \cap^e \mathcal{L}$.
If the fans $\mathcal{K}$ and $\mathcal{L}$ are tropical,
then the weighted fan is also tropical.

If $p + q < m$, then the fan $\mathcal{K} \cap^e \mathcal{L}$ is, by definition, empty.

If $p + q = m$, then
the fan $\mathcal{K} \cap^e \mathcal{L}$ is either empty or consists of a single cone $N = \{0\}$.
The weight of the zero cone $N$ is defined as follows.

\begin{definition}\label{dfmixedTropic}
Consider the set of pairs
$$
S(N) = \{(K, L) \colon K \in \mathcal{K}, \dim K = p, L \in \mathcal{L}, \dim L = q, (e + K) \cap L \ne \emptyset \}.
$$
In the equation (\ref{mixedTropic}) below,
the weights $w(K)$ and $w(L)$ are considered as elements of the ring $\mathfrak{S}_\vol(E)$.
We define the weight of the zero cone $N$ as
\begin{equation}\label{mixedTropic}
w(N) = \sum_{K \in \mathcal{K}, L \in \mathcal{L}, (e + K) \cap L \ne \emptyset} w(K) \cdot w(L)
\end{equation}
\end{definition}

\begin{example}\label{exProd1}
Let $\mathcal{K}$ and $\mathcal{L}$ be weighted fans of convex polygons $\Gamma$ and $\Upsilon$ in the plane $\R^2$.
In this case, $w(N)$ is an element of the one-dimensional space
consisting of elements of the second degree in the ring $\mathfrak{S}_\vol(\R^2)$.
By choosing a scalar product in $\R^2$, the weight $w(N)$ takes on a numerical value.
The following statement is a problem of school-level geometry:
the weight $w(N)$ is equal to the mixed area of the polygons $\Gamma$ and $\Upsilon$.
In particular,
if $\Gamma = \Upsilon$,
then the weight $w(N)$ is equal to the area of the polygon $\Gamma$.
\end{example}
Consider the case $p + q \geq m$.
Denote by $\mathcal{K} \cap \mathcal{L}$ the fan
consisting of pairwise intersections of cones from the fans $\mathcal{K}$ and $\mathcal{L}$.
For example, $\mathcal{K} \cap \mathcal{K} = \mathcal{K}$.
We define the $(p + q - m)$-dimensional subfan $\mathcal{K} \cap^e \mathcal{L}$ of the fan $\mathcal{K} \cap \mathcal{L}$,
depending on the choice of a point $e$ that is admissible for the pair $(\mathcal{K}, \mathcal{L})$.
For this,
we will need the concept of fan factorization.
\begin{definition}\label{dfFact}
Let $K \in \mathcal{K}$,
and $U$ be a vector subspace in $V_K$.
The image $L_U$ of a cone $L \in \mathcal{K}$
under the projection mapping $E^* \to E^*/U$
is called the $U$-factorization of the cone $L$.
All $U$-factorizations $L_U$ of cones $L\in \mathcal{K}$,
such that $L\supseteq K$, 
form a fan of cones $\mathcal{K}_U^{{\rm fact}}$
in the space $E^*/U$,
called the $U$-factorization of the fan $\mathcal{K}$.
If a $k$-dimensional fan $\mathcal{K}$ is weighted,
and $\dim L = k$,
we assign the weight $w(L_U)$ equal to the weight of the cone $L$ in $\mathcal{K}$.
\end{definition}
\begin{corollary}
Let $K \in \mathcal{K}$,
$L \in \mathcal{L}$,
$U \subseteq V_K \cap V_L$,
and the point $e$ be admissible for the pair $(\mathcal{K}, \mathcal{L})$.
Then the image $e_U$ of the point $e$ in $E^*/U$ is admissible
for the pair of fans $(\mathcal{K}_U^{{\rm fact}}, \mathcal{L}_U^{{\rm fact}})$.
\end{corollary}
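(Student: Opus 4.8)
The plan is to peel off the definitions and reduce the statement to one elementary fact about the quotient projection $\rho\colon E^*\to E^*/U$, namely that $\rho^{-1}(\rho(W))=W$ for every vector subspace $W\supseteq U$. First I would observe that any cone $P\in\mathcal K$ with $P\supseteq K$ satisfies $U\subseteq V_K\subseteq V_P$, and likewise any $Q\in\mathcal L$ with $Q\supseteq L$ satisfies $U\subseteq V_L\subseteq V_Q$; by Definition \ref{dfFact} these are exactly the cones whose $U$-projections $P_U,Q_U$ make up $\mathcal K_U^{{\rm fact}}$ and $\mathcal L_U^{{\rm fact}}$. Since $\rho$ is linear, spans commute with it, so $V_{P_U}=\rho(V_P)$, $V_{Q_U}=\rho(V_Q)$, and hence $V_{P_U}+V_{Q_U}=\rho(V_P+V_Q)$. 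Unwinding the definition of admissibility for a pair of fans (Definition \ref{dfTransvCone}), it therefore suffices to show that $e_U=\rho(e)$ is admissible for each such pair $(P_U,Q_U)$.

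Next I would split into the two cases of Definition \ref{dfTransvCone}. If $(P_U,Q_U)$ is transverse in $E^*/U$, then $D(P_U,Q_U)=\emptyset$ and admissibility of $e_U$ is automatic. If it is not transverse, then $D(P_U,Q_U)=V_{P_U}+V_{Q_U}=\rho(V_P+V_Q)$ is a proper subspace of $E^*/U$; as $\rho$ is surjective, this forces $V_P+V_Q\neq E^*$, so the pair $(P,Q)$ is itself non-transverse in $E^*$ and $D(P,Q)=V_P+V_Q$. Assuming toward a contradiction that $e_U\in D(P_U,Q_U)=\rho(V_P+V_Q)$, and using $U\subseteq V_P\subseteq V_P+V_Q$ together with $\ker\rho=U$, we get $e\in\rho^{-1}(\rho(V_P+V_Q))=V_P+V_Q=D(P,Q)$, contradicting that $e$ is admissible for $(\mathcal K,\mathcal L)$ and hence for the pair $(P,Q)\in\mathcal K\times\mathcal L$. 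Thus $e_U\notin D(P_U,Q_U)$; ranging over all such pairs $(P,Q)$ shows $e_U$ is admissible for $(\mathcal K_U^{{\rm fact}},\mathcal L_U^{{\rm fact}})$.

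I do not anticipate a genuine obstacle: the argument is pure bookkeeping around the projection $\rho$. The one point that needs care is verifying that every cone entering $\mathcal K_U^{{\rm fact}}$ (resp. $\mathcal L_U^{{\rm fact}}$) has span containing $U$ — that containment is precisely what lets ``non-admissible downstairs'' be pulled back to ``non-admissible upstairs'', since the obstructing subspace $V_P+V_Q$ already absorbs $\ker\rho=U$. Everything else (transversality only improving under a surjection, spans and sums of subspaces commuting with linear maps) is immediate.
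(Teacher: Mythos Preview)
Your argument is correct. The paper states this corollary without proof, treating it as an immediate consequence of Definitions \ref{dfTransvCone} and \ref{dfFact}; your write-up simply unpacks those definitions carefully, and the key observation you single out---that every cone appearing in $\mathcal K_U^{\rm fact}$ or $\mathcal L_U^{\rm fact}$ has span containing $U$, so that $\rho^{-1}(\rho(V_P+V_Q))=V_P+V_Q$---is exactly the point that makes the corollary work.
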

\begin{definition}\label{dfeIntersect}
Let $e$ be an admissible point for the pair of cones $(\mathcal{K},\mathcal{L})$,
$K\in \mathcal{K}$, $L\in \mathcal{L}$, $\dim K=\dim\mathcal{K}=p$, $\dim L=\dim\mathcal{L}=q$,
$M=K\cap L \in \mathcal{K}\cap\mathcal{L}$,
$U=V_K\cap V_L$.
Then

(1) $M\in\mathcal{K}\cap^e\mathcal{L}$ if and only if $\mathcal{K}_U^{{\rm fact}}\cap^{e_U}\mathcal{L}_U^{{\rm fact}}\ne\emptyset$.
We define the fan $\mathcal{K}\cap^{e}\mathcal{L}$ as the fan
consisting of all such cones and their faces.

(2) If the fans $\mathcal{K}$, $\mathcal{L}$ are weighted,
then, using the rule (\ref{mixedTropic}),
we define the weight of the cone $M$ as the weight of the zero cone in $\mathcal{K}_U^{{\rm fact}}\cap^{e_U}\mathcal{L}_U^{{\rm fact}}$.
\end{definition}
\begin{definition}\label{dfEquiWeightedTrop}
The equivalence class of weighted fans of an element $\mathfrak{U}\in\mathfrak{S}_\vol(E)$
is called the \emph{tropicalization} of the element $\mathfrak{U}$.
Any weighted fan from this class is called a \emph{tropical fan} of the element $\mathfrak{U}$.
\end{definition}
\begin{theorem}\label{thmBKK_Trop1}
Let the weighted fans $\mathcal{K}$, $\mathcal{L}$ be tropical.
Then the following statements are true.

{\rm(1)} The weighted fan $\mathcal{K}\cap^e\mathcal{L}$ does not depend on the choice of the admissible point $e$ for the pair of fans,
and it is also tropical.
Thus, the operation $\mathcal{K}\cap^e\mathcal{L}$ can be considered as the product $\mathcal{K}\cdot\mathcal{L}$ of tropical fans $\mathcal{K}$ and $\mathcal{L}$.

{\rm(2)}
If $\mathcal{K}$, $\mathcal{L}$ are tropical fans of elements $\mathfrak{U}$, $\mathfrak{V}$ of the ring $\mathfrak{S}_\vol(E)$,
then $\mathcal{K}\cdot\mathcal{L}$ is a tropical fan of the element $\mathfrak{U}\cdot\mathfrak{V}\in\mathfrak{S}_\vol(E)$.
Thus, the operation of multiplication of tropicalizations of elements of the ring $\mathfrak{S}_\vol(E)$ is well defined.

{\rm(3)}
The assignment of the tropicalization of an element $\mathfrak{U}\in\mathfrak{S}_\vol(E)$ is an isomorphism of rings.
\end{theorem}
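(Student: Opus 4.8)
The plan is to reduce the theorem to the top-degree ``tropical Kushnirenko--Bernstein formula'' for mixed volumes and then to propagate it to all codimensions through the factorization device of Definition~\ref{dfeIntersect}. I begin with the linear-algebraic skeleton. By Corollary~\ref{corWeightedOfU} the operator $\mathfrak{U}\mapsto\mathcal{K}_{\mathfrak{U}}$ annihilates exactly $S_k(E)\cap J_\vol(E)$ in each degree $k$, so it descends to a graded linear injection from $\mathfrak{S}_\vol(E)$ to weighted fans, whose image is, by Definition~\ref{dfEquiWeightedTrop}, the collection of tropical fans; it is additive because $\mathcal{K}_{\mathfrak{U}+\mathfrak{V}}=\mathcal{K}_{\mathfrak{U}}+\mathcal{K}_{\mathfrak{V}}$. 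Thus tropicalization is already a graded additive bijection onto the tropical fans, and statement~(3) will follow from~(2). Moreover both~(1) and~(2) are consequences of a single assertion, which I will call the \emph{Claim}: for all $\mathfrak{U},\mathfrak{V}\in\mathfrak{S}_\vol(E)$ and every point $e$ admissible for $(\mathcal{K}_{\mathfrak{U}},\mathcal{K}_{\mathfrak{V}})$, the weighted fan $\mathcal{K}_{\mathfrak{U}}\cap^e\mathcal{K}_{\mathfrak{V}}$ is a weighted fan of $\mathfrak{U}\cdot\mathfrak{V}$.

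Granting the Claim, $\mathcal{K}_{\mathfrak{U}}\cap^e\mathcal{K}_{\mathfrak{V}}$ is tropical and lies in the equivalence class attached to $\mathfrak{U}\cdot\mathfrak{V}$; since $\mathfrak{U}\cdot\mathfrak{V}$ makes no reference to $e$, that class is independent of $e$ (up to the equivalence of weighted fans, which permits common refinement and discards zero weights), which is~(1); since the injectivity above shows that the classes of $\mathcal{K}_{\mathfrak{U}},\mathcal{K}_{\mathfrak{V}}$ determine $\mathfrak{U},\mathfrak{V}$, the product of equivalence classes of tropical fans is well defined and equals the tropicalization of $\mathfrak{U}\cdot\mathfrak{V}$, which is~(2). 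The unit is preserved because the fan $\{E^*\}$ carrying the weight $1$ is neutral for $\cap^e$. So the entire theorem comes down to the Claim.

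To prove the Claim, I would first note that both sides are bilinear in $(\mathfrak{U},\mathfrak{V})$ — formula~(\ref{mixedTropic}) is bilinear in the weight systems and weights add under addition of fans, while multiplication in $\mathfrak{S}_\vol(E)$ and $\mathfrak{W}\mapsto\mathcal{K}_{\mathfrak{W}}$ are linear — so, by Lemma~\ref{lmPoly} and polarization, it suffices to treat $\mathfrak{U}=\Gamma^a$, $\mathfrak{V}=\Upsilon^b$ with $\Gamma,\Upsilon\subset E$ genuine convex polytopes, so that $\mathcal{K}_{\mathfrak{U}},\mathcal{K}_{\mathfrak{V}}$ are the weighted fans $\mathcal{K}_{\Gamma,m-a},\mathcal{K}_{\Upsilon,m-b}$. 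Now I compare weights cone by cone. Let $M=K\cap L$ be a top cone of $\mathcal{K}_{\mathfrak{U}}\cap^e\mathcal{K}_{\mathfrak{V}}$ with $K,L$ top cones of $\mathcal{K}_{\mathfrak{U}},\mathcal{K}_{\mathfrak{V}}$; one checks that then $U:=V_K\cap V_L=V_M$, that $E^*/U$ is canonically dual to $V_M^\bot$, and that the factored fans $(\mathcal{K}_{\mathfrak{U}})_U^{\rm fact}$, $(\mathcal{K}_{\mathfrak{V}})_U^{\rm fact}$ have dimensions summing to $\dim(E^*/U)$. On the target side, the weight of $M$ in $\mathcal{K}_{\mathfrak{U}\cdot\mathfrak{V}}$ equals $\pi_{\mathfrak{v}}(\mathfrak{U}\cdot\mathfrak{V})=\pi_{\mathfrak{v}}(\mathfrak{U})\cdot\pi_{\mathfrak{v}}(\mathfrak{V})\in\mathfrak{S}_\vol(V_M^\bot)$, for $\mathfrak{v}$ spanning $V_M$, by the construction of weighted fans of elements together with the fact that $\pi_{\mathfrak{v}}$ is a ring homomorphism (Lemma~\ref{lmvol^k_2}). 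On the source side, by Definition~\ref{dfeIntersect} the weight of $M$ is the zero-cone weight of $(\mathcal{K}_{\mathfrak{U}})_U^{\rm fact}\cap^{e_U}(\mathcal{K}_{\mathfrak{V}})_U^{\rm fact}$. The key geometric input is that the $U$-factorization of $\mathcal{K}_{\Gamma,m-a}$ coincides with the truncated normal fan of $\pi_{\mathfrak{v}}(\Gamma)$ — the face of $\Gamma$ exposed by $U$ — and likewise for $\Upsilon$; granting this, the two fans being $e_U$-intersected are tropical fans of $\pi_{\mathfrak{v}}(\mathfrak{U})$ and $\pi_{\mathfrak{v}}(\mathfrak{V})$ in $V_M^\bot$ with complementary dimensions, so by the top-degree case of the Claim — which is precisely the Kushnirenko--Bernstein tropical formula (Example~\ref{exProd1} in the plane and its higher-dimensional form from \cite{EKK}, which also already contains the independence of the admissible point) — their zero-cone weight is $\pi_{\mathfrak{v}}(\mathfrak{U})\cdot\pi_{\mathfrak{v}}(\mathfrak{V})$. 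Hence the two weights agree and the two weighted fans are equivalent.

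The step I expect to be the main obstacle is the identification $(\mathcal{K}_{\Gamma,m-a})_U^{\rm fact}=\mathcal{K}_{\pi_{\mathfrak{v}}(\Gamma)^a}$, i.e. that factoring the normal fan of $\Gamma$ by the subspace $U=V_K\cap V_L$ localizes it at the face of $\Gamma$ exposed by $U$: this is classical for full normal fans, but it must be made compatible with the fact that $U$ need not be in ``generic position'' relative to $K$ or to $L$, so one has to invoke the invariance of $\pi_{\mathfrak{v}}$ under positive reparametrization (Lemma~\ref{lmvol^k_3}), use the admissibility of $e_U$ supplied by the corollary following Definition~\ref{dfFact}, and match the faces of $\Gamma$, $\Upsilon$ and $\Gamma+\Upsilon$ exposed by $U$ via the elementary fact that the normal cone of a Minkowski-sum face is the intersection of the normal cones of the summand faces. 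Everything else is either formal — the reductions of the first three paragraphs — or a citation: the top-degree tropical mixed-volume formula of \cite{EKK} may be used verbatim here because, by Proposition~\ref{prpolRingVol05}, the ring $\mathfrak{S}_\vol(E)$ and hence the whole weighted-fan calculus depend only on $\dim E$ and not on the chosen volume form, so the torus computation transfers word for word.
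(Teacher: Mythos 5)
The paper does not actually prove Theorem \ref{thmBKK_Trop1}: it explicitly defers to \cite{EKK} for the subring generated by lattice polytopes, extends to arbitrary polytopes ``using continuity arguments'', and points to \cite{K03} for an independent proof. Your outline --- reduce everything to the Claim, reduce the Claim by bilinearity to powers of polytopes, localize at each cone of the intersection via $U$-factorization, and invoke the complementary-dimension (Kushnirenko--Bernstein) case --- is essentially the strategy of those references, and the formal reductions in your first three paragraphs are sound.

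Two points, however, are genuine gaps rather than routine verifications. First, your justification for importing the top-degree formula ``verbatim'' from \cite{EKK} is Proposition \ref{prpolRingVol05}, but that proposition only says that $\mathfrak S_\vol(E)$ is insensitive to rescaling the volume form; it does not pass from the subring generated by lattice polytopes (which is what \cite{EKK} treats) to the full ring generated by arbitrary convex polytopes with real vertices. That passage needs the approximation/continuity argument the paper alludes to (rational approximation of vertices, rescaling of the lattice, continuity of mixed volumes and of the weights under Hausdorff convergence), and without it your base case is unproven for the ring the theorem is actually about. Second, the localization step you flag as the ``main obstacle'' really is one: for $M=K\cap L$ you assert $V_M=V_K\cap V_L$, which fails in general (the cones may meet only along a face of dimension smaller than $\dim(V_K\cap V_L)$), and the identification $(\mathcal K_{\Gamma,m-a})_U^{{\rm fact}}=\mathcal K_{\pi_{\mathfrak v}(\Gamma)^a}$ requires $\mathfrak v$ to lie in the relative interior of the dual cone of the relevant face of $\Gamma$, whereas a basis of $U=V_K\cap V_L$ need not admit such a position simultaneously for $\Gamma$ and $\Upsilon$; Lemma \ref{lmvol^k_3} only covers positive combinations of a fixed $\mathfrak v$, so it does not by itself repair this. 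Until these two points are supplied, the proposal is an outline of the right proof rather than a proof.
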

We do not provide the proof of the theorem.
If, for $E=\R^n$, we consider, instead of the ring $\mathfrak{S}_\vol(\R^n)$, its subring generated by polytopes with vertices at lattice points,
then the proof is given in \cite{EKK}.
If we choose an arbitrary integer lattice in $E$,
then, using continuity arguments,
we can obtain the desired statements for any polytopes in $E$.
An independent proof can be found in \cite{K03}.

Recall that to a linear operator $\pi\colon U\to V$ corresponds a ring homomorphism $\pi_*\colon\mathfrak S_\vol(U)\to\mathfrak S_\vol(V)$;
see Proposition \ref{prpolRingVol}.
According to Theorem \ref{thmBKK_Trop1} (3),
this homomorphism can be described in terms of tropical fans.
Here, we provide this description in a particular case,
for further use in \textsection\ref{tropEA}.

Let $\iota$ be the embedding of the space $L$ into $E^*$.
For simplicity of formulation, as well as for convenience of application in \textsection\ref{tropEA},
let us assume that a Hermitian metric is fixed in the space $L$.
Let $\pi\colon E\to L^*$ be the adjoint operator to $\iota$.
Let $\mathfrak U$ be a homogeneous element in the ring $\mathfrak S_\vol(E)$.
Thus, $\pi_*(\mathfrak U)\in\mathfrak S_\vol(L^*)$.
Let $\mathcal E=\supp(\pi_*(\mathfrak U))$ denote the support of the tropical fan of the element $\pi_*(\mathfrak U)$,
i.e., the \emph{fan of cones without weights} in the space $L$.
\begin{proposition}\label{prConnection}
Let $\mathcal U$ be the support of the weighted fan of the element $\mathfrak U$,
i.e., $\mathcal U$ is a fan of cones in the space $E^*$.
Let $\mathcal L$ be the fan of cones in the space $E^*$ consisting of a single cone $L$.
Then, for any admissible point for the pair of fans $(\mathcal U,\mathcal L)$ in $E^*$, we have $\mathcal E=\mathcal U\cap^e\mathcal L$.
\end{proposition}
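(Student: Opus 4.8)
The plan is to prove a weighted refinement of the statement and then pass to supports. Write $m=\dim E$ and $\ell=\dim L$, so $L^\perp:=\ker\pi\subset E$ has dimension $m-\ell$, and fix a convex polytope $\Gamma_0$ that is full-dimensional in $L^\perp$. The only face of $\Gamma_0$ of dimension $\ge m-\ell$ is $\Gamma_0$ itself, and its dual cone in $E^*$ is $(L^\perp)^\perp=L$; hence by Proposition \ref{prWeightedFan} the weighted fan $\mathcal K_{\Gamma_0^{\,m-\ell}}$ consists of the single cone $L$, carrying a nonzero weight of numerical value $\vol_{m-\ell}(\Gamma_0)$, and its support is the fan $\mathcal L$. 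For homogeneous $\mathfrak U$ of degree $d$, Theorem \ref{thmBKK_Trop1}(1),(2) identifies $\mathcal K_\mathfrak U\cap^e\mathcal K_{\Gamma_0^{\,m-\ell}}$ with a tropical fan of $\mathfrak U\cdot\Gamma_0^{\,m-\ell}\in\mathfrak S_\vol(E)$, independent of the admissible point $e$, and by Definition \ref{dfeIntersect} its underlying unweighted fan is exactly $\mathcal U\cap^e\mathcal L$. Thus it suffices to prove that, under the inclusion $(L^*)^*=L\hookrightarrow E^*$,
\[
\vol_{m-\ell}(\Gamma_0)\cdot\mathcal K_{\pi_*(\mathfrak U)}\;=\;\mathcal K_\mathfrak U\cap^e\mathcal K_{\Gamma_0^{\,m-\ell}}
\]
as weighted fans; passing to supports then gives the proposition.

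Both sides of this identity are additive in $\mathfrak U$: the left because $\pi_*$ is linear and $\mathfrak V\mapsto\mathcal K_\mathfrak V$ is additive on weighted fans, the right because $\cap^e$ computes the product of tropicalizations (Theorem \ref{thmBKK_Trop1}(1),(2)), which is bilinear. By Lemma \ref{lmPoly} it is then enough to treat $\mathfrak U=\Gamma^d$ for a single convex polytope $\Gamma\subset E$, where $\pi_*(\mathfrak U)=\pi(\Gamma)^d$, and the statement reduces to classical convex geometry. For $\varphi\in L=(L^*)^*$ one has $\varphi\circ\pi=\iota(\varphi)$, so the face of $\pi(\Gamma)$ maximizing $\varphi$ is $\pi(\Delta)$, where $\Delta=\Gamma\cap\pi^{-1}(\pi(\Delta))$ is the face of $\Gamma$ maximizing $\iota(\varphi)$. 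From this one checks that the dual cone of the face $\pi(\Delta)$ of $\pi(\Gamma)$, read inside $E^*$, equals $K_\Delta\cap L$; hence the cones of $\mathcal K_{\pi(\Gamma)^d}$ are the cones $K_\Delta\cap L$ with $\iota(L)\cap\mathrm{relint}\,K_\Delta\neq\emptyset$, and the factorization/admissibility rule of Definitions \ref{dfFact}, \ref{dfeIntersect} singles out precisely those $\Delta$ for which $\pi(\Delta)$ is $d$-dimensional (equivalently $\dim(K_\Delta\cap L)=\ell-d$). It then remains to match the weight of each such top cone: on the left it has numerical value $\vol_d(\pi(\Delta))$, while on the right the zero-cone rule \eqref{mixedTropic} computes it as $\vol_{m-\ell}(\Gamma_0)$ times a sum of mixed volumes indexed by the $d$-dimensional faces of $\Delta$ singled out by the displacement; since $\Gamma_0$ spans the complementary subspace $L^\perp$ and $\pi|_\Delta$ is the projection along $L^\perp$, Proposition \ref{prFromEKK} lets the factor $\vol_{m-\ell}(\Gamma_0)$ split off, and the well-definedness of $\cap^e$ (Theorem \ref{thmBKK_Trop1}) forces the remaining sum to equal $\vol_d(\pi(\Delta))$. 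This establishes the displayed identity for $\mathfrak U=\Gamma^d$, hence in general.

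The reduction steps and the normal-fan combinatorics are routine; the genuinely delicate point is this last weight matching — showing that the factorization machinery of Definitions \ref{dfFact} and \ref{dfeIntersect}, applied to the pair $(\mathcal K_\mathfrak U,\mathcal K_{\Gamma_0^{\,m-\ell}})$, reproduces both the ``slice the normal fan of $\Gamma$ by the subspace $L$'' description of the normal fan of $\pi(\Gamma)$ and, via Proposition \ref{prFromEKK}, the correct weight $\vol_{m-\ell}(\Gamma_0)\,\vol_d(\pi(\Delta))$ on each top cone. A cleaner route to this last step is to bypass the cone-by-cone bookkeeping: characterize each side of the displayed identity through the non-degenerate pairing of Proposition \ref{prpolRing}(iv) and match them using the factorization $\vol^E(\,\cdot\,\Gamma_0^{\,m-\ell}\,)=\vol_{m-\ell}(\Gamma_0)\cdot\vol^{L^*}(\pi_*(\,\cdot\,))$ of mixed volumes supplied by Proposition \ref{prFromEKK}, together with the surjectivity of $\pi_*$ from Proposition \ref{prpolRingVol}. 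Everything else is bookkeeping with normal fans.
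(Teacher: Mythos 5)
The paper itself omits the proof of Proposition \ref{prConnection}, saying only that it ``consists of several references to definitions and statements from \textsection\ref{polRing3}, \textsection\ref{tropS}''; your route --- realizing $\mathcal L$ as the support of the tropical fan of $\Gamma_0^{\,m-\ell}$ for a polytope $\Gamma_0$ spanning $\ker\pi=L^\perp$, invoking Theorem \ref{thmBKK_Trop1} to identify $\mathcal K_{\mathfrak U}\cap^e\mathcal K_{\Gamma_0^{\,m-\ell}}$ with a tropical fan of $\mathfrak U\cdot\Gamma_0^{\,m-\ell}$, and reducing by additivity and Lemma \ref{lmPoly} to $\mathfrak U=\Gamma^d$ --- is exactly the kind of assembly of \textsection\ref{polRing3}--\textsection\ref{tropS} material the author has in mind, and the reduction steps are sound. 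Two points, however, are asserted rather than proved. First, the normal cone in $L$ of a face $F$ of $\pi(\Gamma)$ is in general a \emph{union} of slices $K_\Delta\cap L$ over the several faces $\Delta$ of $\Gamma$ with $\pi(\Delta)=F$ and $\mathrm{relint}\,K_\Delta\cap L\neq\emptyset$, not a single $K_\Delta\cap L$; this only costs you a refinement of fans, which the equivalence relation of Definition \ref{dfWeightedFan} absorbs, but it should be said.

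The genuine gap is the final weight matching. ``The well-definedness of $\cap^e$ forces the remaining sum to equal $\vol_d(\pi(\Delta))$'' is not an argument: independence of $e$ tells you the sum is some constant, not which constant. What is actually needed is the classical fact that the $d$-dimensional faces $\Delta_{K'}$ of $\Gamma$ selected by the displacement $e$ (those whose dual cones meet $-e+L$) are precisely the ``upper'' faces with respect to the projection $\pi$, and that their images $\pi(\Delta_{K'})$ tile $\pi(\Delta)$, whence $\sum_{K'}\vol_d(\pi(\Delta_{K'}))=\vol_d(\pi(\Delta))$ --- a Lemma \ref{lmNSymm}--type decomposition. Your proposed ``cleaner route'' via the pairing of Proposition \ref{prpolRing}(iv) and the factorization of Proposition \ref{prFromEKK} identifies $\mathfrak U\cdot\Gamma_0^{\,m-\ell}$ with $\vol_{m-\ell}(\Gamma_0)$ times an element determined by $\pi_*(\mathfrak U)$, but it does not by itself relate the tropical fan of $\pi_*(\mathfrak U)$ \emph{in $L$} to the tropical fan of $\mathfrak U\cdot\Gamma_0^{\,m-\ell}$ \emph{in $E^*$} --- that comparison of fans living in different spaces is the content of the proposition, so as stated this shortcut is circular. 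Note finally that since the proposition concerns only supports, you could bypass weights entirely: the support-level statement for a single polytope (``slice of the $(m-d)$-skeleton of the normal fan of $\Gamma$ by $L$, stabilized by $e$, equals the $(\ell-d)$-skeleton of the normal fan of $\pi(\Gamma)$'') together with the fact that supports of sums of weighted fans are unions of supports already yields $\mathcal E=\mathcal U\cap^e\mathcal L$, with no volume computation needed.
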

The proof of the proposition is omitted.
It consists of several references to definitions and statements from \textsection\ref{polRing3}, \textsection\ref{tropS}.

\section{Newtonizations of Varieties}\label{Newt}
\subsection{Newtonizations of Algebraic Varieties}\label{AlgNewt}
We provide a description of the ring of conditions $\mathcal{C}(\T)$ of the $n$-dimensional complex torus $\T$ from \cite{EKK}.
Let $\mathfrak{T}$ be the Lie algebra of the torus $\T$,
$\re\mathfrak{T}=i\im\mathfrak{T}$,
where $\im\mathfrak{T}\subset\mathfrak{T}$ is the Lie algebra of the maximal compact subtorus in $\T$.
Let $\Z^n$ denote the character lattice of the torus $\T$ in the space of characters $\re\mathfrak{T}^*$
(i.e., in the space of linear functionals on $\re\mathfrak{T}$).
The Newton polytopes of Laurent polynomials are convex polytopes in $\re\mathfrak{T}^*$,
with vertices belonging to the lattice $\Z^n$.
Denote by $\mathfrak{S}_\vol(\Z^n)$ the subring of the ring $\mathfrak{S}_\vol(\re\mathfrak{T}^*)$ generated by Newton polytopes.
\begin{theorem}\label{ringEKK} {\rm\cite[Theorem 6.3.1]{EKK}}
For any equidimensional algebraic variety $X\subset\T$ of codimension $k$, there exists an element $\nwt(X)\in\left(\mathfrak{S}_\vol(\Z^n)\right)_k$,
hereinafter called the Newtonization of the variety $X$,
such that

{\rm(1)} for a hypersurface $X$ defined by the equation $f=0$,
$\nwt(X)=\Delta(f)$, where $\Delta(f)$ is the Newton polytope of the polynomial $f$

{\rm(2)} for numerically equivalent
varieties $X$ and $Y$, $\nwt(X)=\nwt(Y)$

{\rm(3)} if we retain the notation $\nwt(\bar{X})$ for the numerical equivalence class $\bar{X}$ of the variety $X$,
then the map $\nwt\colon\mathcal{C}(\T)\to \mathfrak{S}_\vol(\Z^n)$
is an isomorphism of graded rings.
\end{theorem}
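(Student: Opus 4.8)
The statement is quoted from \cite{EKK}; the plan is to exhibit $\mathcal C(\T)$ and $\mathfrak S_\vol(\Z^n)$ as two quotients of one free graded algebra by the same kernel. Let $R=\bigoplus_k R_k$ be the graded polynomial algebra on the group of virtual lattice polytopes in $\re\mathfrak T^*$, so that $R_1$ is that group. There are two surjections out of $R$: the tautological $p\colon R\twoheadrightarrow\mathfrak S_\vol(\Z^n)$, with kernel $R\cap J_\vol$; and a ring homomorphism $q\colon R\to\mathcal C(\T)$ determined by sending a lattice polytope $\Delta$ in degree $1$ to the numerical equivalence class of any hypersurface $\{f=0\}$ with $\Delta(f)=\Delta$. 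The theorem reduces to three assertions: $q$ is well defined, $q$ is surjective, and $\ker q=\ker p$. Granting them, the isomorphism $\bar q\colon\mathfrak S_\vol(\Z^n)=R/\ker p=R/\ker q\xrightarrow{\sim}\mathcal C(\T)$ induced by $q$ is a graded ring isomorphism and $\nwt:=\bar q^{-1}$ is the required map; property (1) holds by construction of $q$, and (2) holds because $\nwt$ is defined on numerical classes.

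To set up $q$ I would pass to toric compactifications. Identify $\mathcal C(\T)$ with the direct limit $\varinjlim_\Sigma A^*(X_\Sigma)$ of Chow rings of smooth projective toric varieties $X_\Sigma$, with transition maps the pullbacks along the birational toric morphisms induced by refinements of fans; this is the standard description of the ring of conditions of the torus, going back to \cite{CP,C} and made explicit in \cite{FS}. When $\Sigma$ refines the normal fan of $\Delta(f)$, the closure of $\{f=0\}$ in $X_\Sigma$ is rationally equivalent to the nef toric divisor determined by $\Delta(f)$, so its class in $A^1(X_\Sigma)$ depends only on $\Delta(f)$ up to translation and is additive under Minkowski sums; this gives well-definedness of $q$ on $R_1$, and $q$ extends to all of $R$ because $R$ is free. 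Surjectivity of $q$ follows because each $A^*(X_\Sigma)$ is generated in degree $1$, while its degree-$1$ part is spanned by classes of hypersurfaces of prescribed lattice Newton polytope --- every divisor class on a projective toric variety being a difference of very ample, hence ``Newton-polytope'', divisors.

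The equality $\ker q=\ker p$ is where the enumerative content enters. By Definition \ref{dfGor}(3), $x\in R$ lies in $\ker p=R\cap J_\vol$ exactly when $I_\vol(x\cdot y)=0$ for all $y\in R$, where $I_\vol$ is the functional with $I_\vol(\Delta_1\cdots\Delta_n)=\vol(\Delta_1,\dots,\Delta_n)$, zero outside top degree (testing against lattice rather than arbitrary virtual polytopes is harmless, since $I_\vol$ is continuous and homogeneous and rational polytopes are dense). On the other hand, a class $q(x)\in\mathcal C_k$ vanishes precisely when its intersection index with every class in $\mathcal C_{n-k}$ is zero --- that is the definition of numerical equivalence --- which, since $q$ is onto, amounts to $\chi\bigl(q(x)\cdot q(y)\bigr)=0$ for all $y\in R$, where $\chi\colon R_n\twoheadrightarrow\mathcal C_n\cong\R$ is the ``number of points'' functional. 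The Bernstein--Kushnirenko theorem identifies $\chi$ with $n!\,I_\vol$ on $R_n$: the intersection index of $n$ torus hypersurfaces equals $n!$ times the mixed volume of their Newton polytopes. Thus $\ker q$ and $\ker p$ are cut out by proportional pairings and coincide.

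The main obstacle is entirely the toric-geometry input of the second step: that $\mathcal C(\T)$ really is $\varinjlim A^*(X_\Sigma)$ with the expected ring structure, that this limit is generated in degree $1$, and that the self- and mixed intersection numbers of nef toric divisors are the (mixed) volumes of their polytopes. Once that structural picture is in place, matching it with the purely convex-geometric ring $\mathfrak S_\vol(\Z^n)$ --- which by Definition \ref{dfGor} is ``the polynomial algebra on virtual lattice polytopes modulo the kernel of the mixed-volume form'' --- is formal. An alternative route avoiding compactifications would realize $\mathcal C_k(\T)$ as the group of codimension-$k$ Minkowski weights on a complete fan, match the intersection product with the fan-displacement product, and then invoke the description of the homogeneous components of $J_\vol$ in Theorem \ref{thmIdealIntersection}; but this rests on the same Bernstein--Kushnirenko computation.
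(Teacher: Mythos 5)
The paper does not prove this theorem; it is quoted verbatim from \cite[Theorem 6.3.1]{EKK}, and the only argumentative traces here are the uses made of it (e.g.\ Corollary \ref{corBKK}, which invokes generation of the ring of conditions in degree $1$ together with the Kushnirenko--Bernstein--Khovanskii theorem). Your reconstruction --- presenting $\mathcal C(\T)$ and $\mathfrak S_\vol(\Z^n)$ as quotients of the free algebra on virtual lattice polytopes and identifying the two kernels because the numerical pairing and the mixed-volume pairing are proportional by Bernstein--Kushnirenko, with the structural input ($\mathcal C(\T)\cong\varinjlim_\Sigma A^*(X_\Sigma)$, generation in degree $1$) supplied by toric geometry --- is correct and is essentially the argument of the cited source, so there is nothing to flag.
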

Recall that the homogeneous component $\left(\mathfrak{S}_\vol(\re\mathfrak{T}^*)\right)_n$
of degree $n$ of the ring $\mathfrak{S}_\vol(\re\mathfrak{T}^*)$ is one-dimensional.
The choice of a linear functional $\kappa$ on the space $\left(\mathfrak{S}_\vol(\re\mathfrak{T}^*)\right)_n$
allows us to identify
an element $\chi\in\left(\mathfrak{S}_\vol(\re\mathfrak{T}^*)\right)_n$ with a real number $\kappa(\chi)$.
\begin{corollary}\label{corBKK}
Let $\C[x_1,x_1^{-1},\ldots,x_n,x_n^{-1}]$ be the ring of polynomials on the torus $\T$.
Denote by $\chi_i$ the Newton polytope of the polynomial $x_i-1$.
Choose a linear functional $\kappa$ on the one-dimensional space $\left(\mathfrak{S}_\vol(\re\mathfrak{T}^*)\right)_n$ such that
$\kappa(\chi_1\cdot\ldots\cdot\chi_n)=1$.
Then

{\rm(1)} if $\dim X + \dim Y = n$,
then the intersection index of the varieties $X$ and $Y$ is equal to $\kappa\left(\nwt(X)\cdot\nwt(Y)\right)$

{\rm(2)}
$\kappa(\Gamma_1\cdot\ldots\cdot\Gamma_n)=n!\vol(\Gamma_1,\ldots,\Gamma_n)$.
\end{corollary}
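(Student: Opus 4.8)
The plan is to derive both statements directly from Theorem \ref{ringEKK} together with the standard dictionary between intersection numbers in the torus and mixed volumes (the Kushnirenko--Bernstein theorem). First I would verify that $\kappa$ is well-defined: the space $\left(\mathfrak{S}_\vol(\re\mathfrak{T}^*)\right)_n$ is one-dimensional by Proposition \ref{prpolRing}(ii), and $\chi_1\cdot\ldots\cdot\chi_n$ is a nonzero element of it, since by the Bernstein--Kushnirenko theorem $n!\,\vol(\chi_1,\ldots,\chi_n)$ equals the intersection number of the $n$ generic hyperplane sections $x_i=1$, which is $1$; hence there is a unique linear functional $\kappa$ with $\kappa(\chi_1\cdot\ldots\cdot\chi_n)=1$, and it is an isomorphism $\left(\mathfrak{S}_\vol(\re\mathfrak{T}^*)\right)_n\to\R$.

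For part (2), I would argue that the two multilinear $n$-forms $(\Gamma_1,\ldots,\Gamma_n)\mapsto\kappa(\Gamma_1\cdot\ldots\cdot\Gamma_n)$ and $(\Gamma_1,\ldots,\Gamma_n)\mapsto n!\,\vol(\Gamma_1,\ldots,\Gamma_n)$ on the space of virtual polytopes in $\re\mathfrak{T}^*$ agree. By polarization it suffices to check the equality on the diagonal, i.e.\ $\kappa(\Gamma^n)=n!\,\vol(\Gamma)=\vol_n(\Gamma)\cdot n!/n!\cdot\ldots$; more precisely, since both forms are symmetric and multilinear, they are determined by their values $\kappa(\Gamma^n)$ and $n!\vol(\Gamma,\ldots,\Gamma)$ on elements $\Gamma^n$, $\Gamma$ a genuine polytope (which span $S_n$ by Lemma \ref{lmPoly}). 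But by the definition of the ideal $J_\vol$ and of $I_\vol$ (Definition \ref{dfGor}), the functional $I_\vol$ on $\left(\mathfrak{S}_\vol(\re\mathfrak{T}^*)\right)_n$ satisfies $I_\vol(\Gamma_1\cdot\ldots\cdot\Gamma_n)=\vol(\Gamma_1,\ldots,\Gamma_n)$; thus $\kappa$ and $n!\,I_\vol$ are two linear functionals on the one-dimensional space $\left(\mathfrak{S}_\vol(\re\mathfrak{T}^*)\right)_n$, and they coincide because they agree on $\chi_1\cdot\ldots\cdot\chi_n$: indeed $n!\,I_\vol(\chi_1\cdot\ldots\cdot\chi_n)=n!\,\vol(\chi_1,\ldots,\chi_n)=1=\kappa(\chi_1\cdot\ldots\cdot\chi_n)$. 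Hence $\kappa=n!\,I_\vol$ on degree $n$, which is exactly statement (2).

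For part (1), let $X,Y\subset\T$ be equidimensional varieties with $\dim X+\dim Y=n$, so $\codim X+\codim Y=n$. By Theorem \ref{ringEKK}(3), $\nwt$ is a ring isomorphism $\mathcal{C}(\T)\to\mathfrak{S}_\vol(\Z^n)$ respecting the grading, so $\nwt(\bar X)\cdot\nwt(\bar Y)\in\left(\mathfrak{S}_\vol(\Z^n)\right)_n$ equals $\nwt(\bar X\cdot\bar Y)$, the Newtonization of the zero-dimensional class $\bar X\cdot\bar Y$. Now the intersection index of $X$ and $Y$, by the construction of the ring of conditions of the torus, is precisely the degree of this zero-cycle, i.e.\ the image of $\bar X\cdot\bar Y$ under the canonical identification $\mathcal{C}_n(\T)\cong\Z$ sending the class of a single reduced point to $1$. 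The point is that this canonical identification corresponds, under $\nwt$, to the functional $\kappa$: the class of a single point is represented by the transversal intersection of the $n$ hyperplanes $x_i=1$, whose Newtonizations are $\chi_i$ by Theorem \ref{ringEKK}(1), so its image under $\nwt$ is $\chi_1\cdot\ldots\cdot\chi_n$, on which $\kappa$ takes the value $1$. Therefore the intersection index of $X$ and $Y$ equals $\kappa(\nwt(\bar X)\cdot\nwt(\bar Y))=\kappa(\nwt(X)\cdot\nwt(Y))$, using Theorem \ref{ringEKK}(2) to pass from classes to representatives.

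The main obstacle, such as it is, is bookkeeping rather than substance: one must be careful that the "intersection index" of varieties of complementary dimension in the torus is normalized so that a single reduced point has index $1$ (and not, say, weighted by torsion), and that the numerical-equivalence class of a point is well-defined and independent of the point — both facts are part of the cited construction in \cite{EKK}, so no new argument is needed. Everything else is a one-dimensionality argument together with the Bernstein--Kushnirenko theorem in the special case of coordinate hyperplanes.
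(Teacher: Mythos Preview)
Your proof is correct and follows essentially the same approach as the paper's, which simply observes (via Proposition \ref{prpolRing} and Theorem \ref{ringEKK}) that the ring of conditions is generated by hypersurface classes and then invokes the Kushnirenko--Bernstein--Khovanskii theorem. Your treatment of part (2)---noting that $\kappa$ and $n!\,I_\vol$ are two linear functionals on the one-dimensional space $\left(\mathfrak{S}_\vol(\re\mathfrak{T}^*)\right)_n$ agreeing on the nonzero element $\chi_1\cdots\chi_n$---is slightly more direct than the paper's reduction, and your handling of part (1) via identifying the class of a single point with $\chi_1\cdots\chi_n$ under $\nwt$ is the natural unpacking of what the paper leaves implicit; the substance is the same.
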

\begin{proof}
According to Proposition \ref{prpolRing} (iv),
the ring of conditions is generated by hypersurfaces.
Therefore, both statements follow from the Kushnirenko-Bernstein-Khovanskii theorem.
\end{proof}
In \textsection\ref{EANewt}, the concept of the Newtonization of \EA-varieties will be defined;
see Definitions \ref{dfNewtonizationET} and \ref{dfNewtonizationE}.
For this purpose, the following properties of the Newtonization of algebraic varieties are used.
\begin{corollary}\label{corNewt1}
The following holds:

{\rm(1)} $\forall \tau\in\T\colon\: \nwt(\tau X)=\nwt(X)$

{\rm(2)} For any varieties $X,Y\subset\T$,
there exists an algebraic hypersurface $D(X,Y)\subset\T$
such that the following is true: if $\tau\not\in D(X,Y)$, then $\nwt(\tau X\cap Y)=\nwt(X)\cdot\nwt(Y)$.
\end{corollary}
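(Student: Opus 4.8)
The plan is to deduce both statements directly from the properties of the Newtonization map recorded in Theorem \ref{ringEKK}, using only the translation-invariance of numerical equivalence together with the standard genericity (``moving'') statement for the action of the torus on itself; no new geometry is needed. For part (1) I would first note that $\tau X$ is numerically equivalent to $X$ for every $\tau\in\T$. To see this, unwind the definition of the intersection index: $I(\tau X,Z)$ counts the points of $\sigma(\tau X)\cap Z$ for a generic $\sigma\in\T$, and since $\sigma\mapsto\sigma\tau$ is an automorphism of $\T$ carrying generic elements to generic elements, this count equals $I(X,Z)$ for every variety $Z$ of complementary dimension. Hence $\tau X$ and $X$ lie in the same numerical equivalence class, and Theorem \ref{ringEKK} (2) gives $\nwt(\tau X)=\nwt(X)$.

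For part (2) I would invoke Kleiman's transversality theorem for the transitive action of $\T$ on itself: there is a proper closed subset of $\T$, which can be enlarged to the zero set $D(X,Y)$ of a single nonzero regular function, off of which $\tau X\cap Y$ is equidimensional of codimension $\codim X+\codim Y$. For such $\tau$, the numerical equivalence class of $\tau X\cap Y$ is, by the construction of the ring $\mathcal C(\T)$, independent of $\tau$ and equal to the product $\bar X\cdot\bar Y$ of the classes of $X$ and $Y$. Applying the ring isomorphism of Theorem \ref{ringEKK} (3) together with Theorem \ref{ringEKK} (2) then yields
$$
\nwt(\tau X\cap Y)=\nwt(\bar X\cdot\bar Y)=\nwt(\bar X)\cdot\nwt(\bar Y)=\nwt(X)\cdot\nwt(Y),
$$
which is the asserted equality.

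The only genuinely delicate point is the assertion used in part (2) that, for generic $\tau$, the class of $\tau X\cap Y$ is exactly the ring-theoretic product $\bar X\cdot\bar Y$ and is independent of the generic choice of $\tau$; this is precisely what the construction of the ring of conditions establishes (via Kleiman transversality), so here I would simply cite it. Everything else is bookkeeping with the already-established properties of $\nwt$.
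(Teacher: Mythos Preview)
Your proposal is correct and follows essentially the same approach as the paper: the paper's proof is the single sentence ``Theorem \ref{ringEKK} reduces the statement to the standard properties of the ring of conditions of the torus $\T$,'' and what you have written is precisely an unpacking of those standard properties (translation-invariance of numerical equivalence for (1), Kleiman-type transversality plus the definition of the product in $\mathcal C(\T)$ for (2)).
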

\noindent
\emph{Proof.}
Theorem \ref{ringEKK} reduces the statement to the standard properties of the ring of conditions of the torus $\T$.

\begin{proposition}\label{prNewtA}
Let $F\colon\T\to\A$ be a homomorphism of complex tori,
$dF\colon\mathfrak T\to\mathfrak A$ be the differential of the map $F$ at the identity point,
and $\mathfrak f\colon \re\mathfrak A^*\to\re\mathfrak T^*$ be the restriction of the linear operator adjoint to the differential $dF$ on the character space.
For an algebraic variety $A$ in the torus $\A$,
denote by $B$ its preimage under the homomorphism $F$.
Then
$\mathfrak f_*(\nwt(A))=\nwt(B)$,
where the ring homomorphism $\mathfrak f_*\colon\mathfrak S_\vol(\re\mathfrak A^*)\to\mathfrak S_\vol(\re\mathfrak T^*)$ is defined in {\rm\textsection\ref{polRing}};
see Proposition {\rm\ref{prpolRing}}.
\end{proposition}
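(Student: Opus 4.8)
The plan is to pass, via Theorem~\ref{ringEKK}, to the ring of polytopes, reduce to the case where $A$ is a hypersurface, and then bootstrap by multiplicativity. First, the restriction of $\mathfrak f$ to the character lattice $\Z_\A\subset\re\mathfrak A^*$ is exactly the homomorphism $\Z_\A\to\Z_\T$ of character lattices induced by $F$; in particular $\mathfrak f(\Z_\A)\subset\Z_\T$, so by Proposition~\ref{prpolRingVol}(i) the ring homomorphism $\mathfrak f_*$ carries lattice polytopes to lattice polytopes and restricts to a ring homomorphism $\mathfrak S_\vol(\Z_\A)\to\mathfrak S_\vol(\Z_\T)$. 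Using the identifications $\mathcal C(\A)\cong\mathfrak S_\vol(\Z_\A)$ and $\mathcal C(\T)\cong\mathfrak S_\vol(\Z_\T)$ of Theorem~\ref{ringEKK}, this gives a ring homomorphism $F^!\colon\mathcal C(\A)\to\mathcal C(\T)$ with $\nwt\circ F^!=\mathfrak f_*\circ\nwt$, and the assertion becomes the statement that $F^{-1}(A)$ represents the class $F^!(\bar A)$, where $\bar A$ denotes the class of $A$.

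For $A$ a hypersurface this is a direct computation. Write $A=\{g=0\}$ with $g=\sum_\chi c_\chi\chi$, $\supp g=\{\chi\colon c_\chi\ne0\}$, so that $\nwt(A)=\Delta(g)=\text{conv}(\supp g)$ by Theorem~\ref{ringEKK}(1). The pullback $F^*g$ is obtained from $g$ by replacing each monomial $\chi$ by the monomial $\mathfrak f(\chi)$, i.e.\ $F^*g=\sum_\chi c_\chi\,\mathfrak f(\chi)$, and $F^{-1}(A)=\{F^*g=0\}$; hence $\Delta(F^*g)=\text{conv}\bigl(\mathfrak f(\supp g)\bigr)=\mathfrak f(\Delta(g))$. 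By Proposition~\ref{prpolRingVol}(i) the last polytope equals $\mathfrak f_*(\nwt(A))$, and by Theorem~\ref{ringEKK}(1) it equals $\nwt(F^{-1}(A))$, which settles the hypersurface case. When $F$ is surjective the linear map $\mathfrak f$ is injective, so no cancellation of monomials in $F^*g$ occurs and these identities are unconditional; in general one must keep $A$ in general position relative to $F$ --- at least $F(\T)\not\subset A$, since otherwise $F^{-1}(A)=\T$ and the two sides of the claimed identity lie in different graded components.

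To pass to an arbitrary equidimensional $A$, recall that $\mathcal C(\A)$ is generated as a $\Z$-algebra by the classes of hypersurfaces, because $\mathfrak S_\vol(\Z_\A)$ is generated by the Newton polytopes, which are the Newtonizations of hypersurfaces. Given hypersurfaces $A_1,\dots,A_k$, Corollary~\ref{corNewt1}(2) represents the product $\bar A_1\cdots\bar A_k$ by a generic translate $\tau_1A_1\cap\dots\cap\tau_kA_k$, whose Newtonization is $\nwt(A_1)\cdots\nwt(A_k)$; its $F$-preimage equals $\bigcap_iF^{-1}(\tau_iA_i)$, and --- using Corollary~\ref{corNewt1}(1) on the target torus --- for generic $\tau_i$ this is again a proper intersection of representatives of the classes $\overline{F^{-1}(A_i)}$, so its Newtonization is $\prod_i\nwt(F^{-1}(A_i))$. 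By the hypersurface case this product equals $\prod_i\mathfrak f_*(\nwt(A_i))=\mathfrak f_*\bigl(\nwt(A_1)\cdots\nwt(A_k)\bigr)=\nwt\bigl(F^!(\bar A_1\cdots\bar A_k)\bigr)$; since such products span $\mathcal C(\A)$, the Proposition follows.

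The step I expect to be the main obstacle is the last one when $F$ is not surjective: a translate $\tau_iA_i$ by $\tau_i\in\A$ need not pull back to a translate of $F^{-1}(A_i)$, and $\bigcap_iF^{-1}(\tau_iA_i)$ need not be a proper intersection when the $\tau_i$ are confined to a small subvariety of $\A$. The natural remedy is to factor $F$ through its image, $\T\xrightarrow{\,q\,}\T':=F(\T)\xrightarrow{\,\iota\,}\A$, so that $F^{-1}=q^{-1}\circ\iota^{-1}$ and $\mathfrak f_*=(\mathfrak f_q)_*\circ(\mathfrak f_\iota)_*$, and to argue separately: for the surjection $q$ generic translates on $\T'$ lift to generic translates on $\T$, so the argument above applies unchanged; for the subtorus inclusion $\iota$ one has $\iota^{-1}(A)=A\cap\T'$, and the relevant fact --- that restriction of algebraic cycles to the subtorus $\T'$ induces the ring homomorphism of rings of conditions dual to the character-space projection attached to $\iota$ --- is part of the description of the ring of conditions of a torus given in \cite{EKK}, while the hypersurface computation of the second paragraph applies verbatim in both cases.
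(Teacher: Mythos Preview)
Your approach is essentially the paper's: verify the hypersurface case by the evident computation on Newton polytopes, then reduce the general case to hypersurfaces using that the ring of conditions is generated in degree~$1$ (the paper phrases this reduction via Proposition~\ref{prpolRing}(iv), you via (iii) and multiplicativity, which amounts to the same thing). Your extra care about the non-surjective case is well placed --- the statement as written can indeed fail there --- but note that in the paper the proposition is only invoked for the surjective restriction maps $\rho_{H,G}$, where your argument goes through cleanly without the factorization step.
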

\noindent
\emph{Proof.}
For $\codim A=1$, it follows from the standard properties of Newton polytopes.
According to Proposition \ref{prpolRing} (iv),
the required statement is reduced to the case $\codim A=1$.

\subsection{Newtonizations of \EA-Varieties}\label{EANewt}
The notations introduced in \textsection\ref{polRing} and at the beginning of \textsection\ref{intro5} are used here.
Let $\omega^G\colon\mathfrak T^*_G\to{\C^n}^*$ denote the linear operator
adjoint to the differential $d\omega_G\colon\C^n\to\mathfrak T_G$ of the homomorphism of the standard winding $\omega_G\colon\C^n\to\T_G$ at the point $0\in\C^n$.
Further, we retain the notation $\omega^G$ to denote the restriction of the map $\omega^G$
to the character space $\re\mathfrak T^*_G$ of the torus $\T_G$.
\begin{lemma}\label{lmZG}
{\rm(1)} It is true that the restriction of the map $\omega^G$ to the character lattice $\Z_G$ of the torus $\T_G$ is an isomorphism of groups $\omega^G\colon\Z_G\to G$.

{\rm(2)} The map $\Gamma\mapsto \omega^G(\Gamma)$ is a surjective map from the space of virtual polytopes in the character space $\mathfrak T^*_G$ with vertices in the lattice $\Z_G$ to the space of virtual polytopes in ${\C^n}^*$ with vertices in the group $G$.
\end{lemma}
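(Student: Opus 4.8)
The plan is to unwind the definitions of the standard winding map and its adjoint, and then check the two claims directly. First I would set up the dualities carefully. The standard winding $\omega_G\colon\C^n\to\T_G$ is, by Definition \ref{winding}, the homomorphism $z\mapsto(g\mapsto\E^{g(z)})$. Its differential $d\omega_G\colon\C^n\to\mathfrak T_G$ at $0$ is therefore the linear map whose composition with each character $g\in G$ (viewed as a linear functional on $\mathfrak T_G$ once we pass to Lie algebras, i.e.\ $g\in\Z_G\subset\re\mathfrak T_G^*$) is exactly the functional $z\mapsto g(z)$ on $\C^n$. In other words, for $g\in\Z_G$ the composite $\langle g, d\omega_G(\,\cdot\,)\rangle$ equals the element $g\in G\subset{\C^n}^*$. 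But by definition $\omega^G$ is the adjoint of $d\omega_G$ restricted to $\re\mathfrak T_G^*$, so $\omega^G(g)=\langle g, d\omega_G(\,\cdot\,)\rangle = g$ as an element of ${\C^n}^*$. This is precisely statement (1): $\omega^G$ sends the lattice point $g\in\Z_G$ to the functional $g\in G$. Since $G$ contains a basis of ${\C^n}^*$ (our standing assumption on $G$) and $\Z_G$ is, by construction, the character lattice abstractly identified with $G$, this restriction is an isomorphism of abelian groups $\Z_G\xrightarrow{\sim}G$. The only subtlety to address is that $\Z_G$ is \emph{defined} as the character lattice of $\T_G$ sitting inside $\re\mathfrak T_G^*$, whereas $G$ is a subgroup of ${\C^n}^*$; the content of (1) is exactly that the geometric map $\omega^G$ realizes the tautological identification $\Z_G\cong G$, which is what the computation above shows.

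For statement (2), I would argue as follows. A virtual polytope in $\mathfrak T_G^*$ with vertices in $\Z_G$ is a formal difference $\Delta-\Gamma$ of honest convex polytopes whose vertices lie in $\Z_G$; applying the $\R$-linear extension of $\omega^G$ (which by (1) maps $\Z_G$ onto $G$) sends vertices to points of $G$, hence $\omega^G(\Delta)-\omega^G(\Gamma)$ is a virtual polytope in ${\C^n}^*$ with vertices in $G$. Surjectivity is then immediate: given any virtual polytope $P-Q$ in ${\C^n}^*$ with vertices in $G$, choose for each vertex the (unique) lattice point in $\Z_G$ mapping to it under the isomorphism of (1), take the convex hulls $\tilde P,\tilde Q$ of the resulting finite sets, and note $\omega^G(\tilde P-\tilde Q)=P-Q$ because $\omega^G$ is linear and a polytope is the convex hull of its vertices. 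The one point requiring a word of care is that $\omega^G\colon\re\mathfrak T_G^*\to{\C^n}^*$ need not be injective in general — $G$ may have rank larger than $2n$ — so distinct virtual polytopes upstairs can map to the same one downstairs; but this affects only injectivity, not surjectivity, so it does not obstruct (2).

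I do not expect any serious obstacle here: both parts are essentially bookkeeping about the adjoint of the winding map. The one place to be slightly careful is keeping straight the three identifications in play — $G$ as an abstract finitely generated group, $G$ as a subgroup of ${\C^n}^*$, and $\Z_G$ as a lattice in $\re\mathfrak T_G^*$ — and checking that $\omega^G$ intertwines them correctly; that is the computation carried out in the first paragraph above. Everything else follows formally from linearity and the fact that a convex polytope is the convex hull of its vertex set.
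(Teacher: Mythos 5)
Your proof is correct and follows essentially the same route as the paper: the paper invokes Pontryagin duality for the abstract isomorphism $G\cong\Z_G$ and asserts that $\omega^G$ realizes it, which is exactly the explicit computation you carry out in your first paragraph, and part (2) is deduced in both cases from linearity of $\omega^G$ (your vertex-lifting argument being the natural way to spell that out).
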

\noindent
\emph{Proof.}
According to the Pontryagin duality theorem, the groups $G$ and $\Z_G$ are isomorphic.
It is easy to verify that this isomorphism can be established using the map $\omega^G$.
This implies statement (1).
Statement (2) follows from the linearity of the map $\omega^G\colon\re\mathfrak T^*_G\to{\C^n}^*$.
\begin{corollary}\label{corNewtE0}
Let $X$ be an \EA-hypersurface with the equation $f=0$, where $f\in E_G$, $\gamma$ is the Newton polytope of the exponential sum $f$, and $X_G$ is the $G$-model of the \EA-variety $X$.
Denote by $\Gamma$ the Newton polytope of the algebraic hypersurface $X_G$.
Then $\omega^G(\Gamma)=\gamma$.
\end{corollary}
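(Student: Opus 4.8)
The plan is to read the identity $\omega^G(\Gamma)=\gamma$ off the ring isomorphism $\omega_G^*\colon\C[\T_G]\to E_G$ of Corollary \ref{corGvar1}, by tracking what this isomorphism does to monomials. First I would set $P=(\omega_G^*)^{-1}(f)\in\C[\T_G]$; by Definition \ref{dfModel} the $G$-model $X_G$ is then the hypersurface $\{P=0\}$, so $\Gamma=\Delta(P)$ is the Newton polytope of $P$. Writing $P=\sum_{m}c_m\chi_m$ with $\chi_m$ the character of $\T_G$ attached to $m\in\Z_G$ and the sum over a finite set $M=\{m\colon c_m\ne 0\}$, we have $\Gamma=\mathrm{conv}(M)$, a polytope in $\re\mathfrak T_G^*$ with vertices in $\Z_G$.

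The one computation to carry out is the effect of $\omega_G^*$ on a single character. For $m\in\Z_G$, let $d\chi_m\colon\mathfrak T_G\to\C$ be the differential of $\chi_m$ at the identity, so $\chi_m(\exp v)=\E^{d\chi_m(v)}$ for $v\in\mathfrak T_G$; since $\Z_G\subset\re\mathfrak T_G^*$ is by definition the lattice of differentials of characters, $d\chi_m=m$. Because $\omega_G$ is a Lie group homomorphism out of a vector group, it factors as $\omega_G=\exp\circ\,d\omega_G$, whence $\omega_G^*(\chi_m)(z)=\chi_m(\omega_G(z))=\E^{(d\chi_m\circ d\omega_G)(z)}$. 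By the definition of $\omega^G$ as the restriction to $\re\mathfrak T_G^*$ of the adjoint of $d\omega_G$, we get $(d\chi_m\circ d\omega_G)(z)=\omega^G(m)(z)$, so $\omega_G^*(\chi_m)$ is the exponential monomial $z\mapsto\E^{\omega^G(m)(z)}$ with exponent $\omega^G(m)\in G$ (the membership in $G$ being Lemma \ref{lmZG}(1)).

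Applying this termwise gives $f=\omega_G^*(P)=\sum_{m\in M}c_m\,\E^{\omega^G(m)(\,\cdot\,)}$. By Lemma \ref{lmZG}(1) the map $\omega^G$ is an isomorphism $\Z_G\to G$, in particular injective on $\Z_G$, so the exponents $\omega^G(m)$, $m\in M$, are pairwise distinct and no cancellation occurs: $\supp(f)=\omega^G(M)$. Taking convex hulls and using that $\omega^G$ is the restriction of a linear map — hence commutes with the convex-hull operation — yields $\gamma=\Gamma(f)=\mathrm{conv}(\supp f)=\omega^G(\mathrm{conv}(M))=\omega^G(\Gamma)$, which is the claim, as an equality of polytopes in ${\C^n}^*$ on the nose.

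I do not anticipate a genuine obstacle: the corollary is essentially Lemma \ref{lmZG} transcribed to the level of Newton polytopes. The only points needing a little care are the bookkeeping in the second paragraph — that the differential of $\chi_m$ is precisely the lattice point $m$, and that the adjoint differential $d\omega_G^*$ agrees with $\omega^G$ on the character space — together with the observation that injectivity of $\omega^G$ on $\Z_G$ rules out cancellation among the monomials of $f$, which is exactly what makes the equality hold without a translation ambiguity.
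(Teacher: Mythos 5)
Your proof is correct and is essentially the paper's own argument: the paper disposes of this corollary with the single line ``It follows from Lemma \ref{lmZG},'' and your write-up is precisely the unwinding of that reference — identifying $\omega_G^*(\chi_m)$ with the exponential monomial of exponent $\omega^G(m)$, using the injectivity of $\omega^G$ on $\Z_G$ to match supports, and passing to convex hulls by linearity.
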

\begin{proof}
It follows from Lemma \ref{lmZG}.
\end{proof}
According to Proposition \ref{prpolRingVol},
we have the ring homomorphism
\begin{equation}\label{eqInto}
  (\omega^G)_*\colon\mathfrak S_\vol(\re\mathfrak T^*_G)\to\mathfrak S_\vol({\C^n}^*).
\end{equation}
\begin{corollary}\label{corNewtE0.5}
 Let $\mathfrak S_\vol(\Z_G)$ be the subring of $\mathfrak S_\vol(\re\mathfrak T_G^*)$ generated by polytopes in $\re\mathfrak T_G^*$ with vertices in $\Z_G$, and $\mathfrak S_\vol(G)$ be the subring of $\mathfrak S_\vol({\C^n}^*)$ generated by polytopes in ${\C^n}^*$ with vertices in $G$. Then $(\omega^G)_*(\mathfrak S_\vol(\Z_G))=\mathfrak S_\vol(G)$.
\end{corollary}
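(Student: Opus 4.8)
The plan is to prove the two inclusions $(\omega^G)_*(\mathfrak S_\vol(\Z_G))\subseteq\mathfrak S_\vol(G)$ and $\mathfrak S_\vol(G)\subseteq(\omega^G)_*(\mathfrak S_\vol(\Z_G))$ separately, in each case reducing to generators. By Lemma \ref{lmPoly}, for every degree $k$ the homogeneous component $S_k(\re\mathfrak T_G^*)$ is spanned by $k$-th powers of virtual polytopes, and $\mathfrak S_\vol(\Z_G)$ is by definition the subring generated in degree $1$ by (virtual) polytopes with vertices in the lattice $\Z_G$; likewise $\mathfrak S_\vol(G)$ is generated in degree $1$ by virtual polytopes with vertices in $G$. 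So it suffices to understand the map $(\omega^G)_*$ on degree-$1$ generators, which by Proposition \ref{prpolRingVol}(i) is just $\Delta\mapsto\omega^G(\Delta)$, the image polytope.

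First I would establish the easy inclusion. If $\Delta\subset\re\mathfrak T_G^*$ is a polytope with vertices in $\Z_G$, then its vertices map under $\omega^G$ into $G$ by Lemma \ref{lmZG}(1), and since $\omega^G$ is linear the image $\omega^G(\Delta)$ is a polytope whose vertices lie among the images of the vertices of $\Delta$, hence in $G$. Thus $\omega^G(\Delta)\in\mathfrak S_\vol(G)$ as a degree-$1$ generator, and extending multiplicatively (using that $(\omega^G)_*$ is a ring homomorphism) gives $(\omega^G)_*(\mathfrak S_\vol(\Z_G))\subseteq\mathfrak S_\vol(G)$.

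For the reverse inclusion, let $\gamma\subset{\C^n}^*$ be a polytope with vertices in $G$. By Lemma \ref{lmZG}(2), the map $\Gamma\mapsto\omega^G(\Gamma)$ from virtual polytopes in $\re\mathfrak T_G^*$ with vertices in $\Z_G$ onto virtual polytopes in ${\C^n}^*$ with vertices in $G$ is surjective, so there is a virtual polytope $\Gamma$ with vertices in $\Z_G$ such that $\omega^G(\Gamma)=\gamma$; concretely one may take the convex hull of the $\omega^G$-preimages (which lie in $\Z_G$ by Lemma \ref{lmZG}(1)) of the vertices of $\gamma$. Then $(\omega^G)_*(\Gamma)=\gamma$, so every degree-$1$ generator of $\mathfrak S_\vol(G)$ is in the image of $(\omega^G)_*|_{\mathfrak S_\vol(\Z_G)}$; since the image of a subring under a ring homomorphism is a subring and $\mathfrak S_\vol(G)$ is generated by these degree-$1$ elements, we conclude $\mathfrak S_\vol(G)\subseteq(\omega^G)_*(\mathfrak S_\vol(\Z_G))$, completing the proof.

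The only point requiring care — and the main (mild) obstacle — is that virtual polytopes are formal differences, so one must check that "vertices in $\Z_G$" is the right notion and that the surjectivity in Lemma \ref{lmZG}(2) genuinely lifts a formal difference $\gamma_1-\gamma_2$ to a formal difference $\Gamma_1-\Gamma_2$ with all vertices in $\Z_G$; this is immediate from applying the vertex-lifting argument to each summand separately, using that $\omega^G$ restricted to $\Z_G$ is a bijection onto $G$. No deeper input (such as Theorem \ref{thmIdeals} or the structure of $J_\vol$) is needed, since the statement is purely about the subrings generated by lattice/group polytopes, not about the quotient relations.
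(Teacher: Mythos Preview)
Your proof is correct and follows essentially the same approach as the paper. The paper's proof simply cites Corollary~\ref{corNewtE0} (which in turn rests on Lemma~\ref{lmZG}), while you unpack this by appealing directly to Lemma~\ref{lmZG}(1),(2) and Proposition~\ref{prpolRingVol}(i) to handle the degree-$1$ generators and then extend multiplicatively; the underlying argument is the same.
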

\begin{proof}
 It follows from Corollary \ref{corNewtE0}.
\end{proof}
Let $G, H$ be subgroups with a finite number of generators in ${\C^n}^*$, and suppose $G\subset H$.
Denote by $\rho_{H,G}\colon\T_H\to\T_G$ the homomorphism of character restrictions from $H$ to $G$.
Let $\rho^{H,G}\colon\mathfrak T^*_G\to\mathfrak T^*_H$ be the linear operator conjugate to the differential $d\rho_{H,G}\colon\mathfrak T_H\to\mathfrak T_G$ of the homomorphism $\rho_{H,G}$.
According to Proposition \ref{prpolRingVol}, the operator $\rho^{H,G}$ corresponds to a ring homomorphism $\rho^{H,G}_*\colon \mathfrak S_\vol(\mathfrak T^*_G)\to\mathfrak S_\vol(\mathfrak T^*_H)$.
The map $\rho^{H,G}_*$ maps the character lattice $\Z_G$ of the torus $\T_G$ to the character lattice $\Z_H$ of the torus $\T_H$.
(This can also be described as taking the preimage of a character under the torus homomorphism $\rho_{H,G}\colon\T_H\to\T_G$.)
It follows that the ring homomorphism $\rho^{H,G}_*$, when restricted to the subring $\mathfrak S_\vol(\Z_G)$, induces a ring homomorphism $\rho^{H,G}_*\colon\mathfrak S_\vol(\Z_G)\to\mathfrak S_\vol(\Z_H)$.

 \begin{lemma}\label{lmNewtE1}
 The following diagram is commutative
 \[\begin{tikzcd}
\mathfrak S_\vol(\Z_G)\arrow{r}{\rho^{H,G}_*}\arrow{d}{(\omega^G)_*}&\mathfrak S_\vol(\Z_H)\arrow{d}{(\omega^H)_*}
\\ \mathfrak S_\vol(G)\arrow[hook]{r}{}& \mathfrak S_\vol(H)\arrow[hook]{r}{}&\mathfrak S_\vol({\C^n}^*)
\end{tikzcd}\]
\end{lemma}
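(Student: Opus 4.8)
The plan is to reduce the commutativity of the diagram to a statement about linear maps of character spaces, using the functoriality of the construction $A \mapsto A_*$ from Proposition \ref{prpolRingVol} (iii). The square to be checked is $(\omega^H)_* \circ \rho^{H,G}_* = \iota \circ (\omega^G)_*$, where $\iota\colon \mathfrak S_\vol(G) \hookrightarrow \mathfrak S_\vol(H) \hookrightarrow \mathfrak S_\vol({\C^n}^*)$ is induced by inclusion; equivalently, since all these are the maps $A_*$ attached to linear operators, it suffices to verify that the underlying linear operators agree, i.e. that
\[
\omega^H \circ \rho^{H,G} = \iota \circ \omega^G
\]
as maps $\mathfrak T^*_G \to {\C^n}^*$ (restricted appropriately to the lattices and polytope spaces). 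First I would dualize: all three operators $\omega^G$, $\omega^H$, $\rho^{H,G}$ are adjoints of differentials of torus homomorphisms, so it is enough to check the dual identity on Lie algebras, namely $d\rho_{H,G} \circ d\omega_H = d\omega_G$ as maps $\C^n \to \mathfrak T_G$. This in turn is the differential at $0$ of the identity of torus homomorphisms $\rho_{H,G} \circ \omega_H = \omega_G \colon \C^n \to \T_G$, which is immediate from Definition \ref{winding}: both sides send $z$ to the character $g \mapsto \E^{g(z)}$ of $G$, since restricting the character $\omega_H(z)$ of $H$ to the subgroup $G$ just evaluates it on $g \in G$.

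Having the identity of linear operators, the identity of ring homomorphisms follows from Proposition \ref{prpolRingVol} (iii): $(\rho_{H,G}\circ\omega_H)_* = (\rho_{H,G})_* \circ (\omega_H)_*$ and $(\rho_{H,G})_*$ restricted to lattice-generated subrings is $\rho^{H,G}_*$ (this is exactly how $\rho^{H,G}_*$ on $\mathfrak S_\vol(\Z_G)$ was defined in the paragraph preceding the lemma), while $(\omega_H)_*$ adjoint is $(\omega^H)_*$. One must also check that the maps land in the asserted subrings: by Corollary \ref{corNewtE0.5}, $(\omega^G)_*(\mathfrak S_\vol(\Z_G)) = \mathfrak S_\vol(G)$ and $(\omega^H)_*(\mathfrak S_\vol(\Z_H)) = \mathfrak S_\vol(H)$, and $\rho^{H,G}_*(\mathfrak S_\vol(\Z_G)) \subset \mathfrak S_\vol(\Z_H)$ was noted just above; the inclusion $\mathfrak S_\vol(G) \hookrightarrow \mathfrak S_\vol(H)$ holds because $G \subset H$ forces every polytope with vertices in $G$ to have vertices in $H$. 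Chasing a single polytope $\Gamma \subset \re\mathfrak T^*_G$ with vertices in $\Z_G$ around the square and using $A_*(\Gamma) = A(\Gamma)$ (Proposition \ref{prpolRingVol} (i)) then reduces everything to the vertex-wise identity $\omega^H(\rho^{H,G}(v)) = \omega^G(v)$ for $v \in \Z_G$, which is the dual of the torus identity above together with Lemma \ref{lmZG} (1).

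I do not expect a serious obstacle here: the content is entirely bookkeeping of adjoints and the functoriality already established in Proposition \ref{prpolRingVol}. The one point requiring a little care is the compatibility of $\rho^{H,G}_*$ with the lattices — i.e. that the operator $\rho^{H,G}\colon \mathfrak T^*_G \to \mathfrak T^*_H$ really does carry $\Z_G$ into $\Z_H$, which is the statement that pulling back a character of $\T_G$ along $\rho_{H,G}$ gives a character of $\T_H$; this is standard Pontryagin duality and was already invoked in the text. Once that is in hand, the diagram commutes because it is the image under the (contravariant-then-covariant) functors $A \mapsto A_*$ of the trivially commuting triangle of torus homomorphisms $\omega_G = \rho_{H,G}\circ \omega_H$.
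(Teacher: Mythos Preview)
Your proposal is correct and follows essentially the same approach as the paper: both reduce the commutativity of the square to the commutativity of the triangle of torus homomorphisms $\omega_G = \rho_{H,G}\circ\omega_H$, which is immediate from the definition of the standard winding. The paper states only this triangle and leaves the passage to adjoints and to the functor $A\mapsto A_*$ implicit, whereas you have spelled out those steps via Proposition~\ref{prpolRingVol}(iii) and the subring compatibilities; there is no substantive difference.
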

\begin{proof}
It follows from the commutativity of the diagram
  \[\begin{tikzcd}
\C^n\arrow{r}{\omega_H}\arrow{dr}{\omega_G}&\T_H\arrow{d}{\rho_{H,G}}
\\ &\T_G,
\end{tikzcd}\]
which is a consequence of the definition of the standard winding.
\end{proof}
Let $X$ be an algebraic variety in the torus $\T_G$,
and $Y=\rho_{G,H}^{-1}(X)$.
Recall that the Newtonizations $\nwt (X)$ and $\nwt (Y)$ of the algebraic varieties $X$ and $Y$
are elements of the subrings $\mathfrak S_\vol(\Z_G)$ and $\mathfrak S_\vol(\Z_H)$
respectively, in the rings $\mathfrak S_\vol(\re\mathfrak T^*_G)$ and $\mathfrak S_\vol(\re\mathfrak H^*_G)$;
see Theorem \ref{ringEKK}.
 \begin{corollary}\label{corNewtE1}
  It is true that $\omega^G_*(\nwt (X))=\omega^H_*(\nwt (Y))$.
\end{corollary}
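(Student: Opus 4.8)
The plan is to combine Proposition~\ref{prNewtA} with the commutativity statement of Lemma~\ref{lmNewtE1}. The key observation is that $Y = \rho_{G,H}^{-1}(X)$ is exactly the setting of Proposition~\ref{prNewtA}, applied to the torus homomorphism $F = \rho_{H,G}\colon \T_H \to \T_G$ (note the direction: $\rho_{H,G}$ restricts characters from $H$ to $G$, so its adjoint on character spaces is the operator called $\rho^{H,G}_*$ above, and taking preimages of a subvariety of $\T_G$ under $\rho_{H,G}$ gives a subvariety of $\T_H$). Thus the first step is to quote Proposition~\ref{prNewtA} to conclude
\[
\rho^{H,G}_*(\nwt(X)) = \nwt(Y),
\]
where $\rho^{H,G}_*\colon \mathfrak S_\vol(\Z_G) \to \mathfrak S_\vol(\Z_H)$ is the restriction of the ring homomorphism discussed just before the statement of Corollary~\ref{corNewtE1}.

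Next I would apply the map $(\omega^H)_*$ to both sides, obtaining $(\omega^H)_*\bigl(\rho^{H,G}_*(\nwt(X))\bigr) = (\omega^H)_*(\nwt(Y))$. By the commutativity of the diagram in Lemma~\ref{lmNewtE1}, the left-hand side equals $(\omega^G)_*(\nwt(X))$. Combining this with the previous display gives
\[
(\omega^G)_*(\nwt(X)) = (\omega^H)_*(\nwt(Y)),
\]
which is precisely the asserted equality (after identifying the subrings $\mathfrak S_\vol(G), \mathfrak S_\vol(H)$ with their images in $\mathfrak S_\vol({\C^n}^*)$, as in the bottom row of the diagram in Lemma~\ref{lmNewtE1}).

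The only point requiring care — and the main obstacle, though a minor one — is checking that the hypotheses of Proposition~\ref{prNewtA} are met and that the adjoint/character-space conventions match up: one must verify that $\mathfrak f$ in Proposition~\ref{prNewtA}, namely the restriction to character spaces of the adjoint of $d(\rho_{H,G})$, coincides with the map denoted $\rho^{H,G}_*$ restricted to lattice polytopes, and that $\rho^{H,G}_*$ indeed sends $\mathfrak S_\vol(\Z_G)$ into $\mathfrak S_\vol(\Z_H)$ — but this last fact was already established in the paragraph preceding the statement of Corollary~\ref{corNewtE1}. Once these identifications are in place, the argument is a one-line diagram chase, so the proof can be stated very briefly, essentially as ``Apply Proposition~\ref{prNewtA} to the homomorphism $\rho_{H,G}$ and then push forward by $(\omega^H)_*$, using Lemma~\ref{lmNewtE1}.''
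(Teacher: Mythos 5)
Your proposal is correct and is essentially identical to the paper's own proof: the author likewise invokes Proposition~\ref{prNewtA} for the homomorphism $\rho_{H,G}$ to get $\rho^{H,G}_*(\nwt(X))=\nwt(Y)$ and then concludes by the commutative diagram of Lemma~\ref{lmNewtE1}. Your extra care about matching the adjoint/character-space conventions is a sound (and in the paper implicit) verification, not a divergence in method.
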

  \noindent
 \emph{Proof.}
From Proposition \ref{prNewtA} it follows that
 $\rho^{G,H}_*(\nwt (X))=\nwt (Y)$.
The required statement follows from Lemma \ref{lmNewtE1}.
 \begin{corollary}\label{corNewtE2}
 Let $G\subset H$ and the \EA-variety $X$ be defined by equations from the ring $E_G$.
 Then for the $G$-model $X_G$ and the $H$-model $X_H$ of the \EA-manifold $X$ it is true,
 that
 $\omega^G_*(\nwt (X_G))=\omega^H_*(\nwt (X_H))$.
\end{corollary}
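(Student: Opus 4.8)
The plan is to reduce the assertion directly to Corollary \ref{corNewtE1}, the only point requiring verification being the identification of the $H$-model $X_H$ with the preimage of the $G$-model $X_G$ under the character-restriction homomorphism $\rho_{H,G}\colon\T_H\to\T_G$.

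First I would record this identification. Write $X$ as the zero set of equations $P_1=\ldots=P_s=0$ with $P_i\in E_G$; since $G\subset H$ we also have $P_i\in E_H$, so $X$ is simultaneously a $G$-variety and an $H$-variety. By Definition \ref{dfModel}, $X_G\subset\T_G$ is cut out by the Laurent polynomials $Q_i=(\omega_G^*)^{-1}P_i$, and $X_H\subset\T_H$ by the Laurent polynomials $(\omega_H^*)^{-1}P_i$. The commutativity of the triangle $\rho_{H,G}\circ\omega_H=\omega_G$ already used in the proof of Lemma \ref{lmNewtE1} gives $\omega_H^*(\rho_{H,G}^*Q_i)=\omega_G^*Q_i=P_i$, and since $\omega_H^*$ is an isomorphism (Corollary \ref{corGvar1}) this yields $(\omega_H^*)^{-1}P_i=\rho_{H,G}^*Q_i$. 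Hence $X_H$ is the common zero set of the pullbacks $\rho_{H,G}^*Q_i$ of the defining equations of $X_G$, i.e. $X_H=\rho_{H,G}^{-1}(X_G)$.

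It then remains to apply Corollary \ref{corNewtE1} to the algebraic variety $X_G\subset\T_G$ and its preimage $X_H=\rho_{H,G}^{-1}(X_G)\subset\T_H$: that corollary states precisely $\omega^G_*(\nwt(X_G))=\omega^H_*(\nwt(X_H))$, which is the desired equality.

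I do not anticipate a genuine difficulty here: everything substantial — the compatibility $\rho^{H,G}_*(\nwt(X_G))=\nwt(X_H)$ coming from Proposition \ref{prNewtA}, and the commutativity of the square relating $(\omega^G)_*$, $(\omega^H)_*$ and $\rho^{H,G}_*$ from Lemma \ref{lmNewtE1} — is already packaged in Corollary \ref{corNewtE1}. The only care needed is bookkeeping with the directions of the arrows ($\rho_{H,G}$ restricts characters from $H$ to $G$ but is a map $\T_H\to\T_G$) and checking that pullback of the defining equations along $\rho_{H,G}$ reproduces exactly the $H$-model of Definition \ref{dfModel}; both are routine consequences of the defining triangle $\rho_{H,G}\circ\omega_H=\omega_G$.
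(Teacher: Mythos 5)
Your proposal is correct and follows exactly the paper's route: the paper's own proof is the one-line observation that $X_H=\rho_{G,H}^{-1}(X_G)$ followed by an appeal to Corollary \ref{corNewtE1}, and you simply fill in the (routine but worthwhile) verification of that identification via the triangle $\rho_{H,G}\circ\omega_H=\omega_G$. No gaps.
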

\begin{proof}
Since $X_H=\rho_{G,H}^{-1}(X_G)$,
then the statement follows from Corollary \ref{corNewtE1}.
\end{proof}
\begin{definition}\label{dfNewtonizationET}
Let the \EA-variety $X$ be a $G$-variety.
Then we set $\nwt(X)=\omega^G_*(\nwt(X_G))$ and call the element $\nwt(X)$ of the ring $\mathfrak S_\vol({\C^n}^*)$
\emph{$A$-Newtoni\-zation of the \EA-variety} $X$.
According to corollary \ref{corNewtE2},
the $A$-Newtonization of an \EA-variety does not depend on the choice of group $G$.
\end{definition}
\begin{definition}\label{dfNewtonizationETtrop}
Tropicalization of the element $\nwt(X)\in\mathfrak S_\vol({\C^n}^*)$
is called \emph{the tropicalization of the \EA-variety} $X$;
see definition \ref{dfEquiWeightedTrop}.
Any tropical fan
defining the structure of the tropical variety  $\nwt(X)$
(see definition \ref{dfEquiWeightedTrop})
is called \emph{the tropical fan of the \EA-variety} $X$.
\end{definition}
\begin{corollary}\label{corEtropDim}
If $X$ is an \EA-variety in $\C^n$ of algebraic dimension $k$,
then the dimension of its tropical fan is $n+k$.
\end{corollary}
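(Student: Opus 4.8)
The plan is to track the degree of the relevant homogeneous element of $\mathfrak S_\vol$ and to convert it into the dimension of the associated weighted fan. First I would unwind the definitions: by Definition \ref{dfCodim}, saying that $X$ has algebraic dimension $k$ means that its $G$-model $X_G$ is an equidimensional algebraic subvariety of $\T_G$ of codimension $n-k$. By Theorem \ref{ringEKK}, the Newtonization $\nwt(X_G)$ is then a homogeneous element of degree $n-k$ of the ring $\mathfrak S_\vol(\Z_G)\subset\mathfrak S_\vol(\re\mathfrak T^*_G)$.

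Next I would use that, by Definition \ref{dfNewtonizationET}, the $A$-Newtonization of $X$ is $\nwt(X)=(\omega^G)_*(\nwt(X_G))$. By Proposition \ref{prpolRingVol} the map $(\omega^G)_*$ is the ring homomorphism extending the symmetric-algebra homomorphism induced by the linear operator $\omega^G\colon\re\mathfrak T^*_G\to{\C^n}^*$; such a homomorphism preserves the grading of the symmetric algebra. Hence $\nwt(X)$ is a homogeneous element of degree $n-k$ of $\mathfrak S_\vol({\C^n}^*)$, where the underlying real vector space ${\C^n}^*$ has real dimension $2n$.

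Finally I would invoke Proposition \ref{prWeightedFan} together with the description of tropical fans in \textsection\ref{tropS}: the tropicalization of a homogeneous element of $\mathfrak S_\vol(E)$ of degree $d$ is represented by weighted fans of dimension $\dim_\R E - d$. Applying this with $E={\C^n}^*$ and $d=n-k$ shows that the tropical fan of $X$ has dimension $2n-(n-k)=n+k$, which is the assertion of Corollary \ref{corEtropDim}.

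I do not expect a genuine obstacle here: this corollary is essentially a bookkeeping consequence of the grading conventions, and all the substantive content — that $(\omega^G)_*$ is a well-defined grading-preserving ring homomorphism, that the Newtonization of a codimension-$c$ variety is homogeneous of degree $c$, and that a degree-$d$ element has a $(\dim_\R E - d)$-dimensional tropical fan — has already been established in the cited statements. The only point deserving a word of care is the degenerate case in which $\nwt(X)$ is the zero element of $\mathfrak S_\vol({\C^n}^*)$, equivalently (by Theorem \ref{thmIdealIntersection}) the case in which all the weights of the fan vanish; there one reads ``dimension of the tropical fan'' as the dimension prescribed by the degree $n-k$ rather than the dimension of its empty support.
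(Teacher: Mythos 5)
Your proposal is correct and follows essentially the same route as the paper's own proof: both track the degree $n-k$ of $\nwt(X)=(\omega^G)_*(\nwt(X_G))$ through the grading-preserving homomorphism and then apply Proposition \ref{prWeightedFan} with $\dim_\R{\C^n}^*=2n$ to get $2n-(n-k)=n+k$. Your extra remark about the degenerate case of vanishing weights is a reasonable clarification the paper leaves implicit.
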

\begin{proof}
According to the definition of \ref{dfNewtonizationET},
$\nwt(X)=\omega_*^G(\nwt(X_G))$.
Because $\codim(X_G)=n-k$,
then the degree of the element $\nwt(X)$ in the ring $\mathfrak S_\vol({\C^n})$ is equal to $n-k$.
By definition of a weighted fan $K_{\nwt(X)}$
(see Proposition \ref{prWeightedFan}) we get,
that $\dim K_{\nwt(X)}=2n-(n-k)=n+k$.
\end{proof}
\begin{corollary}\label{corEtropQuasi}
Let $X$ be a quasi-algebraic \EA-variety of algebraic dimension $0$.
Then its tropical fan can be chosen to consist of a single $n$-dimensional cone $\im(\C^n)$.
\end{corollary}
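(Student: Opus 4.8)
The plan is to exploit that, for a quasi-algebraic $X$, the $A$-Newtonization $\nwt(X)$ is supported on the $n$-real-dimensional subspace $R:=\re({\C^n}^*)\subset{\C^n}^*$ of real functionals. First I would choose, using quasi-algebraicity, a finitely generated group $G\subset{\C^n}^*$ generated by real functionals and containing a $\C$-basis of ${\C^n}^*$ made of real functionals; this is possible since the real functionals form an $\R$-subspace of real dimension $n$ that $\C$-spans ${\C^n}^*$. As $\dima X=0$, we have $\codima X=n$, so by Definition \ref{dfCodim} the model $X_G$ has codimension $n$ in $\T_G$; hence by Theorem \ref{ringEKK} the Newtonization $\nwt(X_G)$ lies in the degree-$n$ part of $\mathfrak S_\vol(\Z_G)\subset\mathfrak S_\vol(\re\mathfrak T^*_G)$, and by Definition \ref{dfNewtonizationET} the element $\nwt(X)=\omega^G_*(\nwt(X_G))$ is homogeneous of degree $n$ in $\mathfrak S_\vol({\C^n}^*)$.

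Next I would check that the image of the linear map $\omega^G\colon\re\mathfrak T^*_G\to{\C^n}^*$ equals the $\R$-span of $\omega^G(\Z_G)=G$ (by Lemma \ref{lmZG}), which by the choice of $G$ is exactly $R$. Therefore $\omega^G$ factors as $\iota\circ\bar\omega^G$ with $\bar\omega^G\colon\re\mathfrak T^*_G\to R$ surjective and $\iota\colon R\hookrightarrow{\C^n}^*$ the inclusion, and by Proposition \ref{prpolRingVol}, parts (ii) and (iii), we get $\omega^G_*=\iota_*\circ\bar\omega^G_*$ with $\bar\omega^G_*$ surjective and $\iota_*$ injective. Since $\dim R=n$, the degree-$n$ component of $\mathfrak S_\vol(R)$ is one-dimensional, so, fixing any $n$-dimensional polytope $\Gamma\subset R$, I may write $\bar\omega^G_*(\nwt(X_G))=c\,\Gamma^n$ for a suitable $c\in\R$; hence $\nwt(X)=c\,\Gamma^n$ in $\mathfrak S_\vol({\C^n}^*)$, where $\Gamma$ is now regarded as an $n$-dimensional polytope in the $2n$-dimensional real space ${\C^n}^*$.

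Finally I would compute the weighted fan of $\Gamma^n$; by the linearity of $\mathfrak U\mapsto\mathcal K_\mathfrak U$ together with Proposition \ref{prWeightedFan} this pins down, up to the scalar $c$, the tropical fan of $\nwt(X)$, hence, by Definition \ref{dfNewtonizationETtrop}, the tropical fan of $X$. Since $\Gamma$ has dimension $n$ in a space of real dimension $m=2n$, its only face of dimension $\geq m-n=n$ is $\Gamma$ itself, so $\mathcal K_{\Gamma,n}$ consists of the single $n$-dimensional dual cone $K_\Gamma=\{w\in\C^n\colon\re z(w)\text{ is constant on }\Gamma\}$; as $\Gamma$ affinely spans $R$, this means $\re z(w)=0$ for all $z\in R$. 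Writing $w=u+\sqrt{-1}\,v$ with $u,v\in\re\C^n$, one has $\re z(w)=z(u)$ for a real functional $z$, so the condition is $z(u)=0$ for every real $z$, i.e.\ $u=0$, i.e.\ $w\in\sqrt{-1}\,\re\C^n=\im(\C^n)$. Thus $K_\Gamma=\im(\C^n)$, and the tropical fan of $X$ may be taken to consist of the single $n$-dimensional cone $\im(\C^n)$, whose dimension $n$ matches Corollary \ref{corEtropDim}. The one point needing care is the bookkeeping of the middle step — verifying that $\omega^G$ has image exactly $R$ and that this factorisation is compatible with the passage to $\mathfrak S_\vol$-rings — but this is routine, and the closing geometric computation is elementary, so I expect no real obstacle.
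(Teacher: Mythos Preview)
Your proof is correct and follows the same approach as the paper's: the paper simply observes that quasi-algebraicity forces $\nwt(X)$ to lie in the degree-$n$ part of the subring $\mathfrak S_\vol(\re{\C^n}^*)\subset\mathfrak S_\vol({\C^n}^*)$, and asserts the conclusion follows. You have carefully unpacked exactly this implication---the factorisation of $\omega^G_*$ through $\mathfrak S_\vol(R)$, the one-dimensionality of the top component there, and the identification of the dual cone $K_\Gamma$ with $\im(\C^n)$---so your argument is a faithful and more explicit version of the paper's terse proof.
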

\noindent
\emph{Proof.}
From the definition of quasi-algebraicity it follows that
$\nwt(X)$ is an element of degree $n$ in the graded subring $\mathfrak S_\vol(\re{\C^n}^*)$
in $\mathfrak S_\vol({\C^n}^*)$.
From here the necessary statement follows.
\begin{definition}\label{dfNewtonizationE}
We denote by $X^\nwt$ the image of the $A$-Newtonization $\nwt(X)$ under the ring homomorphism
$\mathfrak S_\vol({\C^n}^*)\to\mathfrak S_\mathfrak p({\C^n}^*)$ from Corollary \ref{corM1}.
The element $X^\nwt\in\mathfrak S_\mathfrak p({\C^n}^*)$ is called \emph{the Newtonization of \EA-variety} $X$.
\end{definition}
\begin{example}\label{exAnewt1}
Let $X=\{z\in\C^1\colon f(z)=0\}$,
where $f$ is an exponential sum in $\C^1$.
Then

(1)
the $A$-Newtonization $\nwt(X)$ \EA-variety $X$ is Newton polygon $\Delta$ of exponential sum $f$

(2)
the tropical fan of $X$ is formed by the rays of the external normals to the sides of the polygon $\Delta$;
the weights of the cones are equal to the lengths of the corresponding sides

(3)
the $A$-Newtonization $X^{\nwt}$ is equal to the half-perimeter of $\Delta$.
\end{example}
\begin{example}\label{exAnewt2}
Let the \EA-variety $X$ be quasi-algebraic.
Then, by construction, $\nwt(X)$ belongs to the subring $\mathfrak S_\vol(\re {\C^n}^*)$
of the ring $\mathfrak S_\vol({\C^n}^*)$.
We denote the corresponding element of the ring by $\mathfrak S_\vol(\re {\C^n}^*)$
 via $\nwt_\R(X)$.
The tropical fan $\mathcal K_\R$ of the element $\nwt_\R(X)$ is a weighted fan of cones in the space $\re(\C^n)$.
Then the tropical fan of the element $\nwt(X)$ consists of cones of the form $K+\im\C^n$,
where $K\in\nwt_\R(X)$.
\end{example}
\begin{corollary}\label{corAnyPol}
{\rm(1)} Any element of the ring $\mathfrak S_\vol({\C^n}^*)$ is an $A$-Newtonization
of some \EA-variety.

{\rm(2)} Any element of the ring $\mathfrak S_\mathfrak p({\C^n}^*)$ is a Newtonization
of some \EA-variety.

{\rm(3)}
Any element of the subring $\mathfrak S_\vol(\re {\C^n}^*)$ in the ring $\mathfrak S_\vol({\C^n}^*)$
is an $A$-Newtonization of some quasi-algebraic \EA-variety.
\end{corollary}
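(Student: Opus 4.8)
The plan is to reduce all three statements to the surjectivity of the torus Newtonization map of Theorem~\ref{ringEKK}, transported to ${\C^n}^*$ through the homomorphisms $(\omega^G)_*$ and, for part (2), through the quotient map of Corollary~\ref{corM1}.

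For (1) I would start from $\mathfrak U\in\mathfrak S_\vol({\C^n}^*)$ and write it, via the symmetric algebra, as a finite real combination of products of convex polytopes (expanding virtual polytopes as differences of honest ones). Only finitely many vertices occur, so I can pick a finitely generated subgroup $G\subset{\C^n}^*$ containing all of them together with a basis of ${\C^n}^*$; then $\mathfrak U$ lies in the subring $\mathfrak S_\vol(G)$ of Corollary~\ref{corNewtE0.5}. By that corollary the homomorphism $(\omega^G)_*$ carries $\mathfrak S_\vol(\Z_G)$ onto $\mathfrak S_\vol(G)$, so there is $\mathfrak W\in\mathfrak S_\vol(\Z_G)$ with $(\omega^G)_*(\mathfrak W)=\mathfrak U$. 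Applying Theorem~\ref{ringEKK} to the torus $\T_G$ (of dimension $\rank G$), the map $\nwt\colon\mathcal C(\T_G)\to\mathfrak S_\vol(\Z_G)$ is an isomorphism of graded rings, hence surjective, so $\mathfrak W$ is the Newtonization of a suitable algebraic variety --- or, in general, algebraic cycle --- $Z_G$ in $\T_G$. By Corollary~\ref{corModel} the variety $Z_G$ is the $G$-model $X_G$ of an \EA-variety (resp.\ \EA-cycle) $X$, and Definition~\ref{dfNewtonizationET} then yields $\nwt(X)=(\omega^G)_*(\nwt(X_G))=(\omega^G)_*(\mathfrak W)=\mathfrak U$, which is (1).

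Part (2) is then immediate: given $\mathfrak U'\in\mathfrak S_\mathfrak p({\C^n}^*)$, the homomorphism $\mathfrak S_\vol({\C^n}^*)\to\mathfrak S_\mathfrak p({\C^n}^*)$ of Corollary~\ref{corM1} is surjective, so $\mathfrak U'$ is the image of some $\mathfrak U\in\mathfrak S_\vol({\C^n}^*)$; choosing $X$ with $\nwt(X)=\mathfrak U$ by (1) and invoking Definition~\ref{dfNewtonizationE}, we get $X^\nwt=\mathfrak U'$. For (3) I would rerun the argument of (1) but choose the auxiliary group $G$ inside the real subspace $\re {\C^n}^*$; this is possible because a real basis of $\re {\C^n}^*$ is also a complex basis of ${\C^n}^*$, so a suitable $G\subset\re {\C^n}^*$ can at the same time contain all the needed vertices and a basis of ${\C^n}^*$. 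The resulting \EA-variety is defined by equations from $E_G$ with $G\subset\re {\C^n}^*$, hence is quasi-algebraic, and its $A$-Newtonization is the prescribed element of the subring $\mathfrak S_\vol(\re {\C^n}^*)$.

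The only point that needs care --- and the closest thing to an obstacle --- is the middle step of (1): a general, non-homogeneous, real-coefficient element of $\mathfrak S_\vol(\Z_G)$ is the Newtonization of an honest algebraic variety only after allowing linear combinations, so one is really producing an exponential cycle (in accordance with property~\hyperlink{R2}{($\bf R_2$)}). Passing from the equivalence class $\bar Z$ to an actual representative is handled by splitting $\mathfrak W$ into its homogeneous components, realizing each one via Theorem~\ref{ringEKK} in the corresponding degree, and --- in the effective case, if one wants a genuine variety rather than a cycle --- replacing these components by generic translates so that their union is equidimensional in each codimension and its class is the required sum. Everything else is a direct concatenation of the cited statements.
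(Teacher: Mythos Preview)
Your proposal is correct and follows essentially the same route as the paper: pick a finitely generated $G\subset{\C^n}^*$ containing all the vertices (and a basis of ${\C^n}^*$), push the problem down to $\mathfrak S_\vol(\Z_G)$ via $(\omega^G)_*$, and invoke the torus isomorphism of Theorem~\ref{ringEKK}; the paper compresses this into a reference to Lemma~\ref{lmZG} and Proposition~\ref{prpolRing}. Your extra care about the cycle-versus-variety distinction in the non-homogeneous or non-effective case is a point the paper's three-sentence proof simply glosses over.
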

\noindent
\emph{Proof.}
Vertices of any convex polyhedron in ${\C^n}^*$
generate some subgroup $G\subset{\C^n}^*$.
Therefore, statements (1) and (2) follow from Lemma \ref{lmZG} and from Proposition \ref{prpolRing} (iv).
In the quasi-algebraic case, a subgroup $G$ can be chosen in the space $\re{\C^n}^*$.
Statement (3) follows from this.
\begin{theorem}\label{thmNewtShift}
{\rm(1)} For any $z\in\C^n$ it is true that $\nwt(z+X)=\nwt(X)$ and $(z+X)^\nwt=X^\nwt$

{\rm(2)} for \EA-varieties $X,Y$
there is an \EA-hypersurface $\mathfrak D(X,Y)$,
such that if $z\not\in\mathfrak D(X,Y)$ then $\nwt((z+ X)\cap Y)=\nwt(X)\cdot\nwt(Y)$
and $((z+ X)\cap Y)^\nwt=X^\nwt\cdot Y^\nwt$.
\end{theorem}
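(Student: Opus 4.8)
\noindent
\emph{Proof plan.}
The plan is to reduce both assertions to the corresponding statements about algebraic varieties in a torus, namely Corollary \ref{corNewt1}, by passing to a common $G$-model and exploiting that the standard winding $\omega_G$ is a group homomorphism.

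First I would fix a finitely generated subgroup $G\subset{\C^n}^*$ such that both $X$ and $Y$ are $G$-varieties. The key elementary fact to establish is that translation by a vector $z\in\C^n$ corresponds, under the bijection of Corollary \ref{corModel}, to multiplication by the element $\tau:=\omega_G(z)\in\T_G$ on $G$-models; that is, $z+X$ is again a $G$-variety and $(z+X)_G=\tau\cdot X_G$. Indeed, writing $X$ as the zero set of exponential sums $P_i=\sum c_\lambda\E^{\lambda(w)}\in E_G$, the translate $z+X$ is the zero set of $P_i(\,\cdot\,-z)=\sum c_\lambda\E^{-\lambda(z)}\E^{\lambda(w)}$, which again has support in $G$; and since $\chi_\lambda(\omega_G(z))=\E^{\lambda(z)}$ for the character $\chi_\lambda$ of $G$ attached to $\lambda\in G$, this coefficient rescaling is exactly the one produced by translating the Laurent polynomial $(\omega_G^*)^{-1}(P_i)$ by $\tau$. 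In particular $z+X$ stays equidimensional, so $\nwt(z+X)$ is defined.

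For part (1) I would then argue: by Corollary \ref{corNewt1}(1), $\nwt((z+X)_G)=\nwt(\tau X_G)=\nwt(X_G)$; applying the ring homomorphism $\omega^G_*$ of Proposition \ref{prpolRingVol} gives $\nwt(z+X)=\omega^G_*(\nwt((z+X)_G))=\omega^G_*(\nwt(X_G))=\nwt(X)$, and composing with the ring homomorphism $\mathfrak S_\vol({\C^n}^*)\to\mathfrak S_\mathfrak p({\C^n}^*)$ of Corollary \ref{corM1} yields $(z+X)^\nwt=X^\nwt$. For part (2) I would apply Corollary \ref{corNewt1}(2) to $X_G,Y_G\subset\T_G$ to obtain an algebraic hypersurface $D\subset\T_G$ such that $\tau X_G\cap Y_G$ is equidimensional with $\nwt(\tau X_G\cap Y_G)=\nwt(X_G)\cdot\nwt(Y_G)$ whenever $\tau\notin D$, and take $\mathfrak D(X,Y)$ to be the \EA-hypersurface whose $G$-model is $D$ (the zero set of $\omega_G^*$ applied to a defining equation of $D$, which is an \EA-hypersurface by Corollary \ref{corModel}). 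For $z\notin\mathfrak D(X,Y)$ we have $\tau=\omega_G(z)\notin D$ and $((z+X)\cap Y)_G=(z+X)_G\cap Y_G=\tau X_G\cap Y_G$, so multiplicativity of $\omega^G_*$ gives $\nwt((z+X)\cap Y)=\omega^G_*(\nwt(X_G))\cdot\omega^G_*(\nwt(Y_G))=\nwt(X)\cdot\nwt(Y)$, and applying the homomorphism of Corollary \ref{corM1} once more gives $((z+X)\cap Y)^\nwt=X^\nwt\cdot Y^\nwt$.

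The only step requiring genuine care is the first paragraph: identifying translation in $\C^n$ with the $\T_G$-action on $G$-models, and noting that this identification is compatible with the group structures, so that the "bad set" $D\subset\T_G$ pulls back along $\omega_G$ to an honest \EA-hypersurface (rather than merely a countable union of affine subspaces). Once this is in place, the rest is a formal diagram chase through the two ring homomorphisms $\omega^G_*$ and $\mathfrak S_\vol\to\mathfrak S_\mathfrak p$, together with the translation-invariance and intersection formula already recorded for algebraic varieties in Corollary \ref{corNewt1}; I do not expect any substantive obstacle beyond this bookkeeping.
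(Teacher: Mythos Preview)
Your proposal is correct and follows essentially the same route as the paper: reduce to Corollary \ref{corNewt1} for the $G$-models $X_G,Y_G$, use that the standard winding is a group homomorphism so that $(z+X)_G=\omega_G(z)\cdot X_G$, and set $\mathfrak D(X,Y)=\omega_G^{-1}\bigl(D(X_G,Y_G)\bigr)$. Your write-up is in fact more explicit than the paper's, which compresses the argument into a single sentence invoking Corollary \ref{corNewt1} and the definition of $\mathfrak D(X,Y)$; the identification of translation with toric multiplication that you spell out is left implicit there.
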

\begin{proof}
According to Proposition \ref{prpolRing},
the rings $\mathfrak S_\vol({\C^n}^*)$ and $\mathfrak S_\mathfrak p({\C^n}^*)$ are generated by virtual polyhedra.
Therefore, the necessary state\-ments follow from the corollary \ref{corNewt1} for $A=X_G$, $B=Y_G$ and from
$\mathfrak D(X,Y)=\omega_G^{-1} D(X_G,Y_G)$.
\end{proof}
\section{Intersection Index and the Ring of Conditions of $\C^n$}\label{dw}
Here we define the intersection index and consider the corresponding concept of numerical equivalence of \EA-varieties.
It is proved that the classes of numerical equivalence form
graded ring,
called the ring of conditions of the space ${\C^n}$.
This  ring
is isomorphic to the ring $\mathfrak S_\mathfrak p({\C^n}^*)=\mathfrak S_0+\ldots+\mathfrak S_n$ defined in \textsection\ref{polRing}.
The mentioned isomorphism is established using the Newtonization map of \EA-varieties defined in \textsection\ref{EANewt};
see Definitions \ref{dfNewtonizationET}, \ref{dfNewtonizationE}.

Recall that the \EA-variety $X$ is associated with homogeneous elements
$\nwt (X)$ and $X^\nwt$ degrees $\codima X$ in
graded
rings
$\mathfrak S_\vol{(\C^n}^*)$ and $\mathfrak S_\mathfrak p({\C^n}^*)$,
called
respectively
$A$-Newtonization and Newtonization of the \EA-variety $X$;
see definitions \ref{dfNewtonizationET} and \ref{dfNewtonizationE}.
Let us also recall
that on the space $S({\C^n}^*)$ a symmetric $n$-linear form $\mathfrak p$ is defined;
see definitions \ref{dfPseudo}, \ref{dfPseudoMixed}.
Moreover, the image of the $A$-Newtonization $\nwt(X)$ under the ring homomorphism $\mathfrak S_\vol({\C^n}^*)\to \mathfrak S_\mathfrak p({\C^n} ^*)$
from the corollary \ref{corM1}
is the Newtonization $X^\nwt(X)$ of \EA-variety $X$.
\begin{definition}\label{dfWeakDens}
Let $X$ be an \EA-variety of zero algebraic dimension.
We put $d_w(X)=n!\mathfrak p(X^\nwt)$, and call $d_w(X)$ the \emph{weak density of the \EA-variety} $X$.
\end{definition}
\begin{definition}\label{dfIndex}
Let the sum of the algebraic dimensions of the \EA-varieties $X$ and $Y$ be equal to $n$.
We put $I(X,Y)=n!\:\mathfrak p( X^\nwt\cdot Y^\nwt)$, and call $I(X,Y)$ \emph{intersection index of \EA-varieties} $ X$,$Y$.
\end{definition}
Let us remind that \EA-varieties $X,Y$ of algebraic codimension $k$ are called \emph{numerically equivalent},
if $I(X,Z)=I(Y,Z)$ for any \EA-variety $Z$ of algebraic dimension $k$.
The following three statements are used in the proof of Theorem \ref{thmBase}.
\begin{proposition}\label{prMapTo1}
 The numerical equivalence of \EA-varieties $X$, $Y$ is equivalent to the equality
 $X^\nwt=Y^\nwt$.
\end{proposition}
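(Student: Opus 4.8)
The plan is to reduce the claim to two results already available: Corollary \ref{corAnyPol} (2), which says that every element of $\mathfrak S_\mathfrak p({\C^n}^*)$ is the Newtonization of some \EA-variety, and Proposition \ref{prpolRing} (iv), the non-degeneracy of the multiplication pairing between complementary homogeneous components of $\mathfrak S_\mathfrak p({\C^n}^*)$. One implication is purely formal; all the content lies in the converse.

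First I would dispatch the easy direction. If $X^\nwt=Y^\nwt$, then for every \EA-variety $Z$ with $\dima Z=\dima X$ Definition \ref{dfIndex} gives $I(X,Z)=n!\,\mathfrak p(X^\nwt\cdot Z^\nwt)=n!\,\mathfrak p(Y^\nwt\cdot Z^\nwt)=I(Y,Z)$, so $X$ and $Y$ are numerically equivalent.

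For the converse, set $k=\codima X=\codima Y$. Recall that $X^\nwt-Y^\nwt$ is then a homogeneous element of degree $k$ of $\mathfrak S_\mathfrak p({\C^n}^*)$, while the Newtonization $Z^\nwt$ of an \EA-variety $Z$ of algebraic dimension $k$, i.e.\ of algebraic codimension $n-k$, is homogeneous of degree $n-k$. Using Definition \ref{dfIndex} and $n!\neq0$, I would restate the numerical equivalence of $X$ and $Y$ as the condition
\[
\mathfrak p\big((X^\nwt-Y^\nwt)\cdot Z^\nwt\big)=0\quad\text{for every \EA-variety }Z\text{ with }\dima Z=k .
\]
By Corollary \ref{corAnyPol} (2), every element of the homogeneous component $\big(\mathfrak S_\mathfrak p({\C^n}^*)\big)_{n-k}$ occurs as $Z^\nwt$ for such a $Z$ (Newtonizations being homogeneous with degree equal to the algebraic codimension). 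Hence the displayed condition is precisely the statement that $X^\nwt-Y^\nwt$ lies in the left kernel of the pairing $(\eta,\xi)\mapsto I_\mathfrak p(\eta\cdot\xi)$ between $\big(\mathfrak S_\mathfrak p({\C^n}^*)\big)_{k}$ and $\big(\mathfrak S_\mathfrak p({\C^n}^*)\big)_{n-k}$, where I use that $\mathfrak p$ coincides with $I_\mathfrak p$ on the degree-$n$ component (Definition \ref{dfGor}). Applying Proposition \ref{prpolRing} (iv) to the $n$-linear form $\nu=\mathfrak p$ on virtual polytopes in ${\C^n}^*$, this pairing is non-degenerate, so $X^\nwt-Y^\nwt=0$.

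I do not anticipate a real obstacle: the argument is a short passage through Definition \ref{dfIndex}, Corollary \ref{corAnyPol} (2) and Proposition \ref{prpolRing} (iv). The only points requiring care are bookkeeping ones -- matching the degrees ($\deg X^\nwt=\codima X$ and $\deg Z^\nwt=n-\dima Z$), checking that Corollary \ref{corAnyPol} (2) really yields surjectivity onto the entire degree-$(n-k)$ component so that no test varieties $Z$ are lost, and verifying that the degenerate range $\codima X>n$ is consistent with the claim, where $X^\nwt$ and $Y^\nwt$ both vanish (the top nonzero degree of $\mathfrak S_\mathfrak p({\C^n}^*)$ being $n$, cf.\ Definition \ref{dfGor}) while $X$ and $Y$ are numerically equivalent by the convention in Definition \ref{dfNumerEqu}.
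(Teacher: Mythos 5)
Your proof is correct and follows essentially the same route as the paper: both reduce numerical equivalence to the non-degeneracy of the pairing in Proposition \ref{prpolRing} (iv) for $\nu=\mathfrak p$. You additionally make explicit the surjectivity step via Corollary \ref{corAnyPol} (2), which the paper's two-line proof leaves implicit but which is indeed needed to conclude that the test elements $Z^\nwt$ exhaust the complementary homogeneous component.
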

\begin{proof}
By definition,
the numerical equivalence of \EA-varieties $X,Y$ means that
$\forall Z\colon\,X^\nwt\cdot Z^\nwt=Y^\nwt\cdot Z^\nwt$.
Therefore, the required statement follows from Proposition \ref{prpolRing} (iv)
for $\nu=\mathfrak p$.
\end{proof}
\begin{proposition}\label{prMapTo2}
If $G$-models $X_G$, $Y_G$ of $G$-varieties $X$, $Y$
are numerically equivalent as algebraic varieties in the torus $\T_G$,
then the \EA-varieties $X$, $Y$ are also numerically equivalent.
\end{proposition}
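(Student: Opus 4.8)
The plan is to reduce the statement to an equality of Newtonizations in the ring $\mathfrak S_\mathfrak p({\C^n}^*)$ and then invoke Proposition \ref{prMapTo1}. First I would observe that numerical equivalence of the $G$-models $X_G$ and $Y_G$ as algebraic subvarieties of the torus $\T_G$ forces $\nwt(X_G)=\nwt(Y_G)$: this is exactly part (2) of Theorem \ref{ringEKK} (equivalently, the injectivity of the isomorphism $\mathcal C(\T_G)\to\mathfrak S_\vol(\Z_G)$ of part (3)). In particular $X_G$ and $Y_G$ have the same codimension, so $X$ and $Y$ have the same algebraic codimension and the phrase ``$X,Y$ are numerically equivalent'' is meaningful for them. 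So the whole argument hinges on pushing the equality $\nwt(X_G)=\nwt(Y_G)$ forward through the two ring homomorphisms that define the Newtonization of an \EA-variety.

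Next I would apply the ring homomorphism $(\omega^G)_*\colon\mathfrak S_\vol(\re\mathfrak T^*_G)\to\mathfrak S_\vol({\C^n}^*)$ of (\ref{eqInto}), which is a genuine ring map by Proposition \ref{prpolRingVol}. By Definition \ref{dfNewtonizationET} one has $\nwt(X)=(\omega^G)_*(\nwt(X_G))$ and $\nwt(Y)=(\omega^G)_*(\nwt(Y_G))$, so $\nwt(X_G)=\nwt(Y_G)$ immediately gives $\nwt(X)=\nwt(Y)$ in $\mathfrak S_\vol({\C^n}^*)$. Applying now the ring homomorphism $\mathfrak S_\vol({\C^n}^*)\to\mathfrak S_\mathfrak p({\C^n}^*)$ of Corollary \ref{corM1}, and recalling from Definition \ref{dfNewtonizationE} that $X^\nwt$ and $Y^\nwt$ are precisely the images of $\nwt(X)$ and $\nwt(Y)$ under this map, we conclude $X^\nwt=Y^\nwt$. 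By Proposition \ref{prMapTo1}, the equality $X^\nwt=Y^\nwt$ is equivalent to the numerical equivalence of the \EA-varieties $X$ and $Y$, which finishes the argument.

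I do not expect a genuine obstacle here: the statement is, in effect, the functoriality of the Newtonization construction, and all the substantive input --- that $(\omega^G)_*$ and the projection onto $\mathfrak S_\mathfrak p({\C^n}^*)$ are ring homomorphisms (Proposition \ref{prpolRingVol} and Corollary \ref{corM1}), that $\nwt$ of an \EA-variety does not depend on the group $G$ (Corollary \ref{corNewtE2}), and that numerical equivalence of \EA-varieties is detected by $X^\nwt$ (Proposition \ref{prMapTo1}) --- has already been established. The one point worth stating carefully is the first implication: that numerical equivalence of $X_G$ and $Y_G$ in $\T_G$ yields $\nwt(X_G)=\nwt(Y_G)$, which is precisely the content of the EKK description of the ring of conditions $\mathcal C(\T_G)$ recalled in Theorem \ref{ringEKK}.
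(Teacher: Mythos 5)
Your proposal is correct and follows essentially the same route as the paper's proof: deduce $\nwt(X_G)=\nwt(Y_G)$ from Theorem \ref{ringEKK}, push this equality through the ring homomorphism defining the Newtonization of an \EA-variety (which the paper leaves as "some ring homomorphism $\mathfrak S_\vol(\Z_G)\to\mathfrak S_\mathfrak p({\C^n}^*)$" and you correctly identify as the composite of $(\omega^G)_*$ with the map of Corollary \ref{corM1}), and conclude by Proposition \ref{prMapTo1}. No discrepancy worth noting.
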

\begin{proof}
According to Theorem \ref{ringEKK},
$\nwt(X_G)=\nwt(Y_G)$.
From the definitions \ref{dfNewtonizationET}, \ref{dfNewtonizationE} it follows that
Newtonizations $X^\nwt$ and $Y^\nwt$ are images of Newtonizations $\nwt(X_G)$ and $\nwt(Y_G)$
under some ring homomorphism $\mathfrak S_\vol(\Z_G)\to\mathfrak S_\mathfrak p({\C^n}^*)$.
Therefore, the required statement follows from Proposition \ref{prMapTo1}.
\end{proof}
The following statement,
is equivalent to the existence of the ring of conditions $\mathcal C(\T)$.
\begin{lemma}\label{lmG}
Let $A,B$ be equidimensional algebraic varieties in $\T$.
Then there exist classes of numerical equivalence of the algebraic varieties $\Pi(A,B)$, $\Sigma(A,B)$ and
algebraic hypersurface $Z(A,B)$
in the torus $\T$,
such,
that the following is true.
If $\tau\in\T\setminus Z(A,B)$,
then

\textbf{(i)} all varieties $A\cap(\tau B)$
are equidimensional and contained in the class $\Pi(A,B)$;

\textbf{(ii)} if $\dim A=\dim B$, then all varieties $A\cup(\tau B)$
are equidimensional and contained in the class $\Sigma(A,B)$;

\textbf{(iii)} the classes $\Pi(A,B)$, $\Sigma(A,B)$ depend only on the numerical equivalence classes containing $A,B$.
\end{lemma}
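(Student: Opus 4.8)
The plan is to derive the lemma from two classical facts about the transitive action of the connected algebraic group $\T$ on itself: the generic-transversality theorem of Kleiman, and the constancy of the intersection number of fixed cycles under a generic translation. (As remarked just before the statement, the lemma is equivalent to the existence of the ring of conditions of the torus, so one may instead simply quote \cite{CP,C}; see also \cite{EKK}, of which Theorem \ref{ringEKK} is a refinement. Below I indicate a direct argument.)

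First I would use Kleiman's theorem, applied both to $\T$ acting on itself and to $\T\times\T$ acting on triples, to produce a proper Zariski-closed subset of $\T$ — which we enlarge to a hypersurface $Z_1(A,B)$ — such that for every $\tau\notin Z_1(A,B)$ the scheme $A\cap\tau B$ is empty or equidimensional of dimension $\dim A+\dim B-n$ and generically reduced, $\tau B$ has no irreducible component in common with $A$, and, when $\dim A=\dim B$, the set $A\cup\tau B$ is equidimensional (in fact for every $\tau$) with fundamental cycle $[A]+[\tau B]$. This already gives the equidimensionality assertions in (i) and (ii). Next I would record the translation-invariance of numerical classes: if $\dim B+\dim C=n$, then for generic $\sigma$ one has $\tau B\cap\sigma C=\tau\bigl(B\cap(\tau^{-1}\sigma)C\bigr)$, hence $I(\tau B,C)=\#\bigl(B\cap(\tau^{-1}\sigma)C\bigr)=I(B,C)$ because $\tau^{-1}\sigma$ ranges over a dense open set together with $\sigma$; thus $\kappa(\tau B)=\kappa(B)$. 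Consequently $\kappa(A\cup\tau B)=\kappa(A)+\kappa(\tau B)=\kappa(A)+\kappa(B)=:\Sigma(A,B)$ for $\tau\notin Z_1(A,B)$, an expression depending only on the classes of $A$ and $B$; this settles (ii) and the $\Sigma$-part of (iii).

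For the product, I would introduce $I(A,B,C):=\#(A\cap\tau B\cap\sigma C)$ for generic $(\tau,\sigma)$ whenever $\dim A+\dim B+\dim C=2n$. The incidence variety $\{(\tau,\sigma,x)\colon x\in A\cap\tau B\cap\sigma C\}$ is isomorphic to $A\times B\times C$ via $(\tau,\sigma,x)\mapsto(x,\tau^{-1}x,\sigma^{-1}x)$, hence is equidimensional of dimension $2n=\dim(\T\times\T)$; therefore its projection to $\T\times\T$ is, on each irreducible component, either non-dominant or generically finite, and $\#(A\cap\tau B\cap\sigma C)$ equals the fixed sum of the degrees of the dominant components on a dense open subset of $\T\times\T$ (the transversality locus from Kleiman's theorem ensures these fibres are reduced). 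Slicing at generic $\tau$ shows $I(A\cap\tau B,C)=I(A,B,C)$ is independent of the generic $\tau$. Putting $\Pi(A,B):=\kappa(A\cap\tau_0 B)$ for one such $\tau_0$, the exceptional set $\{\tau\colon\kappa(A\cap\tau B)\neq\Pi(A,B)\}$ is constructible (cycle classes are constructible invariants in algebraic families) and, by the constancy just proved, nowhere dense, so it may be absorbed, together with $Z_1(A,B)$, into a hypersurface $Z(A,B)$; this gives (i).

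It remains to prove the $\Pi$-part of (iii). If $A$ is numerically equivalent to $A'$ and $B$ to $B'$, then for every $C$ of complementary dimension,
\[
I(A,B,C)=I(A\cap\sigma C,B)=I(A\cap\sigma C,B')=I(A,B',C)=I(A',B',C),
\]
using first that $A\cap\sigma C$ is an honest variety and $B\equiv B'$, then the symmetry of $I(\cdot,\cdot,\cdot)$ and $A\equiv A'$; since this holds for all such $C$, the classes $\Pi(A,B)$ and $\Pi(A',B')$ coincide. The main obstacle is the generic-constancy step of the previous paragraph — making rigorous that the point count $\#(A\cap\tau B\cap\sigma C)$, and with it the numerical class of $A\cap\tau B$, stabilises off a hypersurface rather than merely off a countable union of proper subvarieties. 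The incidence-variety degree computation disposes of the point count, and the reduction of ``the numerical class is locally constant'' to finitely many closed conditions rests on the constructibility of cycle classes in families; everything else is routine translation bookkeeping.
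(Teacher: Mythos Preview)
The paper does not actually prove this lemma: immediately before the statement it remarks that the lemma ``is equivalent to the existence of the ring of conditions $\mathcal C(\T)$'' and then passes directly to the proof of Theorem~\ref{thmBase}, relying on \cite{CP,C,EKK} for the existence of $\mathcal C(\T)$. Your proposal already records this option and then goes further, supplying a self-contained sketch via Kleiman transversality and the incidence-variety computation of $I(A,B,C)$. That sketch is sound, and you have correctly isolated the one genuinely nontrivial point: upgrading ``$I(A\cap\tau B,C)$ is constant for generic $\tau$, for each fixed $C$'' to ``the numerical class of $A\cap\tau B$ is constant off a \emph{single} hypersurface''. Your constructibility argument works; an alternative, closer to how \cite{EKK} in fact establishes the ring of conditions, is to note that after a suitable smooth toric compactification the class of the closure of $A\cap\tau B$ lies in a finite-dimensional Chow group, so finitely many test cycles $C$ suffice --- equivalently, by Corollary~\ref{corAlgTrop} the tropicalization (hence the numerical class) of $A\cap\tau B$ is constant off a hypersurface. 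Either route completes your argument; the paper simply outsources the whole lemma to the literature.
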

\noindent
\emph{Proof of Theorem} \ref{thmBase}.
Let us choose the group $G$ so that
that the \EA-varieties $X,Y$ from Theorem \ref{thmBase}
are $G$-varieties.
Let us denote by $A, B$ their $G$-models.
Then, for $\tau=\omega_G(z)$, the algebraic varieties $A\cap(\tau B)$ and $A\cup(\tau B)$
are $G$-models of the \EA-varieties $X\cap(z+Y)$ and $X\cup(z+Y)$.
Therefore, the required statement follows from Lemma \ref{lmG} for $D(X,Y)=\omega_G^{-1}(Z(A,B))$.
\begin{corollary}\label{corNewtHom}
The association of an \EA-variety $X$ with its Newtonization $X^\nwt$ defines an isomorphism between the ring of conditions $\mathcal C(\C^n)$
and the ring $\mathfrak S({\C^n}^*)$.
\end{corollary}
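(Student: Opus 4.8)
The plan is to prove that the Newtonization map $X\mapsto X^\nwt$ induces a graded ring isomorphism $\Phi\colon\mathcal C(\C^n)\to\mathfrak S_\mathfrak p({\C^n}^*)$ (the ring written $\mathfrak S({\C^n}^*)$ in the statement), by assembling the pieces already in hand: Proposition~\ref{prMapTo1}, Theorem~\ref{thmBase}, Theorem~\ref{thmNewtShift}, Corollary~\ref{corAnyPol}, Theorem~\ref{ringEKK}, and the structure of $\mathfrak S_\mathfrak p$ from Proposition~\ref{prpolRing}. First I would define $\Phi$ on generators by $\Phi(\kappa(X))=X^\nwt$ and extend $\R$-linearly over the grading. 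Since $X^\nwt$ is homogeneous of degree $\codima X$ (Definitions~\ref{dfNewtonizationET}, \ref{dfNewtonizationE}) and numerically equivalent \EA-varieties have the same Newtonization (Proposition~\ref{prMapTo1}), the only thing left to check for well-definedness is that $\Phi$ kills the union relation defining addition in $\mathcal C(\C^n)$, i.e.\ that $(X\cup(z+Y))^\nwt=X^\nwt+Y^\nwt$ for generic $z$ and $X,Y$ of equal codimension. By Theorem~\ref{thmNewtShift}\,(1) this reduces to $(X\cup Y)^\nwt=X^\nwt+Y^\nwt$ with $X,Y$ realized as $G$-varieties; for the models one has $(X\cup Y)_G=X_G\cup Y_G$ and $\nwt(X_G\cup Y_G)=\nwt(X_G)+\nwt(Y_G)$, because union is the additive operation of the ring of conditions $\mathcal C(\T_G)$ and $\nwt$ is a ring isomorphism onto $\mathfrak S_\vol(\Z_G)$ by Theorem~\ref{ringEKK}; pushing this forward by the ring homomorphisms $(\omega^G)_*$ of \eqref{eqInto} and $\mathfrak S_\vol({\C^n}^*)\to\mathfrak S_\mathfrak p({\C^n}^*)$ of Corollary~\ref{corM1} yields the identity.

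Next, $\Phi$ is multiplicative and surjective. Multiplicativity: since $\kappa(X)\cdot\kappa(Y)=\Pi(X,Y)$ is the class of $X\cap(z+Y)$ for $z\notin\mathfrak D(X,Y)$, Theorem~\ref{thmNewtShift}\,(2) gives $\Phi(\kappa(X)\cdot\kappa(Y))=(X\cap(z+Y))^\nwt=X^\nwt\cdot Y^\nwt=\Phi(\kappa(X))\cdot\Phi(\kappa(Y))$, and the class of the whole space $\C^n$ goes to $1\in(\mathfrak S_\mathfrak p)_0=\R$. Surjectivity: by Corollary~\ref{corAnyPol}\,(2) every homogeneous element of $\mathfrak S_\mathfrak p({\C^n}^*)$ is $X^\nwt$ for some equidimensional \EA-variety $X$, and a general element is the sum of its homogeneous components, each of which lies in the image.

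It remains to prove injectivity, which I expect to be the only genuinely fiddly step. I would build the inverse explicitly: a homogeneous $\chi\in(\mathfrak S_\mathfrak p)_k$ is sent to $\kappa(X)$ for any \EA-variety $X$ of algebraic codimension $k$ with $X^\nwt=\chi$ — such $X$ exists by Corollary~\ref{corAnyPol}, and $\kappa(X)$ is independent of the choice by Proposition~\ref{prMapTo1} — and this is extended across the grading to a map $\Psi$. That $\Psi$ is additive is the identity $(X\cup(z+Y))^\nwt=X^\nwt+Y^\nwt$ read in reverse, together with $\kappa(X\cup(z+Y))=\Sigma(X,Y)=\kappa(X)+\kappa(Y)$ from Theorem~\ref{thmBase}\,(ii); compatibility with scalars holds for positive integers via iterated generic unions, hence over $\Q$, and then over $\R$ since $\mathcal C_k$ is by construction the real span of the classes it contains (equivalently, by property~\hyperlink{R2}{($\bf R_2$)}, already the integral span tensored with $\R$). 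Once $\Psi$ is in place, $\Psi\circ\Phi$ and $\Phi\circ\Psi$ are linear maps equal to the identity on a generating set, hence equal to the identity, so $\Phi$ is the desired isomorphism of graded rings. The main obstacle is thus not a deep computation but this last packaging: promoting the evident bijection between numerical equivalence classes and elements of $\mathfrak S_\mathfrak p$ (Proposition~\ref{prMapTo1} together with Corollary~\ref{corAnyPol}) to an $\R$-linear ring-theoretic isomorphism, for which the union relation of Theorem~\ref{thmBase} and the additivity of $\nwt$ do all the work.
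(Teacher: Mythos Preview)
Your proposal is correct and follows essentially the same approach as the paper: the paper's own proof is the single line ``Follows from Proposition~\ref{prMapTo1} and Theorems~\ref{thmBase}, \ref{thmNewtShift}'', and you have unpacked exactly what that line means, supplying the additional references (Corollary~\ref{corAnyPol} for surjectivity, Theorem~\ref{ringEKK} for additivity on models) that the paper leaves implicit. Your careful handling of additivity via the model and of the passage from the semiring of classes to the real vector space $\mathcal C_k$ is more than the paper spells out, but is precisely the content behind its one-line citation.
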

\begin{proof}
Follows from Proposition \ref{prMapTo1} and Theorems  \ref{thmBase}, \ref{thmNewtShift}.
\end{proof}
\noindent
\emph{Proof of properties} \hyperlink{R1}{($\bf R_1$)} -- \hyperlink{R5}{($\bf R_5$)} from \textsection\ref{intro4}.

 \noindent
 \hyperlink{R1}{($\bf R_1$)}
  Follows from the definition of Newtonization and from proposition \ref{prMapTo1}.

 \noindent
\hyperlink{R2}{($\bf R_2$)}
From Proposition \ref{prpolRing} (iii) it follows that
the $\R$-algebra $\mathcal C(\C^n)$ is generated by Newton polyhedra.
On the other hand, if
$\Delta$ -- Newton polyhedron of exponential sum $f$, $t>0$,
then $t\Delta$ is a Newton polyhedron of the exponential sum $g(z)=f(tz)$.
From here the necessary statement follows.

\noindent
 \hyperlink{R3}{($\bf R_3$)}
 A direct consequence of the definitions.

 \noindent
 \hyperlink{R4}{($\bf R_4$)}
 Follows from Proposition \ref{prpolRing} (iv).

 \noindent
 \hyperlink{R5}{($\bf R_5$)}
 Follows from Definitions \ref{dfWeakDens}, \ref{dfIndex}.
 \par\smallskip
 \noindent
 \emph{Proof of Theorem} \ref{thmInd}.
The first statement follows from Proposition \ref{prSdvig1}.
 The second statement is a consequence of Theorem \ref{thmNewtShift}.
 \par\smallskip
 \noindent
\emph{ Proof of Theorem} \ref{thmBKK}.
According to Definition \ref{dfIndex}, the intersection index of \EA-hypersurfaces $X_i$ is equal to
$n! \mathfrak p(\Delta_1\cdot\ldots\cdot\Delta_n)$,
where $\Delta_i$ are the Newton polytopes of hypersurfaces.
By the definition of the ring $\mathfrak S_\mathfrak p({\C^n}^*)$, this equals
 to the mixed pseudovolume of Newton polyhedra $\mathfrak p(\Delta_1\cdot\ldots\cdot\Delta_n)$
 of \EA-hypersurfaces $X_1,\ldots,X_n$.
\section{Strong Density and Intersection Index 
}\label{index}
Here we consider zero-dimensional \EA-varieties and present statements
which describe the properties of the previously defined intersection index of \EA-varieties,
consistent with standard ideas about intersection indices in the context of algebraic geometry.

Let's recall some concepts
 from \textsection\textsection\ref{intro5},\ref{polRing}.

$\bullet$\ $G$ is a finitely generated group in ${\C^n}^*$; $\lambda_1,\ldots,\lambda_N$ is a basis of group $G$
(we assume that the set $\lambda_1,\ldots,\lambda_N$ contains some basis of the space ${\C^n}^*$).
$\omega_G\colon \C^n\to (\C\setminus0)^N$ is a homomorphism of the standard winding,
defined as $z\mapsto(\E^{\lambda_1(z)},\ldots,\E^{\lambda_N(z)})$.
Below we identify the space $\C^n$ with its image $L$ under the mapping
$d\omega\colon\C^n\to\C^N$,
where $d\omega$ is a differential of the winding $\omega_G$,
that is $d\omega(z)=(\lambda_1(z),\ldots,\lambda_N(z))$.
In this case, we fix a Hermitian metric on the space $L$,
taken from the space $\C^n$.
Note that $\omega_G(\C^n)=\exp(L)$.

$\bullet$\ $Y$ is an algebraic variety of codimension $n$ in the torus $(\C\setminus0)^N$.

$\bullet$\ $X=\omega^{-1}_G(Y)$ is an \EA-variety  in $\C^n$.
The variety $Y$ is called the $G$-model of the \EA-variety $X$.
The codimension $Y$ is called the algebraic codimension $\codima(X)$ of the \EA-variety $X$.
By definition, $\dima X=n-\codima X=0$.

$\bullet$\
$\E^AY$ is a shift of the variety $Y$ by the element $\E^A\in(\C\setminus0)^N$,
where $A\in\C^N$.
We denote by
$\E^AX$ the \EA-variety with model $\E^AY$.

$\bullet$\
$\mathcal K_{\rm trop}(Y)$ is the tropical fan of the variety $Y$.
Recall that $\mathcal K_{\rm trop}(Y)$ is a weighted fan in the space $\re\C^N$
of dimension equal to the dimension $Y$.
Note that $\forall A\in\C^N\colon\:\mathcal K_{\rm trop}(\E^AY)=\mathcal K_{\rm trop}(Y)$.

$\bullet$\
$V_K$ is a real subspace in $\C^N$,
generated by the points of the cone $K$.
\par\smallskip

Let $E_G$ be the ring of exponential sums with exponents from the group $G \subset {\C^n}^*$.
Suppose that an \EA-variety $X$ in $\C^n$
is defined by a finite system of equations $\{f_i = 0\}$,
where $f_i \in E_G$.
By choosing a basis $\lambda_1, \ldots, \lambda_N$ of the group $G$,
the exponential sums $f_i$ are uniquely represented
in the form
\[ f_i(z) = P_i\left(\E^{\lambda_1(z)}, \ldots, \E^{\lambda_N(z)}\right), \]
where $P_i$ are Laurent polynomials on the $N$-dimensional torus $(\C \setminus 0)^N$.
The algebraic variety $Y$ in $(\C \setminus 0)^N$,
defined by the equations $\{P_i = 0\}$,
is called the $G$-model of the \EA-variety $X$.
The \EA-variety $X$ is the preimage of $Y$ under the standard windig map
\[ z \mapsto \left(\E^{\lambda_1(z)}, \ldots, \E^{\lambda_N(z)}\right) \]
of $\C^n$ to the torus $(\C \setminus 0)^N$.
By definition,
the algebraic codimension $\codim(X)$ of the \EA-variety $X$ equals $\codim(Y)$;
see \textsection\ref{intro5}.
Therefore, since $\dim(X) = 0$, we have $\codim(Y) = n$.

For any vector $A = (a_1, \ldots, a_N) \in \C^N$, the \EA-variety $\E^AX$ is defined by the system of equations
\[ \{P_i(\E^{a_1} \E^{\lambda_1(z)}, \ldots, \E^{a_N} \E^{\lambda_N(z)}) = 0\}. \]
We consider the \EA-varieties $\E^AX$ as a family of deformations of the \EA-variety $X$ with parameter $A \in \C^N$.
The shift of the $G$-model $Y$ of the \EA-variety $X$ by the element $\left(\E^{-\lambda_1(z)}, \ldots, \E^{-\lambda_N(z)}\right)$ of the torus $(\C \setminus 0)^N$ is the $G$-model of the \EA-variety $\E^AX$.
Therefore, $\dim(\E^AX) = \dim(X) = 0.$
\subsection{Preliminaries}\label{Prel}
\subsubsection{Strong $n$-density}\label{index11}
Let $B_r$ be a ball of radius $r$ centered at zero in the finite-dimensional Euclidean space $E$.
Let $\sigma_n$ denote the volume of an $n$-dimensional ball with radius $1$.
\begin{definition} \label{dfprelim11}
Let $Y \subset E$ be a discrete set of points with nonnegative multiplicities, and let $N(Y, r)$ be the sum of the multiplicities of the points in the set $Y \cap B_r$.
If the limit
\[
\lim_{r \to \infty} \frac{N(Y, r)}{\sigma_n r^n}
\]
exists,
we call it the \emph{strong $n$-density} of the set $Y$ and denote it by $d_n(Y)$.
\end{definition}
\begin{example}\label{exprelim11}
If an \EA-variety $Y \subset \C^1$ is defined by the equation $f(z) = 0$,
then the strong $1$-density $d_1(Y)$ exists and is equal to $\frac{p}{2\pi}$,
where $p$ is the semiperimeter of the Newton polygon of the exponential sum $f$.
In particular,
if $f(z) = \E^{\alpha z} - c$,
then $d_1(Y) = \frac{|\alpha|}{\pi}$
(the perimeter of the polygon ``segment''
is considered to be twice its length).
\end{example}
\begin{proposition}\label{prprelim12}{\rm[cf. Definition \ref{dfPseudo}]}
Let $L$ be a real $n$-dimensional subspace in $\C^n$, and let $L^\bot$ be its orthogonal complement
with respect to the pairing $(*,*) = \re \langle *,* \rangle$.
 Let $\lambda_1, \ldots, \lambda_n$ be a basis of the space $L^\bot$.
Suppose that $\dim_\C L = n$, i.e., $L$ does not contain any nonzero complex subspace in $\C^n$.
Set
\[ Y = \{z \in \C^n \colon \E^{\langle z, \lambda_1 \rangle} = a_1, \ldots, \E^{\langle z, \lambda_n \rangle} = a_n\}, \]
where $0 \ne a_j \in \C$.
Then there exists a lattice $S$ of rank $n$ in the space $L$,
such that for some $z \in \C^n$ we have $Y = z + S$.
Moreover,
\[
d_n(S) = \frac{\cos(L^\bot, i L) \, \vol_n\left(\Pi(\lambda_1, \ldots, \lambda_n)\right)}{(2\pi)^n},
\]
where
$\vol_n\left(\Pi(\lambda_1, \ldots, \lambda_n)\right)$ is the $n$-dimensional volume of the parallelepiped $\Pi(\lambda_1, \ldots, \lambda_n)$ with sides $\lambda_j$, situated in the space $L^\bot$.
\end{proposition}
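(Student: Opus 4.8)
The plan is to describe $Y$ explicitly as a translate of a rank-$n$ lattice $S\subset L$ and then to compute $d_n(S)$ as the inverse of a covolume, which reduces everything to one Jacobian computation. First I would introduce the $\C$-linear map $\Phi\colon\C^n\to\C^n$, $\Phi(z)=(\langle z,\lambda_1\rangle,\dots,\langle z,\lambda_n\rangle)$. Since $w\mapsto\langle\,\cdot\,,w\rangle$ is a conjugate-linear isomorphism of $\C^n$ onto its dual, the components of $\Phi$ are $\C$-linearly independent iff $\lambda_1,\dots,\lambda_n$ are, i.e. iff $L^\bot$ contains no complex line. As multiplication by $i$ is an isometry for $(\ast,\ast)=\re\langle\ast,\ast\rangle$, one has $(iL)^\bot=iL^\bot$, hence $L^\bot\cap iL^\bot=(L+iL)^\bot$; the hypothesis $\dim_\C L=n$ (equivalently $L+iL=\C^n$, equivalently $L\cap iL=0$) therefore gives $L^\bot\cap iL^\bot=0$, so $\Phi$ is a $\C$-linear isomorphism. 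Finally, since the $\lambda_j$ span $L^\bot$ and $(L^\bot)^\bot=L$, a vector $z$ lies in $L$ iff $\re\langle z,\lambda_j\rangle=0$ for all $j$, i.e. iff $\Phi(z)\in(i\R)^n$; thus $\Phi$ restricts to an $\R$-linear isomorphism $\Phi|_L\colon L\to(i\R)^n$.

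Next I would solve the defining system of $Y$. Fixing $b=(b_1,\dots,b_n)$ with $\E^{b_j}=a_j$, the equation $\E^{\langle z,\lambda_j\rangle}=a_j$ says $\langle z,\lambda_j\rangle\in b_j+2\pi i\Z$, so $Y=\Phi^{-1}\!\big(b+(2\pi i\Z)^n\big)=z_0+S$ with $z_0=\Phi^{-1}(b)$ and $S=\Phi^{-1}\!\big((2\pi i\Z)^n\big)$. Since $(2\pi i\Z)^n$ is a full-rank lattice in $(i\R)^n=\Phi(L)$ and $\Phi$ is an isomorphism, $S$ is a rank-$n$ lattice contained in $L$ and spanning $L$ over $\R$; this already yields the first assertion $Y=z_0+S$.

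It then remains to compute $d_n(S)$. A full-rank lattice in an $n$-dimensional Euclidean space satisfies $N(\,\cdot\,,r)\sim\sigma_n r^n/\mathrm{covol}$, so $d_n(S)=1/\mathrm{covol}_L(S)$ for the metric induced from $\C^n$; and since $\Phi|_L$ carries $S$ onto $(2\pi i\Z)^n$, of covolume $(2\pi)^n$ in $(i\R)^n$, we get $d_n(S)=|\det(\Phi|_L)|/(2\pi)^n$. It suffices to show $|\det(\Phi|_L)|=\cos(L^\bot,iL)\,\vol_n\!\big(\Pi(\lambda_1,\dots,\lambda_n)\big)$. For $z\in L$ the numbers $\langle z,\lambda_j\rangle$ are purely imaginary with $\im\langle z,\lambda_j\rangle=-(iz,\lambda_j)$, whence $\Phi|_L(z)=-i\big((iz,\lambda_1),\dots,(iz,\lambda_n)\big)$. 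Therefore, up to the isometries $z\mapsto iz\colon L\to iL$ and $c\mapsto -ic\colon\R^n\to(i\R)^n$, the map $\Phi|_L$ is the composite of the orthogonal projection $iL\to L^\bot$ with the coordinate map $\kappa\colon L^\bot\to\R^n$, $v\mapsto\big((v,\lambda_1),\dots,(v,\lambda_n)\big)$. The projection has absolute determinant $\cos(iL,L^\bot)=\cos(L^\bot,iL)$ by the very definition of the cosine between equidimensional subspaces (and it is nonzero since $iL\cap L=0$), while $\kappa$ is the adjoint of the operator sending the standard basis of $\R^n$ to $\lambda_1,\dots,\lambda_n$, so $|\det\kappa|=\vol_n(\Pi(\lambda_1,\dots,\lambda_n))$. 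Multiplying the two factors gives the claimed formula.

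The one step requiring genuine care is this last determinant identity: one must keep the real and complex structures and the several induced metrics straight, and recognize $\cos(L^\bot,iL)$ — which is \emph{defined} as the volume-distortion coefficient of an orthogonal projection — as precisely the Jacobian factor that occurs here. The remaining ingredients are routine: the explicit description of $Y$ is linear algebra over $\C$ combined with the periodicity of $\exp$, and $d_n=1/\mathrm{covol}$ for full-rank lattices is the standard lattice-point counting asymptotic.
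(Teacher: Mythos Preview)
Your proof is correct. The paper's own argument is organized a little differently: rather than working with the map $\Phi$, it introduces the dual basis $\mu_1,\dots,\mu_n\in L$ determined by $\langle\mu_q,\lambda_p\rangle=i\,\delta_p^q$, so that $S=2\pi(\Z\mu_1+\cdots+\Z\mu_n)$ and $d_n(S)=(2\pi)^{-n}\,\vol_n(\Pi(\mu_1,\dots,\mu_n))^{-1}$; the remaining identity $\cos(L^\bot,iL)\,\vol_n(\Pi(\lambda_1,\dots,\lambda_n))\,\vol_n(\Pi(\mu_1,\dots,\mu_n))=1$ is then isolated as a stand-alone lemma about mutually dual bases in two $n$-dimensional subspaces of a Euclidean $2n$-space. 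Your factorization of $\Phi|_L$ through the orthogonal projection $iL\to L^\bot$ and the coordinate map $\kappa$ is exactly the content of that lemma (your $\mu_j$ would simply be $\Phi^{-1}(ie_j)$), so the two arguments coincide in substance. Your packaging is a bit more streamlined in that it never names the dual basis; the paper's packaging has the minor advantage that the auxiliary lemma --- $\cos(A,B)\,\vol_n(\Pi(a_j))\,\vol_n(\Pi(b_j))=1$ for dual bases $a_j\in A$, $b_j\in B$ --- is stated in a form that can be quoted elsewhere.
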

\noindent
\emph{Proof.}
From the condition $\dim_\C L = n$, it follows that there exists a basis $\mu_1, \ldots, \mu_n$ of the space $L$, such that $\forall p, q \colon (\mu_q, \lambda_p) = i \delta_p^q$.
Then, for any $y \in Y$,
\[ Y = y + 2\pi i (\Z \mu_1 + \ldots + \Z \mu_n). \]
Therefore, $S = 2\pi i (\Z \mu_1 + \ldots + \Z \mu_n)$,
and
\[ d_n(S) = (2\pi)^{-n} \vol_n^{-1}(\Pi(\mu_1, \ldots, \mu_n)). \]
Now, it remains to prove that
\[ \cos(L^\bot, i L) \vol_n(\Pi(\lambda_1, \ldots, \lambda_n)) \vol_n(\Pi(\mu_1, \ldots, \mu_n)) = 1. \]
This follows from the lemma below for $A = i L$, $B = L^\bot$, $C = \C^n$, and $(*,*) = \re \langle *,* \rangle$.
\begin{lemma}\label{lmLA}
Let $A$ and $B$ be $n$-dimensional subspaces of a $(2n)$-dimensional real space $C$
with a scalar product $(*,*)$.
Assume that the pairing $(a \times b) \mapsto (a, b)$ of the spaces $A$ and $B$ is non-degenerate.
Then, if $a_1, \ldots, a_n$ and $b_1, \ldots, b_n$ are mutually dual bases in the spaces $A$ and $B$,
we have
\[ \cos(A, B) \, \vol_n(\Pi(a_1, \ldots, a_n)) \, \vol_n(\Pi(b_1, \ldots, b_n)) = 1. \]
\end{lemma}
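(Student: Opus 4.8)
The plan is to carry out the whole computation through the orthogonal projection $\pi\colon C\to B$ together with the dual‑basis relation, keeping things essentially coordinate‑free.

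First I would record that the non‑degeneracy hypothesis on the pairing $A\times B\to\R$, $(a,b)\mapsto(a,b)$, forces the restricted orthogonal projection $\pi|_A\colon A\to B$ to be an isomorphism: if $a\in A$ and $\pi(a)=0$, then for every $b\in B$ we have $(a,b)=(\pi(a),b)=0$ because $a-\pi(a)\perp B$, so $a=0$; since $\dim A=\dim B=n$, the map $\pi|_A$ is bijective. Next I would identify the image of the given basis. Since $(*,*)$ restricts to a positive‑definite form on $B$, there is a basis $b_1^\vee,\dots,b_n^\vee$ of $B$ dual to $b_1,\dots,b_n$, that is $(b_i^\vee,b_j)=\delta_{ij}$. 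For each $i$ one has $\pi(a_i)\in B$ and $(\pi(a_i),b_j)=(a_i,b_j)=\delta_{ij}$, hence $\pi(a_i)=b_i^\vee$. Thus $\pi|_A$ carries the parallelepiped $\Pi(a_1,\dots,a_n)$ onto $\Pi(b_1^\vee,\dots,b_n^\vee)$.

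Then I would invoke the definition of $\cos(A,B)$ as the distortion coefficient of $\vol_n$ under orthogonal projection, which gives
\[
\cos(A,B)=\frac{\vol_n\bigl(\Pi(b_1^\vee,\dots,b_n^\vee)\bigr)}{\vol_n\bigl(\Pi(a_1,\dots,a_n)\bigr)}.
\]
Finally I would use the elementary fact that mutually dual bases of a Euclidean space have reciprocal parallelepiped volumes: the Gram matrices of $(b_j)$ and $(b_j^\vee)$ are mutually inverse, so $\vol_n(\Pi(b_1,\dots,b_n))\cdot\vol_n(\Pi(b_1^\vee,\dots,b_n^\vee))=1$. Substituting, $\cos(A,B)\,\vol_n(\Pi(a_1,\dots,a_n))\,\vol_n(\Pi(b_1,\dots,b_n))=\vol_n(\Pi(b_1^\vee,\dots,b_n^\vee))\,\vol_n(\Pi(b_1,\dots,b_n))=1$, which is the claim.

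The only genuinely delicate point is matching the paper's definition of $\cos(A,B)$: it is described merely as the distortion of $\vol_n$ "when projected from one to the other," so I would make explicit that this refers to the orthogonal projection, that it is insensitive to whether one projects $A\to B$ or $B\to A$ (projecting $B$ onto $A$ sends $b_j$ to the dual basis of $(a_i)$ in $A$ and yields the same factor), and that this factor equals the Jacobian $|\det(\pi|_A)|$ computed in any pair of orthonormal bases of $A$ and $B$. Everything past that is routine determinant bookkeeping.
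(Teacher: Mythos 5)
Your proposal is correct and follows essentially the same route as the paper: orthogonally project $A$ onto $B$, observe that the images $p(a_i)$ form the basis of $B$ dual to $(b_j)$ with respect to the restricted scalar product (so the two parallelepiped volumes are reciprocal via inverse Gram matrices), and absorb the volume distortion of the projection into the factor $\cos(A,B)$. Your write-up is in fact slightly more explicit than the paper's about why $\pi|_A$ is an isomorphism and about the normalization of $\cos(A,B)$, but no new idea is involved.
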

\noindent
\emph{Proof.}
Let $p \colon A \to B$ be the orthogonal projection map.
The vectors $p(a_1), \ldots, p(a_n)$ form a basis in the space $B$,
which is dual to the basis $b_1, \ldots, b_n$
with respect to the pairing $\alpha \times \beta \mapsto (p^{-1} \alpha, \beta)$.
It follows that
\[
\vol_n\left(\Pi(p(a_1), \ldots, p(a_n))\right) \, \vol_n\left(\Pi(b_1, \ldots, b_n)\right) = 1.
\]
Now, the desired statement follows from the equality
\[
\vol_n\left(\Pi(p(a_1), \ldots, p(a_n))\right) = \cos(A, B) \, \vol_n\left(\Pi(a_1, \ldots, a_n)\right).
\]
\begin{definition} \label{dfprelim12}
{\rm(1)}
Let $\mathcal{Z} \subset V$ be a lattice in a subspace $V$ of the space $E$, equipped with an integral positive multiplicity $m(\mathcal{Z})$.
A set $Y \subset E$
is called an
$\varepsilon$-perturbation of the shifted
lattice $e + \mathcal{Z}$ if
{\rm(A)} $Y$ belongs to the $\varepsilon$-neighborhood of the shifted lattice,
and {\rm(B)} in the $\varepsilon$-neighborhood of any point $x \in e + \mathcal{Z}$
there are exactly $m(\mathcal{Z})$ points of $Y$.

{\rm(2)}
If the sets $Y_1, \ldots, Y_m$ are $\varepsilon$-perturbations
of the shifted lattices $z_j + \mathcal{Z}_j$,
then the set $\bigcup_{1 \leq j \leq m} Y_j$
is called an $\varepsilon$-perturbation of the union of the shifted lattices
$\bigcup_{1 \leq j \leq m} (z_j + \mathcal{Z}_j)$.
\end{definition}
\begin{lemma}\label{lmprelim111}
Suppose that

$\bullet$ $V$ is an $n$-dimensional subspace in $E$,

$\bullet$ $K$ is a convex $n$-dimensional cone in $V$,

$\bullet$ $K_{R,S}$ is the set of points in $E$ that are at a distance $<R$ from $K$
and at a distance $>S$ from $\partial K$,

$\bullet$ $\mathcal{Z}_1, \mathcal{Z}_2, \ldots$ is a finite set of $n$-dimensional lattices in $V$,

$\bullet$ $Y$ is an $\varepsilon$-perturbation of the union of shifted lattices $\{e_j + \mathcal{Z}_j\}$.

Then, if $R$ is sufficiently large, the strong density $d_n(Y \cap K_{R,S})$ exists and is equal to
$A(K) \sum_j d_n(\mathcal{Z}_j)$,
where $A(K)$ is the angle of the cone $K$ (the angle of the full cone is taken as one).
\end{lemma}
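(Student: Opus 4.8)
The plan is to reduce counting the points of $Y$ in $K_{R,S}\cap B_r$ to a classical lattice-point count in the $n$-dimensional space $V$, and then to evaluate that count asymptotically. Let $\pi\colon E\to V$ and $\pi^\bot\colon E\to V^\bot$ be the orthogonal projections and put $h_j=|\pi^\bot(e_j)|$. Since $K$ and $\partial K$ lie in $V$, for every $x\in E$ we have $d(\pi(x),K)\le d(x,K)$ and $d(\pi(x),\partial K)\le d(x,\partial K)$, while $|x|^2=|\pi(x)|^2+h_j^2$ whenever $x\in e_j+\mathcal Z_j$; the same inequalities hold with a point $y\in Y$ in place of $x$. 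The phrase ``$R$ sufficiently large'' will mean $R>\varepsilon+\max_j h_j$: this is precisely what makes a point $y\in Y$ lying within $\varepsilon$ of a lattice point $x=e_j+\ell$ with $\pi(x)$ well inside $K$ satisfy $d(y,K)\le\varepsilon+h_j<R$, hence lie in $K_{R,S}$. (Were $R$ smaller, the lattices $\mathcal Z_j$ with $h_j\ge R$ would contribute nothing to $K_{R,S}$ and the limit would be strictly smaller, so the hypothesis is genuinely needed.)

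First I would establish a two-sided bound. Fix a constant $\delta>S+\varepsilon$. By the inequalities above, once $r$ is large, every $y\in Y$ within $\varepsilon$ of a point $x\in e_j+\mathcal Z_j$ with $\pi(x)\in K$, $d(\pi(x),\partial K)>\delta$ and $|\pi(x)|<r-\delta$ lies in $K_{R,S}\cap B_r$; conversely every point of $Y\cap K_{R,S}\cap B_r$ lies within $\varepsilon$ of a point $x\in e_j+\mathcal Z_j$ with $d(\pi(x),K)<R+\varepsilon$ and $|\pi(x)|<r+\varepsilon$. Therefore
\[
N(Y\cap K_{R,S},r)=\sum_j m(\mathcal Z_j)\,\#\bigl\{\ell\in\mathcal Z_j\colon\pi(e_j)+\ell\in K\cap B_r\bigr\}+O(r^{n-1}),
\]
the error absorbing all lattice points whose projection lies within bounded distance of $\partial K$ or within bounded distance of the sphere $\{\,|v|=r\,\}$. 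Here the bound $O(r^{n-1})$ holds because the symmetric difference of $K_{R,S}$ and $K\cap V$ is contained in the $R$-neighbourhood of $\partial K$ (for a closed convex cone the nearest point of $K$ to an exterior point lies on $\partial K$), and $\partial K$ is an $(n-1)$-dimensional conical complex, so each fixed lattice has only $O(r^{n-1})$ points of norm $<r+\varepsilon$ whose projection lies in the $R$-neighbourhood of $\partial K$ or in the shell near $\{\,|v|=r\,\}$.

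It remains to evaluate $\#\{\ell\in\mathcal Z_j\colon\pi(e_j)+\ell\in K\cap B_r\}$. The set $K\cap B_r$ is convex, so by the classical lattice-point estimate for convex bodies this count equals $\vol_n(K\cap B_r)/c_j+O(r^{n-1})$, where $c_j$ is the covolume of $\mathcal Z_j$; and $\vol_n(K\cap B_r)=A(K)\,\sigma_n r^n$ is immediate from the definition of $A(K)$ as the fraction of the unit sphere of $V$ occupied by $K$ (integrate in polar coordinates). Since $d_n(\mathcal Z_j)=m(\mathcal Z_j)/c_j$, combining the last two displays gives $N(Y\cap K_{R,S},r)=A(K)\,\sigma_n r^n\sum_j d_n(\mathcal Z_j)+O(r^{n-1})$; dividing by $\sigma_n r^n$ and letting $r\to\infty$ yields $d_n(Y\cap K_{R,S})=A(K)\sum_j d_n(\mathcal Z_j)$, as required.

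The main obstacle is the estimate in the second paragraph: controlling, uniformly in $r$, the three sources of boundary discrepancy — the $\varepsilon$-fuzz of the perturbation, the difference between the thickened and hollowed region $K_{R,S}$ and the flat cone $K\cap V$, and the gap between balls in $E$ and balls in $V$ — and in particular handling the possibly intricate geometry of $\partial K$ near its apex and along its lower-dimensional faces. All three discrepancies live in sets of order $r^{n-1}$ and so disappear in the density; one clean way to organize this is to verify that $Y\cap K_{R,S}$ is, apart from $O(r^{n-1})$ of its points inside any $B_r$, itself an $\varepsilon$-perturbation of $\bigcup_j\{v\in\pi(e_j)+\mathcal Z_j\colon v\in K\}$, after which the computation above applies verbatim.
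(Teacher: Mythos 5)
Your argument is correct: the paper dismisses this lemma with a one-line reference to Definition \ref{dfprelim12}, and your proof supplies exactly the standard lattice-point count (volume of $K\cap B_r$ divided by the covolume, with all boundary discrepancies of order $r^{n-1}$) that the author evidently had in mind, including the right reading of ``$R$ sufficiently large'' as $R>\varepsilon+\max_j h_j$. One small imprecision: the symmetric difference of $K_{R,S}$ and $K$ is not literally contained in the $R$-neighbourhood of $\partial K$ (points of $E\setminus V$ lying over the interior of $K$ belong to it), but since every lattice point in question projects into $V$, the comparison you actually need --- between the condition on $y\in E$ and the condition on $\pi(x)\in V$ --- only involves the $(\max(R,S)+\varepsilon)$-neighbourhood of $\partial K$ inside $V$, so the $O(r^{n-1})$ bound stands.
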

\noindent
\emph{Proof.}
Follows from Definitions \ref{dfprelim12} (1) and \ref{dfprelim12} (2).
\subsubsection{Domains of Relatevely Full Measure}\label{index12}
Let $\mathfrak I=\{I\}$ be a finite set of proper real subspaces
 in vector space $E$.
Let $B_{\mathfrak I}=\bigcup_{I\in \mathfrak I}I$.
For $0<r\in\R$, let $B_{\mathfrak I,r}$ denote the set of points $E$,
located at a distance $\geq r$ from $B_\mathfrak I$.

\begin{definition} \label{dfdRelFull}
Suppose for a domain $U\subset E$,
there exist $\mathfrak I$ and $r>0$,
such that $U\supset B_{\mathfrak I,r}$.
Then the domain $U$ is called a \emph{domain of relative full measure}.
\end{definition}
\noindent
Hereafter, for brevity, we use the abbreviation \rf-domain.
\begin{corollary}\label{cordRelFull}

{\rm(1)}
The union and intersection of a finite number of \rf-domains are \rf-domains.

{\rm(2)}
The property of being an \rf-domain
does not depend on the choice of metric in the space $E$.

{\rm(3)}
If $U\supset B_{\mathfrak I,r}$, and
the subspace $L$ in $E$ does not belong to $B_\mathfrak I$,
then the domain $U\cap L$ remains an \rf-domain in the space $L$.

{\rm(4)}
Let $(E\setminus B_\mathfrak I)^{1},(E\setminus B_\mathfrak I)^{2},\ldots$ be the connected components of the domain $E\setminus B_\mathfrak I$.
Then the domains
$(E\setminus B_\mathfrak I)^{i}\cap B_{\mathfrak I,r}$
are the connected components
of the \rf-domain $B_{\mathfrak I,r}$.
\end{corollary}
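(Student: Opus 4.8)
\emph{Proof plan.} The strategy is to reduce everything to two elementary facts about a finite union $B_{\mathfrak I}=\bigcup_{I\in\mathfrak I}I$ of proper linear subspaces of $E$: since each $I$ is a linear subspace, $\mathrm{dist}(tx,B_{\mathfrak I})=t\,\mathrm{dist}(x,B_{\mathfrak I})$ for every $t>0$, so $B_{\mathfrak I,r}$ is stable under multiplication by scalars $\ge 1$ and every connected component of $E\setminus B_{\mathfrak I}$ is an open cone; and any two norms on the finite-dimensional space $E$ are equivalent, so distances to a fixed subspace measured in two metrics differ by bounded factors. Part (2) is then immediate: if $U\supset B_{\mathfrak I,r}$ for one metric, then $U$ contains $B_{\mathfrak I,\rho}$ computed in any other metric once $\rho$ is large enough. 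Part (1) is also quick: a union of \rf-domains already contains one of the sets $B_{\mathfrak I_i,r_i}$; for an intersection, put $\mathfrak J=\bigcup_i\mathfrak I_i$, $\rho=\max_i r_i$, and use that $\mathrm{dist}(x,B_{\mathfrak J})\ge\rho$ forces $\mathrm{dist}(x,B_{\mathfrak I_i})\ge r_i$ for all $i$, so $B_{\mathfrak J,\rho}\subset\bigcap_i B_{\mathfrak I_i,r_i}\subset\bigcap_i U_i$.

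For part (3) I would take $\mathfrak J=\{L\cap I:I\in\mathfrak I\}$; the hypothesis that $L$ is contained in no $I\in\mathfrak I$ makes each $L\cap I$ a proper subspace of $L$, so $\mathfrak J$ is a legitimate family in $L$. The one point to verify is a distance comparison: there is a constant $c>0$ with $\mathrm{dist}_E(x,B_{\mathfrak I})\ge c\,\mathrm{dist}_L(x,B_{\mathfrak J})$ for all $x\in L$. For a single $I$ this comes from factoring the restriction to $L$ of the projection $E\to E/I$ through $L/(L\cap I)$: the resulting map $L/(L\cap I)\to E/I$ is an injective linear map of finite-dimensional normed spaces, hence bounded below, which gives $\mathrm{dist}_E(x,I)\ge c_I\,\mathrm{dist}_L(x,L\cap I)$; set $c=\min_I c_I>0$ and take the minimum over $I$. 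Hence $\mathrm{dist}_L(x,B_{\mathfrak J})\ge r/c$ implies $\mathrm{dist}_E(x,B_{\mathfrak I})\ge r$, so $\{x\in L:\mathrm{dist}_L(x,B_{\mathfrak J})\ge r/c\}\subset U\cap L$, and $U\cap L$ is an \rf-domain in $L$ (that the ambient metric may be used is part (2)).

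For part (4) note first that $B_{\mathfrak I,r}\subset E\setminus B_{\mathfrak I}$, so the sets $C_i\cap B_{\mathfrak I,r}$ with $C_i=(E\setminus B_{\mathfrak I})^i$ partition $B_{\mathfrak I,r}$ into relatively open, hence clopen, pieces; it therefore suffices to show each nonempty $C_i\cap B_{\mathfrak I,r}$ is connected. Given $x,y$ in it, join them by a path $\gamma$ inside the open connected set $C_i$; on the compact set $\gamma([0,1])$ the continuous positive function $\mathrm{dist}(\cdot,B_{\mathfrak I})$ attains a positive minimum $\varepsilon$, so for $T=\max(1,r/\varepsilon)$ the scaled path $T\gamma$ lies in $B_{\mathfrak I,r}$ and still lies in the cone $C_i$, while the ray segments $\{tx:1\le t\le T\}$ and $\{ty:1\le t\le T\}$ lie in $C_i\cap B_{\mathfrak I,r}$ by the scaling property; concatenating yields a path from $x$ to $y$ inside $C_i\cap B_{\mathfrak I,r}$. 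I expect this scaling step — pushing $\gamma$ out by the factor $T$ to gain a uniform margin $r$ from $B_{\mathfrak I}$ without leaving the component — to be the only genuinely nontrivial point, the distance comparison of part (3) being the one other place that needs more than bookkeeping.
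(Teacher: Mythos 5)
Your proof is correct. The paper states this corollary without any proof, treating it as an immediate consequence of Definition \ref{dfdRelFull}; the details you supply --- equivalence of norms on the finite-dimensional space $E$ for (2), the lower bound for the injective map $L/(L\cap I)\to E/I$ of quotient spaces for (3), and the homogeneity $\mathrm{dist}(tx,B_{\mathfrak I})=t\,\mathrm{dist}(x,B_{\mathfrak I})$ together with the scaling of a connecting path for (4) --- are exactly the standard arguments one would insert, and each step checks out.
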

Now we will define
the set of real subspaces $\mathfrak I(Y)$ in $\C^N$
and the corresponding \rf-domains,
related to the algebraic variety $Y$ in the torus $(\C\setminus0)^N$.
\begin{definition}\label{df_I}
For $K\in\mathcal K_{\rm trop}(Y)$ (for the notation $\mathcal K_{\rm trop}(Y)$ see \textsection\ref{trop}),
let $V_{K,L}$ denote the minimal real subspace in $\C^N$
containing the union
$K\cup\im\C^N\cup L$.
If  $\dim V_{K,L}<2N$,
then we call the cone $K$ and the subspace $V_{K,L}$
\emph{dangerous} for $Y$.
Let $\mathfrak I(Y)$ be the set of subspaces
dangerous for $Y$,
and define $B_{\mathfrak I}(Y)=\cup_{H\in \mathfrak I(Y)} H$, $B_{\mathfrak I,r}(Y)=B_{\mathfrak I(Y),r}$.
\end{definition}
\begin{corollary}\label{corDangDop}
Let $\mathcal L$ be a fan in $\C^N$,
consisting of a single cone $L$.
Then any point in $\C^N\setminus B_\mathfrak I(Y)$
is admissible for the pair of fans $(\mathcal L,\mathcal K^\C_{\rm trop}(Y))$.
\end{corollary}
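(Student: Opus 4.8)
The plan is to unwind Definition~\ref{dfTransvCone} for the pair $(\mathcal L,\mathcal K^\C_{\rm trop}(Y))$ and to recognize the ``dangerous'' subspaces of Definition~\ref{df_I} as exactly the exceptional sets $D(K,L)$ that occur there. Recall that $\mathcal L$ consists of the single cone $L=d\omega(\C^n)\subset\C^N$, so that $V_L=L$, and that the cones of $\mathcal K^\C_{\rm trop}(Y)$ are obtained from the cones $K$ of the real tropical fan $\mathcal K_{\rm trop}(Y)$ (taken in all dimensions, faces included) by adding the subspace $\im\C^N$; a typical one is $\tilde K=K+\im\C^N$.

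First I would compute, for such a cone $\tilde K$, its real linear span $V_{\tilde K}=\operatorname{span}_\R(K)+\im\C^N$. Then the minimal real subspace of $\C^N$ containing $V_{\tilde K}\cup V_L$ is $\operatorname{span}_\R(K)+\im\C^N+L$, which is precisely the subspace $V_{K,L}$ of Definition~\ref{df_I}. Consequently the pair $(\tilde K,L)$ is transverse in the sense of Definition~\ref{dfTransvCone} if and only if $\dim_\R V_{K,L}=2N$, that is, if and only if the cone $K$ is \emph{not} dangerous for $Y$.

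Then I would split into the two cases of Definition~\ref{dfTransvCone}. If $K$ is not dangerous, the pair $(\tilde K,L)$ is transverse and $D(\tilde K,L)=\emptyset$. If $K$ is dangerous, then $V_{K,L}\in\mathfrak I(Y)$, hence $V_{K,L}\subseteq B_{\mathfrak I}(Y)$, and moreover
\[
D(\tilde K,L)=\{e\in\C^N\colon (e+V_{\tilde K})\cap L\ne\emptyset\}=V_{\tilde K}+L=V_{K,L},
\]
since $e+v=w$ with $v\in V_{\tilde K}$ and $w\in L$ is equivalent to $e\in V_{\tilde K}+L$. In either case $D(\tilde K,L)\subseteq B_{\mathfrak I}(Y)$.

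Finally, since $\mathcal L$ has the single cone $L$, a point $e\in\C^N$ is admissible for the pair of fans $(\mathcal L,\mathcal K^\C_{\rm trop}(Y))$ as soon as $e\notin D(\tilde K,L)$ for every cone $\tilde K$ of $\mathcal K^\C_{\rm trop}(Y)$; by the previous step this holds whenever $e\notin B_{\mathfrak I}(Y)$, which is the assertion of the corollary. There is essentially no genuine obstacle beyond this bookkeeping; the one point that needs a moment's care is that Definition~\ref{df_I} ranges over \emph{all} cones $K\in\mathcal K_{\rm trop}(Y)$, so the dangerous subspaces $V_{K,L}$ arising from lower-dimensional faces are already collected in $\mathfrak I(Y)$ — which is automatic because a fan is closed under passage to faces.
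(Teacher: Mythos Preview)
Your proof is correct and is precisely the unwinding of Definition~\ref{dfTransvCone} that the paper's one-line proof (``This follows from the definition~\ref{dfTransvCone}'') leaves to the reader; the key identification $D(\tilde K,L)=V_{\tilde K}+L=V_{K,L}$ is exactly what makes the corollary immediate. Note that the order of the pair is immaterial since $D(K_1,K_2)=V_{K_1}+V_{K_2}=D(K_2,K_1)$, so your swapping of $(\mathcal L,\mathcal K^\C_{\rm trop}(Y))$ for $(\tilde K,L)$ causes no harm.
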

\begin{proof}
This follows from the definition \ref{dfTransvCone}.
\end{proof}
The set $\mathfrak I(Y)$ depends on the choice of the tropical fan $\mathcal K_{\rm trop}(Y)$ of the variety $Y$.
When transitioning to a subdivision of the tropical fan,
the set $\mathfrak I(Y)$ may increase.
The following statement is a direct consequence of definition \ref{df_I}.
\begin{corollary}\label{corJC}
Let $Y\subset Z$.
Suppose the fan $\mathcal K_{\rm trop}(Y)$ is a subfan of the fan $\mathcal K_{\rm trop}(Z)$.
Then $\mathfrak I(Y)\subset\mathfrak I(Z)$.
\end{corollary}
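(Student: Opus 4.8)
The plan is to simply unwind Definition~\ref{df_I}. The key observation is that, in the construction of $\mathfrak I(Y)$, the only ingredient depending on the variety is the tropical fan $\mathcal K_{\rm trop}(Y)$ itself: the subspace $\im\C^N$ and the subspace $L$ (the fixed $n$-dimensional subspace of $\C^N$ introduced at the start of \textsection\ref{index}) are the same for all varieties under consideration, and for a given cone $K$ the subspace $V_{K,L}$ --- the minimal real subspace of $\C^N$ containing $K\cup\im\C^N\cup L$ --- depends on $K$ alone.

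So first I would record that, by the hypothesis that $\mathcal K_{\rm trop}(Y)$ is a subfan of $\mathcal K_{\rm trop}(Z)$, every cone $K\in\mathcal K_{\rm trop}(Y)$ is also a cone of $\mathcal K_{\rm trop}(Z)$. For such a $K$, the associated subspace $V_{K,L}$ and the numerical condition $\dim V_{K,L}<2N$ that declares $K$ dangerous are unchanged whether $K$ is regarded as a cone of $\mathcal K_{\rm trop}(Y)$ or of $\mathcal K_{\rm trop}(Z)$. Hence, if $H\in\mathfrak I(Y)$, then $H=V_{K,L}$ for some cone $K\in\mathcal K_{\rm trop}(Y)$ dangerous for $Y$; the same $K$ lies in $\mathcal K_{\rm trop}(Z)$ and is dangerous for $Z$, so $H\in\mathfrak I(Z)$. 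This yields $\mathfrak I(Y)\subset\mathfrak I(Z)$. There is no real obstacle; the only point needing a moment's care is the reading of ``subfan'' --- it must mean that each cone of $\mathcal K_{\rm trop}(Y)$ occurs among the cones of $\mathcal K_{\rm trop}(Z)$, so that no genuine subdivision of $Y$'s fan is taking place (consistent with the preceding remark that passing to a subdivision can only enlarge $\mathfrak I$) --- together with the fact that $V_{K,L}$ is an invariant of the cone $K$ alone.
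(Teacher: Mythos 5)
Your proof is correct and is exactly the argument the paper intends: the text dismisses this corollary as "a direct consequence of Definition \ref{df_I}," and your unwinding — that $V_{K,L}$ and the danger condition $\dim V_{K,L}<2N$ depend only on the cone $K$ (with $L$ and $\im\C^N$ fixed), so the subfan hypothesis immediately gives the inclusion of the sets of dangerous subspaces — is precisely that consequence spelled out.
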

Next, we constantly use the notation $\mathcal K^\C_{\rm trop}(Y)$ for a $(2N-n)$-dimensional fan in $\C^N$,
consisting of cones of the form $K+\im\C^N$,
where $K\in\mathcal K_{\rm trop}(Y)$.
Below, we will equip the fan $\mathcal K^\C_{\rm trop}(Y)$ with the structure of a tropical fan in $\C^N$.
To do this, we recall that the Newton polytope $\nwt(Y)$ of the variety $Y$
is an element of the ring $\mathfrak S_\vol(\re {\C^N}^*)$;
see theorem \ref{ringEKK}.
Let $\nwt^\C(Y)$ denote the image of $\iota_*(\nwt(Y))$ in the ring $\mathfrak S_\vol({\C^N}^*)$,
where $\iota\colon\re {\C^N}^*\to{\C^N}^*$ is the embedding of vector spaces.
\begin{corollary}\label{corJC1}
The fan consisting of cones of the tropical fan $\nwt^\C(Y)$ is $\mathcal K^\C_{\rm trop}(Y)$.
\end{corollary}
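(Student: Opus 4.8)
The plan is to compare the two sides by unwinding each through Proposition~\ref{prWeightedFan} (the description of the weighted fan of an element of $\mathfrak S_\vol$), reducing everything to an elementary identity between the dual cone of a face of a polytope $\Gamma\subset\re{\C^N}^*$ computed inside $\re{\C^N}^*$ and the dual cone of that face computed inside ${\C^N}^*$. Since $\codim Y=n$, the Newtonization $\nwt(Y)$ is a homogeneous element of degree $n$ of $\mathfrak S_\vol(\re{\C^N}^*)$ (Theorem~\ref{ringEKK}), so by Lemma~\ref{lmPoly} I may write $\nwt(Y)=\sum_i\pm\Gamma_i^{\,n}$ with $\Gamma_i$ virtual polytopes in $\re{\C^N}^*$. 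Because $\iota_*$ is a ring homomorphism carrying a polytope to its image (Proposition~\ref{prpolRingVol}), we get $\nwt^\C(Y)=\iota_*(\nwt(Y))=\sum_i\pm\iota(\Gamma_i)^{\,n}$ in $\mathfrak S_\vol({\C^N}^*)$. Hence, by Proposition~\ref{prWeightedFan}, the weighted fan of $\nwt^\C(Y)$ is the superposition $\sum_i\pm\mathcal K_{\iota(\Gamma_i),\,2N-n}$, while the weighted fan of $\nwt(Y)$ — which represents the tropicalization of $\nwt(Y)$, i.e. is $\mathcal K_{\rm trop}(Y)$ (see \textsection\ref{trop} and \cite{EKK}) — is $\sum_i\pm\mathcal K_{\Gamma_i,\,N-n}$. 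It therefore suffices to compare, for one fixed polytope $\Gamma$, the weighted fan $\mathcal K_{\iota(\Gamma),\,2N-n}$ in $\C^N$ with the weighted fan $\mathcal K_{\Gamma,\,N-n}$ in $\re\C^N$.

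The key point is the following dual-cone identity. Let $\Gamma\subset\re{\C^N}^*$ be a polytope, $\Delta\subset\Gamma$ a face, and $K_\Delta\subset\re\C^N$ its dual cone. Writing $w=u+iv$ with $u,v\in\re\C^N$ and using that every $z\in\re{\C^N}^*$ is real-valued on $\re\C^N$, we have $z(w)=z(u)+iz(v)$ with $z(u),z(v)\in\R$, so $\re z(w)=z(u)$, which is independent of $v$. Thus $w$ lies in the dual cone in $\C^N$ of $\Delta$, viewed as a face of $\iota(\Gamma)$, exactly when $z\mapsto\re z(w)$ attains its maximum over $\Gamma$ at every point of $\Delta$, i.e. exactly when $u\in K_\Delta$; hence that dual cone equals $K_\Delta+\im\C^N$. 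Since the affine dimension of a face is the same whether it is viewed in $\re{\C^N}^*$ or in ${\C^N}^*$, the faces of $\iota(\Gamma)$ of dimension $\geq n$ correspond bijectively to the faces of $\Gamma$ of dimension $\geq n$ — i.e. to the cones of $\mathcal K_{\Gamma,\,N-n}$ — and under this bijection a cone $K$ goes to $K+\im\C^N$. One checks moreover that the weights match: the weight of a cone is built from the associated face of $\Gamma$, which is unchanged, and the relevant orthogonal complement $V_K^\bot$ is literally the same subspace of $\re{\C^N}^*\subset{\C^N}^*$ in either ambient space. So $K\mapsto K+\im\C^N$ is a weight-preserving isomorphism of $\mathcal K_{\Gamma,\,N-n}$ onto $\mathcal K_{\iota(\Gamma),\,2N-n}$.

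It remains to superimpose over $i$. Since $\im\C^N$ is a fixed complement of $\re\C^N$ in $\C^N$, one has $(K\cap L)+\im\C^N=(K+\im\C^N)\cap(L+\im\C^N)$ and $(K\setminus L)+\im\C^N=(K+\im\C^N)\setminus(L+\im\C^N)$ for cones $K,L\subset\re\C^N$; combined with the weight-compatibility this shows that $K\mapsto K+\im\C^N$ intertwines the addition of weighted fans used in Proposition~\ref{prWeightedFan}. Consequently $\sum_i\pm\mathcal K_{\iota(\Gamma_i),\,2N-n}$ is the image under $K\mapsto K+\im\C^N$ of $\sum_i\pm\mathcal K_{\Gamma_i,\,N-n}=\mathcal K_{\rm trop}(Y)$, so its underlying fan of cones is $\{K+\im\C^N\colon K\in\mathcal K_{\rm trop}(Y)\}=\mathcal K^\C_{\rm trop}(Y)$, which is the assertion. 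The dual-cone identity is immediate; the only steps requiring genuine care are this last compatibility of the ``slack direction'' $\im\C^N$ with the somewhat intricate definition of the sum of weighted fans, and keeping the degree/dimension bookkeeping straight ($\codim Y=n$, so $\nwt(Y)$ has degree $n$, its tropical fan in $\re\C^N$ has dimension $N-n$, and that of $\nwt^\C(Y)$ in $\C^N$ has dimension $2N-n$, matching $\mathcal K^\C_{\rm trop}(Y)$).
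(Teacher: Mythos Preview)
Your proof is correct. The paper states this corollary without proof, treating it as an immediate consequence of the definition of $\nwt^\C(Y)=\iota_*(\nwt(Y))$ and the general machinery of \textsection\ref{polRing3}; your argument is precisely the natural unwinding of that machinery, and in particular the dual-cone identity $K_\Delta^{\C^N}=K_\Delta^{\re\C^N}+\im\C^N$ (together with $V_{K+\im\C^N}^\bot=\iota(V_K^\bot)$, which you implicitly use to match weights) is exactly the elementary fact the author has in mind. Note that the weight comparison is not superfluous bookkeeping: since in the sum of weighted fans a cone with total weight zero disappears, you do need that $K\mapsto K+\im\C^N$ carries nonzero weights to nonzero weights in order to conclude that the \emph{underlying} fan is exactly $\mathcal K^\C_{\rm trop}(Y)$ and not a proper subfan thereof.
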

Let us recall the concept of stable intersection of sets and describe its connection with the definitions of sets
$B_{\mathfrak I}(Y)$ and  $B_{\mathfrak I,r}(Y)$.
\begin{definition}\label{dfStable}
For sets $A,B$ in $\R^M$, a point $e\in A\cap B$ is called stable
if any neighborhood of $e$ intersects the set $A\cap(B+\varepsilon)$ for any sufficiently small $\varepsilon\in\R^M$.
We denote by $A\cap^{\rm st}B$
the set of stable points in $A\cap B$.
This set
is called a \emph{stable intersection} of $A$ and $B$.
The set $A\cap B\setminus (A\cap^{\rm st}B)$ is called \emph{the unstable part of the intersection} of sets $A$ and $B$.
\end{definition}
\begin{corollary}\label{corStable}
If $e\not\in B_{\mathfrak I}(Y)$,
then the following holds.

{\rm(1)}
All points in the unstable part
 of the intersection $L\cap \supp \mathcal K^\C_{\rm trop}(Y)$
disappear when transitioning to
$(L+e)\cap \supp \mathcal K^\C_{\rm trop}(Y)$. As $t\to0$, the set $(L+te)\cap \supp \mathcal K^\C_{\rm trop}(Y)$ approaches the set $ L\cap^{\rm st}\supp \mathcal K^\C_{\rm trop}(Y)$.

{\rm(2)}
As $t\to\infty$,
the distance from $(L+te)\cap\supp \mathcal K^\C_{\rm trop}(Y)$
to the unstable part of the intersection $L\cap \supp \mathcal K^\C_{\rm trop}(Y)$
approaches infinity.
\end{corollary}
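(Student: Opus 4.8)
The plan is to argue one cone at a time. Write $\supp\mathcal K^\C_{\rm trop}(Y)=\bigcup_K C_K$, where $C_K=K+\im\C^N$ ranges over the cones $K\in\mathcal K_{\rm trop}(Y)$, and let $W_K=V_K+\im\C^N$ be the linear span of $C_K$, where $V_K\subset\re\C^N$ is the span of $K$; note $0\in C_K$ and $C_K$ is full-dimensional in $W_K$. By Definition \ref{df_I}, the cone $K$ is \emph{dangerous} exactly when $V_{K,L}=W_K+L$ is a proper subspace of $\C^N$, that is, when $L$ fails to meet $W_K$ transversally, and $B_{\mathfrak I}(Y)=\bigcup_{K\text{ dangerous}}V_{K,L}$. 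The elementary observation at the core of the proof is: if $K$ is dangerous and $e\notin V_{K,L}$, then $L+te$ is disjoint from $W_K$ for every $t\ne0$, since a point $l+te\in W_K$ with $l\in L$ would give $te\in W_K-L=W_K+L$ and hence $e\in V_{K,L}$. (This is the content of Corollary \ref{corDangDop}: $e\notin B_{\mathfrak I}(Y)$ makes $e$ admissible for the pair of fans $(\{L\},\mathcal K^\C_{\rm trop}(Y))$.) Therefore, for $e\notin B_{\mathfrak I}(Y)$ and $t\ne0$,
\[
(L+te)\cap\supp\mathcal K^\C_{\rm trop}(Y)=\bigcup_{K\text{ non-dangerous}}(L+te)\cap C_K,
\]
so the perturbed intersection involves only the cones that $L$ already meets transversally.

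Next I would analyse the non-dangerous cones. Fix a non-dangerous $K$ and write $e=w_K+l_K$ with $w_K\in W_K$, $l_K\in L$ (possible since $W_K+L=\C^N$); then $L+te=L+tw_K$ and $(L+te)\cap W_K=tw_K+(L\cap W_K)$, a family of parallel affine planes depending continuously on $t$ and tending to $L\cap W_K$ as $t\to0$. Intersecting with the fixed closed cone $C_K\subset W_K$, one gets that $(L+te)\cap C_K$ varies continuously with $t$ and tends to $L\cap C_K$ as $t\to0$. A point $p\in L\cap\supp$ in the relative interior of a maximal non-dangerous cone $C_K$ is then stable: near $p$ the set $\supp$ is an open piece of the subspace $W_K$, $L$ meets $W_K$ transversally, and such a transversal intersection survives all small translations of $L$. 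Conversely, a point lying only on dangerous cones $C_K$ disappears under every translation $L\mapsto L+te$ with $t\ne0$ by the first paragraph, hence is unstable. This identifies $L\cap^{\rm st}\supp$ (up to a lower-dimensional discrepancy on cone boundaries) with the closure of $\bigcup_{K\text{ maximal non-dangerous}}\bigl(L\cap\mathrm{relint}\,C_K\bigr)$, and shows that the unstable part of $L\cap\supp$ is contained in $\bigcup_{K\text{ dangerous}}(L\cap C_K)$. Statement (1) follows: for $t\ne0$ the unstable points, being on dangerous cones, are absent from $(L+te)\cap\supp$ (in particular from $(L+e)\cap\supp$), while the surviving non-dangerous pieces converge to $L\cap C_K$, so $(L+te)\cap\supp$ approaches $L\cap^{\rm st}\supp$ as $t\to0$.

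Statement (2) is then immediate. If the unstable part is nonempty there is a dangerous cone, so $L\subseteq V_{K,L}$ and hence $e\notin L$; put $\delta=d(e,L)>0$. The unstable part $U$ lies in $L$, whereas for $t\ne0$ the whole set $(L+te)\cap\supp$ lies in the affine plane $L+te$, which is parallel to $L$ at distance $|t|\,\delta$. Consequently $d\bigl((L+te)\cap\supp,\,U\bigr)\ge|t|\,\delta\to\infty$ as $t\to\infty$; if the unstable part is empty the statement is vacuous.

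The genuine difficulty is not the linear algebra above but the topological bookkeeping in the middle step: deciding exactly which boundary strata of the non-dangerous cones are stable (this is where one needs the standard fact that a dimensionally transversal intersection of $L$ with a balanced fan is already the stable intersection), and giving a precise meaning to ``$(L+te)\cap\supp$ approaches $L\cap^{\rm st}\supp$'' (a Kuratowski limit of sets, with lower-dimensional strata possibly behaving irregularly near cone boundaries). I expect to handle this by passing to a sufficiently fine subdivision of $\mathcal K_{\rm trop}(Y)$ — which, by Corollary \ref{corJC}, only enlarges $\mathfrak I(Y)$ and so keeps $e\notin B_{\mathfrak I}(Y)$ — together with the balancedness of $\mathcal K_{\rm trop}(Y)$ to exclude pathological uncovered faces; for the intended applications only the $n$-dimensional part of the intersection matters, and that part is governed cleanly by the transversality-and-continuity argument given above.
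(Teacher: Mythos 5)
The paper offers no proof of Corollary \ref{corStable} at all: it is stated between Definition \ref{dfStable} and Definition \ref{dfStableOfFans} as an unproved consequence of Definitions \ref{df_I} and \ref{dfStable}, so the only benchmark is the definitions themselves. Your argument is the natural direct verification, and its load-bearing steps are correct. The observation that $e\not\in V_{K,L}$ forces $(L+te)\cap(V_K+\im\C^N)=\emptyset$ for every dangerous cone $K$ and every $t\ne0$ is exactly what the paper packages as Corollary \ref{corDangDop}, and it immediately yields that the perturbed intersection is supported only on the non-dangerous cones; the transversal-deformation computation $(L+te)\cap(V_K+\im\C^N)=tw_K+(L\cap(V_K+\im\C^N))$ gives the convergence in part (1) (the lower bound, that every stable point is a limit of the perturbed intersections, is immediate from Definition \ref{dfStable} applied to $\varepsilon=-te$); and for part (2) your reduction to $d(L+te,L)=|t|\,d(e,L)$ with $d(e,L)>0$ (because the existence of a dangerous cone forces $L\subseteq V_{K,L}\not\ni e$) is precisely the form in which the corollary is invoked in \textsection\ref{prPr112}, where the distance is measured in the ambient space from the affine plane $-A+L$ to the unstable part.

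Two caveats. First, as you yourself flag, the step identifying the stable part with (the closure of) the intersection with the relative interiors of the maximal non-dangerous cones is not pure linear algebra: at a point of $L\cap\supp\mathcal K^\C_{\rm trop}(Y)$ lying on a proper face of a non-dangerous maximal cone, transversality of $L$ to $V_K+\im\C^N$ does not by itself guarantee that the slice re-enters the cone under \emph{every} small perturbation, and one genuinely needs balancedness of the tropical fan (equivalently, the standard facts about stable intersections you cite) to rule out uncovered faces. Your proposed remedy -- subdivide, note via Corollary \ref{corJC} that this only enlarges $\mathfrak I(Y)$ so $e$ stays admissible, and invoke balancedness -- is the right one, but as written it is a citation rather than a proof; since the paper proves nothing here, that is an acceptable level of rigor, just be aware it is the one genuinely non-elementary ingredient. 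Second, your proof of (2) in fact shows that the distance from $(L+te)\cap\supp\mathcal K^\C_{\rm trop}(Y)$ to \emph{all} of $L$ tends to infinity, which is stronger than stated; this makes statement (2), read literally in the ambient metric, carry no information specific to the unstable part, but it is consistent with how the paper uses it, so it is a feature of the statement rather than a defect of your argument.
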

\begin{definition}\label{dfStableOfFans}
Let $\mathcal K,\mathcal N$ be fans of cones in the space $\R^N$.
It is true that the stable intersection $(\supp\mathcal K)\cap^{\rm st}(\supp\mathcal N)$
is the support of some subfan in the intersection fan $\mathcal K\cap\mathcal N$;
see \cite{K03}.
We denote this subfan by $\mathcal K\cap^{\rm st}\mathcal N$
and call it the \emph{stable intersection of fans} $\mathcal K,\mathcal N$.
\end{definition}
\subsubsection{Fan of Cones of a Zero-Dimensional \EA-Variety}\label{tropEA}
We describe here in more detail, compared to the definition \ref{dfNewtonizationETtrop}, the construction of the tropical fan $\mathcal E$ of a zero-dimensional \EA-variety $X$ in $\C^n$. Denote by $\mathcal E$ the stable intersection of the fans $\mathcal L \cap^{\rm st} \mathcal K^\C_{\rm trop}(Y)$, where the fan $\mathcal L$ consists of a single cone $L$. It follows from Corollary \ref{corDangDop} that if $A \not\in B_{\mathfrak I}(Y)$, then $\mathcal E = \mathcal L \cap^A \mathcal K^\C_{\rm trop}(Y)$; see Proposition \ref{prConnection}. Recall that, according to Theorem \ref{thmBKK_Trop1}, the fan $\mathcal L \cap^A \mathcal K^\C_{\rm trop}(Y)$ is tropical. Below we provide a detailed description of this tropical fan.
\begin{definition}\label{dfEAFan}
Suppose that for $K \in \mathcal K_{\rm trop}(Y)$ of dimension $N-n$, the dimension of the cone $P = L \cap (K + \im \C^N)$ is $n$. Let $A \not\in B_{\mathfrak I}(Y)$. Then, if $(-A + L) \cap (K + \im \C^N) \ne \emptyset$, we say that the cone $K$ is an $(A-P)$-cone.
\end{definition}
\begin{corollary}\label{corStInt}
Let $A \not\in B_{\mathfrak I}(Y)$. Then (see definition \ref{dfStable})
$$L \cap^{\rm st} \supp \mathcal K_{\rm trop}(Y) = \bigcup_{K \in S} L \cap (K + \im \C^N),$$
where $S$ is the set of all $(A-P)$-cones in $\mathcal K_{\rm trop}(Y)$.
\end{corollary}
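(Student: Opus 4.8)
\noindent\emph{Proof plan.}
Throughout, $\mathcal K^\C_{\rm trop}(Y)$ denotes the $(2N-n)$-dimensional fan with maximal cones $K+\im\C^N$, $K\in\mathcal K_{\rm trop}(Y)$ of dimension $N-n$; this is the fan meant on both sides of the asserted identity. The plan is to reduce to two facts already at hand. By the opening remarks of \textsection\ref{tropEA}, the tropical fan $\mathcal E$ of the zero-dimensional \EA-variety $X$ is the stable intersection $\mathcal L\cap^{\rm st}\mathcal K^\C_{\rm trop}(Y)$ and, since $A\notin B_{\mathfrak I}(Y)$, also the $A$-intersection $\mathcal L\cap^A\mathcal K^\C_{\rm trop}(Y)$ (Corollary \ref{corDangDop} and Proposition \ref{prConnection}); thus $L\cap^{\rm st}\supp\mathcal K^\C_{\rm trop}(Y)=\supp\mathcal E$. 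By Corollary \ref{corStable}, $L\cap^{\rm st}\supp\mathcal K^\C_{\rm trop}(Y)$ is the limit as $t\to0$ of the slices $(L+tA)\cap\supp\mathcal K^\C_{\rm trop}(Y)$, the unstable points of $L\cap\supp\mathcal K^\C_{\rm trop}(Y)$ being exactly those driven off to infinity. So it suffices to decompose these slices over the maximal cones of $\mathcal K_{\rm trop}(Y)$ and record which summands persist.

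First I would write $\supp\mathcal K^\C_{\rm trop}(Y)=\bigcup_{\dim K=N-n}(K+\im\C^N)$ (Corollary \ref{corJC1}), so that $(L+tA)\cap\supp\mathcal K^\C_{\rm trop}(Y)=\bigcup_K(L+tA)\cap(K+\im\C^N)$. Since $L$ is a linear subspace and $K+\im\C^N$ is a cone, a short check shows $(L+tA)\cap(K+\im\C^N)\ne\emptyset$, for small $t$ of a fixed sign, exactly when $\pm A$ lies in the cone $Q_K:=K+\im\C^N+L$, i.e. exactly under the condition $(-A+L)\cap(K+\im\C^N)\ne\emptyset$ of Definition \ref{dfEAFan}. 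If $K$ is dangerous for $Y$, i.e. $V_{K,L}\subsetneq\C^N$, then $V_{K,L}\in\mathfrak I(Y)$, hence $A\notin V_{K,L}\supseteq Q_K$, so that summand is empty for $t\ne0$ and $L\cap(K+\im\C^N)$ lies entirely in the unstable part by Corollary \ref{corStable}(2): it contributes nothing. If $K$ is not dangerous, then $Q_K$ is full-dimensional, and its facets are of the form $F+\im\C^N+L$ with $F$ a proper face of $K$ — again a cone of $\mathcal K_{\rm trop}(Y)$ whose subspace $V_{F,L}$ is a proper hyperplane, so in $\mathfrak I(Y)$ — whence $A\notin\partial Q_K$. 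Therefore either $-A\notin Q_K$, and the summand disappears, or $-A\in\Interior Q_K$, which is precisely $(-A+L)\cap(K+\im\C^N)\ne\emptyset$; in the latter case $(L+tA)\cap(K+\im\C^N)$ is, for the appropriate small $t$, a nonempty $n$-dimensional polyhedron converging to $L\cap(K+\im\C^N)$, so the latter is itself $n$-dimensional (condition $\dim P=n$) with relative interior inside $\Interior(K+\im\C^N)$, hence all of its points are stable. Assembling the surviving summands gives $L\cap^{\rm st}\supp\mathcal K^\C_{\rm trop}(Y)=\bigcup_{K\in S}L\cap(K+\im\C^N)$ with $S$ the set of $(A-P)$-cones of Definition \ref{dfEAFan}.

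The main obstacle is the non-dangerous case, where $A\notin B_{\mathfrak I}(Y)$ is used twice: once to push $A$ off $\partial Q_K$ — which relies on $\mathfrak I(Y)$ being assembled from \emph{all} cones of $\mathcal K_{\rm trop}(Y)$, not only the top-dimensional ones, together with the elementary fact that the facets of a Minkowski sum $K+W$ with $W$ a linear subspace come from faces of $K$ — and once to guarantee that for $K\in S$ the slice $(L+tA)\cap(K+\im\C^N)$ converges to the \emph{entire} cone $L\cap(K+\im\C^N)$, with no drop in dimension, i.e. that its relative interior is not absorbed into a proper face of $K+\im\C^N$; this non-degeneracy is exactly what $V_{K,L}=\C^N$ provides. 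An equivalent route, closer to the formalism of \textsection\ref{tropS}, is to unwind Definition \ref{dfeIntersect} directly for $\mathcal L\cap^A\mathcal K^\C_{\rm trop}(Y)=\mathcal E$: its maximal cones are the $n$-dimensional $L\cap(K+\im\C^N)$ selected by the factorization condition $\mathcal L_U^{\rm fact}\cap^{A_U}(\mathcal K^\C_{\rm trop}(Y))_U^{\rm fact}\ne\emptyset$ with $U=V_L\cap V_{K+\im\C^N}$, and one checks this is equivalent to the pair $\dim P=n$, $(-A+L)\cap(K+\im\C^N)\ne\emptyset$; taking the union of the maximal cones then yields the claim.
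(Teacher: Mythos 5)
The paper itself gives no proof of Corollary \ref{corStInt}; it is meant to be read off from the identification $\mathcal E=\mathcal L\cap^{\rm st}\mathcal K^\C_{\rm trop}(Y)=\mathcal L\cap^{A}\mathcal K^\C_{\rm trop}(Y)$ stated at the head of \textsection\ref{tropEA}, together with Corollary \ref{corStable} and Definition \ref{dfEAFan}. Your reading of the left-hand side as $L\cap^{\rm st}\supp\mathcal K^\C_{\rm trop}(Y)$ is the correct one, your dangerous/non-dangerous dichotomy is sound (including the observation that the facets of $Q_K=K+\im\C^N+L$ span proper subspaces lying in $\mathfrak I(Y)$, so that $A\notin\partial Q_K$), and the overall strategy of computing the limit of the slices $(L+tA)\cap(K+\im\C^N)$ cone by cone is exactly the intended one.

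There is, however, a genuine gap at the step ``$(L+tA)\cap(K+\im\C^N)$ is a nonempty $n$-dimensional polyhedron converging to $L\cap(K+\im\C^N)$, so the latter is itself $n$-dimensional.'' A family of $n$-dimensional polyhedra can perfectly well converge to a lower-dimensional set (in $\R^2$ take $K=\{y\ge|x|\}$, $L$ the $x$-axis, $-A$ with positive second coordinate: the slices are shrinking segments converging to $\{0\}=L\cap K$, which is $0$-dimensional and, by Definition \ref{dfStable}, unstable). Translated to the present setting: $-A\in\Interior Q_K$ only gives $(-A+L)\cap\relint(K+\im\C^N)\ne\emptyset$; it does not give $L\cap\relint(K+\im\C^N)\ne\emptyset$, and the latter is what is equivalent (via the transversality $V_{K,L}=\C^N$) to the condition $\dim P=n$ that Definition \ref{dfEAFan} imposes on $(A-P)$-cones. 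If $L$ only grazes $K+\im\C^N$ along a proper face, your argument puts the resulting lower-dimensional piece into the stable intersection and into the union, whereas it belongs to neither: it is excluded from $S$ by the clause $\dim P=n$, and it is unstable because a perturbation transverse to the grazing direction empties the intersection. So you must either (a) split the non-dangerous case further according to whether $L$ meets the relative interior of $K+\im\C^N$, proving instability of the grazing pieces directly, or (b) actually carry out the ``equivalent route'' you only sketch at the end, i.e.\ unwind Definition \ref{dfeIntersect} for $\mathcal L\cap^{A}\mathcal K^\C_{\rm trop}(Y)$, where the dimension condition is built into the formalism and the identification with the stable intersection is supplied by Proposition \ref{prConnection} and Theorem \ref{thmBKK_Trop1}. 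As it stands, the first route is incomplete and the second is unexecuted.
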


\begin{example}\label{exRank}
Let $L = M + iM$, where $M \subset \re \C^N$, $\dim M = n$. In this case, the \EA-variety $X$ with the model $Y$ is quasi-algebraic. Then the fan $\mathcal E$ consists of a single cone $P = iM$. Moreover, a cone $K \in \mathcal K_{\rm trop}(Y)$ of codimension $n$ is an $(A-P)$-cone if and only if $K \cap (-\re A + M) \ne \emptyset$.
\end{example}
\begin{definition}\label{dfEAweight}
Define the weight $w(P)$ of an $n$-dimensional cone $P\in\mathcal E$ as follows.
First, fix some $A\not\in B_{\mathfrak I}(Y)$.
Represent the weight $w(K)$ of the $(A-P)$-cone $K$ as an element
$\Delta_K^n$ in the graded ring $\mathfrak S_\vol(\re{\C^N}^*)$,
where $\Delta_K$ is a full-dimensional,
i.e., $n$-dimensional, convex polytope in the space $\mathfrak S_\vol(V^\bot_K)$; see definition \ref{dfWeightedFan}. 
Consider the image $\pi(\Delta_K)$ of the polytope $\Delta_K$ under the projection map $\pi\colon {\C^N}^*\to L^*$,
where $\pi$ is the linear operator adjoint to the differential of the standard torus wrapping $d\omega\colon L\to\C^N$. Set $w_A(P,K)=\vol_n(\pi(\Delta_K))$,
where $\vol_n(\pi(\Delta_K))$ is the volume of the polytope $\pi(\Delta_K)$,
measured using the Hermitian metric in $L$. Define the weight of the cone $P$ as
$$
w(P)=\sum w_A(P,K),
$$
where the summation is over all $(A-P)$-cones $K$.
\end{definition}
\begin{proposition}\label{prFan_C}
For all $A \not\in B_{\mathfrak I}(Y)$, 
the weights of the cones of the fan $\mathcal E$, defined above, coincide.
\end{proposition}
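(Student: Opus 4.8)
The plan is to deduce the statement from the observation that $\mathcal E$, together with its weights, represents the tropical fan of one fixed element of $\mathfrak S_\vol(L^*)$ in which the parameter $A$ never occurs. Recall from the paragraph preceding Definition \ref{dfEAFan} that, by Proposition \ref{prConnection} applied to $\mathfrak U=\nwt^\C(Y)$ — whose weighted fan has support $\mathcal K^\C_{\rm trop}(Y)$ by Corollary \ref{corJC1} — the fan $\mathcal E$ is the support of the tropical fan of $\pi_*(\nwt^\C(Y))$, and that for every admissible $A$ the weighted fan $\mathcal L\cap^A\mathcal K^\C_{\rm trop}(Y)$, equipped with the weights prescribed in Definitions \ref{dfmixedTropic} and \ref{dfeIntersect}, is tropical. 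By Theorem \ref{thmBKK_Trop1}(1) this canonically weighted fan does not depend on the choice of the admissible point, so the weight it assigns to a fixed $n$-dimensional cone $P\in\mathcal E$ is already known to be independent of $A$. It therefore suffices to prove that, for one fixed $A\not\in B_{\mathfrak I}(Y)$, the weight $w(P)$ of Definition \ref{dfEAweight} coincides with this canonical weight of $P$.

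Fix such an $A$ and an $n$-dimensional cone $P=L\cap(K+\im\C^N)\in\mathcal E$. The first step is to match the index sets: comparing Definition \ref{dfEAFan} with the summation conditions of Definitions \ref{dfeIntersect} and \ref{dfmixedTropic}, the $(A-P)$-cones are exactly the $(N-n)$-dimensional cones $K'\in\mathcal K_{\rm trop}(Y)$ with $L\cap(K'+\im\C^N)=P$ and $(-A+L)\cap(K'+\im\C^N)\neq\emptyset$, that is, precisely the cones that contribute to the canonical weight of $P$ after the factorization by $U=V_L\cap V_{K'+\im\C^N}$. The second step uses that the one-cone fan $\mathcal L$ carries the trivial weight (it is the tropicalization of the unit): in the canonical formula every factor $w(L_U)$ acts as the multiplicative identity, so each term $w(K'_U)\cdot w(L_U)$ collapses to $w(K'_U)$, the image of $w(K')=\Delta_{K'}^{\,n}$ under the factorization homomorphism of Definition \ref{dfFact}. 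Thus the canonical weight of $P$ equals $\sum_{K'}w(K'_U)$, the sum ranging over all $(A-P)$-cones $K'$, and the proposition reduces to the single identity $w(K'_U)=\vol_n\big(\pi(\Delta_{K'})\big)$.

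Proving this identity is the main obstacle, and it is essentially linear-algebra bookkeeping made correct by the hypothesis. Indeed, $A\not\in B_{\mathfrak I}(Y)$ forces each $(A-P)$-cone $K'$ to be non-dangerous, $\dim V_{K',L}=2N$ (Definition \ref{df_I}); equivalently $K'+\im\C^N$ and $L$ meet transversally, so the restriction map $\pi\colon{\C^N}^*\to L^*$ carries the $n$-dimensional subspace $V^\bot_{K'}$ supporting $\Delta_{K'}$ isomorphically onto an $n$-dimensional subspace of $L^*$. Under this isomorphism the abstract factorization $w(K'_U)$ of Definition \ref{dfFact} becomes the element $\pi(\Delta_{K'})^{\,n}$ of the top graded component of $\mathfrak S_\vol(L^*)$, and evaluating this one-dimensional datum with the Hermitian metric carried over from $\C^n$ to $L$ — the metric fixed throughout \textsection\ref{index} — gives exactly the number $\vol_n(\pi(\Delta_{K'}))$ of Definition \ref{dfEAweight}. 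The delicate point is to keep consistent track of the identifications between $E={\C^N}^*$, $E^*=\C^N$, the subspace $U$, the quotient $E^*/U$, and $L^*$, and to check that every volume normalization used is the one induced by the fixed metrics. Summing over the $(A-P)$-cones $K'$ and invoking the $A$-independence recorded in the first paragraph finishes the proof.

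Finally, I would note an alternative route that sidesteps Theorem \ref{thmBKK_Trop1}: on each connected component of the \rf-domain $\C^N\setminus B_{\mathfrak I}(Y)$ (Corollary \ref{cordRelFull}) neither the set of $(A-P)$-cones nor the polytopes $\pi(\Delta_{K'})$ change, so $w(P)$ is locally constant in $A$, and the values on two distinct components agree because any contribution that differs is attached to the unstable part of $L\cap\supp\mathcal K^\C_{\rm trop}(Y)$, which by Corollary \ref{corStable} carries total stable multiplicity zero. Either way the substantive content is the volume identity of the previous paragraph.
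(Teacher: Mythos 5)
Your proof is correct and follows essentially the same route as the paper: the paper's own argument is exactly the reduction via Corollary \ref{corDangDop} (admissibility of every $A\not\in B_{\mathfrak I}(Y)$) to the independence of the $e$-intersection on the admissible point in Theorem \ref{thmBKK_Trop1}(1). Your additional verification that the weight of Definition \ref{dfEAweight} agrees with the canonical weight of $\mathcal L\cap^A\mathcal K^\C_{\rm trop}(Y)$ is a useful bookkeeping step that the paper leaves implicit.
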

\begin{proof}
According to Corollary \ref{corDangDop}, any point $A \not\in B_{\mathfrak I}(Y)$ is admissible for the pair of tropical fans $(\mathcal L, \mathcal K^\C_{\rm trop}(Y))$. Therefore, the required statement follows from Theorem \ref{thmBKK_Trop1}.
\end{proof}
We further call $\mathcal E$ the tropical fan of the \EA-variety $X$.
For example, if the \EA-variety $X$ in $\C^1$ is given by the equation $f=0$,
then its tropical fan consists of rays formed by the exterior normals of the sides of the Newton polygon of the exponential sum $f$.
The weights of these rays are equal to the lengths of the corresponding sides.
\begin{definition}\label{EA_nondegen}
A cone $K \subset \C^n$ of dimension $\leq n$ is called \emph{complex non-degenerate} if the dimension of the complex vector subspace generated by the cone is equal to $\dim K$. A fan of cones of dimension $n$ is called complex non-degenerate if each of its cones is complex non-degenerate. A zero-dimensional \EA-variety $X$ is called complex non-degenerate if the fan $\mathcal E$ is complex non-degenerate.
\end{definition}
\begin{corollary}\label{corA-P}
Let an $n$-dimensional cone $P \in \mathcal E$ be complex non-degenerate. Then for any $(A-P)$-cone $K$, it is true that $(V_K + iV_K) \cap L = 0$. Conversely, if for some $(A-P)$-cone $K$ it is true that $(V_K + iV_K) \cap L = 0$, then the cone $P \in \mathcal E$ is complex non-degenerate.
\end{corollary}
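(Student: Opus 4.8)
The plan is to reduce the corollary to a single elementary fact of complex linear algebra, after first pinning down, for each $(A-P)$-cone $K$, the exact position of the cone $P$ inside the real subspace it spans.

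\emph{Step 1 (general position).} For an $(A-P)$-cone $K$ with $A\notin B_{\mathfrak I}(Y)$, I would first show $V_K+\im\C^N+L=\C^N$. Indeed, by Definition~\ref{dfEAFan} there is a point of $(-A+L)\cap(K+\im\C^N)$, which writes $A=\ell-k-m$ with $\ell\in L$, $k\in V_K$, $m\in\im\C^N$; hence $A\in V_K+\im\C^N+L$. But $V_K+\im\C^N+L$ is precisely the minimal real subspace $V_{K,L}$ of Definition~\ref{df_I} containing $K\cup\im\C^N\cup L$; were it proper it would be dangerous for $Y$, hence contained in $B_{\mathfrak I}(Y)$, contradicting $A\notin B_{\mathfrak I}(Y)$. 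A dimension count (using $\dim_\R L=2n$, $\dim_\R(V_K+\im\C^N)=(N-n)+N=2N-n$, $\dim_\R\C^N=2N$, and $V_K\cap\im\C^N=0$) then gives $\dim_\R\bigl(L\cap(V_K+\im\C^N)\bigr)=n$. Since $P=L\cap(K+\im\C^N)$ is contained in the $n$-dimensional subspace $L\cap(V_K+\im\C^N)$ and is itself an $n$-dimensional cone, its real linear span $V_P$ equals $L\cap(V_K+\im\C^N)$.

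\emph{Step 2 (linear algebra).} Set $W=V_K+\im\C^N$ and $T=V_K+iV_K$; since $V_K\subset\re\C^N$, the space $T$ is the complexification of $V_K$, a complex subspace with $T\subset W$, and $L$ is complex as well. I claim $T\cap L=(L\cap W)\cap i(L\cap W)$. The inclusion ``$\subseteq$'' is immediate: $T\cap L$ is a complex subspace contained in $L$ and in $T\subset W$, hence in $L\cap W$, and being complex it lies in $(L\cap W)\cap i(L\cap W)$. For ``$\supseteq$'', take $v$ with $v\in L\cap W$ and $iv\in W$; writing $v=a+ib$ with $a,b\in\re\C^N$ and using the direct sum $W=V_K\oplus\im\C^N$, the real part $a$ of $v\in W$ lies in $V_K$, and applying the same to $iv=-b+ia\in W$ gives $b\in V_K$, so $v\in V_K+iV_K=T$ while $v\in L$. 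Combining with Step~1, $T\cap L=V_P\cap iV_P$.

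\emph{Step 3 (conclusion).} By Definition~\ref{EA_nondegen} the $n$-dimensional cone $P$ is complex non-degenerate iff its complex span $V_P+iV_P$ has complex dimension $n$, i.e. real dimension $2n$, i.e. $V_P\cap iV_P=0$; by Step~2 this is equivalent to $(V_K+iV_K)\cap L=0$. This equivalence holds for every individual $(A-P)$-cone $K$, and at least one such cone exists because $P\in\mathcal E$ (see Corollary~\ref{corStInt}); both implications of the corollary follow at once. The only genuinely delicate point is Step~1 — squeezing $V_K+\im\C^N+L=\C^N$, and hence the identity $V_P=L\cap(V_K+\im\C^N)$, out of the hypotheses on $A$ and $K$; Steps~2 and~3 are then routine.
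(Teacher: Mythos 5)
Your proof is correct and follows essentially the same route as the paper's: the paper's entire argument is the single observation that $V_K+iV_K$ is the maximal complex subspace of $V_K+\im\C^N$, which is exactly what your Step~2 verifies by hand (since $(L\cap W)\cap i(L\cap W)$ is the maximal complex subspace of $L\cap W$). Your Step~1, identifying $V_P$ with $L\cap(V_K+\im\C^N)$ through the admissibility of $A$, is left implicit in the paper but is precisely what makes both implications go through, so spelling it out is added rigor rather than a different approach.
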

\begin{proof}
The subspace $V_K + iV_K$ is the maximal complex subspace in the space $V_K + \im \C^N$. From this follows the required statement.
\end{proof}
\begin{remark}
A tropical fan of an algebraic subvariety in $Y$ can be considered, possibly after transitioning to some subdivisions of the fans, as a subfan of the fan $\mathcal K_{\rm trop}(Y)$; see property \hyperlink{tr_3}{($\bf tr_3$)} in \textsection\ref{trop}. For \EA-varieties, it is not true that the fan of an \EA-subvariety in $X$ is a subfan of the fan $\mathcal E$.
\end{remark}
\subsection{Main Results}
Let $\mathcal E_{S,\R}$ denote the set of points in $\C^n$ that are at a distance
$\geq S$ from the union of complex generated cones of the fan $\mathcal E$; see Definition \ref{EA_nondegen}.
For example, if the \EA-variety $X$ is complex non-degenerate, then $\mathcal E_{S,\R}$ consists of points at a distance $\geq S$ from the support of the $(n-1)$-dimensional skeleton of the fan $\mathcal E$.
\begin{theorem}\label{thmIsolated}

 {\rm(1)} Let $U_X$ be the set of points $A \in \C^N$ such that all points of the \EA-variety $\E^AX$ are isolated. Then the measure of the set $\C^N \setminus U_X$ is zero.
\par\smallskip
In the following statements {\rm (2), (3)} we assume that $A \in B_{\mathfrak I,r}$, where $r$ is sufficiently large.
\par\smallskip
 {\rm(2)}
The set of points of the \EA-variety $\E^AX$ is located at a finite distance from the support of the fan $\mathcal E$.

 {\rm(3)}
For some $S$, the strong density $d_n(\E^AX \cap \mathcal E_{S,\R})$ exists and is equal to the weak density $d_w(X)$.
\end{theorem}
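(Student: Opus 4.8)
The plan is to reduce everything to the algebraic model $Y$ and its behaviour near the boundary of a toric compactification, and then to stitch together the local lattice pictures with the lemmas of \textsection\ref{Prel}. Throughout I identify $\C^n$ with $L\subset\C^N$ and use the elementary reformulation $\exp(z)\E^{-A}\in Y\iff z-A\in\exp^{-1}(Y)=:\tilde Y$, so that $\E^AX=(A+\tilde Y)\cap L$ inside $\C^N$, where $\tilde Y$ is the closed $2\pi i\Z^N$-periodic analytic set $\exp^{-1}(Y)$. For assertion {\rm(1)} I would look at the incidence set $\mathcal X=\{(z,A)\in\C^n\times\C^N\colon \exp(z)\E^{-A}\in Y\}$; over each $z$ its fibre is $\exp^{-1}(\exp(z)Y^{-1})$, nonempty of complex dimension $N-n$ (the case $Y=\emptyset$ being trivial), so $\dim_\C\mathcal X=N$. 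Stratifying $\mathcal X$ into countably many smooth complex manifolds, Sard's theorem applied to the holomorphic projection $\mathcal X_{\mathrm{reg}}\to\C^N$ shows that outside a set of measure zero the fibre over $A$ is a $0$-dimensional submanifold, while the image of each lower stratum has measure zero; hence for almost every $A$ the set $\E^AX$ is discrete, which is the claim (with $\C^N\setminus U_X$ the union of these measure-zero sets). Alternatively one may deduce {\rm(1)} from Zilber's Theorem \ref{thmAnom}: a non-isolated point produces an atypical component contained in a coset of a proper subtorus, and such $A$ form a measure-zero set.

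For {\rm(2)} I would fix a smooth projective toric compactification $(\C\setminus0)^N\subset T^\Sigma$ whose fan $\Sigma$ refines the tropical fan $\mathcal K_{\mathrm{trop}}(Y)$, so that the closure $\bar Y$ meets every torus orbit properly. Choosing $A\in B_{\mathfrak I,r}(Y)$ with $r$ large forces $\re A$ to be large and far from every dangerous subspace (all of which contain $\im\C^N$); Corollary \ref{corStable} then guarantees that the only cones $K\in\mathcal K_{\mathrm{trop}}(Y)$ along which $A+\tilde Y$ can still meet $L$ are the $(A\!-\!P)$-cones of Definition \ref{dfEAFan} — along every other direction the would-be (unstable) intersection is pushed to infinity. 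Near the orbit corresponding to an $(A\!-\!P)$-cone the variety $\bar Y$ is, locally, a finite union of graphs over that orbit; pulling this back by $\exp$ and intersecting with $L$ shows that, outside a bounded set, $\E^AX$ stays within bounded distance of $\supp\mathcal E=\bigcup_{P\in\mathcal E}P$, which is assertion {\rm(2)}.

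For {\rm(3)} I would work near each complex non-degenerate $n$-dimensional cone $P\in\mathcal E$, in the region $\mathcal E_{S,\R}$ (the neighbourhoods of the complex-generated cones, removed in $\mathcal E_{S,\R}$, contribute nothing to the $n$-density). By the local description above, there $\E^AX$ is an $\varepsilon$-perturbation, in the sense of Definition \ref{dfprelim12}, of a finite union of shifted lattices $z_j+\mathcal Z_j$, one for each $(A\!-\!P)$-cone $K$ feeding $P$; the lattice coming from $K$ is precisely of the form treated in Proposition \ref{prprelim12}, applied to the $n$-dimensional slice of $L$ transverse to $K+\im\C^N$, so its strong density equals $(2\pi)^{-n}\cos(L^\bot,iL)\,\vol_n(\Pi(\cdots))=(2\pi)^{-n}w_A(P,K)$ by Definition \ref{dfEAweight} and Lemma \ref{lmLA}. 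Lemma \ref{lmprelim111} then gives $d_n\big(\E^AX\cap(\text{neighbourhood of }P)\big)=A(P)\sum_{K\to P}d_n(\mathcal Z_K)=(2\pi)^{-n}A(P)\,w(P)$, and summing over all $n$-cones $P$ of $\mathcal E$, patching the \rf-domains via Corollary \ref{cordRelFull}, yields $d_n(\E^AX\cap\mathcal E_{S,\R})=(2\pi)^{-n}\sum_P A(P)\,w(P)$. Finally this coincides with $d_w(X)=n!\,\mathfrak p(X^\nwt)$: by Definitions \ref{dfPseudo}, \ref{dfPseudoMixed}, \ref{dfWeakDens} the exterior angles $A(P)$ match the $A(\Lambda)$, the distortion factor $\cos(L^\bot,iL)$ built into $w_A(P,K)$ via the projection $\pi$ matches $c(\Lambda)$, and the $n!$ matches the passage from the mixed pseudovolume to the value of the degree-$n$ element, through the standard identity between the volume of a parallelepiped and the mixed volume of its edges.

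The main obstacle is the local analytic step underlying {\rm(2)} and {\rm(3)}: verifying that near a complex non-degenerate top cone of $\mathcal E$, and away from the skeleton, the transcendental set $\E^AX$ is genuinely an $\varepsilon$-perturbation of the predicted union of shifted lattices, with the correct list of contributing $(A\!-\!P)$-cones and the correct lattices. This fuses the local structure of $\bar Y$ near the toric boundary with the pullback under $\exp$ and a careful bookkeeping of the transversality conditions encoded by $B_{\mathfrak I}(Y)$; once it is in place, assertion {\rm(2)} and the density computation reduce to routine manipulation with the lemmas of \textsection\ref{Prel}.
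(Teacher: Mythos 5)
Your proposal follows essentially the same route as the paper: part (2) via property $({\bf tr_5})$ and Corollary \ref{corStable}, and part (3) via the $\varepsilon$-approximation of $\E^AY$ by toric tongues (the ``local analytic step'' you flag as the main obstacle is exactly the paper's Theorem \ref{thmApproximation} together with Lemma \ref{lmass1_1}), followed by Proposition \ref{prprelim12}, Lemma \ref{lmprelim111} and summation over the complex non-degenerate $n$-cones of $\mathcal E$. For part (1) the paper runs the Sard argument through the atypical locus $Y^L$ (Proposition \ref{pratype1} and Lemma \ref{lm(1)}) rather than through your incidence variety, but the two dimension-counting arguments are interchangeable, and your constant bookkeeping in (3), though loose, matches the paper's formula (\ref{eqdwLoc}).
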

For a complex non-degenerate \EA-variety $X$, Theorem \ref{thmIsolated} can be strengthened as follows.
\begin{theorem}\label{thmStrong}
Let $A \in B_{\mathfrak I,r}$, where $r$ is sufficiently large. Then

{\rm(1)}
the set of points of the \EA-variety $\E^AX$ is discrete.

{\rm(2)}
the strong density $d_n(\E^AX)$ exists and is equal to $d_w(X)$.
\end{theorem}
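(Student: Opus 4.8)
The plan is to derive Theorem \ref{thmStrong} from Theorem \ref{thmIsolated}, using complex non-degeneracy of $X$ to close the two gaps between the statements: upgrading ``all points isolated'' to ``the whole point set is discrete'', and removing the truncation to $\mathcal E_{S,\R}$ in the density assertion. Throughout I keep the notation of \textsection\ref{index}: $Y$ is the $G$-model of $X$, $L\cong\C^n$, and $\mathcal E$ is the tropical fan of $X$, an $n$-dimensional fan whose top cones $P$ carry the weights of Definition \ref{dfEAweight}.

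For part (1), since $X$ is complex non-degenerate the fan $\mathcal E$ is complex non-degenerate, so by Corollary \ref{corA-P} every $(A-P)$-cone $K$ attached to an $n$-dimensional cone $P\in\mathcal E$ satisfies $(V_K+iV_K)\cap L=0$. I would use this to exclude atypical components of $\E^A X$: by Theorem \ref{thmAnom} and Corollary \ref{corZilber} any atypical component is a positive-dimensional affine subspace lying over a translated proper subtorus of $Y$, and the admissibility analysis of \textsection\ref{tropEA} places the real cone of such a subtorus inside some $V_K+\im\C^N$ with $K$ an $(A-P)$-cone; a positive-dimensional intersection of that subspace with $L$ would contradict $(V_K+iV_K)\cap L=0$. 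Hence $\E^A X$ has only components of dimension $\dima(\E^A X)=0$, so it is a purely zero-dimensional closed analytic subset of $\C^n$, therefore discrete.

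For part (2), Theorem \ref{thmIsolated}(3) supplies an $S$ with $d_n(\E^A X\cap\mathcal E_{S,\R})=d_w(X)$, and for complex non-degenerate $X$ the set $\C^n\setminus\mathcal E_{S,\R}$ is exactly the $S$-neighborhood of the $(n-1)$-skeleton of $\mathcal E$. From the description of $\E^A X$ near infinity used to prove Theorem \ref{thmIsolated} (Corollary \ref{corStInt}, Definition \ref{dfEAweight}), together with Proposition \ref{prprelim12} applied in the linear span of each $n$-cone $P$ (legitimate precisely because $(V_K+iV_K)\cap L=0$), the points of $\E^A X$ near $P$ form an $\varepsilon$-perturbation of a finite union of shifted rank-$n$ lattices inside that span. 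The part of a rank-$n$ lattice lying within distance $S$ of a codimension-one subspace sits in a slab of bounded width, hence has counting function $O(r^{n-1})$; summing over the finitely many top-dimensional cones of $\mathcal E$ gives $N(\E^A X\setminus\mathcal E_{S,\R},r)=O(r^{n-1})=o(\sigma_n r^n)$, so $d_n(\E^A X\setminus\mathcal E_{S,\R})=0$. Since $\E^A X$ is the disjoint union of $\E^A X\cap\mathcal E_{S,\R}$ and $\E^A X\setminus\mathcal E_{S,\R}$ and both strong $n$-densities exist, $d_n(\E^A X)=d_w(X)$.

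The main obstacle I anticipate is the bookkeeping in part (2): one must be confident that the points of $\E^A X$ not in $\mathcal E_{S,\R}$ genuinely assemble into perturbations of honest lattices confined near the $(n-1)$-skeleton, i.e.\ that the local model of $\E^A X$ at infinity from \textsection\ref{tropEA} applies there and does not hide a thicker accumulation of isolated points. Complex non-degeneracy, invoked uniformly over all cones of $\mathcal E$ via Corollary \ref{corA-P}, is exactly what rules out the competing pathologies — positive-dimensional components and lattices of rank $<n$ flattening into affine subspaces — so the crux of a full write-up is to propagate that single hypothesis through the structural analysis of \textsection\ref{index} rather than to perform new estimates.
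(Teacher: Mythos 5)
Your overall strategy --- deduce the theorem from Theorem \ref{thmIsolated}, using complex non-degeneracy first to kill atypical components and then to kill the density contribution near the $(n-1)$-skeleton --- is the paper's strategy, but both of your key steps have genuine gaps. In part (1), the claim that an atypical component of $\E^AX$ sits over an $(A-P)$-cone $K$, so that $(V_K+iV_K)\cap L=0$ yields a contradiction, is unjustified and points in the wrong direction: the atypical locus $Y^L$ of the model is itself an algebraic subvariety whose tropical fan is supported precisely on the \emph{complex-degenerate} cones of $\mathcal K_{\rm trop}(Y)$ (Proposition \ref{pratype3}), and such cones are never $(A-P)$-cones when $X$ is complex non-degenerate. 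The paper's mechanism (Lemmas \ref{lmCDX0}--\ref{lmCDX2}) is the opposite of yours: complex non-degeneracy forces every such cone $K$ to have $V_K+L+\im\C^N$ proper, hence dangerous in the sense of Definition \ref{df_I}, so the hypothesis $A\in B_{\mathfrak I,r}$ pushes $-A+L$ arbitrarily far from $\supp\mathcal K_{\rm trop}(Y^L)+\im\C^N$ and therefore, by property ($\bf tr_5$), off the amoeba of $Y^L$ entirely; that is what gives $(-A+L)\cap\log Y^L=\emptyset$. Your version would in addition need the recession direction at infinity of a positive-dimensional analytic component (confined to a coset of a complex subspace) to meet the span $L_P$ in a complex line rather than merely in a real vector, which does not follow from a bounded-distance argument and is not what complex non-degeneracy forbids.

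In part (2) you correctly flag the crux but do not resolve it. The lattice model of Assertion \ref{ass1} and Proposition \ref{prass1_3} is valid only on the sets $P_{R,S}$, i.e.\ near a top-dimensional cone $P$ \emph{and} at distance $>S$ from $\partial P$: the toric tongues of Definition \ref{dfApproxSet} cover only $Y\cap O_R(\mathcal K)$, which excludes a neighborhood of the lower-dimensional skeleton. Hence the points of $\E^AX$ within distance $S$ of the $(n-1)$-skeleton are not known to form a slab of a rank-$n$ lattice, and an $O(r^{n-1})$ volume bound on the region does not bound their number, since they could a priori cluster (part (1) gives local finiteness, not a uniform count per unit ball). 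This is exactly the content of Assertion \ref{ass2}, which the paper proves by a different argument: if the density near a cone $P$ with $\dim P<n$ were positive, some unit balls would contain arbitrarily many points of $\E^AX$; passing to a limit point $g$ in the compact closure $\E^A\bar Y$ inside the toric completion, one contradicts discreteness either of $\E^AX$ itself (if $g$ lies in the open torus) or of the limit \EA-variety $X^K$ on the boundary orbit corresponding to a cone $K$ (Corollary \ref{corPfact} together with part (1) applied to $X^K$). Without this compactness step, the decomposition $d_n(\E^AX)=d_n(\E^AX\cap\mathcal E_{S,\R})+d_n(\E^AX\setminus\mathcal E_{S,\R})$ on which your argument rests is not established.
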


If a zero-dimensional \EA-variety $X$ is quasi-algebraic, then the fan $\mathcal E$ consists of a single cone $\im \C^n$; see Corollary \ref{corEtropQuasi}. In this case, $X$ is complex non-degenerate, and the set $\mathfrak I(Y)$ consists of subspaces of the form $V + \im \C^N$, where $V$ is a proper subspace in $\re \C^N$. The quasi-algebraic refinement of Theorem \ref{thmStrong} is as follows; see \cite{K22}.
\begin{theorem}\label{thmq1}
{\rm(1)}
The strong density $d_n(\E^AX)$ exists and is equal to $d_w(X)$.

{\rm(2)}
Denote by $\mathfrak C_1, \mathfrak C_2, \ldots, \mathfrak C_M$ the connected components
of the domain $\C^N \setminus B_{\mathfrak I}$. For each of the components $\mathfrak C_i$,
there exists a finite set of $n$-dimensional lattices, equipped with positive integer multiplicities,
$$
\{\mathcal L_{i,j} \subset \im \C^n \colon j = 1, \ldots, N_i\},
$$
such that the following holds. For any $\varepsilon$, there exists $r$ such that for any $(i \leq M, A \in (\C^N \setminus B_{\mathfrak I,r}) \cap \mathfrak C_i)$, the \EA-variety $\E^AX$ is an $\varepsilon$-perturbation of the union of shifted lattices
$$z_1(A) + \mathcal L_{i,1}, \ldots, z_{N_i}(A) + \mathcal L_{i,N_i},$$
where the functions $z_j(A)$ are continuous. The strong density $d_n(\E^AX)$ exists and is equal to $d_w(X)$.
\end{theorem}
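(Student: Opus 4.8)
\noindent\emph{Proof plan.}
This refines Theorems~\ref{thmIsolated} and~\ref{thmStrong} in the quasi-algebraic case, where, by Corollary~\ref{corEtropQuasi}, the tropical fan $\mathcal E$ of $X$ is the single cone $\im\C^n$, $X$ is complex non-degenerate (Definition~\ref{EA_nondegen}), and $B_\mathfrak I$ is a union of subspaces $V+\im\C^N$ with $V\subsetneq\re\C^N$, so the hypothesis constrains only $\re A$. I would work with the $G$-model $Y\subset(\C\setminus0)^N$, its rational tropical fan $\mathcal K_{\rm trop}(Y)\subset\re\C^N$, and the complex $n$-plane $L=d\omega(\C^n)\subset\C^N$, writing $M:=d\omega(\re\C^n)=L\cap\re\C^N$, so $L=M+iM$. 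The plan is (a) to localize the zeros of $\E^AX$ into finitely many clusters, (b) to identify the local model of $Y$ at each cluster and transport it through $\omega_G$ to a shifted lattice in $\im\C^n$, and (c) to deduce (1), (2) and the density formula. For (a): since $\log|\omega_G(z)|=d\omega(\re z)\in M$, the logarithmic image of the zeros lies in a translate of $M$ (determined by $\re A$) intersected with the amoeba of $Y$, which for $|\re A|$ large is uniformly close to $\supp\mathcal K_{\rm trop}(Y)$; as $A\notin B_\mathfrak I$ is admissible for $(\mathcal L,\mathcal K^\C_{\rm trop}(Y))$ (Corollary~\ref{corDangDop}), Corollary~\ref{corStInt} identifies the relevant stable intersection as $\bigcup_{K\in S}L\cap(K+\im\C^N)$ with $S$ the finite set of $(A-P)$-cones (Definition~\ref{dfEAFan}), the unstable part escaping to infinity (Corollary~\ref{corStable}). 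Hence the zeros of $\E^AX$ have $\re z$ in a bounded set and split into finitely many clusters indexed by $S$, and $S$ depends only on the component $\mathfrak C_i\ni A$.

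\emph{Local model and preimage.} In a smooth projective toric compactification of $(\C\setminus0)^N$ whose fan contains $\mathcal K_{\rm trop}(Y)$ and refines the normal fans of the weight polytopes $\Delta_K$ of Definition~\ref{dfEAweight}, the standard analysis of $Y$ near the toric boundary (carried out by toric methods in the remainder of the paper, as for Theorems~\ref{thmIsolated} and~\ref{thmStrong}) gives that, uniformly as $|\re A|\to\infty$, $Y$ near the orbit attached to a maximal cone containing $K\in S$ is an $o(1)$-deformation of its initial degeneration $\operatorname{in}_K(Y)$, a union of cosets of the subtorus $T_K$ with Lie algebra $V_K+iV_K$ whose total count with scheme multiplicity is the weight $m(K)$ of $K$ in $\mathcal K_{\rm trop}(Y)$, represented by $\Delta_K$. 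The decisive step is linear algebra: complex non-degeneracy and Corollary~\ref{corA-P} give $(V_K+iV_K)\cap L=0$, hence $\C^N=L\oplus(V_K+iV_K)$, whose real part is $\re\C^N=M\oplus V_K$; consequently $\omega_G^{-1}$ of a coset $gT_K$ equals $\{\,L\cap(\log g+2\pi i\ell+(V_K+iV_K))\colon\ell\in\Z^N\,\}$, a coset of the rank-$n$ lattice $\mathcal L_K=2\pi i\,\mathrm{pr}_M(\Z^N)\subset iM=\im\C^n$, where $\mathrm{pr}_M$ is the projection along $V_K$. Thus near the cluster over $K$ the set $\E^AX$ is an $\varepsilon$-perturbation (Definition~\ref{dfprelim12}) of $m(K)$ shifted copies of $\mathcal L_K$, the shifts $z_j(A)$ solving a non-degenerate analytic system and depending continuously on $A$. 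This gives discreteness in (1) and, taking the $\mathcal L_{i,j}$ to be the lattices $\mathcal L_K$ with the multiplicities read off from $\operatorname{in}_K(Y)$, over the $(A-P)$-cones $K$ attached to $\mathfrak C_i$, statement (2).

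\emph{Density.} Since all zeros of $\E^AX$ have bounded real part, $N(\E^AX,r)=\sum_{K\in S}m(K)\,\#(\mathcal L_K\cap B_r)+O(r^{n-1})$, so Lemma~\ref{lmprelim111} (applied with the full cone $\im\C^n$, whose angle is $1$) yields $d_n(\E^AX)=\sum_{K}m(K)\,d_n(\mathcal L_K)$, a value independent of $A$ within each $\mathfrak C_i$; for arbitrary $A\notin B_\mathfrak I$ it is the same, the leading term of the counting function depending only on the $A$-independent tropical data. It then remains to match $\sum_K m(K)\,d_n(\mathcal L_K)$ with $d_w(X)=n!\,\mathfrak p(X^\nwt)$ (Definition~\ref{dfWeakDens}); by the construction of $\mathcal E$ and its weights (Definition~\ref{dfEAweight}) and the expansion of the pseudovolume through the weighted fan as in the proof of Theorem~\ref{thmIdeals}, $\mathfrak p(X^\nwt)$ is the sum over the $(A-P)$-cones $K$ of a term built from $\vol_n(\pi(\Delta_K))$ and the complex-angle factor of Definition~\ref{dfPseudo}, and Proposition~\ref{prprelim12} computes $d_n(\mathcal L_K)$ in precisely that form — with the factor $m(K)$ absorbed through $\vol_n(\Delta_K)$, cf.\ Lemma~\ref{lmLA}. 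Matching the two gives $n!\sum_K m(K)\,d_n(\mathcal L_K)=n!\,\mathfrak p(X^\nwt)=d_w(X)$.

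\emph{Main obstacle.} The crux is the uniform local analysis near the toric boundary: proving, with effective control on $\varepsilon$ as $|\re A|\to\infty$ inside a fixed $\mathfrak C_i$, that $Y$ is a genuine $\varepsilon$-perturbation of each $\operatorname{in}_K(Y)$ satisfying conditions (A), (B) of Definition~\ref{dfprelim12} with continuously varying cluster centers, and then propagating this through the implicit-function description of $\omega_G^{-1}$. Everything after that reduces to the splitting $\C^N=L\oplus(V_K+iV_K)$ and the volume bookkeeping already encapsulated in Proposition~\ref{prprelim12}.
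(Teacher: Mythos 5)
Your proposal is correct and follows essentially the same route as the paper: the quasi-algebraic case reduces to Theorem \ref{thmStrong} via the single cone $\im\C^n$, the lattices and their densities are exactly the $\Z_P$ of Lemma \ref{lmass1_1} and Proposition \ref{prprelim12} obtained from the splitting $\C^N=L\oplus(V_K+iV_K)$ and the toric-tongue approximation, and the dependence only on the component $\mathfrak C_i$ comes from the local constancy of the set of $(A-P)$-cones off $B_{\mathfrak I}$. The hard uniform step you flag as the main obstacle is precisely what the paper also outsources (to Theorem \ref{thmApproximation} and the detailed treatment in \cite{K22}), so no essential idea is missing.
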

\begin{remark}
(1)
From definition \ref{dfNewtonizationET} it follows that the strong density of a quasi-algebraic \EA-variety is equal to the volume of its $A$-Newtonization multiplied by $n!/(2\pi)^n$.

(2)
Some refinements of Theorems \ref{thmStrong} and \ref{thmIsolated} are provided in their proofs in \textsection\ref{pr1}. In particular, statement (2) of Theorem \ref{thmStrong} remains true under certain relaxations of the complex non-degeneracy condition.

(3)
It seems plausible that this statement also remains true for any \EA-varieties, i.e., without the condition of their complex non-degeneracy.
\end{remark}
Let $X, Z$ be \EA-varieties in $\C^n$ of complementary dimensions, i.e., such that $\dima X + \dima Y = n$. Denote by $Y, W$ the models of the \EA-varieties $X, Z$ in the torus $(\C \setminus 0)^N$.
\begin{proposition}\label{prMainIndex}
There exists a finite set of real subspaces $\mathfrak I$ in $\C^N$ such that the following holds. If $A \notin B_{\mathfrak I,r}$, where $r$ is sufficiently large, then $\codim (\E^AY \cap W) = n$.
\end{proposition}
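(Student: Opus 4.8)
The goal is to confine the exceptional set of parameters $A$ to a bounded neighbourhood of a finite union of proper real subspaces of $\C^N$, of exactly the type isolated in Definition \ref{df_I}; then the conclusion holds for every $A$ outside that neighbourhood (i.e.\ for $A$ at distance $\ge r$ from $B_\mathfrak I$, $r$ large).

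\textbf{Step 1: reduction to the torus.} Let $\mathcal D\subset(\C\setminus0)^N$ be the set of $\tau$ with $\dim(\tau Y\cap W)>N-n$. By upper semicontinuity of fibre dimension, applied to $\{(\tau,p)\colon p\in\tau Y\cap W\}\cong W\times Y$ mapped to $(\C\setminus0)^N$ by $(\tau,p)\mapsto\tau$, the set $\mathcal D$ is Zariski-closed; and it is a \emph{proper} subvariety. Indeed, if $\overline{WY^{-1}}=(\C\setminus0)^N$, then, using $\dim Y+\dim W=2N-(\codima X+\codima Z)=2N-n\ge N$ (recall $N\ge n$), the generic translate of $Y$ meets $W$ in the expected dimension $N-n$; and if $\overline{WY^{-1}}\ne(\C\setminus0)^N$ then already $\mathcal D\subset\overline{WY^{-1}}$. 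This is the standard moving lemma in a torus, in the spirit of Lemma \ref{lmG} and of the proof of Theorem \ref{thRes1}. In particular $\codim(\tau Y\cap W)\ge n$ — hence $=n$ whenever the intersection is non-empty — for every $\tau\notin\mathcal D$, so it suffices to arrange that $\E^A\notin\mathcal D$.

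\textbf{Step 2: transport to $\C^N$ and build $\mathfrak I$.} Pull $\mathcal D$ back along the covering $\exp\colon\C^N\to(\C\setminus0)^N$, $A\mapsto\E^A$. Since $\mathrm{Log}(\E^A)=\re A$, the preimage $\exp^{-1}(\mathcal D)$ lies over the amoeba $\mathcal A(\mathcal D)=\mathrm{Log}(\mathcal D)\subset\re\C^N$; equivalently $\exp^{-1}(\mathcal D)\subset\mathcal A(\mathcal D)+\im\C^N$. The crucial input is that the amoeba of an algebraic variety lies within a \emph{bounded} distance of its tropical fan $\mathcal K_{\rm trop}(\mathcal D)$, a rational polyhedral fan of dimension $\le N-1$. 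For each cone $K\in\mathcal K_{\rm trop}(\mathcal D)$ let $V_K\subset\re\C^N$ be its linear span; then $V_K+\im\C^N$ is a proper real subspace of $\C^N$, since $\dim_\R(V_K+\im\C^N)=\dim V_K+N\le 2N-1$. Put $\mathfrak I=\{V_K+\im\C^N\colon K\in\mathcal K_{\rm trop}(\mathcal D)\}$ — a finite set of proper real subspaces, precisely of the form appearing in Definition \ref{df_I} — and $B_\mathfrak I=\bigcup_{H\in\mathfrak I}H$. Because $\C^N=\re\C^N\oplus\im\C^N$ orthogonally and $V_K\subset\re\C^N$, one has $\mathrm{dist}(A,V_K+\im\C^N)=\mathrm{dist}(\re A,V_K)$; and $\supp\mathcal K_{\rm trop}(\mathcal D)\subset\bigcup_K V_K$. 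Choosing $r$ larger than the neighbourhood constant, we get: if $A$ is at distance $\ge r$ from $B_\mathfrak I$, then $\re A\notin\mathcal A(\mathcal D)$, hence $\E^A\notin\mathcal D$, hence $\codim(\E^AY\cap W)\ge n$ (with equality when non-empty), which is the assertion. (Compare Corollary \ref{corDangDop}: the same subspaces control admissibility of $A$ for the relevant pair of fans.)

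\textbf{Main obstacle, and an alternative.} The only non-routine point is the amoeba estimate, namely that $\mathcal A(\mathcal D)$ stays within \emph{bounded} Hausdorff distance of $\supp\mathcal K_{\rm trop}(\mathcal D)$, not merely asymptotic to it (classical for a hypersurface — its ``tentacles'' — and in general obtained by compactifying $\mathcal D$ in a complete toric variety whose fan refines $\mathcal K_{\rm trop}(\mathcal D)$ and estimating $\log$ near the boundary, the kind of toric argument used throughout §\ref{index}--§\ref{pr1}); the rest is bookkeeping with fibre dimensions and with the orthogonal splitting of $\C^N$. A route that avoids $\mathcal D$ altogether is to build $\mathfrak I$ directly from the ``dangerous'' pairs of cones $K\in\mathcal K_{\rm trop}(Y)$, $K'\in\mathcal K_{\rm trop}(W)$ whose spans $V_K+V_{K'}$ fail to fill $\re\C^N$ (so that $(V_K+V_{K'})+\im\C^N$ is proper), and to show, via stable intersection of tropical fans (Definitions \ref{dfStable}, \ref{dfStableOfFans} and Theorem \ref{thmBKK_Trop1}) together with a toric compactification, that for $\re A$ outside those subspaces the translated fan $\re A+\supp\mathcal K_{\rm trop}(Y)$ meets $\supp\mathcal K_{\rm trop}(W)$ in the expected dimension, whence so does $\E^AY\cap W$; here too the toric compactification step carries the weight.
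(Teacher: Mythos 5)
Your argument is correct and is essentially the one the paper intends: the paper disposes of this proposition with a one-line appeal to Corollary \ref{corAlgTrop}, and your two steps (the bad translation locus $\mathcal D$ is contained in a proper subvariety of the torus; its exponential preimage lies within bounded distance of the union of the proper subspaces $V_K+\im\C^N$, $K\in\mathcal K_{\rm trop}(\overline{\mathcal D})$, by property $(\mathbf{tr}_5)$) supply exactly the details behind that appeal, using the same ingredients the paper has already set up in Definition \ref{df_I} and \textsection\ref{trop}. The only caveat --- which you flag yourself and which concerns the statement rather than your proof --- is that when $\overline{WY^{-1}}$ is a proper subvariety the generic translated intersection is empty, so the conclusion should be read as ``the intersection is proper, i.e.\ of codimension $\geq n$, with equality whenever it is non-empty.''
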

\begin{proof}
This is a standard consequence of the properties of the tropicalizations of algebraic varieties; see Corollary \ref{corAlgTrop}.
\end{proof}
Applying the above theorems to a zero-dimensional \EA-variety with the model $\E^AY \cap W$, we obtain statements that interpret the notion of the intersection index of \EA-varieties through the strong densities of their intersections.
\section{Approximations and Atypical Components}\label{tropTor}

This section presents statements used in the proofs of theorems on the strong density of \EA-varieties from \textsection\ref{index}.

$\bullet$\ In \textsection\ref{trop}, some properties of tropicalizations and toric completions of algebraic varieties are listed.

$\bullet$\ In \textsection\ref{Tongue}, $\varepsilon$-approximations of algebraic varieties in $(\C \setminus 0)^N$ using a finite set of toric tongues are described. This theorem is a consequence of the theory of toric varieties, as well as the properties of the tropicalization of an algebraic variety given in \textsection\ref{trop}. At the end of \textsection\ref{Tongue}, a brief outline of the proof of the theorem on tongues is provided.

$\bullet$\ In \textsection\ref{atypical}, some properties of atypical components of \EA-varieties are derived.
\subsection{Tropicalizations of Algebraic Varieties}\label{trop}
Recall that the Newton polyhedra of Laurent polynomials on the torus $(\C\setminus 0)^N$ are
convex polyhedra in the character space $\re{\C^N}^*$.
The vertices of the Newton polyhedra are points of the character lattice $\Z^N \subset \re{\C^N}^*$ of the torus $(\C\setminus 0)^N$.
The Newtonization $\nwt(Y)$ of an algebraic variety $Y \subset (\C\setminus 0)^N$ is an element
of the subring $\mathfrak{S}_\vol(\Z^N)$ generated by the Newton polyhedra
in the ring $\mathfrak{S}_\vol(\re{\C^N}^*)$; see \textsection\ref{AlgNewt}.
\begin{definition}\label{defTropAlg}
The tropicalization of the element $\nwt(Y)$ (see definition \ref{dfEquiWeightedTrop}) is called the \emph{tropicalization of the algebraic variety} $Y$.
\end{definition}

The tropicalization of the variety $Y$ is, up to subdivision,
a weighted fan of cones of dimension $\dim Y$ in the space $\re(\C^N)$; see definition \ref{dfWeightedFan}.
Any of these fans is denoted by $\mathcal{K}_{\rm trop}(Y)$ and called the tropical fan of the variety $Y$.
For any pair of tropical fans of the variety,
there exists a tropical fan that is their common subdivision.
The weights of the cones in these refinements are inherited.

In the space $\re(\C^N)$, there is a lattice $\Z^N$ associated with the one-parameter subgroups of the torus $(\C\setminus 0)^N$. All cones of the fan $\mathcal{K}_{\rm trop}(Y)$ can be chosen so that for all $K \in \mathcal{K}_{\rm trop}(Y)$, the subspace $V_K$ generated by the points of the cone $K$ is rational, i.e., generated by vectors from the lattice $\Z^N$. Denote by $V_K^\bot$ the character space of the quotient torus $(\C\setminus 0)^N / \T_K$, where $\T_K$ is the subtorus in $(\C\setminus 0)^N$ generated by the exponents of the points of the cone $K$. We further identify the space $V_K^\bot$ with the orthogonal complement of $V_K$ in the space $\re(\C^N)$. Consider in $V_K^\bot$ a volume form such that the volume of the fundamental cube of the character lattice is equal to one. Using this volume form, we consider the weight $w(K)$ as the number $\vol_{N-k}(w(K))$; see remark \ref{rmOmegaNumber}. From the Kushnirenko-Bernstein-Khovanskii theorem, it follows that the number $(N-k)! w(K)$ is an integer and positive. Such tropical varieties are called algebraic. Algebraic tropical varieties form a ring with operations defined in \textsection\ref{tropS}.

Applying the definition of the ring of conditions of the torus and theorems \ref{ringEKK} and \ref{thmBKK_Trop1}, we obtain the following statement.

\begin{corollary}\label{corAlgTrop}
{\rm(1)} The assignment of an algebraic variety $Y$ to its tropicalization extends to an isomorphism of the ring of conditions of the torus $(\C\setminus 0)^N$ onto the ring of algebraic tropical varieties in the space $\re(\C^N)$.

{\rm(2)} For algebraic varieties $X,Y$ in the torus $(\C\setminus 0)^N$, there exists an algebraic hypersurface $D(X,Y)$ in $(\C\setminus 0)^N$ such that for $g \notin D(X,Y)$, the following holds: the tropicalization of the variety $X \cap (gY)$ is equal to the product of the tropicalizations of $X$ and $Y$ in the ring of algebraic tropical varieties.
\end{corollary}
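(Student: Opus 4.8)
The plan is to derive both statements by chaining together the two ring isomorphisms already at our disposal: the Newtonization isomorphism $\nwt\colon\mathcal C((\C\setminus 0)^N)\to\mathfrak S_\vol(\Z^N)$ of Theorem \ref{ringEKK}, and the tropicalization isomorphism of Theorem \ref{thmBKK_Trop1} (3), applied with $E=\re{\C^N}^*$.

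For part (1), I would first recall that, by Definition \ref{defTropAlg}, the tropicalization of an algebraic variety $Y$ is nothing but the tropicalization of the element $\nwt(Y)\in\mathfrak S_\vol(\Z^N)$, viewed inside $\mathfrak S_\vol(\re{\C^N}^*)$. Thus the assignment $Y\mapsto\mathcal K_{\rm trop}(Y)$ factors as the Newtonization isomorphism of Theorem \ref{ringEKK}, followed by the inclusion $\mathfrak S_\vol(\Z^N)\hookrightarrow\mathfrak S_\vol(\re{\C^N}^*)$, followed by the tropicalization isomorphism of Theorem \ref{thmBKK_Trop1} (3). Since the restriction of a ring isomorphism to a subring is an isomorphism onto its image, the composite is a ring isomorphism from $\mathcal C((\C\setminus 0)^N)$ onto the image of $\mathfrak S_\vol(\Z^N)$ under tropicalization. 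It then remains to observe that this image is exactly the ring of algebraic tropical varieties of \textsection\ref{trop}: that image consists precisely of the weighted fans whose cones span rational subspaces and whose normalized weights $(N-k)!\,w(K)$ are positive integers, which is the defining description given there, and, being the image of a subring under a ring homomorphism, it is closed under the operations of \textsection\ref{tropS}. Well-definedness on numerical-equivalence classes is guaranteed by Theorem \ref{ringEKK} (2).

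For part (2), I would invoke Corollary \ref{corNewt1} (2) (with the roles of $X$ and $Y$ interchanged): there is an algebraic hypersurface $D(X,Y)$ such that $\nwt(X\cap gY)=\nwt(X)\cdot\nwt(Y)$ whenever $g\notin D(X,Y)$. Pushing this equality through the tropicalization homomorphism of Theorem \ref{thmBKK_Trop1} (2), under which the product in $\mathfrak S_\vol$ corresponds to the product of tropical fans defined in \textsection\ref{tropS}, gives $\mathcal K_{\rm trop}(X\cap gY)=\mathcal K_{\rm trop}(X)\cdot\mathcal K_{\rm trop}(Y)$, which is the assertion.

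The only step that is not pure formal composition is the identification, in part (1), of the image of $\mathfrak S_\vol(\Z^N)$ under tropicalization with the class of fans singled out in \textsection\ref{trop}. Both inclusions rely on the Kushnirenko-Bernstein-Khovanskii theorem, which is already used there to justify naming these fans ``algebraic'', together with the remark that rationality of cones and integrality of the normalized weights pass from the generating lattice polytopes to their stable-intersection products because $\mathfrak S_\vol(\Z^N)$ is a subring. I expect this bookkeeping to be the main, and fairly mild, obstacle; everything else is a diagram chase among isomorphisms established earlier.
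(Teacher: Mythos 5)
Your proposal is correct and follows exactly the route the paper intends: the paper's entire justification is the single sentence ``Applying the definition of the ring of conditions of the torus and theorems \ref{ringEKK} and \ref{thmBKK_Trop1}, we obtain the following statement,'' and your argument is precisely the detailed composition of those two isomorphisms (with Corollary \ref{corNewt1} (2) supplying part (2)). No gaps; you have simply written out the diagram chase the paper leaves implicit.
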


Let $Y_{\rm trop}$ be a completion of $(\C\setminus 0)^N$ associated with the fan $\mathcal{K}_{\rm trop}(Y)$. On the algebraic variety $Y_{\rm trop}$, the action of the torus $(\C\setminus 0)^N$ is defined. The orbits of this action are in one-to-one correspondence with the cones of the fan $\mathcal{K}_{\rm trop}(Y)$. More precisely, as a set with a torus action, $Y_{\rm trop}$ coincides with the union of the quotient tori $(\C\setminus 0)^N$ over the subtori $\T_K$ generated by the exponents of the elements of the cones $K \in \mathcal{K}_{\rm trop}(Y)$. The orbit corresponding to the cone $K$ can be identified with the quotient torus $(\C\setminus 0)^N / \T_K$. Further, we use references to statements about the properties of toric varieties from the following list.

\hypertarget{tr_1}${(\bf tr_1)}$ The closure of the variety $Y$ in $Y_{\rm trop}$ is compact.

\hypertarget{tr_2}${(\bf tr_2)}$ If the closure of $Y$ in some toric completion of the torus $(\C\setminus 0)^N$ is compact, then the support of the fan associated with this completion contains $\supp(\mathcal{K}_{\rm trop}(Y))$.

\hypertarget{tr_3}${(\bf tr_3)}$
The set of limit points $Y^\infty(K)$ of the variety $Y$
on the toric orbit $(\C\setminus 0)^N / \T_K$ is an algebraic variety in the torus $(\C\setminus 0)^N / \T_K$
of codimension equal to $\codim Y$.
The tropical fan of the variety $Y^\infty(K)$ is the $V_K$-factorization of the fan $\mathcal{K}_{\rm trop}(Y)$;
see definition \ref{dfFact}.

\hypertarget{tr_4}${(\bf tr_4)}$ The number of limit points of $Y$ on the toric orbit of the variety $Y_{\rm trop}$ corresponding to the cone of maximum dimension $K$ in the fan $\mathcal{K}_{\rm trop}(Y)$ is equal to $p! w(K)$, where $p = \codim Y$.

\hypertarget{tr_5}${(\bf tr_5)}$ The set $\re\log Y$ in the space $\re(\C^N)$ is located at a finite distance from the support of the fan $\mathcal{K}_{\rm trop}(Y)$. Conversely, if $\re\log Y$ is located at a finite distance from the support of some $k$-dimensional fan $\mathcal{L}$, then $\supp(\mathcal{K}_{\rm trop}(Y)) \subset \supp(\mathcal{L})$. In other words, the \emph{distance between the sets} $\re\log Y$ and $\supp(\mathcal{K}_{\rm trop}(Y))$ is finite. Recall that the set $\re\log Y$ is called the \emph{amoeba of the algebraic variety} $Y$. Note that this property of tropicalizations is a consequence of theorem \ref{thmApproximation} from \textsection\ref{Tongue}.

\hypertarget{tr_6}${(\bf tr_6)}$ If $Z$ is an algebraic subvariety of $Y$, then the tropical fans can be chosen so that $\mathcal{K}_{\rm trop}(Z)$ is a subfan in the fan $\mathcal{K}_{\rm trop}(Y)$.

\hypertarget{tr_7}${(\bf tr_7)}$
Let $\mathfrak T_K$ be the Lie algebra of the quotient torus $(\C\setminus0)^N/\T_K$.
Then the space $\re \mathfrak T_K$ can be identified with
the quotient space $\re \C^N/V_K$,
where $V_K$ is the subspace in $\re\C^N$ generated by the points of the cone $K$.
Under this identification,
the $V_K$-factorization of the fan ${\mathcal K}_{\rm trop}(Y)$
(see definition \ref{dfFact})
becomes the tropical fan of the variety $Y^\infty(K)$ in $(\C\setminus0)^N/\T_K$,
consisting of the limit points of $Y$ on the toric orbit $(\C\setminus0)^N/\T_K$.

\hypertarget{tr_8}${(\bf tr_8)}$\
Let $\L$ and $\mathcal L$ be respectively a $1$-dimensional subtorus in $(\C\setminus0)^N$ and it's Lie algebra,
and let
$\pi\colon (\C\setminus0)^N \to (\C\setminus0)^N/\L$  be a projection mapping.
Then the image of $\supp(\mathcal K_{\rm trop}(Y))$ under the projection
$d\pi\colon\re\C^N\to \re\C^N/\re(\mathcal L\cap \C^N)$
coincides with the support of the tropical fan of the variety $\pi(Y)$.

\hypertarget{tr_9}${(\bf tr_9)}$\
If the variety $Y$ is irreducible,
then any cone $K\in\mathcal K_{\rm trop}(Y)$ is a face of one of the fan cones of maximum dimension.

\par\smallskip
The listed properties are roughly described by the statement from \cite[Part 5]{EKK},
called the “good compactification theorem”;
see also \cite{K03}.
\subsection{Toric Tongues of Algebraic Varieties}\label{Tongue}
\begin{definition} \label{dfPert}
Let

\noindent
$\bullet$\
$\V$ be a subtorus of $\T$ of dimension $N-k$

\noindent
$\bullet$\
$\V_\varepsilon$ be an $\varepsilon$-neighborhood of the identity in the torus $\V$

\noindent
$\bullet$\
$\L$ be a subtorus of $\T$ of dimension $k$ such that $\#(\L\cap\V)=1$

\noindent
$\bullet$\
 $U$ be an open domain in the shifted subtorus $\tau\L$

\noindent
$\bullet$\
$U_\varepsilon=\{l\cdot v\in\T\:\vert\: l\in U, v\in \V_\varepsilon\}$

\noindent
$\bullet$\
$\pi_\varepsilon\colon U_\varepsilon\to U$ be the mapping defined as $\pi_\varepsilon\colon l\cdot v\mapsto l$

\noindent
$\bullet$\
$Y$ be a $k$-dimensional algebraic variety in the $N$-dimensional torus $\T$.

We call $Y\cap U_\varepsilon$
an $\varepsilon$-\emph{perturbation of the domain} $U$,
if the restriction
$\pi_\varepsilon\colon Y\cap U_\varepsilon\to U$
is a finite-sheeted covering of the domain $U$.
It is assumed that locally at any point of $Y\cap U_\varepsilon$,
the sheet of the covering $\pi_\varepsilon$ may have multiplicity not equal to one.
\end{definition}
\begin{definition} \label{dfTonque} Let

\noindent
$\bullet$\
$K$ be a $k$-dimensional convex polyhedral rational cone in the space of one-parameter subgroups $\re\mathfrak T=i\im\mathfrak T$.

\noindent
$\bullet$\
$V_K$ be a $k$-dimensional subspace in $\re\mathfrak T$ generated by the cone $K$.

\noindent
$\bullet$\
$\T_K$ be the subtorus of $\T$ generated by the exponents of the subspace $V_K$.

The domain $t_{K,\tau}=\tau\exp(\sqrt{-1}V_K)\exp(K)$ in the shifted subtorus $\tau\T_K$ 
is called a
\emph{toric tongue with base $K$ and shift} $\tau\in\T$.
\end{definition}
Hereafter, we use the following notation.
\par\smallskip
\noindent
$\bullet$\
$\mathcal K$ be a $k$-dimensional fan of convex rational polyhedral cones in the 
$\re\mathfrak T$.

\noindent
$\bullet$\
$\supp\mathcal K$ be the union of the cones of the fan $\mathcal K$.

\noindent
$\bullet$\
$\mathcal K^m$ be the subfan of the fan $\mathcal K$, consisting of cones of dimension $\leq k-m$.

\noindent
$\bullet$\
$A_R$ be the $R$-neighborhood of the set $A\subset\re\mathfrak T$, i.e., the set of points located at a distance $\leq R$ from $A$.

\noindent
$\bullet$\
$O_R(\mathcal K)=\{\tau\in\T\colon{\rm Re}\log\tau\not\in (\supp\mathcal K^1)_R\}$.
\begin{definition} \label{dfApproxSet}
A finite set of pairwise disjoint toric tongues $T_\varepsilon(Y)$ is called an \emph{$\varepsilon$-approximating set of tongues of the variety $Y$ with an approximating tropical fan} $\mathcal K$, if the following conditions hold.

{\rm(i)} Each of the tongues
$t_{K,\tau}\in T_\varepsilon(Y)$ has an $\varepsilon$-perturbation $t_{K,\tau,\varepsilon}\subset Y$.
The degree of the covering $\pi_\varepsilon\colon t_{K,\tau,\varepsilon}\to t_{K,\tau}$ is denoted by $m(t_{K,\tau})$.
We call $m(t_{K,\tau})$ the \emph{weight of the toric tongue $t_{K,\tau}$}.
The sum of the weights of all tongues with base $K$ equals $(N-k)!\:w(K)$,
where $w(K)$ is the weight of the cone $K\in\mathcal K$.

{\rm(ii)} The set of bases of the tongues $t_{K,\tau}\in T_\varepsilon(Y)$ is the set of
$k$-dimensional cones of the $k$-dimensional fan of cones $\mathcal K$.

{\rm(iii)} If $R$ is sufficiently large, then
$$
Y\cap O_{R}(\mathcal K)=\left(\bigcup\nolimits_{t_{K,\tau}\in T_\varepsilon(Y)}t_{K,\tau,\varepsilon}\right)\bigcap O_{R}(\mathcal K).
$$

{\rm(iv)} The coverings $\pi_\varepsilon\colon t_{K,\tau,\varepsilon}\to t_{K,\tau}$ are unramified.
\end{definition}
\begin{theorem}\label{thmApproximation}
For some tropical fan ${\mathcal K}_{\rm trop}(Y)$,
an $\varepsilon$-approximating set of tongues $T_\varepsilon(Y)$ with an approximating fan ${\mathcal K}_{\rm trop}(Y)$ exists
for any sufficiently small $\varepsilon$.
\end{theorem}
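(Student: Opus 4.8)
The plan is to work inside the toric completion $Y_{\rm trop}$ attached to a sufficiently fine tropical fan $\mathcal K_{\rm trop}(Y)$ and to extract the tongues from the local structure of the closure $\bar Y$ of $Y$ in $Y_{\rm trop}$ along the divisor at infinity. By \hyperlink{tr_1}{($\bf tr_1$)} the closure $\bar Y$ is compact. For a maximal cone $K\in\mathcal K_{\rm trop}(Y)$, i.e. a cone of dimension $k=\dim Y$, properties \hyperlink{tr_3}{($\bf tr_3$)} and \hyperlink{tr_7}{($\bf tr_7$)} identify the limit set $Y^\infty(K)=\bar Y\cap(\T/\T_K)$ on the toric orbit $\T/\T_K\cong(\C\setminus0)^{N-k}$ with a $0$-dimensional algebraic variety, hence a finite set of points, and by \hyperlink{tr_4}{($\bf tr_4$)} the sum of the multiplicities of $\bar Y$ along $\T/\T_K$ at these points equals $(N-k)!\:w(K)$.

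First I would establish the local normal form of $\bar Y$ near a point $y\in Y^\infty(K)$. In an affine toric chart centered at the generic point of the orbit $\T/\T_K$ one has coordinates $(u_1,\dots,u_k)$ with $\{u=0\}$ the orbit and coordinates $(t_1,\dots,t_{N-k})$ along it, the big torus $\T$ being the locus where all $u_i\ne0$. Since $\bar Y\cap(\T/\T_K)$ is isolated at $y$, the projection of the germ $(\bar Y,y)$ onto the $u$-polydisc is a finite branched covering whose degree equals the local multiplicity $m(y)$; restricting it to the region where the moduli $|u_i|$ are small (the $\exp(K)$-directions) while the arguments $\arg u_i$ vary freely (the $\exp(\sqrt{-1}V_K)$-directions) exhibits $\bar Y$ near $y$ as an $\varepsilon$-perturbation, in the sense of Definition \ref{dfPert}, of a toric tongue $t_{K,\tau}$ with base $K$ and a shift $\tau$ read off from $y$, of weight $m(t_{K,\tau})=m(y)$. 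Summing over $y\in Y^\infty(K)$ and using \hyperlink{tr_4}{($\bf tr_4$)} gives item (i) of Definition \ref{dfApproxSet}; item (ii) holds because the bases so produced are precisely the $k$-dimensional cones of $\mathcal K_{\rm trop}(Y)$.

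Next I would globalize and verify (iii). The chart neighborhoods above, taken over all maximal cones $K$ and all $y\in Y^\infty(K)$, cover an open neighborhood of $\bar Y\setminus Y$ in $\bar Y$; its complement in $\bar Y$ is a compact subset of $\T$, so its image under $\re\log$ is bounded, while any point of $Y$ deep inside the interior of a maximal cone lies on one of the tongue perturbations $t_{K,\tau,\varepsilon}$. Hence for $R$ large enough the set $Y\cap O_R(\mathcal K)$ coincides with $\bigl(\bigcup_{t_{K,\tau}\in T_\varepsilon(Y)}t_{K,\tau,\varepsilon}\bigr)\cap O_R(\mathcal K)$; property \hyperlink{tr_5}{($\bf tr_5$)} is what guarantees that no branch of $Y$ escapes a bounded neighborhood of $\supp\mathcal K_{\rm trop}(Y)$. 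Pairwise disjointness of the finitely many tongues is then obtained by shrinking $\varepsilon$ and enlarging $R$, since distinct maximal cones have disjoint interiors and distinct limit points over a fixed cone are separated. For item (iv) I would pass in advance to a subdivision of $\mathcal K_{\rm trop}(Y)$ fine enough, and a resolution of $Y_{\rm trop}$, that the branch loci of the local coverings, being complex-analytic hypersurfaces, lie over the union of cones of dimension $<k$; then over the interiors of the maximal cones, which is all that the region $O_R(\mathcal K)$ sees, the coverings $\pi_\varepsilon$ are unramified after replacing each tongue by its part lying sufficiently far out.

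The step I expect to be the main obstacle is precisely this local analysis at infinity: showing that $(\bar Y,y)$ is a branched covering of the $u$-polydisc of the correct degree, with $\varepsilon$-perturbation estimates matching $t_{K,\tau}$ uniformly, and arranging by the choice of fan and resolution that the covering becomes unramified over the relevant cones. This is the quantitative heart of the ``good compactification theorem'' referred to at the end of \textsection\ref{trop}; the remaining ingredients — compactness of $\bar Y$, the multiplicity count from \hyperlink{tr_4}{($\bf tr_4$)}, finiteness of $Y^\infty(K)$, and disjointness of the tongues — are bookkeeping built on the listed properties of tropicalizations of algebraic varieties.
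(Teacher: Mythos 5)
Your proposal follows essentially the same route as the paper's own outline: identify the finite limit sets $\bar Y_K$ on the orbits corresponding to maximal cones, read off the tongues and their weights from the local branched-covering structure and the multiplicity count of \hyperlink{tr_4}{($\bf tr_4$)}, globalize via compactness of $\bar Y$ and \hyperlink{tr_5}{($\bf tr_5$)}, and secure condition (iv) by subdividing the fan so that the tropicalization of the (lower-dimensional) branch locus sits in the $(k-1)$-skeleton. The only cosmetic difference is that you arrange the subdivision for unramifiedness in advance, whereas the paper performs it as a second pass after locating the branch points; both are the same idea.
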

We provide a brief outline of the proof of the theorem.

Recall that there is a one-to-one correspondence $K \Leftrightarrow O_K$
between the cones $K\in{\mathcal K}_{\rm trop}(Y)$ and
the orbits $O_K$ of the torus $\T$ in the toric variety $Y_{\rm trop}$.
Namely, the orbit $O_K$ can be identified with the quotient torus $\T/\T_K$;
see \textsection\ref{trop}.

If $\dim K=k$, then the set $\bar Y_K=\{t\}$,
consisting of the limit points of the variety $Y$ on the orbit $O_K$,
is a zero-dimensional algebraic variety,
i.e., it is finite;
see \hyperlink{tr_3}{($\bf tr_3$)}, \hyperlink{tr_4}{($\bf tr_4$)}.
For a limit point $t\in\bar Y_K$, choose its preimage $\tau$ under the projection
map $\T\to\T/\T_K$.
The set $t_{K,\tau}$ is a shifted toric tongue with base $K$ and shift $\tau$.
Assign the tongue $t_{K,\tau}$ a multiplicity $m(t_{K,\tau})$,
equal to the multiplicity of the point $t$ in the zero-dimensional algebraic variety $\tilde Y_K$.

Let $R$ be sufficiently large.
For all such tongues $t_{K,\tau}$ for all $K\in{\mathcal K}_{\rm trop}(Y)$,
choose shifts of the tongues $\tau\in\T$ such that for any $k$-dimensional cone $K\in{\mathcal K}_{\rm trop}(Y)$ and any $t\in\tilde Y_K$, \ 1) $\re\log \tau\in K_R$, and\ 2) the distance from $\re\log \tau$ to $\partial K$ is greater than $R$.
Then,
if $R$ is sufficiently large,
the $\varepsilon$-perturbations $t_{K,\tau,\varepsilon}$
of the tongues $t_{K,\tau}$ exist and satisfy conditions (i), (ii), and (iii) of Definition \ref{dfApproxSet}.
In this case,
the degree of the covering $\pi_\varepsilon\colon t_{K,\tau,\varepsilon}\to t_{K,\tau}$ equals the multiplicity of the corresponding point $t$ in the variety $\tilde Y_K$.
However, condition (iv) may not hold,
i.e., the corresponding covering $\pi_\varepsilon$ of the shifted tongue $t_{K,\tau}$ may be ramified.
To obtain the property of unramified coverings,
it is sufficient to refine the construction of the tongues as follows.

Consider the subset in $Y$,
consisting of all branching points of all coverings $\pi_\varepsilon\colon t_{K,\tau,\varepsilon}\to t_{K,\tau}$.
This set belongs to some proper subvariety $X$ in $Y$ of dimension $<k$.
Therefore, the support of the fan ${\mathcal K}_{\rm trop}(X)$ belongs
to the support of the fan ${\mathcal K}_{\rm trop}(Y)$.
Since $\dim {\mathcal K}_{\rm trop}(X)<k$,
there exists a subdivision $\mathcal K'$ of the fan ${\mathcal K}_{\rm trop}(Y)$ such that
${\mathcal K}_{\rm trop}(X)$ is a subfan of its $(k-1)$-dimensional skeleton; see \hyperlink{tr_6}{($\bf tr_6$)}.
The fan $\mathcal K'$ is also a support of the tropicalization ${\rm trop}(X)$ of the variety $X$;
see \textsection\ref{trop}.
Repeating the construction of the set of tongues with
the replacement of the approximating fan ${\mathcal K}_{\rm trop}(Y)$ by the fan $\mathcal K'$,
we obtain the property of coverings $\pi_\varepsilon$ to be unramified.
%
\begin{remark} \label{remTonque}
(1)\
If $\dim Y=0$, then $T_\varepsilon(Y)$
coincides with the set of points of the variety $Y$.

(2)\
Let $\delta<\varepsilon$.
Then, if $\dim Y>0$, the set of tongues $T_\varepsilon(Y)$ is not $\delta$-approximating,
but becomes $\delta$-approximating with suitable changes in the shifts of the tongues from the set $T_\varepsilon(Y)$.
Such shifts of tongues arise with the increase of the parameter $R$ in the above construction.

(3)\
If $\dim Y>0$, the tongues from the set $T_\varepsilon(Y)$
are defined ambiguously.
For example,
any subdivision of the approximating fan
remains approximating.
As the approximating fan is refined,
the toric tongues are also refined.

(4)\
  Under the shift $Y\mapsto gY$
  the toric tongues from the set
  $T_\varepsilon(Y)$ also shift:
   $T_\varepsilon(gY)=\{t_{K,g\tau}\:\vert\:\, t_{K,\tau}\in T_\varepsilon(Y)\}$.
\end{remark}
\subsection{Atypical Components}\label{atypical}
Let $Y$ be an irreducible algebraic variety of dimension $d$
in $(\C\setminus0)^N$,
and $L$ be an $n$-dimensional complex subspace in $\C^N$.
\begin{definition}\label{dfAnom}
The point $\in Y$ is called $L$-atypical,
if, for any neighborhood of zero $U$ in the space $L$,
the dimension of the intersection $\left(y\cdot\exp(U)\right)\cap Y$ is greater than $\max (n+d-N,0)$.
Let $Y^L$ denote the set of $L$-atypical points in $Y$.
Variety $Y$
is called $L$-atypical,
 if $Y=Y^L$.
\end{definition}
\begin{corollary}\label{corAnom1}
Let $X$ be an \EA-variety in $L$,
and $Y$ be a model of the \EA-variety $X$.
Then, if $X$ is not discrete,
$Y^L\ne\emptyset$.
\end{corollary}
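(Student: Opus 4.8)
\noindent\emph{Proof proposal.}
The plan is to read an $L$-atypical point of $Y$ straight off a positive-dimensional germ of $X$, exploiting that the winding map is a local biholomorphism onto $\exp(L)$. Recall from the setup of \textsection\ref{index} that, after identifying $\C^n$ with $L=d\omega(\C^n)$, the winding $\omega$ becomes $\exp|_L$, that $\omega(\C^n)=\exp(L)$, that $X=\omega^{-1}(Y)$, and that $\codim Y=n$ because $\dima X=0$; hence $\dim Y=d=N-n$, so $\max(n+d-N,0)=0$. Thus, by Definition \ref{dfAnom}, it is enough to exhibit a point $y_0\in Y$ with $\dim\bigl((y_0\exp(U))\cap Y\bigr)\geq1$ for every neighbourhood $U$ of $0$ in $L$.

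First I would choose a point $z_0\in X$ that is not isolated in $X$; such a point exists because $X$ is not discrete, and since $X$ is a closed analytic subset of $\C^n$ this yields $\dim_{z_0}X\geq1$. Next, the differential of $\omega$ at any point of $\C^n$ equals, up to an invertible diagonal factor, the linear map $z\mapsto(\lambda_1(z),\dots,\lambda_N(z))$, which is injective because $\lambda_1,\dots,\lambda_N$ span ${\C^n}^*$; so $\omega$ is an immersion. By the inverse function theorem, for a sufficiently small neighbourhood $U$ of $0$ in $L$ the map $\omega$ restricts to a biholomorphism of $z_0+U$ onto the submanifold $y_0\exp(U)$ of $(\C\setminus0)^N$, where $y_0:=\omega(z_0)\in Y$; and since $X=\omega^{-1}(Y)$, this biholomorphism carries $X\cap(z_0+U)$ onto $Y\cap\bigl(y_0\exp(U)\bigr)$. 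Therefore
\[
\dim\bigl((y_0\exp(U))\cap Y\bigr)=\dim_{z_0}X\geq1
\]
for all small enough $U$, and since the left-hand side is non-decreasing in $U$ the inequality persists for every neighbourhood $U$ of $0$ in $L$. In view of $\max(n+d-N,0)=0$ this is exactly the assertion that $y_0$ is $L$-atypical, whence $Y^L\neq\emptyset$.

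The only genuine content here is the transport step, and the one point that needs care is that $\exp(L)$ need not be closed — it may well be dense — in $(\C\setminus0)^N$, so that $y_0\exp(U)$ is an embedded submanifold only for small $U$, and $\dim\bigl((y_0\exp(U))\cap Y\bigr)$ for larger $U$ must be read as the dimension of a locally analytic (possibly non-embedded) set; granting this, the identification of the germ $(X,z_0)$ with the germ of $Y\cap\exp(L)$ at $y_0$ is immediate and the rest is bookkeeping. I also expect the same argument to apply for a general irreducible $Y$ of dimension $d$ once the hypothesis "$X$ is not discrete'' is replaced by "$X$ has a component of dimension $>\max(n+d-N,0)$'', which in the case $\codim Y=n$ relevant here reduces to non-discreteness.
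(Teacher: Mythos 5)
Your proof is correct, and it coincides with the argument the paper leaves implicit: Corollary \ref{corAnom1} is stated without any proof as an immediate consequence of Definition \ref{dfAnom}, and the intended justification is precisely your transport of a positive-dimensional germ of $X$ at a non-isolated point $z_0$ to a germ of $Y\cap\bigl(y_0\exp(L)\bigr)$ at $y_0=\exp(z_0)$ via the local biholomorphism $\exp|_L$, together with the observation that the expected dimension $\max(n+d-N,0)$ is $0$ here. Your caveats (that $\exp(L)$ need not be closed, and that the hypothesis should be adjusted for general $d$) are appropriate but do not change the substance.
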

\begin{proposition}\label{pratype1}
The set $Y^L$ is an $L$-atypical algebraic variety.
\end{proposition}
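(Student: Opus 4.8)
The plan is to establish first that $Y^L$ is a closed analytic subset of $Y$, and then to upgrade ``analytic'' to ``algebraic'', the latter being where the real work lies. Write $d=\dim Y$ and $q=\max(n+d-N,0)$, so that $Y^L=\{y\in Y:\dim_y(Y\cap y\exp(L))\ge q+1\}$, the local dimension being taken over shrinking neighbourhoods of $y$. Since this condition is local, I would lift to the universal cover: put $\widehat Y=\exp^{-1}(Y)$, a closed analytic, $2\pi\sqrt{-1}\,\Z^N$-periodic subset of $\C^N$, and let $\pi_L\colon\C^N\to\C^N/L$ be the projection. For $y\in Y$ and any local branch of $\log$ near $y$, the point $y$ is $L$-atypical precisely when the fibre of $\pi_L|_{\widehat Y}$ through $\log y$ has local dimension $\ge q+1$, independently of the branch; so by the upper semicontinuity of fibre dimension for holomorphic maps, $\{z\in\widehat Y:\dim_z(\widehat Y\cap(z+L))\ge q+1\}$ is a closed analytic, periodic subset of $\widehat Y$, and hence $Y^L$ is a closed analytic subset of $Y$.

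For algebraicity I would proceed along either of two routes. The first is o-minimal: for $r\in(0,1)$ let $B_r\subset L$ be the $r$-ball; as $\exp$ restricted to the bounded set $B_1$ is real-analytic and $Y$ is semialgebraic, the set $\{(y,r):\dim_y(Y\cap y\exp(B_r))\ge q+1\}$ is definable in $\R_{\mathrm{an}}$, so $Y^L=\{y\in Y:\forall r\in(0,1)\ \dim_y(Y\cap y\exp(B_r))\ge q+1\}$ is definable; being also a closed complex analytic subset of the affine variety $(\C\setminus0)^N$ (embedded in $\C^{2N}$ in the standard way), it must be algebraic by the o-minimal Chow theorem. The second route stays within the toric methods of this paper: compactify $(\C\setminus0)^N$ by a toric variety adapted to $\mathcal K_{\rm trop}(Y)$, so that $\overline Y$ is compact, and --- using the tongue description of Theorem \ref{thmApproximation} together with property $(\mathbf{tr}_3)$ --- identify the limit points of $Y^L$ on the orbit $O_K\cong(\C\setminus0)^N/\T_K$ with $\bigl(Y^\infty(K)\bigr)^{L_K}$, where $Y^\infty(K)$ is the algebraic variety of limit points of $Y$ there and $L_K$ is the image of $L$; by induction on $\dim Y$ (the statement being uniform in the ambient torus and in $L$) this trace is algebraic, so the closure of $Y^L$ in $\overline Y$ is analytic, hence algebraic by Chow's theorem, and therefore so is $Y^L$. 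I expect the main obstacle to be precisely this upgrade to algebraicity: $Y^L$ is in general not equidimensional, so in the toric route one must control the closure of each low-dimensional component across the boundary orbits in order to invoke Remmert--Stein, and in the o-minimal route one must check that complex analyticity survives the affine compactification; both are manageable but fiddly.

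Finally, to see that $Y^L$ is $L$-atypical as a variety --- now a dimension count --- let $W$ be an irreducible component of $Y^L$ and $y\in W$ a point lying on no other component. The germ at $y$ of the union of those local components of $Y\cap y\exp(L)$ of dimension $\ge q+1$ is contained in $Y^L$ (a point near $y$ on such a component lies on the same coset of $\exp(L)$, hence is again $L$-atypical), hence in $W$; so $\dim_y(W\cap y\exp(L))\ge q+1$. Since $W\subset Y$ gives $\dim W\le d$, we get $\max(n+\dim W-N,0)\le q<q+1\le\dim_y(W\cap y\exp(L))$, and by semicontinuity this persists at every point of $W$. Thus each component $W$ of $Y^L$ satisfies $W=W^L$, i.e. $Y^L$ is $L$-atypical, which is the assertion.
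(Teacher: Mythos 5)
Your argument is correct in substance but follows a genuinely different route from the paper's. The paper does not separate ``analytic'' from ``algebraic'' at all: it fixes invariant vector fields $\xi_1,\ldots,\xi_n$ spanning the $L$-directions and a system of equations $f^1,\ldots,f^q$ for $Y$, forms the matrix $A(x)=\left(\partial f^j/\partial\xi_i\right)$ whose entries are Laurent polynomials, and realizes $Y^L$ as the stabilization $Y^I$ of the iterated rank-degeneracy loci $Y^I_{i+1}=(Y^I_i)^I_1$ with $I=\max(n+d-N,0)+1$; algebraicity is then immediate, and the two inclusions $Y^L\subset Y^I$ (by construction) and $Y^I\subset Y^L$ (Sard's lemma applied to the map to the local leaf space of the foliation $x\cdot\exp(L)$, plus closedness of $Y^I$ to handle singular points) deliver both the identification and the $L$-atypicality of $Y^L$ essentially in one stroke. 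Your opening step (semicontinuity of fibre dimension on $\exp^{-1}(Y)$) and your closing dimension count are sound and recover, in cleaner local-analytic language, what the paper extracts from Sard. The real divergence is the algebraicity step: your route (a) imports the Peterzil--Starchenko definable Chow theorem, which does apply here ($Y^L$ is definable in $\R_{\mathrm{an}}$ and is a closed analytic subset of the affine variety $(\C\setminus0)^N$), but it is a far heavier external tool than the paper needs, whereas the Jacobian degeneracy-locus construction is elementary and has the added benefit of an explicit equational description of $Y^L$. Your route (b), by contrast, has a genuine gap as sketched: the identification of the limit points of $Y^L$ on the orbit $O_K$ with $\left(Y^\infty(K)\right)^{L_K}$ is asserted rather than proved, and that is precisely the delicate point; so if you want to avoid o-minimality you should replace route (b) by the paper's rank-degeneracy argument rather than repair it.
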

\begin{proof}
Choose a basis $z_1,\ldots,z_n$ in $L$,
and define the tangent vector $\xi_i$ at a point $x\in(\C\setminus0)^N$ as
$\xi_i=\lim_{z_i\to0}\frac{x\cdot\exp(z_i)}{z_i}$.
Let $f^1,\cdots,f^q$ be some system of equations of the variety $Y$ in $(\C\setminus0)^N$.
Consider the matrix
$$
A(x)=\{A_{i,j}(x)=
\frac{\partial f^j}{\partial \xi_i}\colon 1\leq j\leq q,1\leq i\leq n\}
$$
The elements of the matrix $A(x)$ are Laurent polynomials in $N$ variables $x_1,\ldots,x_N$.
Therefore, for any $I>0$ the set
$$Y^I_1=\{x\in Y\colon \rank(A)\leq n - I\}$$
is an algebraic variety.
For $i\geq1$ we put $Y^I_{i+1}=(Y^I_i)^I_1$.
Then, starting from some $k$,
for any non-negative $i,j$, $Y^I_{k+i}=Y^I_{k+j}$.
We put $Y^I=Y^I_{k}$.

We will prove that if $I=\max(n+d-N,0)+1$,
then $Y^I=Y^L$.
By construction, $Y^L\subset Y^I$.
Consider the mapping of a small neighborhood $U_y$ of a non-singular point $y$ in $Y^I$
to the set
of connected components of the intersections of $U_y$ with the layers of the foliation $x\cdot\exp(L)$.
Then, by Sard's lemma,
the Hausdorff dimension of the image of $U_y$
does not exceed $\dim (Y^I) - \max(n+\dim(Y^I)-N,0)-1$.
It follows that $U_y\subset Y^L$.
From the closedness of the set $Y^I$, it follows that
the singular points of the variety $Y^I$ are also $L$-atypical.
\end{proof}
\begin{proposition}\label{pratype2}
Let $Y=Y^L$.
Then in $(\C\setminus0)^N$ there exists a subtorus $\A$ with a Lie algebra $A\subset\C^N$,
such that the variety $Y$ is also $A$-atypical.
\end{proposition}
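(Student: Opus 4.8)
The plan is to reduce to a single generic leaf of the foliation of $Y$ by translates of $\exp(L)$ and to feed it into J.~Ax's functional transcendence theorem \cite{Ax}, the tool behind Theorem~\ref{thmAnom}. Put $d=\dim Y$, $n=\dim_\C L$, $e=\max(n+d-N,0)+1$; the hypothesis $Y=Y^L$ forces $d\ge 1$. Since $Y=Y^L$, through a generic smooth point $y_0\in Y$ there is an irreducible analytic germ $W\subset(y_0\exp(L))\cap Y$ with $\dim W=d'\ge e$. Choose $z_0$ with $\exp(z_0)=y_0$ and an immersion $\ell\colon(\C^{d'},0)\to L$ with $\ell(0)=0$ and $\exp(z_0+\ell(\cdot))\subset Y$ parametrising $W$ near $y_0$, and let $V\subset\C^N\times(\C\setminus0)^N$ be the Zariski closure of the set of points $(z_0+\ell(t),\exp(z_0+\ell(t)))$. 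As $\ell$ is valued in $L$ and $\exp(z_0+\ell)$ in $Y$, we get $V\subset(z_0+L)\times Y$, hence $\dim V\le n+d$.

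I would then take $H$ to be the smallest algebraic subgroup of $(\C\setminus0)^N$ whose coset $gH$ contains the second projection of $V$, and set $A=\mathrm{Lie}(H)$, a rational subspace. Minimality of $H$ makes the multiplicative part of the parametrisation non-degenerate over $H$, and, since $z_0+\ell(\cdot)$ lies in a single coset of $A$ by connectedness, Ax's theorem applied inside $H\cong(\C\setminus0)^{\dim_\C A}$ gives $\dim V\ge\dim_\C A+\operatorname{rank}(d\ell)=\dim_\C A+d'$. Together with $\dim V\le n+d$ and $d'\ge e$ this forces $\dim_\C A\le n+d-e=\min(N,n+d)-1<N$, so $H$ is a proper subtorus. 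Moreover $\ell$ is valued in $L\cap A$, whence $W\subset y_0\exp(L\cap A)$ and $\dim_{y_0}((y_0\exp(A))\cap Y)\ge d'\ge e$.

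The subtorus $H$ a priori depends on $y_0$, but it ranges over the countable set of subtori as $y_0$ varies, while for fixed $H$ the locus $\{y\in Y:\dim_y((yH)\cap Y)\ge e\}$ is Zariski closed and $\{y\in Y:\dim_y((y\exp(L\cap A))\cap Y)\ge e\}$ is analytically closed. Since an irreducible variety over $\C$ is not a countable union of proper closed subvarieties, a standard argument combined with the irreducibility of $Y$ produces a single proper subtorus $H$, with $A=\mathrm{Lie}(H)$, such that $\dim_y((y\exp(A))\cap Y)\ge e$ and $\dim_y((y\exp(L\cap A))\cap Y)\ge e$ for every $y\in Y$; the second estimate, with $\dim_\C(L\cap A)\le n$, says $Y=Y^{L\cap A}$ (using also that $Y^M$ is closed, Proposition~\ref{pratype1}). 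If $\dim_\C A\le n$, then the first estimate gives $\dim((y\exp(A))\cap Y)\ge e>\max(\dim_\C A+d-N,0)$, i.e. $Y$ is $A$-atypical, and we take $\A=H$. If $\dim_\C A>n$, then either $L\cap A\subsetneq L$, in which case we recurse with the smaller subspace $L\cap A$, or $L\subseteq A$, in which case we replace $Y$ by a generic irreducible fibre $F$ of the restriction to $Y$ of $(\C\setminus0)^N\to(\C\setminus0)^N/H$ (which lives in a torus of strictly smaller dimension and is $L$-atypical there), apply the inductive hypothesis to obtain a subtorus $\A'\subset H$ with Lie algebra $A'$, and use $(y\exp(A'))\cap Y=(y\exp(A'))\cap F_y$ (valid since $A'\subseteq A$) to deduce that $Y$ is $A'$-atypical. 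A lexicographic induction on $(N,\dim_\C L)$ then terminates, the base case being that $L$ is already rational, where $\A=\exp(L)$ works tautologically.

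The Ax-theoretic core is clean once the passage to the minimal subtorus $H$ is set up; the main obstacle is the dimension bookkeeping in the degenerate branches of the induction. The worst case is $L\subseteq\mathrm{Lie}(H)$ when $Y$ fails to dominate the quotient torus $(\C\setminus0)^N/H$: there a generic fibre $F$ need not be $L$-atypical for the naive count, and one must first cut the ambient torus down to a smaller subtorus whose coset contains $Y$ and re-examine whether $Y=Y^L$ survives with the altered expected dimension --- which, when it does, typically lets one take $\A$ equal to that subtorus outright. Matching ``$A$-atypical at a generic point'' (via Proposition~\ref{pratype1} and irreducibility of $Y$) with ``$A$-atypical everywhere'', and tracking this through passage to fibres, is where the remaining work lies.
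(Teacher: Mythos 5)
Your proof has the same skeleton as the paper's, which is only a few lines long: for each point $y\in Y$ the hypothesis $Y=Y^L$ together with Theorem~\ref{thmAnom} (Zilber's theorem, itself resting on Ax) produces a proper subtorus ``serving'' $y$, and then, since there are only countably many subtori of $(\C\setminus0)^N$ and the serving locus of a fixed subtorus is closed, irreducibility of $Y$ forces a single subtorus to serve all of $Y$. Your closing countability-plus-irreducibility step is exactly that argument. The difference is that you re-derive the per-point input from Ax's theorem directly (parametrising a leaf, taking the minimal subgroup $H$ containing its multiplicative projection, and reading off $\dim_\C A+d'\le\dim V\le n+d$), where the paper simply cites Theorem~\ref{thmAnom}; your version is more self-contained but duplicates a result already stated in \textsection\ref{intro5}.

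The added value of your write-up is that you notice a genuine subtlety the paper's proof glosses over: being $A$-atypical is measured against the threshold $\max(\dim_\C A+d-N,0)$, not $\max(n+d-N,0)$, so containment of the atypical leaf in a coset of a proper subtorus $H$ does not by itself make $Y$ $\mathrm{Lie}(H)$-atypical when $\dim_\C\mathrm{Lie}(H)>n$. Your Ax inequality $\dim_\C A\le n+d-e$ and the recursion on $L\cap A$ (or on fibres when $L\subseteq A$) are aimed at exactly this point. However, that part of your argument is only sketched: the branch where $Y$ does not dominate $(\C\setminus0)^N/H$ and a generic fibre need not inherit $L$-atypicality is left open, and you say so yourself. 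So as written the proposal is not a complete proof; its completed core coincides with the paper's two-line argument, and the extra inductive branches --- which address a real gap in the paper's own exposition --- still need to be closed.
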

\begin{proof}
If a point $y$ is $A$-atypical,
we will say that the subtorus $\L$ serves the point $y$.
Theorem \ref{thmAnom} states that for any point $y\in Y$ there exists a serving subtorus.
From the irreducibility condition of the variety, it follows that there exists a subtorus
serving all points of $Y$ simultaneously.
\end{proof}
\begin{proposition}\label{pratype3}
If $Y=Y^L$, then for any $d$-dimensional cone $K\in{\mathcal K}_{\rm trop}(Y)$,
it holds that $\dim(L\cap (V_K+iV_K))> 0$.
\end{proposition}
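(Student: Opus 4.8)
The plan is to fix a $d$-dimensional cone $K\in\mathcal K_{\rm trop}(Y)$ — these are exactly the maximal cones, and by \hyperlink{tr_9}{($\bf tr_9$)} every cone is a face of one of them — and to prove the sharper statement that $(V_K+iV_K)\cap L$ contains a complex subspace of dimension $\max(n+d-N,0)+1$; in particular it is nonzero. Write $e=\max(n+d-N,0)$. The argument rests on two facts: $(a)$ near infinity inside $K$ the variety $Y$ is, up to an $\varepsilon$-perturbation, the $d$-dimensional coset $\tau\T_K$, where $\T_K$ is the subtorus with Lie algebra $V_K+iV_K$; and $(b)$ the hypothesis $Y=Y^L$ forces, through every point of $Y$, an analytic subset of dimension $\ge e+1$ lying in a translate of $\exp(L)$. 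Since the assertion is unchanged under refining the fan (a $d$-cone $K$ subdivides into $d$-cones $K_i$ with $V_{K_i}=V_K$), we may take $\mathcal K_{\rm trop}(Y)$ to be the approximating fan of Theorem \ref{thmApproximation}.

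For $(a)$ I would invoke Theorem \ref{thmApproximation}: $Y$ contains an $\varepsilon$-approximating toric tongue $t_{K,\tau,\varepsilon}$ with base $K$, that is, a \emph{finite unramified} holomorphic covering $\pi_\varepsilon\colon t_{K,\tau,\varepsilon}\to t_{K,\tau}=\tau\exp(iV_K)\exp(K)$ onto an open domain of $\tau\T_K$, with fibres in $\V_\varepsilon$ for the complementary subtorus $\V$. Passing to the universal cover of $t_{K,\tau}$ — which, in logarithmic coordinates, is the \emph{convex} set $iV_K+K$, unbounded in the $iV_K$-directions and in the interior directions of $K$ — trivializes the covering, so each sheet becomes a bounded holomorphic graph. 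Cauchy estimates (and, where $K$ contains a line, Liouville) on balls of radius $R\to\infty$ then show that along a sequence $y_m\in Y$ with $\re\log y_m$ receding to infinity inside $K$ one has $T_{y_m}Y\to V_K+iV_K$ in the Grassmannian of complex $d$-planes of $\C^N$.

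For $(b)$ I would use that, since $Y=Y^L$, each such $y_m$ is $L$-atypical (Definition \ref{dfAnom}); hence for a small neighbourhood $U\ni 0$ in $L$ the set $Z_m=(y_m\exp(U))\cap Y=y_m\exp(W_m)$, with $W_m\subseteq U$ analytic, has dimension $\ge e+1$. Taking $y_m$ to be a smooth point of a top-dimensional component of $Z_m$ — which we may, as every nearby point of $Y$ is again $L$-atypical and still deep inside $K$ — we get $T_{y_m}Z_m\subseteq T_{y_m}Y$ with $T_{y_m}Z_m$ a complex subspace of $L$ of dimension $\ge e+1$; choose $P_m\subseteq T_{y_m}Z_m$ of dimension exactly $e+1$. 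A subsequence has $P_m\to P$ with $\dim_\C P=e+1$, $P\subseteq L$, and $P\subseteq\lim_m T_{y_m}Y=V_K+iV_K$; therefore $\dim_\C(L\cap(V_K+iV_K))\ge e+1\ge 1$.

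I expect the main obstacle to be step $(a)$: extracting, from the metric/combinatorial notion of an $\varepsilon$-approximating set of tongues in Definition \ref{dfApproxSet}, the \emph{analytic} conclusion that the tangent planes of $Y$ converge to $V_K+iV_K$ along a sequence going to infinity inside $K$. This is precisely where the unramifiedness of the coverings $\pi_\varepsilon$ is indispensable — it is what lets one write the sheets as genuine holomorphic graphs over a convex unbounded base and invoke Cauchy estimates; $\varepsilon$-closeness of the sets alone gives no control on tangent directions. An alternative that bypasses the tangent estimate — descending to the orbit $O_K\cong(\C\setminus0)^N/\T_K$, where $Y^\infty(K)$ is finite by \hyperlink{tr_3}{($\bf tr_3$)} and \hyperlink{tr_4}{($\bf tr_4$)}, and arguing that if $L\cap(V_K+iV_K)=0$ then the $L$-atypicality of $Y$ would descend to a nontrivial atypicality of the zero-dimensional set $Y^\infty(K)$, a contradiction — meets the same analytic difficulty in the guise of ``the atypical pieces pass to the limit without dropping dimension''.
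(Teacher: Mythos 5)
Your argument is correct and is essentially the paper's own proof, fleshed out: the paper likewise picks the $\varepsilon$-approximating tongue with base $K$ from Theorem \ref{thmApproximation}, observes that the tangent spaces of $Y$ along the tongue approach $V_K+iV_K$ while $L$-atypicality puts complex lines of $L$ inside them, and concludes by passing to the limit. Your step $(a)$ (trivializing the unramified covering over the convex set $iV_K+K$ and applying Cauchy estimates) is precisely the detail the paper compresses into one sentence, and your sharper bound $\dim_\C(L\cap(V_K+iV_K))\ge\max(n+d-N,0)+1$ is a harmless strengthening.
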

\begin{proof}
Choose a toric tongue $t_{K,\tau}$ from the set of $\varepsilon$-approximating tongues $T_\varepsilon(Z)$;
see Definition \ref{dfApproxSet}.
Let $t_{K,\tau,\varepsilon}$ be an $\varepsilon$-perturbation of the toric tongue $t_{K,\tau}\in T_\varepsilon(Y)$
with base $K$.
It follows from Theorem \ref{thmApproximation} that
in the tangent spaces of points
there are complex lines arbitrarily close to the tangent spaces of some points of the torus $\T_K$.
This implies the desired statement.
\end{proof}
\begin{corollary}\label{corpratype31}
Let $\forall K\in\mathcal K_{\rm trop}(Y)\colon\dim \left(L\cap(V_K+iV_K)\right)=0$.
Then $Y^L=\emptyset$.
\end{corollary}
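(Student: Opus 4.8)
The plan is to argue by contradiction. Assuming $Y^L\ne\emptyset$, I would extract a cone $K\in\mathcal K_{\rm trop}(Y)$ with $\dim\bigl(L\cap(V_K+iV_K)\bigr)>0$, contradicting the hypothesis. The three ingredients are: Proposition \ref{pratype1}, which tells us $Y^L$ is an algebraic variety and is itself $L$-atypical; Proposition \ref{pratype3}, which produces from an $L$-atypical (irreducible) variety a top-dimensional tropical cone $K$ with $\dim\bigl(L\cap(V_K+iV_K)\bigr)>0$; and property ($\bf tr_6$), which lets the tropical fan of a subvariety be realized as a subfan of $\mathcal K_{\rm trop}(Y)$. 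Since Propositions \ref{pratype1} and \ref{pratype3} are stated under the standing irreducibility assumption of \textsection\ref{atypical}, the first real task is to reduce to an irreducible piece.

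First I would fix an irreducible component $Z$ of $Y^L$ and check that $Z$ is $L$-atypical, i.e.\ $Z=Z^L$. The set of points of $Z$ lying on no other irreducible component of $Y^L$ is a nonempty, hence dense, open subset of $Z$; near such a point $y$ one has $Y^L=Z$ locally, so for all sufficiently small neighborhoods $U$ of $0$ in $L$ the intersection $(y\exp U)\cap Z$ coincides with $(y\exp U)\cap Y^L$, whose dimension exceeds $\max(n+\dim Y^L-N,0)\ge\max(n+\dim Z-N,0)$ because $Y^L$ is $L$-atypical and $\dim Y^L\ge\dim Z$. Thus every such $y$ lies in $Z^L$; since $Z^L$ is closed (Proposition \ref{pratype1} applied to $Z$) and contains a dense subset of the irreducible $Z$, we conclude $Z^L=Z$. (In particular $Z$ has positive dimension, since a finite $Z$ could not be $L$-atypical.)

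Next, applying Proposition \ref{pratype3} to the irreducible $L$-atypical variety $Z$, there is a $(\dim Z)$-dimensional cone $K\in\mathcal K_{\rm trop}(Z)$ with $\dim\bigl(L\cap(V_K+iV_K)\bigr)>0$. By ($\bf tr_6$) I may choose tropical fans so that $\mathcal K_{\rm trop}(Z)$ is a subfan of $\mathcal K_{\rm trop}(Y)$, after passing to a common refinement. This refinement does not disturb the hypothesis: any cone $K'$ of the refined fan is contained in a cone $K_0$ of the original $\mathcal K_{\rm trop}(Y)$, so $V_{K'}\subseteq V_{K_0}$ and hence $\dim\bigl(L\cap(V_{K'}+iV_{K'})\bigr)\le\dim\bigl(L\cap(V_{K_0}+iV_{K_0})\bigr)=0$. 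Therefore $K\in\mathcal K_{\rm trop}(Y)$ and $\dim\bigl(L\cap(V_K+iV_K)\bigr)>0$, contradicting the assumption. Hence $Y^L=\emptyset$.

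The step I expect to require the most care is the descent of $L$-atypicality from $Y^L$ to one of its irreducible components $Z$: one must make sure that the local identification $Y^L=Z$ near a generic point of $Z$ genuinely transfers the dimension inequality in Definition \ref{dfAnom}, keeping track of the change from $\dim Y^L$ to $\dim Z$ in the quantity $\max(n+d-N,0)$, and that $Z^L$ being closed forces equality with the irreducible $Z$. The remaining two steps are a routine assembly of Propositions \ref{pratype1} and \ref{pratype3} with property ($\bf tr_6$).
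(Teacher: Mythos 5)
Your proof is correct and follows essentially the same route as the paper's: assume $Y^L\ne\emptyset$, note it is an $L$-atypical algebraic variety by Proposition \ref{pratype1}, apply Proposition \ref{pratype3}, and use property ($\bf tr_6$) to place the resulting cone inside $\mathcal K_{\rm trop}(Y)$, contradicting the hypothesis. The paper's two-line proof silently elides the reduction to an irreducible component and the stability of the hypothesis under fan refinement, both of which you supply correctly.
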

\begin{proof}
The tropical fan ${\mathcal K}_{\rm trop}(Y^L)$ can be chosen such
that it is a subfan of the fan ${\mathcal K}_{\rm trop}(Y)$.
Therefore, the desired statement follows from Proposition \ref{pratype3}.
\end{proof}
\begin{corollary}\label{df_I2}
For
$K\in{\mathcal K}_{\rm trop}(Y)$,
consider the toric orbit $(\C\setminus0)^N/\T_K$
in the completion of the torus $(\C\setminus0)^N$,
corresponding to the fan ${\mathcal K}_{\rm trop}(Y)$.
Let $\bar Y_K$ be the variety in the torus $(\C\setminus0)^N/\T_K$,
consisting of the limit points
of the variety $Y$,
and $\bar L$ be the image of the subspace $L$ under the differential
of the projection map $(\C\setminus0)^N\to(\C\setminus0)^N/\T_K$.
Then, if the conditions of Corollary \ref{corpratype31} are satisfied for the variety $Y$,
then $(\bar Y_K)^{\bar L}=\emptyset$.
\end{corollary}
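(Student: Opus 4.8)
The plan is to apply Corollary~\ref{corpratype31} a second time, now to the algebraic variety $\bar Y_K=Y^\infty(K)$ in the quotient torus $(\C\setminus0)^N/\T_K$ and to the complex subspace $\bar L$ in place of the pair $(Y,L)$. It therefore suffices to check that the transversality hypothesis of Corollary~\ref{corpratype31} is \emph{inherited}, i.e. that $\dim\bigl(\bar L\cap(V_{\bar M}+iV_{\bar M})\bigr)=0$ for every cone $\bar M$ of a tropical fan of $\bar Y_K$. Granting this, Corollary~\ref{corpratype31} applied to each irreducible component $Z$ of $\bar Y_K$ gives $Z^{\bar L}=\emptyset$: the inherited hypothesis passes to $Z$ because, by \hyperlink{tr_6}{($\bf tr_6$)}, every cone of $\mathcal K_{\rm trop}(Z)$ lies inside a cone of $\mathcal K_{\rm trop}(\bar Y_K)$, and enlarging a cone only enlarges the intersection in question. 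Since a point that is $\bar L$-atypical for $\bar Y_K$ is $\bar L$-atypical for some irreducible component through it (the fibre of the foliation $\cdot\exp(\bar L)$ attains its dimension on one component, and the dimension threshold for $\bar Y_K$ dominates that of each component), one gets $(\bar Y_K)^{\bar L}\subseteq\bigcup_Z Z^{\bar L}=\emptyset$, as required.

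To verify the inheritance I would run through the toric dictionary of \textsection\ref{trop}. By \hyperlink{tr_3}{($\bf tr_3$)} and \hyperlink{tr_7}{($\bf tr_7$)}, a tropical fan of $\bar Y_K$ is the $V_K$-factorization of $\mathcal K_{\rm trop}(Y)$ (Definition~\ref{dfFact}), carried to $\re\mathfrak T_K\cong\re\C^N/V_K$; its cones are faces of the cones $\bar M=\pi_K(M)$ with $M\in\mathcal K_{\rm trop}(Y)$ and $M\supseteq K$, where $\pi_K\colon\re\C^N\to\re\C^N/V_K$ is the quotient map, so it is enough to treat such $\bar M$, for which $V_{\bar M}=\pi_K(V_M)$. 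Because $V_K\subset\re\C^N$ is a real subspace, the Lie algebra of $\T_K$ inside $\C^N$ is $W_0:=V_K+iV_K$; hence the Lie algebra of $(\C\setminus0)^N/\T_K$ is $\C^N/W_0$, with $\bar L$ the image of $L$ and $V_{\bar M}+iV_{\bar M}$ the image of $W:=V_M+iV_M$, and $W_0\subseteq W$ since $K\subseteq M$.

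The remaining step is elementary linear algebra. As $W_0\subseteq W$, one has $(L+W_0)\cap W=(L\cap W)+W_0$, so in $\C^N/W_0$ the subspace $\bar L\cap(V_{\bar M}+iV_{\bar M})$ is $\bigl((L\cap W)+W_0\bigr)/W_0$, of dimension $\dim(L\cap W)-\dim(L\cap W_0)$. Both $K$ and $M$ belong to $\mathcal K_{\rm trop}(Y)$, so the hypothesis of Corollary~\ref{corpratype31} for $(Y,L)$ forces $\dim(L\cap W_0)=\dim\bigl(L\cap(V_K+iV_K)\bigr)=0$ and $\dim(L\cap W)=\dim\bigl(L\cap(V_M+iV_M)\bigr)=0$. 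Hence $\dim\bigl(\bar L\cap(V_{\bar M}+iV_{\bar M})\bigr)=0$, which is exactly the inherited hypothesis, and the proof is complete.

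The one genuinely delicate point is bookkeeping: pinning down the complex structure on the Lie algebra of the quotient torus so that the Lie algebra of $\T_K$ is $V_K+iV_K$ and $V_{\bar M}+iV_{\bar M}$ really is the image of $V_M+iV_M$; matching the $V_K$-factorization of $\mathcal K_{\rm trop}(Y)$ with $\mathcal K_{\rm trop}(\bar Y_K)$ through \hyperlink{tr_3}{($\bf tr_3$)} and \hyperlink{tr_7}{($\bf tr_7$)}; and observing that passing to a subdivision or to an irreducible component of $\bar Y_K$ can only shrink the spans $V_{\bar M}$ and hence cannot destroy the transversality. No new geometric input is needed — all the analytic content (Theorems~\ref{thmAnom} and \ref{thmApproximation}) has already been absorbed into Corollary~\ref{corpratype31}.
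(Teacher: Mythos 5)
Your proposal is correct and follows the same route as the paper, whose proof is just the one-line observation that by \hyperlink{tr_3}{($\bf tr_3$)} the tropical fan of $\bar Y_K$ is the $V_K$-factorization of $\mathcal K_{\rm trop}(Y)$, so the hypothesis of Corollary \ref{corpratype31} descends to $(\bar Y_K,\bar L)$. Your modular-law computation $\bigl(L+W_0\bigr)\cap W=(L\cap W)+W_0$ and the reduction to irreducible components simply make explicit the details the paper leaves to the reader.
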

\begin{proof}
From property
\hyperlink{tr_3}{($\bf tr_3$)} in \textsection\ref{trop},
it follows
that the conditions of Corollary \ref{corpratype31} are also satisfied for the variety $\bar Y_K$.
\end{proof}
\begin{remark}
The statement
converse to Proposition \ref{pratype3}
seems plausible.
Namely,
if for any $d$-dimensional cone $K\in{\mathcal K}_{\rm trop}(Y)$
it holds that $\dim(L\cap (V_K+iV_K))> 0$,
then $Y=Y^L$.
\end{remark}
\section{Proofs of Theorems from \textsection\ref{index}}\label{pr1}
%
%
%
\subsection{Theorem \ref{thmIsolated}}\label{prPr11}
Recall that $Y$ is a model of the zero-dimensional \EA-variety $X$,
i.e. $\codim(Y)=n$.
\subsubsection{Proof of Theorem \ref{thmIsolated} (1)}\label{prPr111}
\begin{lemma}\label{lm(1)}
Let the variety $Y$ be $L$-atypical (see Definition {\rm\ref{dfAnom}}).
Then

{\rm(1)}
for any $A\in\C^N$, the set of isolated points of the \EA-variety $\E^AX$ is empty

{\rm(2)}
the measure of the set
consisting of points $A\in\C^N$
such that the \EA-variety $\E^AX$ is not empty
is zero

{\rm(3)}
the weak density $d_w(X)$ is zero.
\end{lemma}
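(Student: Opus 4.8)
The plan is to handle the three parts in turn, using that $L$-atypicality is preserved under translations of the torus (for (1) and (2)) and that $d_w(X)$ can be read off the tropical fan of $X$ (for (3)).

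\textbf{Part (1).} First I would record that if $Y=Y^L$ then $gY=(gY)^L$ for every $g\in(\C\setminus0)^N$, since $\bigl((gy)\exp U\bigr)\cap gY=g\bigl((y\exp U)\cap Y\bigr)$ has the same dimension as $(y\exp U)\cap Y$. As the $G$-model of $\E^AX$ is a translate of $Y$, it is again $L$-atypical. Now take $z_0\in\E^AX$; for a small neighbourhood $U$ of $0$ in $\C^n$ the winding $\omega_G$ restricts to a biholomorphism of $z_0+U$ onto $\omega_G(z_0)\exp\bigl(d\omega_G(U)\bigr)$ carrying $(z_0+U)\cap\E^AX$ onto the intersection of $\omega_G(z_0)\exp\bigl(d\omega_G(U)\bigr)$ with the model of $\E^AX$. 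Since that model is $L$-atypical and contains $\omega_G(z_0)$, this intersection has positive dimension for every $U$, so $z_0$ is not isolated.

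\textbf{Part (2).} Because $L$ is a complex subspace we have $\exp(-L)=\exp(L)$, hence $\E^AX\ne\emptyset$ if and only if $\exp(A)\in Y\exp(L)$; since $\exp\colon\C^N\to(\C\setminus0)^N$ is a local diffeomorphism it suffices to prove that $Y\exp(L)$ is a set of measure zero. Passing to the (equidimensional, of dimension $N-n$) irreducible components of $Y$, I may assume $Y$ irreducible and write $Y\exp(L)$ as the image of the real-analytic map $\mu\colon Y_{\rm sm}\times L\to(\C\setminus0)^N$, $(y,\ell)\mapsto y\exp\ell$, together with the image of $Y_{\rm sing}\times L$, which has real dimension $<2N$ and is therefore null. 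Identifying tangent spaces with the Lie algebra $\C^N$, the image of $d\mu_{(y,\ell)}$ is $T_yY+L$, so $d\mu$ fails to be a submersion at every point provided $T_yY\cap L\ne 0$ at every smooth point $y$; granting this, $\mu(Y_{\rm sm}\times L)$ is null by Sard's theorem and (2) follows. To get the tangency I would argue that, by $L$-atypicality, the germ of $(y\exp L)\cap Y$ at every $y\in Y$ has positive dimension, so arbitrarily close to $y$ there are points $y_1$ that are smooth on $Y$ and on this intersection, whence $0\ne T_{y_1}\bigl((y\exp L)\cap Y\bigr)\subseteq T_{y_1}Y\cap L$; thus $\{y\in Y_{\rm sm}:T_yY\cap L\ne 0\}$ is dense in $Y_{\rm sm}$, and being also closed (it is the complement of the open locus where $T_yY+L=\C^N$) it is all of $Y_{\rm sm}$.

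\textbf{Part (3).} By Definition \ref{dfWeakDens}, $d_w(X)=n!\,\mathfrak p(X^\nwt)$. Rewriting the pseudovolume in terms of the dual weighted fan exactly as in the proof of Theorem \ref{thmIdeals} (see formula (\ref{eq_pFor})) expresses $\mathfrak p(X^\nwt)$ as a non-negative combination $\sum_{P\in\mathcal E,\ \dim P=n}c(P)\,\vol_n(w(P))$, where $\mathcal E$ is the tropical fan of the zero-dimensional \EA-variety $X$ and $c(P)$ is the cosine factor of Definition \ref{dfPseudo}, which vanishes precisely when the cone $P$ is complex degenerate. Now since $Y=Y^L$, Proposition \ref{pratype3} gives $(V_K+iV_K)\cap L\ne 0$ for every $(N-n)$-dimensional cone $K\in\mathcal K_{\rm trop}(Y)$, in particular for every $(A-P)$-cone; hence by Corollary \ref{corA-P} every $n$-dimensional cone $P\in\mathcal E$ is complex degenerate, so each $c(P)=0$ and $\mathfrak p(X^\nwt)=0$, i.e.\ $d_w(X)=0$.

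\textbf{The main obstacle.} I expect the delicate step to be the tangency claim in Part (2) — upgrading the ``integrated'' hypothesis $Y=Y^L$ to the pointwise statement $T_yY\cap L\ne 0$ at all smooth points, including the bookkeeping near $Y_{\rm sing}$; an alternative is to induct on $\dim Y$, replacing $Y$ by $Y_{\rm sing}$, which remains $L$-atypical in the relevant sense because the expected intersection dimension drops. Parts (1) and (3) are comparatively formal once translation invariance, Proposition \ref{pratype3} and Corollary \ref{corA-P} are available.
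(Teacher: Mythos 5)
Your proposal is correct and follows essentially the same route as the paper: translation-invariance of $L$-atypicality for (1), Sard's theorem for (2) (the paper applies it to the projection $p\colon\log Y\to\C^N/L$, which is equivalent to your parametrization $\mu$, with the same key point that $T_yY\cap L\ne 0$ everywhere makes every point critical), and Proposition \ref{pratype3} together with the vanishing of the cosine coefficients in (\ref{eq_pFor}) for (3). Your write-up of the tangency step in (2) supplies detail the paper leaves implicit, but it is the same argument.
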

\begin{proof}
Under the toric shift $Y\to\E^AY$, the property of $L$-atypicality of $Y$ is preserved.
Therefore, the first statement is a direct consequence of Definition \ref{dfAnom}.

Let $\log Y=\exp^{-1}Y$.
Let $p\colon\log Y\to\C^N/L$ be the restriction of the projection map
$\C^N\to\C^N/L$ to the subvariety $\log Y\subset\C^N$.
The second statement (2) follows from Sard's lemma for the map $p$.

According to Definition \ref{dfWeakDens},
$d_w(X)=n!\mathfrak p(\mathcal E)$,
where $\mathcal E$ is the tropical fan of the \EA-variety $X$.
We will prove that $\mathfrak p(\mathcal E)=0$.
According to Proposition \ref{pratype3},
for each of the cones $K\in\mathcal K_{\rm trop}(Y)$ of dimension $N-n$,
the dimension of the subspace $(V_K+iV_K)\cap L$ in $\C^N$ is non-zero.
Hence, by
Definition \ref{dfEAFan} and Proposition \ref{prFan_C},
for each $n$-dimensional cone $P\in\mathcal E$ the following holds:
the subspace $V_P$ in $L$
(recall that we identify the spaces $L$ and $\C^n$)
contains some non-zero complex subspace in $\C^n$.
Using equality (\ref{eq_pFor}),
we get $d_w(X)=0$,
since all coefficients $c(P)$ from
(\ref{eq_pFor}) are zero.
Thus, we get the desired equality $d_w(X)=0$.
\end{proof}
Now we proceed directly to the proof of Theorem \ref{thmIsolated} (1).
In the case of an $L$-atypical variety $Y$, both statements of Theorem \ref{thmIsolated}
are direct consequences of Lemma \ref{lm(1)}.
The proof of the first statement of Theorem 
for an arbitrary \EA-variety $X$
is as follows.
From the definition of $L$-atypicality, it follows
that the set $U_X$ 
coincides with the set of points $A\in\C^N$
such that 
$\log(Y^L)\cap (-A+L)=\emptyset$.
According to Proposition \ref{pratype1},
the set of $L$-atypical points $Y^L$ of the variety $Y$
is an algebraic variety.
Therefore, the desired statement follows from Lemma \ref{lm(1)} (2).
\subsubsection{Proof of Theorem \ref{thmIsolated} (2)}\label{prPr112}
According to property \hyperlink{tr_5}{($\bf tr_5$)} from \textsection\ref{trop},
the distance from the set $\log Y$ to the set $\supp \mathcal K^\C_{\rm trop}(Y)$ is finite;
recall that $\mathcal K^\C_{\rm trop}(Y)=\{K+\im\C^N\colon K\in \mathcal K_{\rm trop}(Y)\}$.
From Corollary \ref{corStable} it follows that as $r$ increases and $A\in B_{\mathfrak I,r}(Y)$,
the distance from the set $(-A+L)$ to the unstable part of the intersection $L\cap\supp \mathcal K^\C_{\rm trop}(Y)$
increases unboundedly;
see Corollary \ref{corStable}.
On the other hand,
the distance from the set $(-A+L)\cap\log(Y)$ to the set $(-A+L)\cap\mathcal K^\C_{\rm trop}(Y)$
remains bounded by some quantity that does not depend on the choice of $r$.
Therefore, the required statement follows from the description of the fan $\mathcal E$ in \textsection\ref{tropEA}.
\subsubsection{Proof of Theorem \ref{thmIsolated} (3)}\label{prPr113}
Let $\mathcal E$ be the tropical fan of the zero-dimensional \EA-variety $X\subset L$ with the model $Y\subset(\C\setminus0)^N$.
For an $n$-dimensional complex non-degenerate cone $P\in\mathcal E$, denote by
$P_R$ the set of points in $L$ whose distance to $P$ does not exceed $R$.
Denote also by
$P_{R,S}$ the subset of $P_R$ consisting of points at a distance $> S$ from the boundary of the cone $P$.
We use the notation $L_P$ for the real subspace of $L$
generated by the cone $P$, and $L_P^\bot$ for the orthogonal complement of $L_P$ in $L$,
i.e. $\{L_P^\bot=l\in L\colon\: \re\langle l,L_P\rangle=0\}$.
By definition, $\dim L_P=\dim L_P^\bot=n$.
The proof of Theorem \ref{thmIsolated} (3) is based on the calculation of the strong density $d_n(\E^AX\cap P_{R,S})$
for $\dim P=n$;
see Assertion \ref{ass1} below.

The proof of Assertion \ref{ass1} is based on the $\varepsilon$-approximation of the algebraic variety $\E^AY$;
see \textsection\ref{Tongue}.
Namely, we approximate
the variety $\E^AY$
by the union of toric tongues $t_{K,\tau}$ from the set $T_\varepsilon(\E^AY)$,
and replace the part of the \EA-variety $\E^AX\cap P_{R,S}$ by the preimage
of the union of some subset of the union of approximating tongues $t_{K,\tau}$ under the standard winding map $\omega\colon\C^n\to(\C\setminus0)^N$.
We prove that, after such a replacement,
the formula for strong density in Assertion \ref{ass1} becomes valid; see Proposition \ref{prass1_3}.
Next we replace the tongues $t_{K,\tau}$ with their $\varepsilon$-perturbations $t_{K,\tau,\varepsilon}$,
and after that we finish the proof of the statement by referring to Theorem \ref{thmApproximation}.
Below we use the following definitions and assumptions.

\hypertarget{p1}${(\bf p_1)}$\ $P\in\mathcal E$, $\dim P=n$,
the cone $P$ is complex non-degenerate.

\hypertarget{p2}${(\bf p_2)}$\ $\T_K$ is a subtorus in $(\C\setminus0)^N$,
generated by the exponents of the cone $K\in\mathcal K_{\rm trop}(Y)$.
We consider the Lie algebra $\mathfrak T_K$ of the torus $\T_K$ as a subspace of $V_K+iV_K$ in $\C^N$.
The space $i V_K$ is considered as a Lie algebra of the maximal compact subtorus of the torus $\T_K$.

\hypertarget{p3}${(\bf p_3)}$\
$x_1,\ldots,x_N$ are coordinates in the torus $(\C\setminus0)^N$,
such that each of the first $n$ coordinates identically equals $1$ on the torus $\T_K$.
Let
$\mathbb N$ be an $n$-dimensional subtorus in $(\C\setminus0)^N$,
given by the equations $x_{n+1}=\ldots=x_{N}=1$.
Note that the space $V_K^\bot$ defined in \textsection\ref{polRing3} coincides with the character space of the torus $\mathbb N$.
We further consider the Lie algebra $\mathfrak N$ of the torus $\mathbb N$ as a subspace of $V_\mathbb N+i V_\mathbb N$ in $\C^N$.
We also use the notation $\Z_K$
for the lattice in the space $i V_\mathbb N$,
consisting of the preimages of the unit
under the exponential map $\exp\colon\mathfrak N\to\mathbb N$.

\hypertarget{p4}${(\bf p_4)}$\
$\mathcal K_{A-P}$ is the set of $(A-P)$-cones of the fan $\mathcal K_{\rm trop}(Y)$;
see Definition {\rm\ref{dfEAFan}}.
Further, we assume that $K\in\mathcal K_{A-P}$.

\hypertarget{p5}${(\bf p_5)}$\
$L=dw(\C^n)$,
where $dw$ is the differential of the standard winding.
In this case, the space $L$ is endowed with a Hermitian metric $\langle *,*\rangle$ taken from the space $\C^n$.
The cone $P=(K+\im\C^N)\cap L$ is an $n$-dimensional cone of the fan $\mathcal E$.
In this case, $L_P=(V_K+\im\C^N)\cap L$.
Recall also that the space $L$ is assumed to be endowed with a Hermitian metric,
consistent with the Hermitian metric in $\C^n$ via the isomorphism $d\omega\colon\C^n\to L$.
\par\smallskip
\begin{assertion}\label{ass1}
Let $R$ be large enough.
Then, for some $S_0$,
for any $S> S_0$,
the strong density $d_n(\E^AX\cap P_{R,S})$ exists and is equal to
\begin{equation}\label{eqdwLoc}
  \frac{n!}{( 2\pi)^n} w(P)\cos(L_P^\bot,i L_P)\:A(P),
\end{equation}
where $w(P)$ is the weight of the cone $P$ (see Definition {\rm\ref{dfEAweight}}),
$\cos(L_P^\bot,i L_P)$ is the cosine of the angle between the $n$-dimensional spaces $L_P^\bot$ and $i L_P$ (i.e. the area distortion factor
under the orthogonal projection $L_P^\bot\to i L_P$),
and $A(P)$ is the angle of the $n$-dimensional cone $P$ (the full $n$-dimensional angle is considered to be one).
\end{assertion}
\begin{lemma}\label{lmass1_1}
Let $P = (K + \im\C^N) \cap L$, where $K \in \mathcal{K}_{A-P}$.
Define $\Z_P = L_P \cap \exp^{-1}\T_K$.
Then

(i)
$$
\Z_P = (\Z_K + V_K + iV_K) \cap L = (\Z_K + V_K + iV_K) \cap L_P,
$$
where $\Z_K$ is a lattice from {\rm\hyperlink{p3}{($\bf p_3$)}}.

(ii) $\Z_P$ is an $n$-dimensional lattice in the space $L_P$.
\end{lemma}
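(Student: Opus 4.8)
The plan is to analyze the structure of $\Z_P$ directly from the definitions. Recall that $L_P = (V_K + \im\C^N)\cap L$ and $\T_K$ is the subtorus generated by the exponents of the cone $K$, with Lie algebra $\mathfrak T_K \subset V_K + iV_K$. First I would unwind $\exp^{-1}\T_K$ inside $\C^N$: since $\mathbb T_K$ has Lie algebra $V_K + iV_K$ and the maximal compact subtorus has Lie algebra $iV_K$, the preimage $\exp^{-1}\T_K$ is exactly $\Z_K + V_K + iV_K$, where $\Z_K$ is the period lattice from \hyperlink{p3}{($\bf p_3$)} sitting in $iV_{\mathbb N}$. Intersecting with $L$ gives the first equality in (i). The second equality in (i) — that intersecting with $L$ is the same as intersecting with $L_P$ — follows because the set $\Z_K + V_K + iV_K$ already contains $V_K + iV_K \supset V_K + \im\C^N \cap (\text{that space})$; more precisely, any point of $(\Z_K + V_K + iV_K)\cap L$ automatically lies in $L_P = (V_K + \im\C^N)\cap L$ since $\Z_K + V_K + iV_K \subset V_K + \im\C^N$ (as $\Z_K \subset i V_{\mathbb N} \subset \im\C^N$ and $iV_K \subset \im\C^N$). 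So the two intersections coincide.

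**Proving (ii): $\Z_P$ is a lattice.** Here I would use the complex non-degeneracy of the cone $P$. By Corollary \ref{corA-P}, since $P$ is complex non-degenerate and $K$ is an $(A-P)$-cone, we have $(V_K + iV_K)\cap L = 0$. Now consider the quotient map $q\colon \C^N \to \C^N/(V_K + iV_K)$. Restricted to $\Z_P = (\Z_K + V_K + iV_K)\cap L_P$, this map is injective (its kernel on $L$ is $(V_K+iV_K)\cap L = 0$), and it sends $\Z_P$ into the image of the discrete set $\Z_K$ in the quotient — actually into $q(\Z_K)$, which is a finitely generated (hence discrete, modulo its own subgroup structure) subgroup. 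More carefully: $q$ maps $\Z_K + V_K + iV_K$ onto $q(\Z_K)$, a finitely generated abelian group; since $q|_{L_P}$ is injective, $\Z_P \hookrightarrow q(\Z_K)$, so $\Z_P$ is a finitely generated torsion-free abelian group, i.e. a lattice of some rank. It remains to compute the rank. The rank equals $\dim_\R L_P$, i.e. $n$: indeed $L_P$ has real dimension $n$ (stated in \hyperlink{p5}{($\bf p_5$)}: $L_P = (V_K+\im\C^N)\cap L$, and since $P$ spans $L_P$ over $\R$ and $\dim P = n$, we get $\dim_\R L_P = n$). To see $\Z_P$ spans $L_P$, observe that $L_P = (V_K + \im\C^N)\cap L$ and modulo $V_K$ the relevant directions in $\im\C^N \cap L$ are already covered by the lattice directions $\Z_K$ together with $iV_K$ inside $\im\C^N$ — so $\Z_K + V_K + iV_K$ meets $L_P$ in a full-rank sublattice. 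I would make this last spanning claim precise by a dimension count using the non-degeneracy $(V_K + iV_K)\cap L = 0$ together with $\dim_\R L_P = n$ and $\dim_\R(V_K + iV_K) = 2\dim K = 2n$ — the map $L_P \oplus (V_K+iV_K) \to \C^N$ lands in $V_K + \im\C^N$, and a rank computation shows $L_P$ provides a complementary $n$-dimensional real subspace, forcing $\Z_P$ to be cocompact in $L_P$.

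**Main obstacle.** The delicate point will be the full-rank (cocompactness) assertion in (ii): showing not merely that $\Z_P$ is discrete but that it spans $L_P$ as an $\R$-vector space. The discreteness is a soft consequence of complex non-degeneracy via Corollary \ref{corA-P}, but the rank statement requires pinning down how $\Z_K$ (a lattice in $iV_{\mathbb N}$, a space transverse to $V_K + iV_K$ inside $\im\C^N$) interacts with the slice $L_P$. I expect to handle this by choosing coordinates adapted to \hyperlink{p3}{($\bf p_3$)}, writing $\C^N = (V_K + iV_K)\oplus W$ with $\Z_K \subset iV_{\mathbb N}\subset W$, and checking that projection of $L_P$ to $W$ along $V_K + iV_K$ is an isomorphism onto a subspace containing $\R\otimes_\Z (\text{image of }\Z_K)$; this is exactly where complex non-degeneracy of $P$ is used a second time, now quantitatively rather than just to get injectivity. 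Once the coordinate picture is set up the verification is routine linear algebra, so I would not belabor it in the write-up.
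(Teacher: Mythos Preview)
Your approach is essentially the same as the paper's, and the outline for (i) is correct. One slip: you write $\dim_\R(V_K+iV_K)=2\dim K=2n$, but $K$ is a top-dimensional cone of $\mathcal K_{\rm trop}(Y)$, so $\dim K=N-n$ and $\dim_\R(V_K+iV_K)=2(N-n)$; fix this before doing the rank count.

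For (ii) the paper is more direct than your quotient-map argument and avoids the ``main obstacle'' you flag. It works entirely inside the real $(2N-n)$-dimensional space $V_K+iV_K+iV_{\mathbb N}$ (note $\im\C^N=iV_K+iV_{\mathbb N}$, so $L_P\subset V_K+\im\C^N=V_K+iV_K+iV_{\mathbb N}$, and also $\Z_K\subset iV_{\mathbb N}$). From complex non-degeneracy one has $L\cap(V_K+iV_K)=0$, hence $L_P\cap(V_K+iV_K)=0$; since $\dim_\R L_P=n$ and $\dim_\R(V_K+iV_K)=2(N-n)$ add up to $2N-n$, the subspaces $L_P$ and $V_K+iV_K$ are \emph{complementary} in $V_K+iV_K+iV_{\mathbb N}$. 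Therefore for each $z\in\Z_K$ the affine coset $z+(V_K+iV_K)$ meets $L_P$ in exactly one point, and this point also equals $L\cap(z+(V_K+iV_K))$. This single observation gives simultaneously the second equality in (i), the discreteness of $\Z_P$, and the full rank (the map $\Z_K\to\Z_P$ is a bijection of rank-$n$ groups). Your quotient-map route reaches the same place but splits discreteness and cocompactness into two steps; if you adopt the complementarity phrasing, the ``delicate spanning claim'' you identify disappears.
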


\begin{proof}
By construction,
$L_P = L \cap (V_K + \im\C^N)$,
and $\im\C^N = iV_K + iV_\mathbb{N}$.
From \hyperlink{p3}{($\bf p_3$)}, it follows that 1) $L_P \subset V_K + iV_K + iV_\mathbb{N}$,
and 2) $\Z_K \subset iV_\mathbb{N}$.
Therefore,
$$
\forall z \in \Z_K \colon\, z + (V_K + iV_K) \subset V_K + iV_K + iV_\mathbb{N},
$$
i.e., $L_P$ and $z + V_K + iV_K$ are complementary subspaces
in the space $V_K + iV_K + iV_\mathbb{N}$.
From the complex non-degeneracy of the cone $P$, it follows that $L \cap (V_K + iV_K) = 0$;
see Corollary \ref{corA-P}.
Therefore, for any $z \in \Z_K$, the intersection $L_P \cap \left(z + (V_K + iV_K)\right)$
is a single point and coincides with $L \cap \left(z + (V_K + iV_K)\right)$.
From this, the necessary statements follow.
\end{proof}
Recall that in \hyperlink{p3}{($\bf p_3$)}, the characters $x_1, \ldots, x_N$
of the torus $(\C \setminus 0)^N$ are defined.
Denote by $\Pi(x_1, \ldots, x_n)$
the $n$-dimensional parallelepiped in the space $V_K^\bot$
with the sides $x_1, \ldots, x_n$.
Let $\pi\colon {\C^N}^* \to L^*$ be a linear operator
adjoint to 
$d\exp\colon L \to \C^N$ at the zero point in the space $L$.
We denote by $\pi_K\colon V_K^\bot \to L^*$ the restriction
of $\pi$ onto the space $V_K^\bot$,
and set $\lambda_i=\pi_K(x_i)$.
Let $\Pi(\lambda_1, \ldots, \lambda_n)$ be
the $n$-dimensional parallelepiped in $L$ with the sides $\lambda_1, \ldots, \lambda_n$.
Using the Hermitian metric in $L$, consider the linear functionals $\lambda_i$
as vectors in $L$.
The following statement
directly follows from the given definitions.
\begin{corollary}\label{corass1_0}
{\rm(1)}
The vectors $\lambda_i \in L$ lie in the subspace $L_P^\bot$,
where $L_P^\bot$ is the orthogonal complement of the subspace $L_P$ in $L$ with respect to the pairing $\re\langle *, *\rangle$.

{\rm(2)}
The parallelepiped $\Pi(\lambda_1, \ldots, \lambda_n)$ lies in the subspace $L_P^\bot$ and is the image of the parallelepiped $\Pi(x_1, \ldots, x_n)$
under the described above projection $\pi_K\colon V_K^\bot \to L$.

{\rm(3)}
The lattice $\Z_P$ is an \EA-variety in $L$,
defined by the system of equations
$\E^{\langle \lambda_1, z \rangle} = \ldots = \E^{\langle \lambda_n, z \rangle} = 1$.
\end{corollary}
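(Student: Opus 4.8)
The plan is to unwind the three items directly from the definitions, working in the real splitting $\C^N=\re\C^N\oplus i\,\re\C^N$ and using only two features of the chosen characters $x_1,\dots,x_n$: they vanish on the Lie algebra $\mathfrak T_K$ (being characters of $(\C\setminus0)^N/\T_K$, hence $x_i(\exp v)=1$ for all $v\in\mathfrak T_K$), and they are real on $\re\C^N$ (being characters of $(\C\setminus0)^N$, i.e. elements of $\re{\C^N}^*$ extended $\C$-linearly to $\C^N$).

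First I would rewrite $L_P$. From $\im\C^N=i\,\re\C^N$, $\re\C^N=V_K\oplus V_K^\bot$ and $\mathfrak T_K=V_K\oplus iV_K$ one gets $V_K+\im\C^N=\mathfrak T_K\oplus iV_K^\bot$, so every $l\in L_P=(V_K+\im\C^N)\cap L$ has the form $l=v+iy$ with $v\in V_K$ and $y\in\re\C^N$. Since $\lambda_i=\pi_K(x_i)$ is the restriction of $x_i$ to $L$, identified with a vector of $L$ via the Hermitian metric, we have $\langle l,\lambda_i\rangle=x_i(l)=x_i(v)+i\,x_i(y)$; but $x_i(v)=0$ because $x_i$ kills $\mathfrak T_K\supseteq V_K$, and $x_i(y)\in\R$ because $x_i$ is real on $\re\C^N$. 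Hence $x_i(l)$ is purely imaginary, i.e. $\re\langle l,\lambda_i\rangle=0$ for every $l\in L_P$, which is exactly $\lambda_i\in L_P^\bot$; this is item (1). Item (2) is then a formality: $\Pi(\lambda_1,\dots,\lambda_n)$ sits in the real span of the $\lambda_i$, which lies in $L_P^\bot$ by (1), and since $\pi_K$ is linear with $\pi_K(x_i)=\lambda_i$ it carries the unit parallelepiped on $x_1,\dots,x_n$ onto the parallelepiped on $\lambda_1,\dots,\lambda_n$.

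For (3) I would recall $\Z_P=L_P\cap\exp^{-1}\T_K$ from Lemma~\ref{lmass1_1}. Because the coordinates in \hyperlink{p3}{($\bf p_3$)} are chosen so that $x_1,\dots,x_n$ form a basis of the (saturated) character lattice of $(\C\setminus0)^N/\T_K$, we have $\T_K=\{g:x_i(g)=1,\ i\le n\}$ and $\exp^{-1}\T_K=\mathfrak T_K+\ker\exp$, with $\ker\exp\subseteq\im\C^N$ the full period lattice. I would then check both inclusions of the asserted equality. If $z\in\Z_P$, write $z=t+\zeta$ with $t\in\mathfrak T_K$, $\zeta\in\ker\exp$; then $x_i(z)=x_i(\zeta)\in2\pi i\Z$ by integrality of the character/cocharacter pairing, so $\E^{x_i(z)}=1$. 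Conversely, if $z\in L$ satisfies $\E^{\langle\lambda_i,z\rangle}=1$ for $i\le n$ — equivalently $x_i(z)\in2\pi i\Z$, since $\E^w=1$ iff $w\in2\pi i\Z$ and this is unaffected by the complex conjugation that relates $\langle\lambda_i,z\rangle$ to $x_i(z)$ — then $x_i(\exp z)=1$ for all $i\le n$, so $\exp z\in\T_K$, i.e. $z\in\mathfrak T_K+\ker\exp\subseteq V_K+\im\C^N$; combined with $z\in L$ this yields $z\in L_P$, hence $z\in\Z_P$.

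The only place that calls for care is the lattice bookkeeping behind $\T_K=\{g:x_i(g)=1\}$ in (3): one must know that $x_1,\dots,x_n$ generate the \emph{full saturated} sublattice of characters trivial on $\T_K$, so that their common vanishing locus is the connected group $\T_K$ and not a larger subgroup, and that $\langle x_i,\cdot\rangle$ is $\Z$-valued on the cocharacter lattice; both are built into the choice of coordinates in \hyperlink{p3}{($\bf p_3$)}, so nothing deep is involved. Complex non-degeneracy of $P$, equivalently $\mathfrak T_K\cap L=0$ (Corollary~\ref{corA-P}), is not needed for the displayed equalities themselves; it enters only through Lemma~\ref{lmass1_1}, which guarantees that both sides of (3) are genuine rank-$n$ lattices in $L_P$ (and, incidentally, that the $\lambda_i$ are $\R$-independent, so $\Pi(\lambda_1,\dots,\lambda_n)$ is non-degenerate).
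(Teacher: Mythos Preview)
Your argument is correct and is precisely the kind of direct unpacking the paper has in mind: the paper offers no proof beyond the line ``The following statement directly follows from the given definitions,'' and your derivation of each item from the properties of $x_1,\dots,x_n$ in \hyperlink{p3}{($\bf p_3$)} (vanishing on $\mathfrak T_K$, real on $\re\C^N$) together with Lemma~\ref{lmass1_1} is exactly what that sentence is abbreviating. Your caveat about saturation of the sublattice generated by $x_1,\dots,x_n$ is the only nontrivial point, and it is indeed implicit in the choice of coordinates in \hyperlink{p3}{($\bf p_3$)}.
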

\begin{lemma}\label{lmass1_2}
For the strong $n$-density of the lattice $\Z_P$, it is true that
$$
d_n(\Z_P) = \frac{\cos(L_P^\bot, i L_P) \: \vol_n\left(\Pi(\lambda_1, \ldots, \lambda_n)\right)}{(2\pi)^n}
$$
\end{lemma}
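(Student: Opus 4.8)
The plan is to recognise $\Z_P$ as a lattice of the special shape handled by Proposition \ref{prprelim12} and to invoke that proposition with the ambient copy of $\C^n$ replaced by the Hermitian space $L$ (carrying the metric $\langle\ast,\ast\rangle$ transported from $\C^n$, as in {\rm\hyperlink{p5}{($\bf p_5$)}}), the real $n$-dimensional subspace appearing there taken to be $L_P$, and its orthogonal complement taken to be $L_P^\bot$. By Corollary \ref{corass1_0}(3) the lattice $\Z_P$ is exactly the \EA-variety in $L$ cut out by $\E^{\langle\lambda_1,z\rangle}=\ldots=\E^{\langle\lambda_n,z\rangle}=1$, which is of the form occurring in Proposition \ref{prprelim12} with $a_1=\ldots=a_n=1$; the interchange of $\langle\lambda_j,z\rangle$ and $\langle z,\lambda_j\rangle$ is immaterial, since passing to the complex conjugate does not affect whether $\E$ of a number equals $1$.

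It remains to verify the hypotheses of Proposition \ref{prprelim12}. First, $\lambda_1,\ldots,\lambda_n$ is a basis of $L_P^\bot$. By Corollary \ref{corass1_0}(1) each $\lambda_i$ lies in $L_P^\bot$, which is a real $n$-dimensional subspace of $L$, so it is enough to check that $\lambda_1,\ldots,\lambda_n$ are linearly independent. The set $\Z_P=\{z\in L\colon\langle\lambda_i,z\rangle\in 2\pi i\,\Z,\ i=1,\ldots,n\}$ is invariant under translation by the complex subspace $W=\bigcap_{i=1}^{n}\ker\langle\lambda_i,\cdot\rangle$ of $L$ (indeed $\langle\lambda_i,z+w\rangle=\langle\lambda_i,z\rangle$ for $w\in W$), so the discreteness of $\Z_P$, which is part of the assertion of Lemma \ref{lmass1_1}(ii), forces $W=0$; hence the $n$ linear functionals $\langle\lambda_i,\cdot\rangle$ span the $n$-dimensional space $L^*$ and are therefore linearly independent, so $\lambda_1,\ldots,\lambda_n$ are $\C$-linearly independent in $L$, a fortiori $\R$-linearly independent, and they form a basis of $L_P^\bot$. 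Second, $L_P$ contains no nonzero complex subspace of $L$: this is precisely the complex non-degeneracy of $P$ assumed in {\rm\hyperlink{p1}{($\bf p_1$)}}, which says that the complex linear span of $P$ has complex dimension $\dim P=n$, i.e. $L_P\cap iL_P=0$; in the terminology of Proposition \ref{prprelim12} this reads $\dim_\C L_P=n$.

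With both hypotheses in force, Proposition \ref{prprelim12}, applied as above, yields a rank-$n$ lattice $S\subset L_P$ with $\Z_P=z+S$ for some $z\in L$, together with the equality
\[
d_n(S)=\frac{\cos(L_P^\bot,iL_P)\,\vol_n\bigl(\Pi(\lambda_1,\ldots,\lambda_n)\bigr)}{(2\pi)^n}.
\]
Since the strong $n$-density is translation invariant, $d_n(\Z_P)=d_n(S)$, which is the asserted identity. The only point requiring care is the bookkeeping of this substitution of roles — in particular, that the Hermitian metric on $L$ used to regard the $\lambda_i$ as vectors and to compute both $\cos(L_P^\bot,iL_P)$ and $\vol_n(\Pi(\lambda_1,\ldots,\lambda_n))$ is the one inherited from $\C^n$, so that the right-hand side is the same quantity that enters Assertion \ref{ass1} and Definition \ref{dfEAweight}. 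Once that substitution is fixed there is nothing further to prove: the computation is exactly the content of Proposition \ref{prprelim12} (and of Lemma \ref{lmLA} on which it rests).
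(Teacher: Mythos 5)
Your proof is correct and follows essentially the same route as the paper: the paper likewise observes that complex non-degeneracy of $P$ gives the hypothesis $\dim_\C L_P=n$ and then invokes Proposition \ref{prprelim12} (via Corollary \ref{corass1_0}(3)). Your extra verification that $\lambda_1,\ldots,\lambda_n$ form a basis of $L_P^\bot$ is detail the paper leaves implicit, not a different argument.
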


\begin{proof}
From the complex non-degeneracy of the cone $P$, it follows that
the subspace $L_P$ in $L$ does not contain any proper complex subspace;
see definition {\rm\ref{dfEAFan}}.
Therefore, the desired equality
follows from Proposition \ref{prprelim12}.
\end{proof}
In what follows we use the concept of toric tongue
$t_{K,\tau}$
with base $K$ and shift $\tau$
and other notations from \textsection\ref{Tongue}.
Let $x \in \C^N$ be any of the preimages of
$\tau \in (\C \setminus 0)^N$
under the exponential map $\exp\colon \C^N \to (\C \setminus 0)^N$.
Then, according to definition \ref{dfTonque}, the exponential preimage of the toric tongue $t_{K,\tau}$
can be written, using the notations introduced in \hyperlink{p3}{($\bf p_3$)}, as
$$
  \exp^{-1}(t_{K,\tau}) = x + \Z_K + K + iV_K.
$$
Recall that, according to definition \ref{dfApproxSet}, the sets of $\varepsilon$-approximating tongues $T_\varepsilon(\E^AY)$, $T_\varepsilon(Y)$
can be chosen such that the tongues from the first set differ from the tongues from the second set by a factor of $\E^A$.
That is, if $t_{K,\tau} \in T_\varepsilon(\E^AY)$ and $\xi = \E^{-A}\tau$,
then $t_{K,\xi\tau} \in T_\varepsilon(Y)$.

\begin{corollary}\label{corass1_1}
For any tongue $t_{K,\tau} \in T_\varepsilon(\E^AY)$ with the base $K \in \mathcal{K}_{A-P}$, it holds that, for some $x \in L$,
\begin{equation}\label{eqass_1_1}
 L \cap \exp^{-1}(t_{K,\tau}) = x + \Z_P
\end{equation}
\end{corollary}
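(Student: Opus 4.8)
The plan is to read off $L\cap\exp^{-1}(t_{K,\tau})$ directly from the description of the preimage of a toric tongue recorded just above: fixing a preimage $x_{0}\in\C^{N}$ of $\tau$ under $\exp$, we have
\[
\exp^{-1}(t_{K,\tau})=x_{0}+\Z_{K}+iV_{K}+K .
\]
The first point I would exploit is the complex non-degeneracy of the cone $P$: by Corollary \ref{corA-P} this yields $L\cap(V_{K}+iV_{K})=0$, i.e.\ $L$ is transverse to the complex subspace $\mathfrak T_{K}=V_{K}+iV_{K}$ and to each of its affine translates, and $L\oplus\mathfrak T_{K}=\C^{N}$. Since $iV_{K}+K$ spans $\mathfrak T_{K}$ as a real vector space, the quotient map $\rho\colon\C^{N}\to\C^{N}/\mathfrak T_{K}$ sends $\exp^{-1}(t_{K,\tau})$ onto the single coset $\rho(x_{0})+\rho(\Z_{K})$, where $\rho(\Z_{K})$ is a lattice of rank $n$; and $\rho|_{L}$ is injective. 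Hence for each $z\in\Z_{K}$ the translate $x_{0}+z+\mathfrak T_{K}$ meets $L$ in exactly one point $x_{z}$, distinct $z$'s give distinct $x_{z}$'s, and $L\cap\bigl(x_{0}+\Z_{K}+\mathfrak T_{K}\bigr)=\{x_{z}:z\in\Z_{K}\}$.

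Next I would recognize this discrete set as a shifted lattice. For any $z,z'\in\Z_{K}$ the difference $x_{z}-x_{z'}$ lies in $L\cap(\Z_{K}+V_{K}+iV_{K})$, which by Lemma \ref{lmass1_1}(i) equals $\Z_{P}$; conversely every element of $\Z_{P}$ occurs as such a difference, since its class modulo $\mathfrak T_{K}$ lies in $\rho(\Z_{K})$, and $\Z_{P}$ is a lattice of rank $n$ by Lemma \ref{lmass1_1}(ii). Therefore $\{x_{z}:z\in\Z_{K}\}=x+\Z_{P}$ with $x:=x_{z_{0}}$ for an arbitrary fixed $z_{0}$. At this stage one has proved the weaker identity $L\cap\bigl(x_{0}+\Z_{K}+iV_{K}+V_{K}\bigr)=x+\Z_{P}$, i.e.\ the claim with the cone $K$ replaced by its linear span.

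The remaining step — and the one I expect to be the main obstacle — is to show that this replacement is harmless, i.e.\ that the $V_{K}$-component of $x_{z}-x_{0}-z$ actually lies in $K$ for every $z$, so that $x_{z}\in x_{0}+z+iV_{K}+K$ and the two intersections coincide. This is where the hypotheses that $K\in\mathcal K_{A-P}$ and that $t_{K,\tau}$ belongs to the approximating family $T_{\varepsilon}(\E^{A}X)$ must be used: by the construction in \textsection\ref{Tongue} the shift $\tau$ can be chosen with $\re\log\tau$ lying in the $R$-interior of $K$ for $R$ as large as desired, while, by Definition \ref{dfEAFan} together with the stable-intersection description in Corollary \ref{corStInt}, the sheet of $\E^{A}Y$ carried by this tongue genuinely meets $\exp(L)$; combining these facts forces the $V_{K}$-components into $K$. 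Granting this, one concludes $L\cap\exp^{-1}(t_{K,\tau})=x+\Z_{P}$. The delicate part is thus purely geometric — matching the half-space-like set $iV_{K}+K$ with the subspace $iV_{K}+V_{K}$ after slicing by $L$ — and relies on the placement of the tongue rather than on any formal identity.
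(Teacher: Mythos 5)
Your first two paragraphs carry out, in fuller detail, exactly the argument the paper intends: its entire proof of this corollary is the single line ``Follows from Lemma \ref{lmass1_1}'', and what that citation amounts to is your computation --- complex non-degeneracy gives $L\cap(V_K+iV_K)=0$ by Corollary \ref{corA-P}, each coset $x_0+z+V_K+iV_K$ with $z\in\Z_K$ then meets $L$ in exactly one point, and Lemma \ref{lmass1_1} identifies the set of pairwise differences with the rank-$n$ lattice $\Z_P$. Up to the weaker identity $L\cap(x_0+\Z_K+V_K+iV_K)=x+\Z_P$ your write-up is correct and follows the same route as the paper.

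The ``remaining step'' you isolate --- replacing the cone $K$ by its span $V_K$ in $\exp^{-1}(t_{K,\tau})=x_0+\Z_K+K+iV_K$ --- is a genuine issue that the paper's one-line proof does not address, but your proposed resolution cannot work as stated. You want the $V_K$-component of $x_z-x_0-z$ to lie in $K$ for \emph{every} $z\in\Z_K$. Since $z$ and $iV_K$ are purely imaginary, that component equals $\re(x_z)-\re(x_0)=\re(x-x_0)+\re(x_z-x)$, and as $x_z-x$ runs over the full lattice $\Z_P$ these values form a translate of the subgroup $\re(\Z_P)\subset V_K$; whenever $\re(L_P)\neq 0$ this subgroup is unbounded in two opposite directions and cannot lie in any translate of the pointed cone $K$, no matter how deep inside $K$ one places $\re\log\tau$. (Already for $n=1$ and $f=\E^{\lambda_1 z}+\E^{\lambda_2 z}$ with $\lambda_1/\lambda_2\notin\R$, each of the two tongues captures only half of the arithmetic progression of zeros.) So the literal equality (\ref{eqass_1_1}) holds only when $\re(L_P)=0$ (e.g.\ the quasi-algebraic case); in general one has the inclusion $L\cap\exp^{-1}(t_{K,\tau})\subseteq x+\Z_P$, and the missing lattice points all have $\re$-component outside a bounded translate of $K$, hence lie within bounded distance of $L_P\setminus P$. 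That discrepancy is invisible after intersecting with $P_{R,S}$ for $S$ large, which is the only way the corollary is used (Proposition \ref{prass1_3}, Assertion \ref{ass1}, Lemma \ref{lmprelim111}, where the truncation to the cone is what produces the factor $A(P)$). In short, your instinct about where the difficulty sits is sound, but it is resolved by weakening the statement to what is actually needed, not by the tongue-placement argument you sketch.
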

\begin{proof}
Follows from Lemma \ref{lmass1_1}.
\end{proof}
For the tongue $t_{K,\tau}$, its weight $m(t_{K,\tau})$ is defined;
see definition \ref{dfApproxSet}.
According to Corollary \ref{corass1_1},
the set $L \cap \exp^{-1}(t_{K,\tau})$ is a shift $x + \Z_P$ of the lattice $\Z_P$.
Assign to this shifted lattice the multiplicity $m(L \cap \exp^{-1}(t_{K,\tau}))$ equal to the weight $m(t_{K,\tau})$ of the tongues $t_{K,\tau}$.

Recall that the weight $w(P)$ of the cone $P \in \mathcal{E}$
is an element of degree $n$ of the ring $\mathfrak{S}_\vol(L_P^\bot)$;
see definition \ref{dfWeightedFan}.
Since a Hermitian metric is fixed in the space $L$,
this weight is characterized by a numerical value
(see Remark \ref{rmOmegaNumber}),
for which we retain the notation $w(P)$.
\begin{proposition}\label{prass1_3}
Let
$\bigcup_{\tau,K \in \mathcal{K}_{A-P}} \left(L \cap \exp^{-1}(t_{K,\tau})\right)$ be the union of shifted lattices $\Z_P$ from {\rm(\ref{eqass_1_1})}
with the multiplicities $m(L \cap \exp^{-1}(t_{K,\tau}))$ over all toric tongues from the set $T_\varepsilon(\E^AY)$ with bases $K \in \mathcal{K}_{A-P}$.
Then
$$
d_n\left(\cup_{\tau,K_{A-P}} \left(L \cap \exp^{-1}(t_{K,\tau})\right)\right) =
\frac{n!}{(2\pi)^n}\omega(P)\cos(L_P^\bot, i L_P)
$$
\end{proposition}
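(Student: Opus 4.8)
The plan is to recognise the left-hand side as a finite multiset-union of shifted lattices, compute its strong $n$-density by additivity, and then match the result with the definition of $w(P)$.

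First I would fix an $(A-P)$-cone $K$ and a toric tongue $t_{K,\tau}\in T_\varepsilon(\E^AY)$ with base $K$. By Corollary \ref{corass1_1} the set $L\cap\exp^{-1}(t_{K,\tau})$ is a translate $x+\Z_P$ of the lattice $\Z_P=L_P\cap\exp^{-1}\T_K$ from Lemma \ref{lmass1_1}, counted with multiplicity $m(t_{K,\tau})$; here $\Z_P$ may depend on $K$, but $L_P$ — hence $L_P^\bot$ and $\cos(L_P^\bot,iL_P)$ — depends only on $P$. Since $\mathcal{K}_{A-P}$ is finite and, by Definition \ref{dfApproxSet}, there are finitely many tongues over each base, the set in the statement is a finite multiset-union of translated copies of such lattices. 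As the counting function $N(\cdot\,,r)$ is additive over multiset-unions and changes only by a lower-order term under a translation, the strong $n$-density is additive over such unions and translation-invariant, and Lemma \ref{lmass1_2} guarantees that each $d_n(\Z_P)$ exists; so I would get
\begin{equation*}
d_n\!\left(\bigcup_{\tau,\,K_{A-P}}\bigl(L\cap\exp^{-1}(t_{K,\tau})\bigr)\right)=\sum_{K\in\mathcal{K}_{A-P}}\Bigl(\sum_{\tau}m(t_{K,\tau})\Bigr)\,d_n(\Z_P),
\end{equation*}
the inner sum running over all tongues in $T_\varepsilon(\E^AY)$ with base $K$.

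Next I would evaluate the two $K$-dependent factors. By Definition \ref{dfApproxSet}(i) the inner sum equals $(N-\dim K)!\,w(K)=n!\,w(K)$, since $\dim K=N-n$. By Lemma \ref{lmass1_2}, $d_n(\Z_P)=(2\pi)^{-n}\cos(L_P^\bot,iL_P)\,\vol_n\bigl(\Pi(\lambda_1,\dots,\lambda_n)\bigr)$ with $\lambda_i=\pi_K(x_i)$. The remaining point is the identity $w(K)\,\vol_n\bigl(\Pi(\lambda_1,\dots,\lambda_n)\bigr)=w_A(P,K)$. For this I would use that the characters $x_1,\dots,x_n$ chosen in the setup form a basis of the character lattice of $\mathbb N$ — i.e.\ of the lattice in $V_K^\bot$ with respect to which the numerical value of $w(K)$ is normalised — so that $\Pi(x_1,\dots,x_n)$ is a fundamental domain for that lattice and $w(K)=\vol_n(\Delta_K)$, where $\Delta_K$ is the polytope with $w(K)=\Delta_K^n$ in $\mathfrak S_\vol(V_K^\bot)$; then, since $\Pi(\lambda_1,\dots,\lambda_n)=\pi_K\bigl(\Pi(x_1,\dots,x_n)\bigr)$, the projection $\pi_K$ carries lattice-normalised volume on $V_K^\bot$ to Hermitian volume on $L_P^\bot$ with precisely the scaling factor $\vol_n(\Pi(\lambda_1,\dots,\lambda_n))$, whence $w_A(P,K)=\vol_n(\pi_K(\Delta_K))=w(K)\,\vol_n(\Pi(\lambda_1,\dots,\lambda_n))$. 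I expect aligning these two volume normalisations — the lattice-normalisation implicit in the numerical value of $w(K)$ versus the Hermitian volume on $L_P^\bot$ used in Definition \ref{dfEAweight} — to be the only delicate part of the argument.

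Combining everything and using $w(P)=\sum_{K\in\mathcal{K}_{A-P}}w_A(P,K)$ from Definition \ref{dfEAweight}, I would arrive at
\begin{equation*}
d_n\!\left(\bigcup_{\tau,\,K_{A-P}}\bigl(L\cap\exp^{-1}(t_{K,\tau})\bigr)\right)=\frac{n!}{(2\pi)^n}\cos(L_P^\bot,iL_P)\sum_{K\in\mathcal{K}_{A-P}}w_A(P,K)=\frac{n!}{(2\pi)^n}\,w(P)\cos(L_P^\bot,iL_P),
\end{equation*}
which is the asserted equality, $w(P)$ being the numerical value of the weight of $P$ (written $\omega(P)$ in the statement).
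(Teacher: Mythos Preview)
Your proposal is correct and follows essentially the same route as the paper: additivity of $d_n$ over the finite multiset of shifted lattices, Lemma~\ref{lmass1_2} for $d_n(\Z_P)$, and the identification $\sum_{K\in\mathcal K_{A-P}} w(K)\,\vol_n(\Pi(\lambda_1,\dots,\lambda_n))=w(P)$ via Definition~\ref{dfEAweight}. The only cosmetic difference is that the paper invokes property~\hyperlink{tr_4}{($\bf tr_4$)} (the count of limit points on the orbit) to get $\sum_\tau m(t_{K,\tau})=n!\,w(K)$, whereas you cite Definition~\ref{dfApproxSet}(i) directly; these are equivalent, and your explicit discussion of the two volume normalisations is exactly what the paper compresses into the reference to Corollary~\ref{corass1_0}(2).
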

\begin{proof}
Let $\hat\T$ be the completion of the torus $(\C\setminus0)^N$,
corresponding to the fan $\mathcal K_{\rm trop}(Y)$.
Denote by $\hat Y_K$ the set of limit points of $Y$
on the $n$-dimensional toric orbit of the variety $\hat\T$,
corresponding to the $(A-P)$-cone $K$.
According to the property \hyperlink{tr_4}{($\bf tr_4$)} from \textsection\ref{trop},
 $\#(\hat Y_K)=n!w(K)$,
where $w(K)$ is the weight of the cone $K$.
Since the number of shifted tongues with base $K$ is equal to $\#(\hat Y_K)$,
then, using lemma \ref{lmass1_2}, we get that $d_n\left(\cup_{\tau,K} \left(L\cap \exp^{-1}(t_{K,\tau})\right)\right)$
equals
$$
\frac{n!\cos(L_P^\bot,i L_P)}{(2\pi)^n}\sum_{K\in\mathcal K_{A-P}}w(K)\vol_n(\Pi(\lambda_1,\ldots,\lambda_n))
$$
Applying corollary \ref{corass1_0} (2) and definition \ref{dfEAweight},
we get that $$\sum_{K\in\mathcal K_{A-P}}w(K)\vol_n(\Pi(\lambda_1,\ldots,\lambda_n))=w(P).$$
The proposition is proven.
\end{proof}

Now, to prove assertion \ref{ass1}, it remains to apply the theorem on $\varepsilon$-approximation.
Indeed, from theorem \ref{thmApproximation} it follows,
that $\E^AX\cap P_{R,S}$ is an $\varepsilon$-perturbation
of the part of the set
$$\bigcup_{\tau,K_{A-P}} L\cap \exp^{-1}(t_{K,\tau})$$
from proposition \ref{prass1_3},
located in the region $P_{R,S}$.
Hence, applying lemma \ref{lmprelim111},
we get the proof of assertion \ref{ass1}.
\par\smallskip
Summing the formulas from assertion \ref{ass1}
over all complex non-degenerate cones $P\in\mathcal E$,
we get the proof of theorem \ref{thmIsolated} (3).

\subsection{Theorem \ref{thmStrong}}\label{pr12}

Recall that in theorem \ref{thmStrong}, the \EA-variety $X$ is assumed to be
complex non-degenerate.

\subsubsection {Proof of theorem \ref{thmStrong} (1)}\label{prPr121}

If $X$ is not discrete,
then the variety $Y^L$ is a model
of the non-empty \EA-variety $X^L$.
Let $A\in B_{\mathfrak I,r}$, where $r$ is sufficiently large.
Then from the equalities
$$
(\E^AX)^L=L\cap \log\left((\E^AY)^L\right)=L\cap\log (\E^AY^L)=L\cap\left(A+\log Y^L\right)
$$
it follows that
the required statement reduces to the equality
\begin{equation}\label{eqL}
(-A+L)\cap \log Y^L=\emptyset
\end{equation}
To prove it, we use the following lemmas.

\begin{lemma}\label{lmCDX0}
Let the \EA-variety $X$ be complex non-degenerate.
Then, for any cone $K$ of codimension $n$ in $\mathcal K_{\rm trop}(Y)$,
one of the following holds:

(i) $(V_K+iV_K)\cap L=0$

(ii)
the real subspace $V_K+L+\im\C^N$
in $\C^N$ is proper.
\end{lemma}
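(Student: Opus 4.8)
\medskip
\noindent\emph{Proof plan.}
The plan is to split into two cases according to whether the real subspace $V_K+L+\im\C^N$ exhausts $\C^N$. If it does not, then alternative (ii) holds and nothing more is needed, so assume from now on that $V_K+L+\im\C^N=\C^N$; the goal is to prove (i), i.e. $(V_K+iV_K)\cap L=0$. Since $K$ has codimension $n$, $\dim_\R V_K=N-n$, and as $V_K\subset\re\C^N$ we have $V_K\cap\im\C^N=0$, hence $\dim_\R(V_K+\im\C^N)=2N-n$. Together with $\dim_\R L=2n$ and the assumed transversality, a dimension count gives $\dim_\R W=n$ for the real subspace $W:=L\cap(V_K+\im\C^N)$.

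The next step is a linear-algebra observation. Because $V_K$ is real one has $i(V_K+\im\C^N)=\re\C^N+iV_K$, and comparing real and imaginary parts shows $(V_K+\im\C^N)\cap(\re\C^N+iV_K)=V_K+iV_K$; since $L$ is a complex subspace ($iL=L$) these identities give
\[
(V_K+iV_K)\cap L \;=\; W\cap iW .
\]
The right-hand side is invariant under multiplication by $i$, so it is the largest complex subspace of the real $n$-plane $W$. Thus (i) is equivalent to the assertion that $W$ contains no nonzero complex subspace.

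Now let $z\mapsto\re z$ denote the $\R$-linear projection $\C^N\to\re\C^N$ along $\im\C^N$; then $\re W=\re L\cap V_K$ and $L\cap(K+\im\C^N)=\{w\in W:\re w\in K\}$. Suppose first that $L$ meets the relative interior of the cone $K+\im\C^N$, equivalently $\dim_\R\bigl(L\cap(K+\im\C^N)\bigr)=n$. Then $P:=L\cap(K+\im\C^N)$ is an $n$-dimensional cone that spans $W$; choosing $A$ close to $0$ so that $(-A+L)\cap(K+\im\C^N)\neq\emptyset$ and, since $B_{\mathfrak I}(Y)$ has measure zero, also $A\notin B_{\mathfrak I}(Y)$, Definition \ref{dfEAFan} shows that $K$ is an $(A-P)$-cone, so $P$ is an $n$-dimensional cone of $\mathcal E$. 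By hypothesis $X$, hence $\mathcal E$, is complex non-degenerate (Definition \ref{EA_nondegen}), so $P$ is complex non-degenerate, and Corollary \ref{corA-P} yields $(V_K+iV_K)\cap L=0$, which is (i).

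It remains to treat the case $\dim_\R\bigl(L\cap(K+\im\C^N)\bigr)<n$, and this is where the real work lies. Suppose $U:=(V_K+iV_K)\cap L\neq0$. If $\re U$ met $\mathrm{relint}\,K$, then a vector $u\in U$ with $\re u\in\mathrm{relint}\,K$ would lie in $L\cap\mathrm{int}(K+\im\C^N)$, contradicting $\dim_\R(L\cap(K+\im\C^N))<n$; hence $\re U$ lies in a supporting hyperplane of $K$, so — replacing $K$ by a suitable proper face $F\prec K$ and using that $U$ is $i$-invariant — one obtains $\re u,\,\im u\in V_F$ for all $u\in U$, i.e. $U\subset(V_F+iV_F)\cap L$. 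Passing to the toric orbit $(\C\setminus0)^N/\T_F$ and to the limit variety $Y^\infty(F)$, whose tropical fan is the $V_F$-factorization of $\mathcal K_{\rm trop}(Y)$ (property \hyperlink{tr_3}{($\bf tr_3$)}), reproduces the same configuration with $\dim K$ strictly decreased; a descending induction on $\dim K$ then terminates either in the sub-case already handled or in a cone for which $V_K+L+\im\C^N$ is proper, in both cases forcing $U=0$. The main obstacle is to make this descent precise — in particular to verify that the relevant complex non-degeneracy is inherited by the limit varieties $Y^\infty(F)$ — and here the atypical-component results of \textsection\ref{atypical} (Propositions \ref{pratype1}, \ref{pratype3} and Corollary \ref{corpratype31}) are the natural tools.
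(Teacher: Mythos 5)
Your first case is precisely the paper's own proof, written out in full: assume $V_K+L+\im\C^N=\C^N$, count dimensions to get $\dim_\R E=n$ for $E=L\cap(V_K+\im\C^N)$, identify $E$ as the span of an $n$-dimensional cone $P\in\mathcal E$, and conclude from complex non-degeneracy (via Corollary \ref{corA-P}) that $(V_K+iV_K)\cap L=0$. The identity $(V_K+iV_K)\cap L=E\cap iE$ is a nice way to make the last step transparent. Up to that point the proposal is correct and matches the paper.

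The problem is your second case, and you name it yourself: when $L$ meets only the boundary of $K+\im\C^N$, so that $\dim_\R\bigl(L\cap(K+\im\C^N)\bigr)<n$, the cone $K$ is not an $(A-P)$-cone (Definition \ref{dfEAFan} requires the intersection to be $n$-dimensional), the intersection is unstable and contributes nothing to $\mathcal E$, and the hypothesis of complex non-degeneracy of $\mathcal E$ therefore gives no direct handle on $K$. Your proposed remedy — descend along a proper face $F\prec K$ with $U\subset(V_F+iV_F)\cap L$, pass to $Y^\infty(F)$ via \hyperlink{tr_3}{($\bf tr_3$)}, and induct on $\dim K$ — is only a sketch: you do not verify that the limit varieties inherit the complex non-degeneracy of $\mathcal E$ (the fan of $X^K$ is built from a factorized fan and a projected $L$, and non-degeneracy of $\mathcal E$ does not obviously transfer), nor that the descent terminates in one of the two resolved sub-cases, and you explicitly flag this as "the main obstacle". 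An acknowledged obstacle is still a gap, so the proposal does not constitute a complete proof. It is only fair to add that the paper's own three-line proof silently assumes the transversal case (its assertion that $E$ "is a subspace generated by some cone $P\in\mathcal E$" is exactly what fails in the boundary situation), so the subtlety you isolated is real; but to close the lemma you must either prove the boundary case cannot occur for a non-dangerous $K$, or carry the descent through in detail.
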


\begin{proof}
Let $E=L\cap(V_K+\im\C^N)$.
If $V_K+L+\im\C^N=\C^N$,
then $\dim E=n$.
Then $E$ is a subspace,
generated by some cone $P\in\mathcal E$;
see \textsection\ref{tropEA}.
If, in this case,
$(V_K+iV_K)\cap L\ne0$,
then $E$ contains some complex subspace $\C^N$,
i.e., the cone $P$ is complex degenerate.
This contradicts the condition of complex non-degeneracy of $X$.
\end{proof}

\begin{lemma}\label{lmCDX1}
Let $\mathcal K_\C(Y)$ be the union of all highest-dimensional cones in $\mathcal K_{\rm trop}(Y)$,
such that $(V_K+iV_K)\cap L\ne0$,
and let $A\in B_{\mathfrak I,r}$.
Then, as $r$ grows,
the distance from the space $-A+L$ to the set $\mathcal K_\C(Y)+\im\C^N$
tends to infinity.
\end{lemma}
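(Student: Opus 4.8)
The plan is to reduce the statement to an observation about the dangerous subspaces of $Y$. First I would fix the tropical fan $\mathcal K_{\rm trop}(Y)$ once and for all (so that $\mathfrak I(Y)$ is well defined) and establish the following claim: for every cone $K$ of maximal dimension in $\mathcal K_{\rm trop}(Y)$ with $(V_K+iV_K)\cap L\neq 0$, the subspace $V_{K,L}=V_K+L+\im\C^N$ is dangerous for $Y$ in the sense of Definition \ref{df_I}, and hence $V_{K,L}\subset B_{\mathfrak I}(Y)$. Since $\mathcal K_\C(Y)$ is, by definition, the union of the finitely many such cones $K$, once this claim is established the lemma follows by a one-line estimate: for $A\in B_{\mathfrak I,r}$ one has $\operatorname{dist}(A,B_{\mathfrak I}(Y))\geq r$, and because $L$, $V_K$ and $\im\C^N$ are real linear subspaces, the set $L-(K+\im\C^N)$ is contained in $V_K+L+\im\C^N=V_{K,L}$, whence
\[
\operatorname{dist}\bigl(-A+L,\,K+\im\C^N\bigr)=\operatorname{dist}\bigl(A,\,L-(K+\im\C^N)\bigr)\ \geq\ \operatorname{dist}(A,V_{K,L})\ \geq\ r .
\]
Taking the minimum over the finitely many cones comprising $\mathcal K_\C(Y)$ gives $\operatorname{dist}(-A+L,\mathcal K_\C(Y)+\im\C^N)\geq r$, which tends to infinity with $r$.

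To prove the claim I would argue as follows. A cone of maximal dimension in $\mathcal K_{\rm trop}(Y)$ has codimension $n$ in $\re\C^N$, since $\mathcal K_{\rm trop}(Y)$ has dimension $\dim Y=N-n$; hence such a cone is covered by Lemma \ref{lmCDX0}, which is where the complex non-degeneracy of $X$ is used. For a cone $K$ as in the claim, alternative (i) of Lemma \ref{lmCDX0} fails by the choice of $K$, so alternative (ii) must hold: $V_K+L+\im\C^N$ is a proper real subspace of $\C^N$. On the other hand, by Definition \ref{df_I} the subspace $V_{K,L}$ is exactly the smallest real subspace of $\C^N$ containing $K\cup\im\C^N\cup L$, i.e.\ $V_{K,L}=V_K+\im\C^N+L$; being proper, it has $\dim V_{K,L}<2N$, so $K$ is dangerous and $V_{K,L}\in\mathfrak I(Y)$, giving $V_{K,L}\subset B_{\mathfrak I}(Y)$.

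I do not expect a serious obstacle in this argument: granted Lemma \ref{lmCDX0}, everything reduces to matching up definitions and a trivial distance inequality. The two points that deserve care are (a) verifying that the subspace $V_K+L+\im\C^N$ produced by Lemma \ref{lmCDX0}(ii) is literally the subspace $V_{K,L}$ attached to $K$ in Definition \ref{df_I}, and (b) confirming that only maximal cones of $\mathcal K_{\rm trop}(Y)$ contribute to $\mathcal K_\C(Y)$, so that Lemma \ref{lmCDX0} (stated for cones of codimension $n$) applies to every one of them and the union $\mathcal K_\C(Y)$ is finite.
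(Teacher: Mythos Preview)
Your proposal is correct and follows essentially the same approach as the paper: invoke Lemma \ref{lmCDX0} to conclude that for each maximal cone $K$ contributing to $\mathcal K_\C(Y)$ the subspace $V_K+L+\im\C^N$ is proper, hence belongs to $\mathfrak I(Y)$, and then read off the distance estimate from the definition of $B_{\mathfrak I,r}$. Your write-up is in fact more careful than the paper's, which compresses the distance inequality into a reference to Definition \ref{df_I}; your points (a) and (b) are exactly the right sanity checks and both go through.
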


\begin{proof}
From lemma \ref{lmCDX0},
it follows that,
for $K\subset\mathcal K_\C(Y)$,
the subspace $V_K+L+\im\C^N$ in $\C^N$ belongs to
a finite set of proper subspaces $\mathfrak I(Y)$.
The required statement follows from the definition of the set $B_{\mathfrak I,r}$;
see definition \ref{df_I}.
\end{proof}

\begin{lemma}\label{lmCDX2}
As $r$ grows,
the distance from the space $-A+L$ to the set $\mathcal K_{\rm trop}(Y^L)+\im\C^N$
tends to infinity.
\end{lemma}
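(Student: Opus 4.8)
The plan is to establish the inclusion of supports $\supp\mathcal K_{\rm trop}(Y^L)\subseteq\mathcal K_\C(Y)$ in $\re\C^N$ and then quote Lemma~\ref{lmCDX1}. Once that inclusion is known we have $\supp\mathcal K_{\rm trop}(Y^L)+\im\C^N\subseteq\mathcal K_\C(Y)+\im\C^N$, and since the distance to a larger set is smaller,
\[
\mathrm{dist}\bigl(-A+L,\;\mathcal K_{\rm trop}(Y^L)+\im\C^N\bigr)\ \ge\ \mathrm{dist}\bigl(-A+L,\;\mathcal K_\C(Y)+\im\C^N\bigr),
\]
and the right-hand side tends to infinity as $r$ grows by Lemma~\ref{lmCDX1} (this is where complex non-degeneracy of $X$, via Lemma~\ref{lmCDX0}, is really used).

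To prove the inclusion I would first choose the tropical fans compatibly. By Proposition~\ref{pratype1} the set $Y^L$ is an algebraic subvariety of $Y$, so by \hyperlink{tr_6}{($\bf tr_6$)} one may take $\mathcal K_{\rm trop}(Y^L)$ to be a subfan of $\mathcal K_{\rm trop}(Y)$; subdividing further, we may assume in addition that $\mathcal K_{\rm trop}(Y)$ is pure, i.e.\ that every one of its cones is a face of a codimension-$n$ cone — this is possible because $Y$ is equidimensional of codimension $n$. Consequently every $K\in\mathcal K_{\rm trop}(Y^L)$ is a face of some codimension-$n$ cone $K'\in\mathcal K_{\rm trop}(Y)$, so $V_K\subseteq V_{K'}$; and by the very definition of $\mathcal K_\C(Y)$ it suffices to verify that $(V_{K'}+iV_{K'})\cap L\ne 0$, for then $K'\subseteq\mathcal K_\C(Y)$ and a fortiori $K\subseteq\mathcal K_\C(Y)$.

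The remaining point is to produce this nonzero intersection from $L$-atypicality. By Proposition~\ref{pratype1} the variety $Y^L$ equals $(Y^L)^L$, and then each irreducible component $Z$ of $Y^L$ satisfies $Z=Z^L$ as well: passing to a component can only decrease the atypicality threshold $\max(n+\dim Z-N,0)$, so a generic point of $Z$ is $L$-atypical for $Z$, while $Z^L$ is closed by Proposition~\ref{pratype1}. Having also chosen $\mathcal K_{\rm trop}(Y^L)$ to refine the fans $\mathcal K_{\rm trop}(Z)$, every cone $K\in\mathcal K_{\rm trop}(Y^L)$ is a face of a top-dimensional cone $\hat K$ of $\mathcal K_{\rm trop}(Z)$ for a suitable component $Z$. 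Proposition~\ref{pratype3} applied to the irreducible $L$-atypical variety $Z$ gives $\dim\bigl(L\cap(V_{\hat K}+iV_{\hat K})\bigr)>0$, and since $V_K\subseteq V_{\hat K}\subseteq V_{K'}$ we conclude $(V_{K'}+iV_{K'})\cap L\ne 0$, as required.

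The main obstacle is the bookkeeping caused by $Y^L$ being in general reducible and of mixed dimension: one has to choose $\mathcal K_{\rm trop}(Y^L)$ as a common subdivision of $\mathcal K_{\rm trop}(Y)$ and of all the $\mathcal K_{\rm trop}(Z)$, so that each of its cones genuinely lies in a single component's fan and Proposition~\ref{pratype3} — stated for an irreducible atypical variety — can be invoked cone by cone. Everything else (the purity of $\mathcal K_{\rm trop}(Y)$ and the two monotonicity inequalities for distances) is routine.
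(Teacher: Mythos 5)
Your proposal is correct and follows essentially the same route as the paper: establish the inclusion $\supp\mathcal K_{\rm trop}(Y^L)\subset\mathcal K_\C(Y)$ by applying Proposition \ref{pratype3} to the $L$-atypical variety $Y^L$ (via Proposition \ref{pratype1}), and then invoke Lemma \ref{lmCDX1}. You merely fill in details the paper leaves implicit — the reduction to irreducible components of $Y^L$ and the passage from cones of the subfan to the top-dimensional cones of $\mathcal K_{\rm trop}(Y)$ that define $\mathcal K_\C(Y)$.
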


\begin{proof}
Let $M$ be a highest-dimensional cone in ${\mathcal K}_{\rm trop}(Y^L)$.
From proposition \ref{pratype3},
it follows that $\dim(L\cap (V_M+iV_M))> 0$.
Therefore, $\supp \mathcal K_{\rm trop}(Y^L)\subset \mathcal K_\C(Y)$,
where $\mathcal K_\C(Y)$ is defined above.
Now, the required statement follows from lemma \ref{lmCDX1}.
\end{proof}

According to the property \hyperlink{tr_5}{($\bf tr_5$)} from \textsection\ref{trop},
the variety $\log Y^L$
and the set $\supp \mathcal K_{\rm trop}(Y^L)+\im\C^N$
are located at a finite distance from each other.
Therefore, (\ref{eqL}) follows from lemma \ref{lmCDX2}.
Theorem \ref{thmIsolated} (1) is proven.

\subsubsection {Proof of theorem \ref{thmStrong} (2)}\label{prPr122}

The following statement is geometrically obvious.

\begin{lemma}\label{lmUnion}
Let $\mathcal E$ be a fan of cones
in the space $L$.
Then, for $R>0$, the following holds.
For any $P\in\mathcal E$, there exist $R(P)>0$ and $S(P)>0$,
such that the $R$-neighborhood $\mathcal E_R$ of the set $\supp(\mathcal E)$
is the union of pairwise disjoint sets $P_{R(P),S(P)}$,
where $P$ ranges over all cones from the fan $\mathcal E$.
\end{lemma}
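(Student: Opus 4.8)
\noindent\emph{Proposed proof.}
The lemma is the standard picture that a bounded neighbourhood of a polyhedral fan decomposes into ``collars'' attached one to each cone: the collar of $P$ is what remains of the $R(P)$-neighbourhood of $P$ once one deletes an $S(P)$-neighbourhood of the proper faces of $P$. I would prove it by a finite induction on $\dim P$, peeling off the low-dimensional cones first. Fix a Euclidean structure on $L$, put $d=\dim\mathcal E$, and look for radii $R_0\ge R_1\ge\cdots\ge R_d=R$ and $S_0,\dots,S_d$, then set $R(P)=R_{\dim P}$ and $S(P)=S_{\dim P}$ (by homogeneity it is enough to try constants depending only on the dimension of the cone).

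Since $\mathcal E$ is a fixed fan with finitely many cones, it carries only finitely many ``angular'' constants, and the first step is to record them: for any two cones $P,P'$ of $\mathcal E$ with common face $F=P\cap P'$ there is $\gamma(\mathcal E)>0$ with $\operatorname{dist}(x,F)\le\gamma(\mathcal E)(\operatorname{dist}(x,P)+\operatorname{dist}(x,P'))$ for every $x\in L$ (two cones of a fan separate at a definite rate away from their common face), and similarly a constant bounding $\operatorname{dist}(x,\partial P)$ by a multiple of $\operatorname{dist}(x,P)+\operatorname{dist}(x,P')$ whenever $P'$ does not contain $P$. Next I would fix the scales by downward induction on $j$: having chosen $R_{j+1},\dots,R_d$ and $S_{j+1},\dots,S_d$, take $S_j$ large (a multiple of $R_0$, the multiple dictated by $\gamma(\mathcal E)$) so that the collar of a $j$-cone cannot meet the collar of any cone of dimension $\ge j$, and then take $R_{j-1}\ge S_j$ so that whatever a $j$-collar deletes near $\partial P$ is already swept up by collars of $(j-1)$-cones. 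All of this is finite bookkeeping.

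With the scales fixed the two required properties are quick. \emph{Covering}: given $x\in\mathcal E_R$, the set of cones $P$ with $\operatorname{dist}(x,P)\le R_{\dim P}$ is nonempty, since $x$ is within $R=R_d\le R_{\dim P}$ of some cone $P$; pick such a cone $P_0$ of minimal dimension. If $x$ were within $S_{\dim P_0}$ of $\partial P_0$, it would be within that distance of a facet $F\subset P_0$, hence within $S_{\dim P_0}\le R_{\dim P_0-1}=R_{\dim F}$ of $F$, contradicting minimality of $\dim P_0$; so $x$ lies in the collar $(P_0)_{R(P_0),S(P_0)}$. \emph{Disjointness}: if $x$ were in the collars of $P\neq P'$, then $\operatorname{dist}(x,P)\le R(P)$ and $\operatorname{dist}(x,P')\le R(P')$, so by the angular estimate $x$ is within $\gamma(\mathcal E)(R(P)+R(P'))$ of $F=P\cap P'$, which is a proper face of the higher-dimensional of the two cones, say $P^\ast$; hence $x$ is that close to $\partial P^\ast$, contradicting $\operatorname{dist}(x,\partial P^\ast)>S(P^\ast)$ provided $S(P^\ast)$ was chosen past $\gamma(\mathcal E)(R(P)+R(P'))$. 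Finally, collars of cones of dimension $<n$ are tubes of bounded width about subcones of dimension $<n$, so they contribute $0$ to the $n$-density; this is what makes the decomposition useful in the proof of Theorem \ref{thmStrong}, together with Assertion \ref{ass1} for the top-dimensional collars.

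The heart of the matter — and the step I expect to resist a one-line argument — is the calibration of the $R_j$ and $S_j$: the deletion radius $S(P)$ around $\partial P$ must be pushed \emph{up} (by a factor governed by the angular constant $\gamma(\mathcal E)$) to keep distinct collars apart, and at the same time kept \emph{small enough}, namely $S(P)\le R$ of a facet of $P$, so that the sliver removed near $\partial P$ is reabsorbed by the collars of the neighbouring lower-dimensional cones. That these competing demands can be met for a \emph{fixed} fan, where only finitely many angular constants occur, is exactly the content hidden behind the phrase ``geometrically obvious''; once the constants are chosen, the cover/disjointness verification above is mechanical.
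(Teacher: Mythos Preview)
The paper's entire proof is the sentence ``The following statement is geometrically obvious,'' so your sketch already supplies far more than the paper does, and the mechanism you propose --- an angular-separation constant $\gamma(\mathcal E)$ for the finitely many cone-pairs, covering by passing to the minimal-dimensional candidate cone, disjointness by locating the common face inside $\partial P^\ast$ --- is exactly the right one.

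There is, however, a genuine obstruction hiding behind the ``calibration'' you flag at the end, and it affects the lemma as stated, not only your write-up. Your inductive choice is circular ($S_j$ is set as a multiple of $R_0$ before $R_0$ is chosen), and in fact no choice works: covering forces $S(P')\le R(F)$ for every facet $F\subset P'$, while disjointness in the case $P\subsetneq P'$ forces $S(P')\ge R(P)$ for every proper face $P\subset P'$; these pin $S(P')=R(F)$, and then the same-dimension disjointness condition $S(F)\ge 2\gamma\,R(F)$ becomes $R(F)\ge 2\gamma\,R(F)$, impossible once $\gamma>\tfrac12$. Already the two facet rays of a right-angled $2$-cone have $\gamma=1/\sqrt2$, and the $\{0\}$-collar (a ball) must then have radius $\ge R\sqrt2$, which protrudes outside $\mathcal E_R$. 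So neither the exact equality $\mathcal E_R=\bigsqcup P_{R(P),S(P)}$ nor even a pairwise-disjoint cover of $\mathcal E_R$ by such collars exists in general.

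What the deduction of Theorem~\ref{thmStrong}(2) from Assertions~\ref{ass1} and~\ref{ass2} actually needs, and what your argument does deliver once one fixes $R(P),S(P)$ as a geometric sequence in the codimension, is only the inclusion $\mathcal E_R\subset\bigcup_P P_{R(P),S(P)}$ together with the observation that every pairwise overlap $P_{R(P),S(P)}\cap P'_{R(P'),S(P')}$ lies in a tube of bounded width about the lower-dimensional cone $P\cap P'$ and hence carries zero $n$-density; this is enough for the collar densities to sum to $d_n(\E^AX)$.
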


From lemma \ref{lmUnion}, it follows
that theorem \ref{thmStrong} (2) is a consequence of theorem \ref{thmIsolated} (3)
and the following assertion \ref{ass2}.

Let the dimension of the cone
$\dim P$ in $\mathcal E$ be less than $n$, and
$P$ is not a face of any complex degenerate $n$-dimensional cone in $\mathcal E$.
Suppose that
$A\in B_{\mathfrak I,r}$,
where $r$ is sufficiently large.

\begin{assertion}\label{ass2}
Let $R>0$.
Then, for some $S_0$,
for any $S> S_0$,
the strong density $d_n(\E^AX\cap P_{R,S})=0$.
\end{assertion}

Consider the completion $\hat\T$ of the torus $(\C\setminus0)^N$,
corresponding to the fan $\mathcal K_{\rm trop}(Y)$.
Denote by $\bar Y$ the closure of the variety $Y$
in $\hat\T$.
According to the property \hyperlink{tr_1}{($\bf tr_1$)} in \textsection\ref{trop},
the variety $\bar Y$ is compact.

Let $B_1$ be the ball of radius $1$ centered at the origin in
the space $L$.
\begin{lemma}\label{lm_1_1}
If $d_n(\E^AX\cap P_{R,S})\neq0$,
then there exists an infinite sequence $z_i\in\E^AX\cap  P_{R,S}$,
such that

(i)
the number of intersection points $(z_i+B_1)\cap \E^AX$ tends to infinity

(ii)
the sequence $\exp(z_i)$ converges to the limit $g\in \E^A\bar Y$
\end{lemma}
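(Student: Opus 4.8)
The plan is to obtain (i) from a covering/volume estimate that uses $\dim P<n$, and then to get (ii) by compactness of the toric closure. First I would record the setup: under the standing hypotheses ($X$ complex non-degenerate, $A\in B_{\mathfrak I,r}$ with $r$ large) the set of points of $\E^AX$ is discrete by Theorem~\ref{thmStrong}~(1), so $\E^AX\cap P_{R,S}$ is a discrete set with finite multiplicities, each ball $z+B_1$ meets $\E^AX$ in finitely many points, and $N(\E^AX\cap P_{R,S},\rho)$ is well defined. By construction (see Lemma~\ref{lmUnion} and the definition of $P_{R,S}$ preceding Assertion~\ref{ass1}), $P_{R,S}$ is contained in the $R$-neighborhood in $L\cong\C^n$ of the cone $P$, whose real dimension is $\dim P<n$; hence for fixed $R$ the $2n$-dimensional volume of $P_{R,S}\cap B_\rho$ is $O(\rho^{\dim P})=o(\rho^n)$, and the number of points of any $1/2$-separated subset of $P_{R,S}$ inside $B_\rho$ is likewise $O(\rho^{\dim P})$. (The hypothesis that $P$ is not a face of a complex degenerate cone is not needed for this lemma; it enters only later, when the sequence produced here is used to contradict Assertion~\ref{ass2}.)

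Next I would assume $d_n(\E^AX\cap P_{R,S})\ne0$, i.e.\ that $N(\E^AX\cap P_{R,S},\rho)$ is not $o(\rho^n)$, and prove: for every $r_0$, the function $z\mapsto\#\bigl((z+B_1)\cap\E^AX\bigr)$ (multiplicities counted) is unbounded on $\{z\in\E^AX\cap P_{R,S}:|z|>r_0\}$. Arguing by contradiction, if it were bounded by $M$ there, I would choose a maximal $1/2$-separated subset $\{w_k\}$ of $\E^AX\cap P_{R,S}\setminus B_{r_0}$; then the balls $w_k+B_1$ cover $\E^AX\cap P_{R,S}\setminus B_{r_0}$, so $N(\E^AX\cap P_{R,S},\rho)\le N(\E^AX\cap P_{R,S}\cap B_{r_0})+M\cdot\#\{k:|w_k|\le\rho+1/2\}=O(1)+M\cdot O(\rho^{\dim P})=o(\rho^n)$, a contradiction. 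Hence I can pick $z_i\in\E^AX\cap P_{R,S}$ with $|z_i|\to\infty$ and $\#\bigl((z_i+B_1)\cap\E^AX\bigr)\to\infty$, which is (i).

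Finally, for (ii): identifying $\C^n$ with $L=d\omega(\C^n)$ so that $\omega=\exp|_L$ and $\E^AX=\omega^{-1}(\E^AY)$, each $\exp(z_i)$ lies in $\E^AY$. By property~\hyperlink{tr_1}{($\bf tr_1$)} the closure $\E^A\bar Y=\overline{\E^AY}$ in the compact toric completion $\hat\T$ is compact, so $\{\exp(z_i)\}$ has a subsequence converging to some $g\in\E^A\bar Y$; replacing $\{z_i\}$ by this subsequence (still satisfying (i)) gives (ii). I expect the only genuinely delicate step to be the volume bound for $P_{R,S}$: one must check, from the somewhat implicit description of the regions $P_{R(P),S(P)}$ in Lemma~\ref{lmUnion}, that the region attached to a cone of dimension $<n$ really does lie in a neighborhood of that cone of bounded width, so that $P_{R,S}\cap B_\rho$ has volume $O(\rho^{\dim P})$; once this is granted, the rest is routine.
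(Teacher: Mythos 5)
Your proposal is correct and follows the same route as the paper: the paper's own (two-sentence) proof derives (i) from the definition of strong density together with $\dim P<n$ — exactly the covering/counting estimate you spell out — and (ii) from the compactness of $\E^A\bar Y$ in the toric completion (property ($\bf tr_1$)). Your worry about the width of $P_{R,S}$ is resolved by its definition just before Assertion \ref{ass1}: $P_{R,S}\subset P_R$ is by construction contained in the $R$-neighborhood of the cone $P$, so the $O(\rho^{\dim P})$ bound holds for fixed $R$.
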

\begin{proof}
Statement (i), due to the condition $\dim P<n$,
follows from the definition of strong density (see definition \ref{dfprelim11}).
Statement (ii) follows from the compactness of the variety $\E^A\bar Y$.
\end{proof}
First, suppose that $g\in(\C\setminus0)^N$.
From the closedness of the set $\E^AY$,
it follows that $g\in\E^{A}Y$.
From theorem \ref{thmStrong} (1), it follows that,
in a small neighborhood of $g$, the number of points of the set $\exp (\log g +B_1)\cap\E^{A}Y$ is finite.
On the other hand,
this set is the limit of the sequence of sets $\exp (z_i +B_1)\cap\E^AY$,
the number of points in which tends to infinity.
Thus,
if the point $g$ is contained in the finite part of the torus $\hat T$ (i.e., in the torus $(\C\setminus0)^N$),
then assertion \ref{ass2} is proven.

Let $K\in\mathcal K_{\rm trop}(Y)$ be a highest-dimensional cone in $\mathcal K_{\rm trop}(Y)$,
such that $g$ belongs to the toric orbit in $\hat\T$,
corresponding to the cone $K$.
Denote by $\T^K$
the toric orbit in $\hat\T$ corresponding to the cone $K$.
As usual, we identify the orbit $\T^K$ with the quotient torus $\T^K=(\C\setminus0)^N/\T_K$,
where $\T_K$ is the subtorus in $(\C\setminus0)^N$,
generated by the exponents of the cone $K$.
The closure $Y^K$ of the variety $Y$ on the toric orbit $\T^K$
is an algebraic variety in $\T^K$ of codimension $n$;
see property \hyperlink{tr_3}{($\bf tr_3$)} in \textsection\ref{trop}.

We will consider $\T^K$ as a torus with the Lie algebra $\mathfrak T^K=\C^N/(V_K+iV_K)$.
Let $\pi^K\colon\C^N\to\C^N/\mathfrak T^K$ be the projection map.
The tropical fan $\mathcal K_{\rm trop}(Y^K)$ of the variety $Y^K\subset\T^K$ is the
$V_K$-factorization of the fan $\mathcal K_{\rm trop}(Y)$;
see \hyperlink{tr_3}{($\bf tr_3$)} in \textsection\ref{trop}.
Recall,
this means that it is composed of the images of all cones containing $K$ from $\mathcal K_{\rm trop}(Y)$
under the projection map $\re\pi^{K}\colon\re\C^N\to\re\mathfrak T^K$.

The map
$$
\omega^K\colon L\hookrightarrow\C^N\to\mathfrak T^K\xrightarrow{\exp}\T^K
$$
is a homomorphism of groups.
From the condition of complex non-degeneracy of the cone $P$, it follows that $L\cap(V_K+iV_K)=\emptyset$.
Hence,
the map $\omega^K$ is injective.
Let $L^K=d\omega^K(L)$,
and consider $\exp\colon L^K\to\T^K$ as the standard winding map of the torus $\T^K$.
Denote by $X^K$ the \EA-variety in $L^K$ with the model $Y^K\subset\T^K$,
let $P^K=d\omega(P)$ and $A^K=\pi^K(A)$.

\begin{corollary}\label{corPfact}
When replacing the data $(Y\subset(\C\setminus0)^N, L,X,A,P)$ with the data
$(Y^K\subset\T^K, L^K,X^K,A^K,P^K)$
the conditions of assertion {\rm\ref{ass2}}
remain satisfied.
That is,

(i) The algebraic variety $Y^K$ in the torus $\T^K$ is a model of the zero-dimensional \EA-variety $X^K$

(ii)
$P^K$ is not a face of any complex degenerate $n$-dimensional cone in the tropical fan of the \EA-variety $X^K$,
and $\dim P^K<n$

(iii)
$A^K\in B_{\mathfrak I(Y^K),r}$,
where $r$ is sufficiently large
\end{corollary}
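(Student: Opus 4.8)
The plan is to verify the three assertions (i)--(iii) in turn; the common mechanism is that the passage from $(\C\setminus0)^N$ to the toric orbit $\T^K$ replaces each relevant fan by its $V_K$-factorization (Definition \ref{dfFact}, via properties \hyperlink{tr_3}{($\bf tr_3$)} and \hyperlink{tr_7}{($\bf tr_7$)}), while on the linear side $\pi^K|_L\colon L\to L^K$ is a $\C$-linear isomorphism --- the injectivity of $\omega^K$, i.e.\ $L\cap(V_K+iV_K)=0$, having already been recorded above. Granting this, assertion (i) is a dimension count: by \hyperlink{tr_3}{($\bf tr_3$)} the variety $Y^K$ of limit points of $Y$ on the orbit $\T^K$ has codimension $\codim Y^K=\codim Y=n$, and $\dim_\C L^K=\dim_\C L=n$, so the \EA-variety $X^K$ with model $Y^K$ has algebraic dimension $\dima X^K=n-\codim Y^K=0$; see \textsection\ref{intro5}.

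For assertion (ii), $\dim P^K=\dim P<n$ is immediate from the isomorphism $\pi^K|_L$. To show that no $n$-dimensional cone of the tropical fan $\mathcal E^K$ of $X^K$ through $P^K$ is complex degenerate, I would work at the level of $\mathcal K_{\rm trop}(Y^K)$. By the construction of \textsection\ref{tropEA} (Definition \ref{dfEAFan}) such an $n$-cone is $(K''+\im\mathfrak T^K)\cap L^K$ for an $(A^K-P^K)$-cone $K''$ of $\mathcal K_{\rm trop}(Y^K)$, and by \hyperlink{tr_3}{($\bf tr_3$)}/\hyperlink{tr_7}{($\bf tr_7$)}, $\mathcal K_{\rm trop}(Y^K)$ is the $V_K$-factorization of $\mathcal K_{\rm trop}(Y)$, so $K''=\re\pi^K(K')$ for a cone $K'$ of maximal dimension in $\mathcal K_{\rm trop}(Y)$ with $K'\supseteq K$; moreover $(K'+\im\C^N)\cap L$ is then an $n$-cone of $\mathcal E$ having $P$ as a face. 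By the hypothesis of Assertion \ref{ass2} that cone is complex non-degenerate, hence $(V_{K'}+iV_{K'})\cap L=0$ by Corollary \ref{corA-P}; since $\pi^K$ is $\C$-linear with kernel $V_K+iV_K\subseteq V_{K'}+iV_{K'}$ and $\pi^K(L)=L^K$, a routine identity for images of intersections gives $(V_{K''}+iV_{K''})\cap L^K=\pi^K\big((V_{K'}+iV_{K'})\cap L\big)=0$; applying Corollary \ref{corA-P} over $\T^K$ then shows that $(K''+\im\mathfrak T^K)\cap L^K$ is complex non-degenerate. This proves (ii).

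For assertion (iii), I would again use that every cone of $\mathcal K_{\rm trop}(Y^K)$ is $\re\pi^K(K')$ with $K'\in\mathcal K_{\rm trop}(Y)$, $K'\supseteq K$; since $\ker\pi^K=V_K+iV_K\subseteq V_{K',L}$, $\pi^K(L)=L^K$ and $\pi^K(\im\C^N)=\im\mathfrak T^K$, the corresponding dangerous subspace of $Y^K$ equals $\pi^K(V_{K',L})$, which is proper in $\mathfrak T^K$ exactly when $V_{K',L}$ is proper in $\C^N$. Hence (Definition \ref{df_I}) every member of $\mathfrak I(Y^K)$ has the form $\pi^K(H)$ with $H\in\mathfrak I(Y)$ and $H\supseteq\ker\pi^K$. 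Since $\pi^K$ is a fixed linear surjection, there is $c>0$ with $\mathrm{dist}(\pi^K(a),\pi^K(H))\ge c\,\mathrm{dist}(a,H)$ for every subspace $H\supseteq\ker\pi^K$; therefore $A\in B_{\mathfrak I(Y),r}$ implies $A^K=\pi^K(A)\in B_{\mathfrak I(Y^K),cr}$, and $cr\to\infty$ with $r$, which is (iii).

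The step I expect to be the main obstacle is the matching used in (ii): showing precisely, from the toric description of \textsection\ref{tropEA} together with the factorization property \hyperlink{tr_3}{($\bf tr_3$)}, that an $(A^K-P^K)$-cone $K''$ of $\mathcal K_{\rm trop}(Y^K)$ is $\re\pi^K(K')$ for a maximal cone $K'\supseteq K$ of $\mathcal K_{\rm trop}(Y)$ whose associated $n$-cone $(K'+\im\C^N)\cap L$ of $\mathcal E$ genuinely has $P$ as a face --- this is the toric bookkeeping that links the hypotheses on $X$ near $P$ to those on $X^K$ near $P^K$. Everything else reduces to the dimension count of (i), the $\C$-linearity of $\pi^K|_L$, or the elementary estimate for the factorization map in (iii).
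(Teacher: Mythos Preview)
Your argument is correct and substantially more detailed than the paper's own proof, which reads in its entirety ``It follows from the above definitions.'' You are carrying out exactly what that sentence leaves to the reader: (i) is the dimension count via \hyperlink{tr_3}{($\bf tr_3$)} and the injectivity of $\pi^K|_L$; (iii) is the observation that $\mathfrak I(Y^K)$ consists of $\pi^K$-images of members of $\mathfrak I(Y)$ containing $\ker\pi^K$, plus the linear distance estimate; and (ii) is the cone-matching via factorization.

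One small refinement for the step you flag as the obstacle in (ii): you do not actually need to match an $(A^K\!-\!P^K)$-cone $K''$ with an $(A\!-\!P)$-cone $K'$ in the strict sense of Definition~\ref{dfEAFan}. What you need is only the linear statement $(V_{K''}+iV_{K''})\cap L^K=\pi^K\bigl((V_{K'}+iV_{K'})\cap L\bigr)$, which you derive correctly, together with the fact that the $n$-dimensional set $(K'+\im\C^N)\cap L$ lies in $\supp\mathcal E$. The latter holds because $n$-dimensionality of this intersection is exactly the transversality condition $L+V_{K'}+\im\C^N=\C^N$, which makes every interior point a stable intersection point of $L$ and $\supp\mathcal K^\C_{\rm trop}(Y)$; hence $(K'+\im\C^N)\cap L$ is a union of $n$-cones of $\mathcal E$, all sharing the span $(V_{K'}+\im\C^N)\cap L$ and therefore all simultaneously complex degenerate or non-degenerate. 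Since $P\subset L\cap(K'+V_K+\im\C^N)\subset(V_{K'}+\im\C^N)\cap L$, the hypothesis on $P$ then gives $(V_{K'}+iV_{K'})\cap L=0$ as required. With this remark your argument for (ii) is complete.
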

\begin{proof}
It follows from the above definitions.
\end{proof}
By construction, $g \in \mathbb{T}^K$. From Theorem \ref{thmStrong} (1) for the \EA-variety $X^K$, it follows that the intersection of the orbit $\exp(L)g$ with the variety $\E^{A_K}Y^K \subset \mathbb{T}^K$ at the point $g$ is discrete.

Considering the sequence of points $z_i \in \mathbb{C}^N$ from Lemma \ref{lm_1_1}
 in the local toric chart at the point $g$, 
 similarly to the case $g \in (\mathbb{C} \setminus 0)^n$, 
 we obtain a contradiction with the mentioned above discreteness of the intersection
$\exp(L)g\cap \E^{A_K}Y^K$.
 Assertion \ref{ass2} is proven.
\subsection{Theorem \ref{thmq1}}\label{pr13}

In the quasi-algebraic case, the tropical fan of the \EA-variety
consists of a single cone $\im \C^n$.
Therefore, the \EA-variety is complex non-degenerate,
and the statement on strong density follows from theorem \ref{thmStrong}.
Along with the structure of the tropical fan,
the geometry of the \EA-variety also
simplifies somewhat.
The detailed proof of the theorem is given in \cite{K22}.

Here,
omitting the details,
we mention a few circumstances
that actually ensure the derivation of the theorem based on the above considerations.

(1) The absence of cones of dimension $<n$ makes the density estimate from assertion \ref{ass2} unnecessary.

(2) The description of the set of shifted lattices
$$z_1(A)+\mathcal L_{i,1},\ldots,z_{N_i}(A)+\mathcal L_{i,N_i}$$
from the statement of the theorem is given in \textsection\ref{prPr113}.

(3) The shifts of these lattices continuously depend on the point $A\in B_{\mathfrak I,r}$.

(4) The set of lattices does not change
if $z$ belongs to some fixed connected component
of the domain $B_{\mathfrak I,r}$.
The statement about the connection of the set of lattices with the connected components
$\mathfrak C_1,\mathfrak C_2,\ldots,\mathfrak C_M$ is explained by
corollary \ref{cordRelFull} (4).

\begin{thebibliography}{References}
\bibitem[1]{Ritt}
Ritt G.F. On the zeros of exponential polynomials. Trans. Amer. Math. Soc., 31, 1929, 680--686

\bibitem[2]{AG}
Avanissian V., Gay R. Sur une transformation des
fonctionelles analytiques et ses applications.
Bull. Soc. Math. France, 1975, v.103, N 3, 341--384

\bibitem[3]{L}
M. Laurent. Equations diophantiennes exponentielles.
Invent. math. 1984, (78:2), 299--327

\bibitem[4]{Ev}
J.-H. Evertse, H. P. Schlickewei, and W. M. Schmidt.
Linear equations in variables which lie
in a multiplicative group.
Annals of Mathematics, 155 (2002), 807--836

\bibitem[5]{K97}
B. Ja. Kazarnovskii
Functional Analysis and Its Applications, 1997, Volume 31, Issue 2, Pages 86–94

\bibitem[6]{Z02}
B. Zilber.  Exponential sums equations and the Shanuel conjecture.
Journal of the London Math. Soc., (65:2), 2002, 27--44

\bibitem[7]{EKK}
B. Kazarnovskii, A. Khovanskii, A. Esterov. Newton Polyhedra and Tropical Geometry.
Russian Mathematical Surveys, 2021, Volume 76, Issue 1, Pages 91–175
DOI: https://doi.org/10.1070/RM9937)

\bibitem[8]{K22}
B. Kazarnovskii. Intersections of Exponential Analytic Sets and Ring of Conditions of Complex Affine Space,  I. Изв. РАН. Izvestiya: Mathematics, 2022, Volume 86, Issue 1, Pages 169–202
DOI: https://doi.org/10.1070/IM9065)

\bibitem[9]{MCM}
P. Mcmullen. The polytope algebra, Adv. in Math. v. 78 (1989), 76-130.

\bibitem[10]{MCM1}
P. Mcmullen. Separation in the polytope algebra, Beitrage zur Algebra and Geometrie, 34 (1993), 15-30.

\bibitem[11]{MCM2}
P. Mcmullen. On simple polytopes, Invent. Math., 113, 1993, 419 -- 444

\bibitem[12]{KhP}
A. G. Khovanskii, A. V. Pukhlikov. Finitely additive measures of virtual polytopes, St. Petersburg Math. J. 4 (1993), 337-356.

\bibitem[13]{Br2}
M. Brion. The structure of polytope algebra.
T\"{o}hoku Math. J. v. 49 (1997), 1-32

\bibitem[14]{FS}
W. Fulton, B. Sturmfels.
Intersection theory on toric varieties. Topology, 1997, v.36, 335--353

\bibitem[15]{Br1}
M. Brion. Piecewise polynomial functions, convex polytopes and enumerative
geometry. - Parameter spaces (Warsaw, 1994), 25--44, Banach Center Publ., 36,
Polish Acad. Sci., 1996

\bibitem[16]{K03} B. Ya. Kazarnovskii, c-fans and Newton polyhedra of algebraic varieties, Izvestiya:
Mathematics, 2003, Volume 67, Issue 3, 439–460

\bibitem[17]{Est} A. Esterov.
Tropical varieties with polynomial weights
and corner loci of piecewise polynomials. - Mosc. Math. J., 12:1 (2012), 55–-76
(arXiv:1012.5800)

\bibitem[18]{KhA}
A. G. Khovanskii.
Newton polyhedra and good compactification theorem.
Arnold Mathematical Journal, volume 7, (2021) 135--157

\bibitem[19]{Dokl}
B. J. Kazarnovskii. On the zeros of exponential sums.
Doklady Mathematics,  257:4 (1981),  804–808

\bibitem[20]{K84}
B. J. Kazarnovskii. Newton Polyhedra and the Roots of Systems of Exponential Sums. Funct. Anal. Appl., 18:4 (1984), 299–307

\bibitem[21]{olga}
O.A. Gelfond.
The Mean Number of Zeros of Systems of Holomorphic Almost Periodic Equations.
Russian Mathematical Surveys, 1984, Volume 39, Issue 1, Pages 155–156
DOI: https://doi.org/10.1070/RM1984v039n01ABEH003069

\bibitem[22]{Few}
A. G. Khovanskii. Fewnomials. (1991), American Mathematical Society

\bibitem[23]{C}
C. De Concini. Equivariant embeddings of homogeneous spaces.
Proceedings of the International Congress of Mathematicians, Berkeley, California, USA (1986), 369--377.

\bibitem[24]{CP}
C.~De Concini and C.~Procesi. Complete symmetric varieties II.
Intersection theory. Adv. Stud. Pure Math., 6 (1985), 481--512.

\bibitem[25]{A1}
D. N. Akhiezer. On the Actions with Finite Set of Orbits. Funct. Anal. Appl., 19:1 (1985), 1–4

\bibitem[26]{Ax} J. Ax. On Schanuel's conjectures. Annals of Mathematics, 93(1971), 252--268

\bibitem[27]{BMZ07} E. Bombieri, D. Masser and U. Zannier.
 Anomalous Subvarieties—Structure Theorems and Applications.
International Mathematics Research Notices, Vol. 2007, Article ID
rnm057, 33 pages. doi:10.1093/imrn/rnm057

\bibitem[28]{K14}
B. J. Kazarnovskii.
Action of the complex Monge–Ampere
operator on piecewise-linear functions and exponential
tropical varieties.
Izvestiya: Mathematics, 2014, Volume 78, Issue 5, Pages 902–921

\bibitem[29]{K14f}
B. J. Kazarnovskii.
On the action of complex Monge–Ampere operator on piecewise-linear functions.
Funct. Anal. Appl., 48:1 (2014), 15--23

\bibitem[30]{Al03} S. Alesker.
Hard Lefschets theorem for valuations, complex integral geometry, and unitarily invariant valuations. J. Differential Geom., 63:1 (2003),
63--95

\bibitem[31]{Weyl} Н. Weyl. Mean Motion, Amer. J Math., 60 (1938), 889--896

\bibitem[32]{KLast} B. J. Kazarnovskii.  On the distribution of zeros of function with exponential grouth.
Sbornik math, 215:3, (2024), 70 -- 79 (Russian)

\end {thebibliography}

\end{document}